\documentclass[11pt, reqno]{amsart}

\usepackage[utf8]{inputenc}
\usepackage[T1]{fontenc}
\usepackage{lmodern}
\usepackage{microtype} 

\usepackage{geometry} 

\usepackage{graphicx} 
\usepackage{caption}
\usepackage{subcaption}

\usepackage[dvipsnames]{xcolor} 
\usepackage{pgf,tikz} 
\usetikzlibrary{quotes}
\usetikzlibrary{cd}
\usetikzlibrary{babel}
\usetikzlibrary{decorations.markings}
\usetikzlibrary{patterns.meta}

\usepackage{amsmath} 
\usepackage{amssymb} 
\usepackage{amsthm} 
\usepackage{mathtools} 
\usepackage{xfrac} 
\usepackage{bm}
\usepackage{thmtools} 
\usepackage{thm-restate}

\usepackage[pdfencoding=auto, psdextra]{hyperref}
\usepackage{cleveref}
\Crefname{figure}{Figure}{Figures}

\usepackage[
backend=biber,
date=year,
style=alphabetic,
sorting=nyt,
giveninits=true,
doi=false,
isbn=false,
sortcites=true
]{biblatex}

\DefineBibliographyStrings{english}{
  prepublished = {preprint},
  mathesis = {Master's Thesis}
}

\AtEveryBibitem{%
	\clearfield{urlyear}%
  \ifentrytype{online}
    {}
    {\clearfield{eprint}%
     \clearfield{eprinttype}%
     \clearfield{eprintclass}}%
  \ifentrytype{misc}{}{\clearfield{url}}
}

\usepackage[shortlabels]{enumitem} 
\setlist[itemize]{noitemsep, topsep=0pt} 
\setlist[enumerate]{noitemsep, topsep=0pt} 

\usepackage{todonotes}
\usepackage{shuffle} 
\DeclareFontFamily{U}{shuffle}{}
\DeclareFontShape{U}{shuffle}{m}{n}{ <-8>shuffle7 <8->shuffle10}{}

\usepackage{bbm}

\newcommand*{\definitionname}{Definition}
\newcommand*{\theoremname}{Theorem}
\newcommand*{\propositionname}{Proposition}
\newcommand*{\corollaryname}{Corollary}
\newcommand*{\lemmaname}{Lemma}
\newcommand*{\remarkname}{Remark}
\newcommand*{\examplename}{Example}

\newcommand*{\notationname}{Notation}
\newcommand*{\conventionname}{Convention}
\newcommand*{\claimname}{Claim}
\newcommand*{\conjecturename}{Conjecture}
\newcommand*{\factname}{Fact}

\let\epsilon\varepsilon
\renewcommand\emptyset{\varnothing}
\let\implies\Rightarrow

\let\ForAll\forall
\renewcommand\forall{\;\ForAll}
\let\Exists\exists
\renewcommand\exists{\;\Exists}

\newcommand{\qq}[1]{\ensuremath{\mathrel{\quad\text{#1}\quad}}}
\newcommand{\abs}[1]{\left\lvert #1 \right\rvert}
\newcommand{\dd}[1]{\mathop{}\!\mathrm{d}#1}

\newcommand*{\on}[1]{\operatorname{#1}}

\newcommand*{\N}{\mathbb{N}} 
\newcommand*{\Z}{\mathbb{Z}} 
\newcommand*{\Q}{\mathbb{Q}} 
\newcommand*{\R}{\mathbb{R}} 
\newcommand*{\C}{\mathbb{C}} 
\newcommand*{\CP}{\mathbb{CP}} 
\newcommand*{\RP}{\mathbb{RP}} 

\newcommand*{\la}{\boldsymbol{a}}

\renewcommand{\Re}{\operatorname{Re}}

\DeclareMathOperator{\can}{can}
\DeclareMathOperator{\RW}{RW}
\DeclareMathOperator{\aux}{aux}

\DeclareMathOperator{\sgn}{sgn}
\let\ker\relax
\DeclareMathOperator{\ker}{ker}
\DeclareMathOperator{\im}{Im}

\DeclareMathOperator{\perm}{perm}
\DeclareMathOperator{\adj}{adj}

\DeclareMathOperator{\sv}{sv}
\DeclareMathOperator{\GL}{GL}
\DeclareMathOperator{\tr}{tr}
\DeclareMathOperator{\rank}{rank}

\DeclareMathOperator{\conf}{Conf}

\DeclareMathOperator{\darg}{darg}
\DeclareMathOperator{\dlog}{dlog}
\DeclareMathOperator{\diag}{diag}
\DeclareMathOperator{\vol}{vol}

\makeatletter
\renewcommand*\env@matrix[1][*\c@MaxMatrixCols c]{%
  \hskip -\arraycolsep
  \let\@ifnextchar\new@ifnextchar
  \array{#1}}
\makeatother

\makeatletter
\newsavebox{\@brx}
\newcommand{\llangle}[1][]{\savebox{\@brx}{\(\m@th{#1\langle}\)}%
  \mathopen{\copy\@brx\mkern2mu\kern-0.9\wd\@brx\usebox{\@brx}}}
\newcommand{\rrangle}[1][]{\savebox{\@brx}{\(\m@th{#1\rangle}\)}%
  \mathclose{\copy\@brx\mkern2mu\kern-0.9\wd\@brx\usebox{\@brx}}}
\makeatother

\theoremstyle{definition}
\newtheorem{definition}{\definitionname}[section]
\newtheorem{eg}[definition]{\examplename}

\theoremstyle{plain}
\newtheorem{theorem}[definition]{\theoremname}
\newtheorem{proposition}[definition]{\propositionname}
\newtheorem{corollary}[definition]{\corollaryname}
\newtheorem{lemma}[definition]{\lemmaname}

\newtheorem{fact}[definition]{\factname}

\theoremstyle{remark}
\newtheorem{remark}[definition]{\remarkname}

\makeatletter
\def\thmt@innercounters{equation,section,definition} 
\makeatother

\usepackage{./tikzit/tikzit}

\tikzstyle{internal vertex}=[fill=black, draw=black, shape=circle, scale=0.6]
\tikzstyle{external vertex}=[fill=white, draw=black, shape=circle, scale=0.6]
\tikzstyle{edge label}=[fill=none, draw=none, shape=circle, font={\scriptsize}]
\tikzstyle{node label}=[fill=none, draw=none, shape=circle, font={\tiny}]
\tikzstyle{subscriptExternal}=[fill=white, draw=black, shape=circle, scale=0.3]
\tikzstyle{subscriptInternal}=[fill=black, draw=black, shape=circle, scale=0.3]

\tikzstyle{standard}=[-]
\tikzstyle{arrow}=[-, arrows={-{Latex}}]
\tikzstyle{ds edge}=[-, dashed]
\tikzstyle{ds arrow}=[-, arrows={-{Latex}}, dashed]
\tikzstyle{standard accent}=[-, draw=orange]
\tikzstyle{arrow accent}=[-, arrows={-{Latex}}, draw=orange]
\tikzstyle{ds edge accent}=[-, dashed, draw=orange]
\tikzstyle{ds arrow accent}=[-, arrows={-{Latex}}, dashed, draw=orange]

\tikzset{every edge quotes/.style = {auto, font=\scriptsize, sloped}}
\title{Graph integrals, Feynman periods, and single-valued multiple zeta values}
\author{Jean-Luc Portner}
\address{Mathematical Institute, Oxford, OX2 6GG, United Kingdom}
\email{jean-luc.portner@maths.ox.ac.uk}

\begin{document}

\begin{abstract}
The Borel classes generating the stable cohomology of the general linear group can be represented by invariant differential forms.
It is known that pulling these forms back along a tropical Torelli map yields canonical convergent integrals associated to graphs, 
which are closely connected to the cohomology of $\mathrm{GL}_n$ and of graph complexes.
A natural question is what numbers these graph integrals are.
We answer this for primitive canonical integrals by showing that they coincide with a family of complex position-space integrals
arising in deformation quantisation. As a consequence, canonical integrals of graphs evaluate to single-valued multiple zeta values.
We further deduce that every single-valued multiple zeta value occurs as a rational linear combination of Feynman periods of graphs 
with massless propagators.
Finally, in the commutative graph complex, our result implies that the two associated cocycles agree. 
\end{abstract}

\vspace*{-0.5cm}
\maketitle
\vspace*{-0.25cm}
\graphicspath{{Images/}}

\section{Introduction}\label{sec:introduction}
Let $n \geq 3$, and let $G = (V_{G},E_{G})$ be an edge-ordered graph with no self-loops,
$V := \abs{V_{G}} = n+1$ vertices and $E := \abs{E_{G}} = 2n$ edges i.e. $E = 2V-2$.
This work shows the equality of two families of integrals associated to $G$: canonical integrals, obtained by
integrating over positive edge parameters, and RW-integrals, obtained by integrating over vertex configurations on $\C$.
The condition $E=2V-2$ ensures that in both constructions the integrand has top-degree on the relevant integration domain.

For the first family, let $\sigma_{2n} \subseteq \RP^{2n-1}$ denote the real positive coordinate simplex with
homogeneous coordinates  $x_1,\ldots,x_{2n} \geq 0$, one for each edge of $G$ induced by the edge-ordering.
For a $n \times  n$ matrix $X$ the primitive canonical form is defined as the $(2n-1)$-form
\begin{equation}\label{eq:canonicalTraceFormula}
	\beta_{X}^{2n-1} = \tr((X^{-1} \dd{X})^{2n-1})
.\end{equation}
Associated to the graph $G$ is its $(n-1) \times (n-1)$ Laplacian matrix $L_{G}$ whose entries are linear forms in $x_{k}$.
The corresponding canonical integral is
\[
I_{\can}(G) = \int_{\sigma_{2n}} \beta_{L_{G}}^{2n-1} \in \R
.\] 
These integrals are finite and arise from the study of the stable cohomology of the general linear group \cite{brown21}.
They may also be interpreted as Feynman integrals with massless propagators.

For the second family, consider the configuration space of $n+1$ pairwise distinct points in $\C$, one for each vertex of $G$,
modulo translation, rotation and positive real scaling,
$Z_{n} = \{z \in \C^{n+1} \mid z_{i}\neq z_{j} \text{ for } i \neq  j\} / (\C^{*} \ltimes \C)$, which
may be identified with $\mathfrak{M}_{0,n+2}$ the moduli space of $n+2$ marked points on $\mathbb{P}^{1}$.
The corresponding RW-integral is
\[
	I_{\RW}(G) = \frac{1}{(2 \pi i)^{n-1}} \int_{Z_{n}} \sum_{l=1}^{2n} \sum_{d\neq l}^{2n} 
	(-1)^{l+d+(l<d)} \log(\abs{z_{l}}^2) \bigwedge_{e \neq l, d} \dlog(\abs{z_{e}}^2) \in \R
,\] 
where $z_{e} = z_{u} - z_{v}$ for $e = (u,v) \in E_{G}$, $(l < d)$ is $1$ if $l < d$ and $0$ otherwise,
and the order of the wedge-product is induced by the edge-order of the graph.
These integrals are finite and real valued: they vanish for even $n$, while for odd $n$ the pre-factor is real.
RW-integrals have been used to study the action of the Grothendieck-Teichmüller group on deformation quantisation \cite{rossi14}.
The main theorem of this paper establishes the equality of the two types of integrals:
\begin{theorem}\label{thm:EqualityOfIntegralsGraphs}
	\[
	I_{\RW}(G) = I_{\can}(G)
	.\] 
\end{theorem}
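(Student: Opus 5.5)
The strategy is to rewrite both integrals as integrals of one differential form on a common space and compare them there. The natural common space is $\C^{E}$, or its quotient, with one complex variable $w_e$ per edge; the graph constraints cut out the image of $Z_n$. I would route the comparison through a Schwinger/Fourier-type transform. On the canonical side, start from the explicit formula for $\beta^{2n-1}_{L_G}$ on the simplex. Since $\tr((X^{-1}dX)^{2n-1})$ is $\mathrm{GL}$-invariant, it can be written in terms of the Dodgson polynomials and the Kirchhoff polynomial $\Psi_G=\det L_G$, via a Matrix-Tree expansion of $L_G^{-1}dL_G$. I would then introduce a Gaussian representation. Because $E=2V-2=2(n-1)+2$, the form is top-degree and homogeneous of weight zero, so we may write
\[
\frac{1}{\Psi_G}\propto\int_{\C^{n-1}} e^{-\sum_e x_e |z_e|^2}\,\prod_{v\neq v_0} d^2z_v,
\]
and more generally the numerators arising from $L_G^{-1}dL_G$ become insertions of $|z_e|^2\,dx_e$. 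This converts $I_{\can}(G)$ into an integral over $(x,z)\in\sigma_{2n}\times\C^{n}$ (mod translation) of an exponential times polynomial forms. The key identity used here is that, for a symmetric positive matrix, $\tr((X^{-1}dX)^{k})$ is obtained by integrating $\exp(-z^*Xz)$ against a fermionic/Berezin or differential-form insertion. This is the standard "superfield" trick, in which the odd variables are $dz$ and $d\bar z$.

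Next, perform the $x$-integral first. After fixing the scaling (the $\RP^{2n-1}$ projectivisation corresponds to the $\R_{>0}$ part of $\C^*$), each edge variable appears as $\int_0^\infty e^{-x_e|z_e|^2}(\cdots)$. This produces $\dlog|z_e|^2$ for edges carrying a $dx_e$ factor. The single edge that absorbs the projective scaling $\int_0^\infty dt/t$ with a cutoff produces the $\log|z_l|^2$ term. The sum over $l$ and $d$ with signs $(-1)^{l+d+(l<d)}$ should come from the two distinguished edges in the expansion of $\tr((X^{-1}dX)^{2n-1})$: the edge on which the projective form $\sum(-1)^k x_k\,dx_1\cdots\widehat{dx_k}\cdots$ lands, and the edge whose $dx$ is missing. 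The rotation part of $\C^*$ accounts for one angular integration, which gives a factor $2\pi$. The remaining $(2\pi i)^{n-1}$ come from the Gaussian normalisations. Combining these, the $x$-integrated form should equal the RW integrand on $Z_n$, with the $\log$ and $\dlog$ pieces organised exactly as in the definition of $I_{\RW}(G)$.

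The main obstacle is convergence and the exchange of integration order. Individually, the terms with fixed $(l,d)$ are not absolutely convergent: $\log|z_l|^2$ depends on a choice of scale, and the boundary divergences of $\sigma_{2n}$ correspond to collisions in $Z_n$. I would handle this in two steps. First, insert a regulator, for example restricting to $\sum x_e\le T$ or cutting off $|z|$. Second, show that the regulator-dependent terms cancel in the alternating sum. This cancellation uses the fact that the total form is closed and that $\sum_d(-1)^{d}\bigwedge_{e\neq l,d}\dlog|z_e|^2$ is exact or projectively invariant, so that shifting $\log\to\log+c$ changes nothing. Convergence of each side, together with Fubini on the regulated integrals, then gives the equality in the limit. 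I expect the combinatorial sign bookkeeping, namely matching the trace expansion with $(-1)^{l+d+(l<d)}$, to be tedious but routine once the Gaussian representation is in place.
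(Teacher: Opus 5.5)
Your outline has the right global shape: one integral over position space $\times$ parameter space, evaluated in both orders, with the ambiguity in the logarithm removed by \cref{lem:rotationInvariance}. However, the step that carries the content is never specified, and the one concrete version you give produces the wrong form. Suppose the parameter dependence enters only through insertions of $\abs{z_e}^2\dd{x_e}$ into $e^{-\sum_e x_e\abs{z_e}^2}$ (your $z_e$ is the paper's $\la_e(z)$). If you take powers of the $1$-form $\omega=\sum_e\abs{z_e}^2\dd{x_e}$, then $\omega\wedge\omega=0$. If instead you insert $\prod_{e\in I}\abs{z_e}^2$ against $\dd{x_I}=\bigwedge_{i\in I}\dd{x_i}$ for $I=[2n]\setminus\{d\}$, Wick's theorem gives a multiple of $\det(L)^{-1}\perm\bigl((AL^{-1}A^{T})_{I,I}\bigr)$. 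That is an unsigned sum over all cycle types, diagonal entries included. By contrast, the coefficient of $\dd{x_I}$ in $\tr\bigl((L^{-1}\dd{L})^{2n-1}\bigr)=\tr\bigl((AL^{-1}A^{T}\diag(\dd{x}))^{2n-1}\bigr)$ is a signed sum over single $(2n-1)$-cycles of off-diagonal entries of the same matrix. Invoking a ``superfield trick'' with $\dd{z},\dd{\overline{z}}$ odd does not repair this until the form is written down, and choosing that form is the actual idea. The paper takes the numerator $\bigwedge_{e\neq l,d}\dd{\abs{\la_e}^2}\wedge\dd{\la_d}\wedge\dd{\overline{\la_d}}$ over $\bigl(\sum_e\alpha_e\abs{\la_e}^2+\alpha_l\bigr)^{2n}$. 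Its $\dd{z}$-structure becomes the operator $\det(P^l_d)=\sum_{(S,T)\in\mathcal{I}^l_d}\pm\det(A_{Sd,\bullet})\det(A_{Td,\bullet})\prod_{i\in T}\partial_{\overline{p_i}}\prod_{j\in S}\partial_{p_j}$. After the Gaussian integration this yields $\perm\bigl((AL^{-1}A^{T})_{S,T}\bigr)$ with $S\cap T=\emptyset$, so diagonal entries never occur (\cref{thm:prodToPara}).

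Even with this integrand, matching the result with $\beta^{2n-1}_L$ is not sign bookkeeping; it is the main algebraic theorem of the paper. One rewrites $\det(A_{Sd,\bullet})\det(A_{Td,\bullet})$ as a multiple of the minor $\det(\adj(M)_{Sl,Tl})$ via Jacobi's identity (\cref{lem:paraToCanDetProd}). One then reads each term of $\det\cdot\perm$ as a permutation of $S\sqcup T\sqcup\{l\}$ and builds a sign-reversing involution on the terms containing an even cycle. Only single odd $(2n-1)$-cycles through $l$ survive (\cref{prop:cycleCancellation,thm:adjToDodgsonProd}).

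This also shows that your account of the two indices is off. In $\Omega(x)$ the edge whose coordinate appears as coefficient and the edge whose $\dd{x}$ is omitted coincide, so the trace expansion supplies only $d$. The index $l$ is the vertex at which the single odd cycle is cut open, and cyclicity of the trace is why one fixed $l$ suffices up to the factor $2n-1$ in \cref{prop:introNoLsum}.

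Likewise, your mechanism for the logarithm (one edge absorbing $\int\dd{t}/t$ with a cutoff) is not attached to any specific form. In the paper it comes from the explicit scale-breaking term $+\alpha_l$. The $l$-th radial integral only converges after subtracting the counterterm with $\abs{\la_d}^2$ in place of $\abs{\la_l}^2$, whose canonical side vanishes because the modified matrix has a repeated row (\cref{prop:canVanish}). Until the product-space form is fixed, there is nothing concrete to regularise or to apply Fubini to.
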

From the perspective of Feynman integrals, the canonical integrals may be viewed as the parameter-space representation, 
while the RW-integrals correspond to the position-space formulation. The proof makes this correspondence explicit by
constructing an auxiliary integral on the product of position space and parameter
space. Integrating first over the edge-parameters recovers the RW-integral, while
integrating first over position space gives the canonical integral.

\begin{remark}
	For the sake of clarity in the statements of the theorems, 
	the definition of the RW-integrals used in this work differs from the original definition in \cite{rossi14} by a factor of 
	$(2 \pi i)^{n}\, 2^{2n-2}$.
\end{remark}

\subsection{An example}\label{sec:threeWheel}
We illustrate the general constructions with the simplest non-trivial example, the three-wheel graph $W_{3}$ shown in \cref{fig:threeWheel}.
\begin{figure}[htbp]
	\centering
	\begin{tikzpicture}
	\begin{pgfonlayer}{nodelayer}
		\node [style=internal vertex] (0) at (0, 0) {};
		\node [style=internal vertex] (1) at (0, 2.75) {};
		\node [style=internal vertex] (2) at (-2.75, -2) {};
		\node [style=internal vertex] (3) at (2.75, -2) {};
		\node [style=edge label] (4) at (0.25, 1.25) {$1$};
		\node [style=edge label] (5) at (1.5, -0.75) {$2$};
		\node [style=edge label] (6) at (-1.5, -0.75) {$3$};
		\node [style=edge label] (7) at (2, 0.75) {$4$};
		\node [style=edge label] (8) at (-2, 0.75) {$6$};
		\node [style=edge label] (9) at (0, -2.75) {$5$};
		\node [style=node label] (10) at (0, -0.5) {$1$};
		\node [style=node label] (11) at (0, 3.25) {$2$};
		\node [style=node label] (12) at (3.25, -2.25) {$3$};
		\node [style=node label] (13) at (-3.25, -2.25) {$4$};
		\node [style=internal vertex] (14) at (11, 0) {};
		\node [style=internal vertex] (15) at (15, 0) {};
		\node [style=internal vertex] (16) at (13, 3.25) {};
		\node [style=internal vertex] (17) at (9, 3.25) {};
		\node [style=internal vertex] (18) at (7, 0) {};
		\node [style=internal vertex] (19) at (9, -3.25) {};
		\node [style=internal vertex] (20) at (13, -3.25) {};
		\node [style=edge label] (21) at (15.75, 0) {$1$};
		\node [style=edge label] (22) at (13.5, -3.75) {$2$};
		\node [style=edge label] (23) at (8.5, -3.75) {$3$};
		\node [style=edge label] (24) at (6.25, 0) {$4$};
		\node [style=edge label] (25) at (8.5, 3.75) {$n-1$};
		\node [style=edge label] (26) at (13.5, 3.75) {$n$};
	\end{pgfonlayer}
	\begin{pgfonlayer}{edgelayer}
		\draw [style=arrow] (0) to (1);
		\draw [style=arrow] (0) to (3);
		\draw [style=arrow] (0) to (2);
		\draw [style=arrow, bend left=15] (1) to (3);
		\draw [style=arrow, bend left=15] (2) to (1);
		\draw [style=arrow, bend left=15] (3) to (2);
		\draw [style=standard] (15) to (14);
		\draw [style=standard] (15) to (20);
		\draw [style=standard] (20) to (14);
		\draw [style=standard] (20) to (19);
		\draw [style=standard] (19) to (14);
		\draw [style=standard] (19) to (18);
		\draw [style=standard] (18) to (14);
		\draw [style=standard] (17) to (14);
		\draw [style=standard] (16) to (15);
		\draw [style=standard] (16) to (14);
		\draw [style=standard] (16) to (17);
		\draw [style=ds edge] (18) to (17);
	\end{pgfonlayer}
\end{tikzpicture}
	\caption{The three-wheel graph $W_3$ with labeled vertices and labeled directed edges on the left and the general wheel graph $W_{n}$ on the right.}
	\label{fig:threeWheel}
\end{figure}
The incidence matrix of $W_{3}$ is given by $\epsilon$ below on the left.
For instance, the first row encodes that edge $1$ is oriented from vertex $1$ to vertex $2$, as depicted in \cref{fig:threeWheel}.
Removing any column from $\epsilon$ yields a reduced incidence matrix, denoted $\widetilde{\epsilon}$, and
the corresponding reduced Laplacian matrix is given by $L_{G} = \widetilde{\epsilon}^{T} \diag(x_1,\ldots,x_{6}) \widetilde{\epsilon}$.
While $L_{G}$ is independent of the orientation of the edges, it depends on the choice of removed column and the labelling of the edge variables, induced
by the edge-order.
For example, removing the first column (vertex $1$) gives the Laplacian matrix $L_{W_{3}}$ shown below on the right:
\[
\epsilon = \begin{pmatrix} 
	-1 &1 &0 &0\\
	-1 &0 &1 &0\\
	-1 &0 &0 &1\\
	0 &-1 &1 &0\\
	0 &0 &-1 &1\\
	0 &1 &0 &-1
\end{pmatrix} \qquad L_{W_3} = \begin{pmatrix} 
	x_1 + x_4 + x_6 &-x_4 &-x_6\\
	-x_4 &x_2 + x_4 + x_5 &-x_5\\
	-x_6 &-x_5 &x_3 + x_5 + x_6
\end{pmatrix}.
\]
The RW-integral for $W_{3}$ is
\[
	I_{\RW}(W_3) = \frac{1}{(2 \pi i)^{2}} \int_{Z_{3}} \sum_{l=1}^{6} \sum_{d\neq l}^{6} 
	(-1)^{l+d+(l<d)} \log(\abs{z_{l}}^2) \bigwedge_{e \neq l,d} \dlog(\abs{z_{e}}^2)
\]
where the edge variables $z_{e}$ are defined by $(z_{e_1},\ldots,z_{e_{6}})^{T} = \epsilon \cdot (z_1,\ldots,z_4)^{T}$.

We turn to the canonical integral.
Although the reduced Laplacian $L_{G}$ depends on the choice of reduced incidence matrix $\widetilde{\epsilon}$,
the canonical form $\beta^{2n-1}_{L_{G}}$ is independent of this choice \cite{brown21}.
A direct computation for $W_{3}$ yields
\begin{equation}\label{eq:canThreeWheel}
	I_{\can}(W_{3}) = \int_{\sigma_{6}} \beta_{L_{W_{3}}}^{5}
	= 10 \int_{\sigma_{6}} \frac{\Omega(x)}{\Psi^2},
\end{equation}
where $\Omega(x) = \sum_{i=1}^{2n} (-1)^{i-1} x_{i} \dd{x_{1}} \ldots \widehat{\dd{x_{i}}} \ldots \dd{x_{2n}}$
denotes the projective volume form, and $\Psi = \det(L_{W_{3}})$ is the first Symanzik polynomial of $G$.

Computing the right integral in \cref{eq:canThreeWheel} gives the classical result $60 \zeta(3)$, 
dating back to calculations by Chetyrkin, Kataev, and Tkachov \cite{chetyrkin80}.
A computation for the RW-integral of $W_{3}$, was sketched in \cite{rossi14}, yielding $60 \zeta(3)$.
More generally, the RW-integrals and the canonical integrals have been computed for all wheel graphs in \cite{portner23,schnetz24}.
For even $n$ both vanish, whereas for odd $n$ they are non-trivial and coincide:
\begin{equation}\label{eq:wheelGraphValues}
	I_{\RW}(W_{n}) = I_{\can}(W_{n}) = n \binom{2n}{n} \zeta(n) 
.\end{equation}

\subsection{Single-valuedness and weight}\label{sec:svAndWeight}
Next, we discuss the values of the RW and canonical integrals.
Consider the  multiple zeta values (MZVs)
\[
	\zeta(n_1,\ldots,n_{k}) := \sum_{0 < j_1 < j_2 < \ldots < j_{k} } \frac{1}{j_1^{n_1} j_2^{n_2} \ldots j_{k}^{n_{k}}},
	\quad n_{i} \in \N, n_{k} \geq 2
,\] 
and let $\mathcal{Z}$ denote the $\Q$-algebra they generate. This algebra contains a subalgebra $\mathcal{Z}^{\sv}$ of so-called 
single-valued multiple zeta values \cite{brown14}.
These numbers arise naturally, for example, in the low-energy expansion of closed string theory \cite{stieberger14,zerbini22}, 
and satisfy the same relations as MZVs as well as many more \cite{brown14}. For example,
\[
	\zeta_{\sv}(2k) = 0, \quad \zeta_{\sv}(2k+1) = 2 \zeta(2k+1) \qq{and} \zeta_{\sv}(5,3) = 14 \zeta(3) \zeta(5)
.\]
The weight of a multiple zeta value $\zeta(n_1,\ldots,n_{k})$ is defined as the sum of its arguments $n_1 + \ldots + n_{k}$.
We denote by $\mathcal{Z}_{k}$ (and $\mathcal{Z}^{\sv}_{k}$) the $\Q$-vector spaces spanned by (single-valued) MZVs of weight $k$.
The algebras $\mathcal{Z}$ and $\mathcal{Z}^{\sv}$ admit decompositions by weight:
\[
	\mathcal{Z} = \sum_{k=2}^{\infty} \mathcal{Z}_{k} \qquad
	\mathcal{Z}^{\sv} = \sum_{k=2}^{\infty} \mathcal{Z}^{\sv}_{k}
.\] 
Conjecturally, there are no $\Q$-linear relations between (single-valued) MZVs of different weight.
Under this conjecture, the above decompositions are direct sums.

From the definition of the RW-integrals and \cite{brown21single,schlotterer19,vanhove22,banks20} (see \cref{prop:RWSingleValuedWeight}) it follows that:
\begin{proposition}\label{prop:RWSingleValued}
	Let $G$ be a graph with $n+1$ vertices and $2n$ edges, i.e. $E = 2 V -2$. 
	Then, the RW-integral and thus the canonical integral, evaluates to single-valued
	MZVs of weight $n$.
	\[
	I_{\can}(G) =  I_{\RW}(G) \in \mathcal{Z}^{\sv}_{n}
	.\] 
\end{proposition}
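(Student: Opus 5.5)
Given \cref{thm:EqualityOfIntegralsGraphs}, it suffices to show $I_{\RW}(G) \in \mathcal{Z}^{\sv}_{n}$; the statement for $I_{\can}(G)$ then follows at once. We will view the RW-integrand as a single-valued form on $\mathfrak{M}_{0,n+2}$ and use the theory of single-valued integration on moduli spaces of genus zero curves \cite{brown21single,schlotterer19,vanhove22,banks20}.

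\textbf{Identifying the form.} We first use the $\C^{*}\ltimes\C$ action to fix one vertex at $0$ and another at $1$. This identifies $Z_n$ with the configuration space of the remaining $n-1$ distinct points in $\C\setminus\{0,1\}$, which is $\mathfrak{M}_{0,n+2}$ with marked points $0,1,\infty,z_1,\dots,z_{n-1}$. Each edge variable $z_e$ is then a difference $z_u-z_v$ of these coordinates, with $z_u,z_v \in \{0,1,z_1,\ldots,z_{n-1}\}$. We expand
\[
\dlog\abs{z_e}^2=\frac{\dd z_e}{z_e}+\frac{\dd\bar z_e}{\bar z_e}.
\]
Each summand of the integrand is then a $\Q$-linear combination of terms
\[
\log\abs{z_l}^2\,\omega\wedge\bar\nu,
\]
where $\omega$ is a holomorphic and $\bar\nu$ an antiholomorphic logarithmic form with poles along the boundary divisors $z_i=z_j$, $z_i\in\{0,1\}$. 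The function $\log\abs{z_l}^2$ is a weight-one single-valued multiple polylogarithm. So the integrand lies in the algebra of single-valued polylogarithms tensored with $\Omega^{\bullet}(\mathfrak{M}_{0,n+2})\otimes\overline{\Omega^{\bullet}(\mathfrak{M}_{0,n+2})}$, as in \cite{brown21single}.

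\textbf{Applying the structure theorem.} Convergence is given; it is part of the definition of $I_{\RW}$ and also follows from \cref{thm:EqualityOfIntegralsGraphs} together with finiteness of $I_{\can}$. The main input is then the theorem of Brown--Dupont and Schnetz, in the form of \cite{brown21single,schlotterer19,vanhove22,banks20}: a convergent integral over $\mathfrak{M}_{0,N}(\C)$ of such a form is a single-valued period. More precisely, it is a $\Q$-linear combination of single-valued MZVs, possibly multiplied by powers of $2\pi i$. We obtain it by iterated fibration integration, one point at a time. Each step uses the single-valued residue theorem (Stokes' theorem for single-valued primitives), and stays inside the class of single-valued polylogarithms on $\mathfrak{M}_{0,k}$.

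\textbf{Weight count and the power of $2\pi i$.} This is the main obstacle. The integrand has $2n-2$ $\dlog$ factors and one logarithm. Integrating over the $n-1$ complex dimensions (real dimension $2n-2$) produces, at each step, one factor $2\pi i$ from a residue in place of an antiholomorphic integration. Overall this should give total weight $(2n-2)+1=2n-1$, of which $n-1$ units are the factor $(2\pi i)^{n-1}$; this factor is exactly cancelled by the normalisation $(2\pi i)^{-(n-1)}$. The remaining weight is $2n-1-(n-1)=n$, assigned to single-valued MZVs.

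To make this rigorous, we track the weight filtration through each fibration step, using the fact that single-valued integration respects the motivic weight grading. We then check that no leftover power of $2\pi i$ survives. This check uses the reality of $I_{\RW}(G)$ noted in the introduction: it is real for odd $n$, vanishes for even $n$, and $\zeta_{\sv}$ of even weight is zero. Together these give $I_{\RW}(G)\in\mathcal{Z}^{\sv}_{n}$.
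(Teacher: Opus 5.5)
Your overall route is the same as the paper's sketch. You fix two vertices at $0$ and $1$, view the integrand as a weight-one single-valued polylogarithm times holomorphic and antiholomorphic log forms on $\mathfrak{M}_{0,n+2}$, and integrate out one point at a time via single-valued primitives and residues.

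\textbf{The gap.} It is in your last step, where you use reality to remove leftover powers of $2\pi i$. Reality only rules out odd powers. Since $(2\pi i)^{2j}=(-4\pi^2)^j$ is real, a contribution such as $\pi^2\zeta(3)$ for $n=5$ would pass your test. The supporting claim that single-valued MZVs of even weight vanish is false. Only the depth-one values $\zeta_{\sv}(2k)$ vanish, whereas for example $\zeta_{\sv}(5,3)=14\zeta(3)\zeta(5)\in\mathcal{Z}^{\sv}_8$ is nonzero. Moreover, any argument that sorts a real number into weight components presupposes that the sum $\sum_k\mathcal{Z}^{\sv}_k$ is direct, which is only conjectural. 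So the final step, as written, does not establish membership in $\mathcal{Z}^{\sv}_n$.

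\textbf{How to repair it.} The problem comes from weakening the structure theorem to allow stray powers of $2\pi i$. Instead, keep the normalisation inside each integration step, as the paper does.
\begin{enumerate}
\item Schnetz's residue theorem gives $\frac{1}{2\pi i}\int_{z_k}\omega=\sum_j\operatorname{Res}_{\overline{z_k}=\overline{z_j}}\alpha$.
\item The primitive $\alpha$ and its residues are again single-valued polylogarithms with rational coefficients.
\item Hence each normalised fibre integration maps $\mathcal{U}^{2k}_{w}(X_k)$ into $\mathcal{U}^{2k-2}_{w+1}(X_{k-1})$.
\end{enumerate}
The prefactor $(2\pi i)^{-(n-1)}$ is then absorbed one factor per integrated point, and no power of $\pi$ is ever produced. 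The weight is tracked by explicit construction, $1\to 2\to\cdots\to n$, rather than read off from a conjectural grading. You end with single-valued polylogarithms of weight $n$ in the alphabet $\{0,1\}$ evaluated at $1$, which lie in $\mathcal{Z}^{\sv}_n$. With this bookkeeping the reality argument is no longer needed.
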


\begin{eg}
The weight constraint implies that each canonical integral lies in the corresponding piece of $\mathcal{Z}^{\sv}$. 
The relevant pieces in weights $3,5,7$ and $9$ are generated by
$\{\zeta(3)\}$, $\{\zeta(5)\}$, $\{\zeta(7)\}$, and $\{\zeta(9),\ \zeta(3)^3\}$
respectively.
Thus, the three-wheel $W_{3}$, the five wheel $W_5$ and the zigzag $Z_5$,
any graph $G_{7}$ with $7$ loops, $14$ edges and $8$ vertices,
and any graph $G_{9}$ with $9$ loops, $18$ edges and $10$ vertices have
canonical integrals of the form
\begin{gather*}
	I_{\can}(W_{3}) = c_1\zeta(3),\quad I_{\can}(W_{5})= c_2\zeta(5),\quad I_{\can}(Z_{5}) = c_3\zeta(5), \\
	I_{\can}(G_7) = c_4 \zeta(7),\quad I_{\can}(G_{9}) = c_5\zeta(9)+c_6\zeta(3)^3,
\end{gather*}
where $c_1,\ldots,c_6 \in \Q$.
\end{eg}

A partial converse of \cref{prop:RWSingleValued} also holds. Rossi and Willwacher show in \cite{rossi14} that the path-ordered exponential
of a specific element in $\mathfrak{grt}_{1}$, whose coefficients are the RW-integrals, coincides with
the ratio of the KZ and the anti-KZ associator.
Brown shows in \cite[Lemma 5.1]{brown14} that this ratio is the generating series of the single-valued MZVs. This implies:
\begin{proposition}\label{prop:RWGenerateZsv}
	$\mathcal{Z}^{\sv}$ is generated as a $\Q$-algebra by the RW-integrals $I_{RW}(G)$, where we take all graphs $G$ with $n+1$ vertices and $2n$ edges,
	i.e. $E = 2V -2$.
\end{proposition}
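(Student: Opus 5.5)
The plan is to combine the two inclusions $\mathcal{Z}^{\sv}\supseteq\langle I_{\RW}(G)\rangle$ and $\mathcal{Z}^{\sv}\subseteq\langle I_{\RW}(G)\rangle$, where $\langle\cdot\rangle$ is the $\Q$-subalgebra generated. The first inclusion is immediate from \cref{prop:RWSingleValued}: every $I_{\RW}(G)$ lies in $\mathcal{Z}^{\sv}_n$, and $\mathcal{Z}^{\sv}$ is a $\Q$-algebra. Note that the conjectural direct-sum decomposition is not needed anywhere.

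For the reverse inclusion I will use the Rossi--Willwacher result from \cite{rossi14}. There is an explicit Lie series $\psi\in\widehat{\mathfrak{grt}}_1$ (or in the completed free Lie algebra on two generators $x,y$) whose coefficients are $\Q$-linear combinations of the integrals $I_{\RW}(G)$. This holds after rescaling by the normalisation $(2\pi i)^n2^{2n-2}$ from the Remark, and that factor is rational once the $(2\pi i)^{n-1}$ has been absorbed into $I_{\RW}$. The main point to check carefully is that this bookkeeping produces rational factors. For odd $n$ the factor $(2\pi i)^{n-1}$ is a real, rational multiple of a power of $\pi^2$. I must therefore make sure that the normalised $I_{\RW}$, and not $\pi^{2k}I_{\RW}$, is what enters the coefficients; otherwise the statement fails. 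Rossi and Willwacher show that the path-ordered exponential $\mathcal{P}\exp(\psi)$ equals $\Phi_{KZ}(x,y)\,\overline{\Phi_{KZ}}(x,y)^{-1}$, where $\overline{\Phi_{KZ}}$ is the anti-KZ associator.

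Next I apply \cite[Lemma 5.1]{brown14}, which identifies this ratio with the generating series of single-valued MZVs, $\Phi^{\sv}(x,y)=\sum_w \zeta_{\sv}(w)\,w$, up to the harmless changes of variables relating the two conventions. Hence every $\zeta_{\sv}(w)$ is a coefficient of $\mathcal{P}\exp(\psi)$. Expanding the path-ordered exponential, each word coefficient is a finite $\Q$-linear combination of products of coefficients of $\psi$. This is the graded analogue of $\exp$: products of finitely many homogeneous pieces, with rational coefficients coming from the iterated-integral simplex volumes $1/k!$. Each coefficient is therefore a polynomial with rational coefficients in the $I_{\RW}(G)$.

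Finally, the $\zeta_{\sv}(w)$, with $w$ ranging over words in $x,y$ (equivalently the single-valued MZVs $\zeta_{\sv}(n_1,\dots,n_k)$), span $\mathcal{Z}^{\sv}$ as a $\Q$-vector space by definition \cite{brown14}. This gives $\mathcal{Z}^{\sv}\subseteq\langle I_{\RW}(G)\rangle$. I expect the main obstacle to be matching the conventions: the normalisations of RW-integrals and of $\psi$, the identification of path-ordered exponential with Brown's $\Phi^{\sv}$ including possible inversions or the automorphism $x\mapsto -x$, and the rationality of all constants involved.
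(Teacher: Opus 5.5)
Your proposal follows the paper's argument. One inclusion is \cref{prop:RWSingleValued}. The other combines Rossi--Willwacher's identification of the path-ordered exponential of the RW element of $\mathfrak{grt}_1$ with the ratio of the KZ and anti-KZ associators, together with \cite[Lemma 5.1]{brown14}. Your expansion of the exponential only makes explicit that each $\zeta_{\sv}(w)$ is a polynomial with rational coefficients in the $I_{\RW}(G)$.

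One correction to your bookkeeping: $(2\pi i)^n 2^{2n-2}$ is not rational, but the issue is harmless. The factor $2^{2n-2}$ is rational, and $(2\pi i)^n$ is homogeneous of degree $n$ in the loop order, which equals the weight. Rescaling the generators $x,y$ by $2\pi i$ (exactly how $2\pi i$ enters the KZ associator) therefore removes this factor on both sides of the graded identity, so no stray powers of $\pi$ can enter.
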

However, RW-integrals are not closed under multiplication. For instance, $\zeta(3)$ and $\zeta(5)$ arise from the three- and five-wheel
graphs, respectively, and have weights three and five. 
Their product $\zeta(3) \zeta(5)$ which has weight eight, does not correspond to any single RW-integral.
Thus, RW-integrals only form a set of algebra generators for $\mathcal{Z}^{\sv}$ but not a set of vector space generators.

\subsection{Feynman periods}\label{sec:feynmanPeriods}
For every connected finite graph $G$ one may consider convergent integrals of the form
\begin{equation}\label{eq:feynmanPeriod}
\int_{\sigma_{E}} \frac{P(x)}{\Psi^{k}} \Omega(x)
,\end{equation}
where $k \in \N$, $P(x)$ is a polynomial in the variables $x$, $\Psi$ is the first Symanzik polynomial and $\Omega(x)$ denotes the volume form.
We define the $\Q$-vector space spanned by all such convergent integrals of all graphs to be the space of Feynman periods $\mathcal{P}^{\text{Feyn}}$.
This space has been studied in detail by Brown in \cite{brown17}.
The role of Feynman periods in the study of Feynman integrals is illustrated by the following example.
\begin{eg}
	\begin{figure}[htpb]
		\centering
		\begin{tikzpicture}[scale = 0.7]
	\begin{pgfonlayer}{nodelayer}
		\node [style=internal vertex] (0) at (0, 0) {};
		\node [style=internal vertex] (1) at (0, 2.75) {};
		\node [style=internal vertex] (2) at (-2.75, -2) {};
		\node [style=internal vertex] (3) at (2.75, -2) {};
		\node [style=none] (10) at (4.25, -2.75) {};
		\node [style=none] (11) at (-4.25, -2.75) {};
		\node [style=none] (12) at (0, 4.25) {};
		\node [style=none] (13) at (0, -1.5) {};
		\node [style=edge label] (14) at (4, -2) {$p_2$};
		\node [style=edge label] (15) at (-4, -2) {$p_3$};
		\node [style=edge label] (16) at (-0.5, 3.75) {$p_1$};
		\node [style=edge label] (17) at (-0.5, -1.25) {$p_4$};
		\node [style=internal vertex] (18) at (10, 0) {};
		\node [style=internal vertex] (19) at (12, 2) {};
		\node [style=internal vertex] (20) at (12, -2) {};
		\node [style=internal vertex] (21) at (14, 0) {};
		\node [style=none] (22) at (8.25, 0) {};
		\node [style=none] (23) at (15.75, 0) {};
		\node [style=edge label] (24) at (9, 0.5) {$p$};
		\node [style=edge label] (25) at (15, 0.5) {$p$};
		\node [style=none] (26) at (12, -3.25) {$\widetilde{G}$};
		\node [style=none] (27) at (0, -3.25) {$G$};
	\end{pgfonlayer}
	\begin{pgfonlayer}{edgelayer}
		\draw [style=arrow] (11.center) to (2);
		\draw [style=arrow] (12.center) to (1);
		\draw [style=arrow] (10.center) to (3);
		\draw [style=arrow] (13.center) to (0);
		\draw [bend left] (18) to (19);
		\draw [bend left] (19) to (21);
		\draw [bend left] (21) to (20);
		\draw [bend left] (20) to (18);
		\draw (19) to (20);
		\draw [style=arrow] (22.center) to (18);
		\draw [style=arrow] (21) to (23.center);
		\draw (1) to (0);
		\draw (0) to (3);
		\draw (0) to (2);
		\draw [bend left=15] (2) to (1);
		\draw [bend left=15] (1) to (3);
		\draw [bend left=15] (3) to (2);
	\end{pgfonlayer}
\end{tikzpicture}
		\caption{The three-wheel graph $G$ with external legs on the left, and the two-loop master graph $\widetilde{G}$ on the right 
		\cite{bierenbaum03,grozin12}.}
		\label{fig:feynmanGraphs}
	\end{figure}
	Consider the three-wheel graph with four external legs $G$, shown on the left of \cref{fig:feynmanGraphs}.
	In its scalar Feynman integral in dimensional regularisation,  $D = 4 - 2\epsilon$, the Feynman period appears as the pole in $\epsilon$:
	\[
	I_{G}(\{m_{i}^2\}, \{p_{i}^2\},D)
	= \frac{1}{3 \epsilon} \int_{\sigma_{G}} \frac{\Omega(x)}{\Psi^2} + \mathcal{O}(\epsilon^{0})
	.\] 
	On the other hand, removing the external legs of $G$ and cutting an internal edge $e$ yields a two-point graph $\widetilde{G}$,
	shown on the right of \cref{fig:feynmanGraphs}.
	In the massless case, $m_{i} = 0$ for all internal edges, the corresponding two-point integral has the Feynman period 
	as the constant term in the $\epsilon$-expansion:
	\[
		I_{\widetilde{G}}(\{0\}, \{p^2\},D) = \frac{1}{p^2} \int_{\sigma_{G}} \frac{\Omega(x)}{\Psi^2} + \mathcal{O}(\epsilon^{1})
	.\] 
	For details and examples see \cite{brown09,panzer13,baikov10}.
	Numerators $P(x)$ arise, for instance, from non-scalar particles, like fermions, or from IBP reductions, and can be interpreted
	combinatorially as linear combinations of graphs obtained by subdividing edges by two-valent vertices.
\end{eg}

In \cite{brown21} Brown shows that the canonical integrals are Feynman periods. Together with \cref{thm:EqualityOfIntegralsGraphs}, this implies:
\begin{corollary}\label{cor:RWinPFeyn}
	The $\Q$-vector space of RW-integrals is contained in $\mathcal{P}^{\text{Feyn}}$.
\end{corollary}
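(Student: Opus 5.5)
This follows by combining \cref{thm:EqualityOfIntegralsGraphs} with Brown's result \cite{brown21} that each canonical integral is a Feynman period. Fix a graph $G$ with $n+1$ vertices and $2n$ edges. By \cref{thm:EqualityOfIntegralsGraphs}, $I_{\RW}(G)=I_{\can}(G)$. Since $\mathcal{P}^{\text{Feyn}}$ is by definition a $\Q$-vector space, and the space of RW-integrals is the $\Q$-span of the numbers $I_{\RW}(G)$, it suffices to show that each $I_{\can}(G)$ lies in $\mathcal{P}^{\text{Feyn}}$.

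For this, I would expand $\beta^{2n-1}_{L_G}=\tr((L_G^{-1}\dd{L_G})^{2n-1})$ on $\sigma_{2n}$. Since $L_G$ is linear in the edge variables, $\dd{L_G}=\sum_e \widetilde{\epsilon}^T E_{ee}\widetilde{\epsilon}\,\dd{x_e}$. By Cramer's rule, $L_G^{-1}=\adj(L_G)/\Psi$ with $\Psi=\det L_G$ the first Symanzik polynomial. Hence $\beta^{2n-1}_{L_G}$ is a sum of terms (polynomial)$/\Psi^{2n-1}\,\dd{x_{i_1}}\wedge\cdots$. The form is projectively invariant: it is homogeneous of degree $0$ and annihilated by contraction with the Euler vector field, because $\tr(L^{-1}\dd L)$ invariance under scaling kills the contraction in the odd trace. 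Therefore it can be written as $\frac{P(x)}{\Psi^{2n-1}}\Omega(x)$ with $P$ a polynomial, homogeneous of the appropriate degree. The example of $W_3$ in \cref{eq:canThreeWheel} illustrates this: after cancellation one obtains $10\,\Omega/\Psi^2$.

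The integral over $\sigma_{2n}$ converges, which is Brown's convergence theorem for canonical integrals \cite{brown21}. So $I_{\can}(G)=\int_{\sigma_{2n}}\frac{P}{\Psi^{2n-1}}\Omega$ is exactly of the form \cref{eq:feynmanPeriod} for the graph $G$, and therefore lies in $\mathcal{P}^{\text{Feyn}}$.

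The only step that needs care is that \cref{eq:feynmanPeriod} requires convergence of the integral in that form; it is not enough for the form to converge as a whole. If the polynomial representation is not unique, or if splitting $P$ into monomials were to produce individually divergent pieces, that would be a problem. I avoid this by keeping the whole numerator $P$ as a single polynomial, so the single convergent integral is itself one element of the spanning set of $\mathcal{P}^{\text{Feyn}}$. I would quote this directly from \cite{brown21}.
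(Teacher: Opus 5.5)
Your proposal is correct and matches the paper's argument: the corollary follows by combining \cref{thm:EqualityOfIntegralsGraphs} with Brown's result \cite{brown21} that canonical integrals are Feynman periods, and then taking $\Q$-spans. One small caution about your sketch of Brown's result: for the vertex Laplacian $L_G=\widetilde{\epsilon}^{T}\diag(x)\widetilde{\epsilon}$, the determinant $\det L_G$ is the spanning-tree polynomial $\sum_T\prod_{e\in T}x_e$, not the first Symanzik polynomial $\Psi_G=\sum_T\prod_{e\notin T}x_e$ that \cref{eq:feynmanPeriod} requires (the two are related by $x\mapsto x^{-1}$), so getting literally into that form needs the graph-matrix/Dodgson expression with $\Psi=\det M$ as in \cref{prop:canonicalDodgsonFormula}, and this is why deferring to \cite{brown21} is the right choice.
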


In general, the structure of $\mathcal{P}^{\text{Feyn}}$ is rather delicate and not well understood. 
Many multiple zeta values are known to occur, yet not all do. For instance, it is conjectured that $\zeta(2)$ does not appear \cite{panzer17}. 
At the same time, Feynman periods are not restricted to multiple zeta values \cite{schnetz12}.
Moreover, certain simple periods, such as $\log(x)$, are known not to appear motivically \cite{brown17}.
Against this background, it is natural to ask which families of values are guaranteed to occur.
We find the following lower bound on the space $\mathcal{P}^{\text{Feyn}}$.
\begin{theorem}
	Every single-valued multiple zeta value can be realized as a $\Q$-linear combination of Feynman periods:
	\[
	\mathcal{Z}^{\sv} \subseteq \mathcal{P}^{\text{Feyn}}
	.\] 
\end{theorem}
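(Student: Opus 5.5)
The plan is to combine the two structural results already available, \cref{prop:RWGenerateZsv} and \cref{cor:RWinPFeyn}, with a closure property of $\mathcal{P}^{\text{Feyn}}$. By \cref{prop:RWGenerateZsv}, every element of $\mathcal{Z}^{\sv}$ is a $\Q$-linear combination of finite products $I_{\RW}(G_1)\cdots I_{\RW}(G_r)$, together with rational constants. By \cref{cor:RWinPFeyn}, each factor $I_{\RW}(G_i)$ lies in $\mathcal{P}^{\text{Feyn}}$. It therefore suffices to show two things: that $\mathcal{P}^{\text{Feyn}}$ contains $\Q$, and that it is closed under multiplication. Closure under $\Q$-linear combinations holds by definition.

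The constants are straightforward. For the one-loop graph with two edges we have $\Psi = x_1+x_2$. Then $\int_{\sigma_2} \Omega/\Psi^2 = \int_0^\infty \dd{t}/(1+t)^2 = 1$, so $\Q \subseteq \mathcal{P}^{\text{Feyn}}$.

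For products, take convergent integrals $\int_{\sigma_{E_1}} P_1\Omega/\Psi_{G_1}^{k_1}$ and $\int_{\sigma_{E_2}} P_2\Omega/\Psi_{G_2}^{k_2}$. Let $G = G_1 \vee G_2$ be the one-vertex join, so that $\Psi_G = \Psi_{G_1}\Psi_{G_2}$ in the disjoint variable sets $x$ and $y$. Each integrand $f_i$ is homogeneous of degree $-E_i$ in its own block of variables. Hence $f_1 f_2\,\Omega_{E_1+E_2}$ is invariant under the two independent rescalings, and integrating it over $\sigma_{E_1+E_2}$ diverges along the relative scaling direction. To fix this I will break the symmetry with a graph-theoretic numerator.

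Concretely, I plan to use the identity
\[
\int_{\sigma_{E_1+E_2}} f_1(x) f_2(y)\, \frac{(\textstyle\sum x)^{a}(\textstyle\sum y)^{b}}{(\sum x+\sum y)^{a+b}}\,\Omega
 = \frac{\Gamma(a)\Gamma(b)}{\Gamma(a+b)}\int_{\sigma_{E_1}} f_1\Omega\int_{\sigma_{E_2}} f_2\Omega .
\]
This follows by writing the joint simplex as a product of the two simplices and a one-dimensional scale parameter, which turns the scale integral into a Beta integral. The correction factor must then be realised as a Feynman-period integrand, i.e.\ as $P/\Psi_{G'}^{k}$ for some graph $G'$. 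I would try to do this by taking $a,b$ positive integers and replacing the denominator $(\sum x + \sum y)^{a+b}$ by a power of the graph polynomial of an enlarged graph: join $G_1$ and $G_2$ through an extra bridging structure, or use edge subdivisions. Subdividing edges multiplies $\Psi$ by nothing new but introduces extra variables, which is exactly how numerators arise combinatorially, as noted after the example. Equivalently, I would exploit that $1/\Psi^{k}$ with mixed powers can be rewritten by multiplying numerator and denominator by $\Psi_{G_1}^{k-k_1}\Psi_{G_2}^{k-k_2}$ with $k=\max(k_1,k_2)$. This leaves only the relative-scaling issue to handle.

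The main obstacle is this last step: producing the symmetry-breaking factor within the strict form $P(x)/\Psi^k$ for a single graph. If a direct construction becomes messy, I would fall back on Brown's analysis in \cite{brown17}, where $\mathcal{P}^{\text{Feyn}}$ is shown to be closed under products. With that, $\mathcal{P}^{\text{Feyn}}$ is a $\Q$-algebra containing all RW-integrals, and hence it contains the algebra they generate, which is $\mathcal{Z}^{\sv}$.
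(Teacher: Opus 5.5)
Your argument is the same as the paper's: you combine \cref{prop:RWGenerateZsv} with \cref{cor:RWinPFeyn} to reduce everything to closure of $\mathcal{P}^{\text{Feyn}}$ under multiplication, and the paper also imports that closure from the literature, citing the two-vertex join \cite[Proposition 40]{brown09} or face maps \cite[Theorem 2]{panzer20} rather than \cite{brown17}. Drop your exploratory Beta-factor construction: on the one-vertex join $\Psi=\Psi_{G_1}\Psi_{G_2}$, so no integrand $P/\Psi^{k}$ can have the irreducible factor $\sum x+\sum y$ in its denominator, and any working gluing must be of the two-vertex type the paper cites; your check that $1\in\mathcal{P}^{\text{Feyn}}$ via the two-edge one-loop graph is correct and harmless.
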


\begin{proof}
	By \cref{prop:RWGenerateZsv}, the algebra $\mathcal{Z}^{\sv}$ is generated over $\Q$ by the RW-integrals. By
	\cref{cor:RWinPFeyn}, each such integral lies in $\mathcal{P}^{\text{Feyn}}$. It therefore suffices to show that
	$\mathcal{P}^{\text{Feyn}}$ is closed under multiplication. This closure follows from the two-vertex join 
	\cite[Proposition 40]{brown09}, or via face maps associated to sub- and quotient graphs \cite[Theorem 2] {panzer20}.
\end{proof}

\begin{remark}
	It is also known that not all MZVs which appear in $\mathcal{P}^{\text{Feyn}}$ are single-valued. For example, the 
	non-single-valued MZV $\zeta(12)$ occurs \cite{schnetz10}.
\end{remark}

In the study of Feynman integrals, the weight is also an important aspect, and the weight of the RW-integrals, 
as described in \cref{prop:RWSingleValued}, is rather surprising.

To each graph one can associate a mixed Hodge structure, often referred to as the Feynman motive \cite{bloch06},
such that all integrals of the form as in \cref{eq:feynmanPeriod} are periods of this motive.
The associated weight filtration induces a notion of weight on $\mathcal{P}^{\text{Feyn}}$.
For a graph with $n$ loops, these Hodge structures have weight bounded by $4n-6$ \cite{bloch06}.
Consequently, a generic integral of this type may be expected to evaluate to a period of Hodge weight $4 n-6$.
If this bound is not attained, the phenomenon is referred to as \emph{weight drop} \cite{yeats11}.

\begin{eg}
	Consider the five wheel graph $W_5$ with $n=5$ loops, $6$ vertices and $10$ edges from \cref{fig:threeWheel}.
	Its Feynman period and canonical integral \cite[Section 10.2]{brown21} are given by
	\[
		\int_{\sigma_{10}} \frac{\Omega(x)}{\Psi^2} = 70 \zeta(7) \qq{and} I_{\can}(W_{5}) = 18 \int_{\sigma_{10}} 
	\left(\frac{1}{\Psi^2} + 12 \frac{x_1 x_2 x_3 x_4 x_5}{\Psi^3} \right) \Omega(x)
	= 1260 \zeta(5)
	.\] 
	As the Hodge weight corresponds to twice the MZV weight, 
	the Feynman period $70 \zeta(7)$ has Hodge weight $14$, as expected for a generic integral.
	In contrast, the canonical integral $1260 \zeta(5)$ has Hodge weight $10$.
\end{eg}

Such a weight drop has been observed in all known computations of canonical integrals \cite{schnetz24},
and is unexpected from the viewpoint of Feynman integrals.
\Cref{prop:RWSingleValued} gives an analytic explanation for this phenomenon.

\subsection{Cohomology classes of the even graph complex}
The result of \cref{thm:EqualityOfIntegralsGraphs} also has implications for the cohomology of the even graph complex:
The even graph complex is the $\Q$-vector space $\mathcal{GC}_{2}$ spanned
by isomorphism classes of connected, simple, edge-ordered graphs with vertex valency at least $3$, subject to the relation
\[
	(G,\pi \circ \sigma) = \sgn(\pi) \cdot (G,\sigma) \qq{for} \pi \in \mathbb{S}_{E},
\]
where $\sigma$ denotes the edge-ordering of $G$ and $\mathbb{S}_{E}$ is the symmetric group on $\{1,\ldots,E\}$.
The space $\mathcal{GC}_{2}$ is bigraded by loop number and degree $\deg(G) = 2V -2 - E$. The differential is defined as
\[
	\partial G = \sum_{e \in E_{G}} G / e,
\]
where $G / e$ denotes the graph obtained from $G$ by contracting the edge $e$, equipped with the induced orientation.
Since $\partial^{2} = 0$, $\mathcal{GC}_{2}$ is a complex, and one can consider its homology
\[
H_{\bullet}(\mathcal{GC}_{2}) = \frac{\ker(\partial)}{\im(\partial)}.
\]

The homology of $\mathcal{GC}_{2}$ is of interest in several areas of mathematics.
In algebraic geometry, it computes a small part of the (stable) cohomology of the moduli stack $\mathcal{M}_{g}$ 
of smooth curves of genus $g$ \cite{chan21}. 
In number theory, it is closely related to the cohomology of arithmetic groups such as $\GL_{n}(\Z)$ \cite{brown21}. 
From the viewpoint of geometric topology, graph homology is connected to the cohomology of $\on{Out}(F_{n})$ \cite{conant03}, 
the group of outer automorphisms of the free group on $n$ generators. 
In addition, it plays a central role in deformation quantisation, where cocycles have been computed explicitly 
in order to study Poisson flows \cite{buring18,buring20}, in  operadic deformation theory, 
and in the study of the Grothendieck–Teichmüller group, where its degree-zero cohomology is identified with the 
Grothendieck–Teichmüller Lie algebra \cite{willwacher15}.
Despite these connections, the homology of $\mathcal{GC}_{2}$ remains difficult to compute,
with explicit results known only in very low degrees and for graphs with a small number of loops \cite{willwacher25}.

The connection to the homology of $\mathcal{GC}_{2}$ arises since the integrals $I_{\RW}$ and $I_{\can}$ 
define functions on the degree-zero part of $\mathcal{GC}_{2}$, that is they are degree-zero cochains on $\mathcal{GC}_{2}$.
Moreover, they are cocycles as they vanish on boundaries i.e. $I_{\RW}(\partial G) = I_{\can}(\partial G)  = 0$ \cite{rossi14,brown21}. 
Hence, they determine cohomology classes $[I_{\RW}]$ and $[I_{\can}]$.

Since $\mathcal{GC}_{2}$ is also graded by loop order, the homology decomposes as a direct sum.
Thus, each cohomology class splits into components indexed by the loop number $l$, which we denote by
$[I^{l}_{\RW}]$ and $[I^{l}_{\can}]$.
By \cref{prop:RWSingleValued}, these classes take values in the space of single-valued multiple zeta values of weight $l$:
\[
	[I^{l}_{\RW}], [I^{l}_{\can}] \in H^{0}(\mathcal{GC}_{2}) \otimes \mathcal{Z}_{l}^{\sv}
.\]
Cohomology classes can, for example, be used to detect non-trivial cycles.
Consider the wheel graphs  $W_{l}$ from \cref{fig:threeWheel}, they define cycles in $H_{0}(\mathcal{GC}_{2})$, 
since contracting any edge produces a multi-edge and hence vanishes. 
Their pairing with the cocycle $I^{l}_{\RW}$ is non-trivial as seen in \cref{eq:wheelGraphValues}, which shows that the wheel cycles $W_{l}$ and 
the cocycles $I^{l}_{\RW} = I^{l}_{\can}$ are not exact i.e. they represent non-zero (co)homology classes.

The main result, \cref{thm:EqualityOfIntegralsGraphs}, not only implies that the integrals define the same cohomology class,
but proves the stronger statement:
\begin{corollary}
	The RW cocycle and the canonical cocycle coincide. That is, the integrals $I_{\RW}^{l}$ and $I_{\can}^{l}$ coincide as cocycles.
\end{corollary}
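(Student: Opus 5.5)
The corollary follows from \cref{thm:EqualityOfIntegralsGraphs} by extending the graph-by-graph equality linearly to the whole degree-zero part of $\mathcal{GC}_{2}$. First, the degree-zero, $l$-loop part of $\mathcal{GC}_{2}$ is spanned by connected simple graphs with $\deg(G)=2V-2-E=0$. Such a graph has $E=2V-2$, so with $V=n+1$ we get $E=2n$ and loop number $E-V+1=n=l$. These are exactly the graphs for which both $I_{\RW}(G)$ and $I_{\can}(G)$ are defined. A degree-zero cochain is determined by its values on such generators, so it suffices to show that $I^{l}_{\RW}$ and $I^{l}_{\can}$ are well defined and agree on every generator $(G,\sigma)$.

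Second, I will check that both integrals respect the defining relation $(G,\pi\circ\sigma)=\sgn(\pi)(G,\sigma)$. For $I_{\can}$, permuting the edge labels permutes the homogeneous coordinates $x_e$. This acts on the integration chain $\sigma_{2n}$ with orientation sign $\sgn(\pi)$, while the form $\beta^{2n-1}_{L_G}$, pulled back along the coordinate permutation, is the form attached to the relabelled Laplacian. For $I_{\RW}$, the wedge product $\bigwedge_{e\neq l,d}$ is reordered, and the signs $(-1)^{l+d+(l<d)}$ combine with this reordering to give $\sgn(\pi)$. One way to see this is to note that the sum equals the contraction of $\log|z_l|^2\,\dlog|z_d|^2$ terms into a fully antisymmetric expression in the edges.

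The same considerations give independence from the choice of edge orientations and from the removed column, as recalled from \cite{brown21} for $\beta$; for $I_{\RW}$, $|z_e|^2$ is insensitive to orientation. Hence both are well-defined linear functionals on the degree-zero part.

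Third, for any generator $G$, \cref{thm:EqualityOfIntegralsGraphs} gives $I_{\RW}(G)=I_{\can}(G)$. By linearity the two functionals agree on the whole space. Restricting to each loop degree then gives $I^{l}_{\RW}=I^{l}_{\can}$ as cochains, and these are cocycles by \cite{rossi14,brown21}. This is an identity of cochains, not merely of classes.

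The main obstacle is the sign bookkeeping in the second step for $I_{\RW}$. I would handle it by writing the integrand as $\sum_{l}\pm\log|z_l|^2\,\iota$-type contractions of $\bigwedge_{e}\dlog|z_e|^2$ and checking behaviour under transpositions of adjacent labels. Alternatively, the compatibility is already implicit in the statement of the theorem, which holds for every edge-ordering. Since both sides change by the same sign under relabelling, once one side is antisymmetric so is the other; antisymmetry of $I_{\can}$ is clear from the orientation of $\sigma_{2n}$.
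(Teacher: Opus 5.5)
Your proposal is correct and matches the paper, which deduces the corollary directly from \cref{thm:EqualityOfIntegralsGraphs}: every degree-zero, $l$-loop generator has $E=2V-2$ with $l=n$, and simplicity automatically forces $n\geq 3$. Since $I_{\RW}(G)=I_{\can}(G)$ holds for every such edge-ordered generator, the two linear functionals agree identically and not merely up to coboundaries. Your extra check that the relation $(G,\pi\circ\sigma)=\sgn(\pi)(G,\sigma)$ is respected is not done in the paper, which takes the cochain and cocycle properties from \cite{rossi14,brown21}; as you observe, for $I_{\RW}$ it follows automatically from the antisymmetry of $I_{\can}$, because the theorem holds for every edge-ordering.
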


The cocycles also descend to a quotient of the graph complex.
Recall that a graph has a two-vertex cut if the removal of two vertices disconnects it.
Let $\mathcal{GC}_{2}^{\mathrm{tri}}$ denote the quotient of $\mathcal{GC}_{2}$ by the subcomplex spanned by graphs with two-vertex cuts.
Willwacher proved in \cite{willwacher25tri} that $\mathcal{GC}_{2}^{\mathrm{tri}}$ is quasi-isomorphic to $\mathcal{GC}_{2}$.

For canonical integrals, the corresponding vanishing on graphs with two-vertex cuts has been shown by Simone Hu in her PhD thesis via a
quite complicated argument. For the RW-integrals, the same vanishing is more direct; see \cref{rem:RWTwoVertexCut}. This gives
the following consequence.

\begin{corollary}
The cocycles $I_{\RW}^{l}=I_{\can}^{l}$ descend to the quotient complex $\mathcal{GC}_{2}^{\mathrm{tri}}$. 
In particular, they define cohomology classes
\[
[I_{\RW}^{l}] = [I_{\can}^{l}]
\in H^{0}\bigl(\mathcal{GC}_{2}^{\mathrm{tri}}\bigr).
\]
\end{corollary}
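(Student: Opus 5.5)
The corollary reduces to two facts: the cocycles $I_{\RW}^{l}$ and $I_{\can}^{l}$ are equal as cochains, and $I_{\RW}^{l}$ vanishes on every graph with a two-vertex cut. The first fact is \cref{thm:EqualityOfIntegralsGraphs}. The second is the point of \cref{rem:RWTwoVertexCut}. Once both are in place, a cocycle on $\mathcal{GC}_{2}$ that vanishes on a subcomplex $S$ factors through the quotient $\mathcal{GC}_{2}/S=\mathcal{GC}_{2}^{\mathrm{tri}}$. It remains a cocycle there, because the differential on the quotient is induced from $\partial$ and $I(\partial G)=0$ already holds upstairs \cite{rossi14,brown21}. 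It therefore defines a class in $H^{0}(\mathcal{GC}_{2}^{\mathrm{tri}})$. By Willwacher's quasi-isomorphism \cite{willwacher25tri}, this class corresponds to the class in $H^{0}(\mathcal{GC}_{2})$.

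The substantive step is the vanishing of $I_{\RW}(G)$ when $G$ has a two-vertex cut $\{u,v\}$. Let $G_1$ and $G_2$ be the two sides, glued along $u$ and $v$. I would use the $\C^{*}\ltimes\C$ symmetry to fix $z_u=0$ and $z_v=1$. The remaining vertex positions then split into those of $G_1$ and those of $G_2$, so $Z_n$ is, up to the diagonal locus, a product of two configuration spaces. This locus has measure zero and does not affect the integral. The integrand is a sum over the choice of the distinguished edges $l,d$. In each term the $\dlog(\abs{z_e}^2)$ factors split into a form in the $G_1$-variables and a form in the $G_2$-variables. The $\log$ factor attaches to whichever side contains $l$.

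The integral of each term is then a product of two integrals, and I would count degrees on each factor. The real dimension of the $G_i$ factor is $2(V_i-2)$, where $V_i$ counts the vertices of $G_i$ including $u$ and $v$. The form degree on that factor is $E_i$ minus the number of distinguished edges lying in $G_i$. Since $E_1+E_2-2=2(V_1+V_2-4)$, the degree matches the dimension on both factors only when a specific split of $l,d$ occurs. For all other splits the term vanishes for degree reasons. For the matching splits I would aim to show that the factor carrying only $\dlog$'s, with no $\log$, is an integral of a closed real form of the type $\bigwedge \dlog\abs{z}^2$. Such a form is exact or has vanishing total integral, for instance because $\dlog\abs{z}^2=\dlog z+\dlog\bar z$ and the top-degree real combination is odd under complex conjugation $z\mapsto\bar z$, which fixes $0$ and $1$. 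If the matching term instead puts both distinguished edges on one side, the other side is a pure $\dlog$ integral of top degree, and the same conjugation argument kills it.

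The main obstacle is making this splitting rigorous in the presence of convergence issues near collisions with $u$ and $v$, and pinning down the sign and conjugation argument in every parity case. If it fails, the fallback is to invoke Hu's vanishing result for canonical integrals and transport it to the RW side through the cochain equality.
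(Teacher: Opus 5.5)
You have the same structure as the paper. The corollary follows from \cref{thm:EqualityOfIntegralsGraphs} together with the vanishing of $I_{\RW}$ on graphs with a two-vertex cut (\cref{rem:RWTwoVertexCut}, via \cref{prop:RWvanish}~c)). Your chart $z_u=0$, $z_v=1$, splitting into a product $X_1\times X_2$ with the logarithm on one side and a pure $\dlog$ factor on the other, is exactly the paper's argument.

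\textbf{The gap is in how you kill the pure factor.}
\begin{itemize}
\item Complex conjugation fixes $\bigwedge_e \dlog\abs{z_e}^2$, and on a factor of complex dimension $k_i=V_i-2$ it acts with degree $(-1)^{k_i}$. So it gives vanishing only when $k_i$ is odd.
\item For odd $n$ (the only non-trivial case) $k_1+k_2=n-1$ is even, so both factors always have the same parity. When both are odd, conjugation kills the $\log$-factor just as well. When both are even, it says nothing.
\item The even case does occur. Take $n=5$ and let $G$ be two copies of $K_4$ minus the edge $uv$, glued along $u,v$. Then $k_1=k_2=2$, and for $l\in G_1$, $d\in G_2$ you need $\int_{X_2}\bigwedge_{e\in E_2\setminus\{d\}}\dlog\abs{z_e}^2=0$ over a complex surface. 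There conjugation preserves orientation and gives no information.
\item Your alternative, ``closed, hence exact'', is not enough on a non-compact domain unless you control the boundary contributions at the collision strata.
\end{itemize}

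The missing ingredient is Kontsevich's vanishing lemma \cite[Section 6.6.1]{kontsevich03}. For a complex variety of dimension $N\geq 1$ and rational functions $f_1,\dots,f_{2N}$, the integral of $\bigwedge_{i=1}^{2N}\darg f_i$ converges absolutely and vanishes; the same holds, up to a constant, for $\bigwedge_i\dlog\abs{f_i}^2$. This holds regardless of the parity of $N$. The paper applies it to the factor without the logarithm in the proof of \cref{prop:RWvanish}~c), and it also settles the convergence of that factor, which you flagged as an obstacle.

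Your fallback, transporting Hu's vanishing result for canonical integrals through the cochain equality, is logically sound. However, it outsources exactly the step that the corollary is meant to make direct on the RW side.
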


\subsection{Effective formulas}\label{sec:effectiveFormulas}
The proof of \cref{thm:EqualityOfIntegralsGraphs} also yields simplified representations of both the RW-integrals and the canonical forms.
Obtaining explicit formulas for canonical forms is difficult as substantial cancellations occur.
Indeed, from the trace formula $\beta^{2n-1}_{L_{G}} = \tr((L_{G}^{-1} \dd{L_{G}})^{2n-1})$
one would naively expect a denominator of the form $\Psi^{2n-1}$, where $\Psi = \det(L_{G})$.
However, Brown shows in \cite[Theorem~2.1]{brown21} that the canonical form can always be written as
\[
	\beta^{2n-1}_{L_{G}} = \frac{P_{G}(x)}{\Psi^{\frac{n+1}{2}}} \Omega(x),
\]
for some polynomial $P_{G}(x)$. Thus, roughly three-quarters of the powers of $\Psi$ expected in the denominator cancel with the numerator.
In the course of proving \cref{thm:EqualityOfIntegralsGraphs}, we obtain an alternative explicit expression of the following shape:
\begin{align*}
	\beta^{2n-1}_{L_{G}} = \frac{Q_{G}(x)}{\Psi^{n}} \frac{\Omega(x)}{x_{d}} \qq{with} Q_{G}(x) = (2n-1)
	\sum_{S,T} \pm \det(\widetilde{\epsilon}_{S d,\bullet}) \det(\widetilde{\epsilon}_{T d,\bullet}) \perm(\adj(M)_{S,T})
.\end{align*}
The precise signs, indexing conventions and definitions are given in \cref{thm:paraToCan}.
Although the expression is initially obtained in the form shown above, the numerator $Q_G(x)$ is divisible by $x_d$. 
Hence the apparent factor $x_d$ in the denominator cancels, and the form has denominator $\Psi^n$. 
While this is not the optimal denominator, it nevertheless accounts for approximately two-thirds of the available cancellations.
Moreover, the proof makes the source of these cancellations explicit.
This provides, to our knowledge, the most efficient explicit general formula currently available for computing canonical forms.

Computing canonical forms explicitly is already difficult, but integrating them is even harder.
As discussed in \cref{sec:feynmanPeriods}, they arise in the context of Feynman motives, 
where complicated periods, such as those associated with a $K3$ surface, may occur \cite{schnetz12}. 
By contrast, RW-integrals are significantly more accessible: their integrands are products of logarithms and dlog-forms, 
and explicit integration algorithms and dedicated software are available for such integrals \cite{banks20,schnetz18}. 
It is therefore useful to simplify the RW-integrals as much as possible.
To this end, we observe that the summations over the distinguished edges $l$ and $d$ in the definition of $I_{\RW}$ may be removed:
\begin{proposition}\label{prop:introNoLsum}
	Let $l,d \in \{1,\ldots, 2n\} $ with $l \neq d$. Then
	\[
		I_{\RW}(G) = (2n-1) \frac{(-1)^{l+d+(l<d)}}{(2 \pi i)^{n-1}} \int_{Z_{n}} \log\Bigg( \frac{\abs{z_{l}}^2}{\abs{z_{d}}^2} \Bigg) 
		\bigwedge_{e\neq l,d} \dlog\Bigg(\frac{\abs{z_{e}}^2}{\abs{z_{d}}^2}\Bigg)
	\]
	where $z_{e} = z_{u} - z_{v}$ for an edge $e = (u,v) \in E_{G}$ and $(l < d)$ is $1$ if $l < d$ and $0$ otherwise.
\end{proposition}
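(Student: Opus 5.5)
Write $\omega_e := \dlog(\abs{z_e}^2)$ and $L_e := \log(\abs{z_e}^2)$ for the edge forms. The plan is to compare the integrand in \cref{prop:introNoLsum} with the RW-integrand, for fixed $l \neq d$. It rests on two observations.

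\emph{Step 1: expand the new integrand.} Expanding multilinearly,
\[
\bigwedge_{e\neq l,d} \dlog\bigl(\abs{z_e}^2/\abs{z_d}^2\bigr) = \bigwedge_{e\neq l,d}(\omega_e-\omega_d) = \bigwedge_{e\neq l,d}\omega_e \;-\; \sum_{e\neq l,d} \pm\, \omega_d\wedge\bigwedge_{e'\neq l,d,e}\omega_{e'},
\]
since terms containing $\omega_d$ twice vanish. After reordering, each summand is the wedge of all $\omega$'s except $\omega_l$ and $\omega_e$, with $\omega_d$ now sitting in slot $e$. Multiplying by $L_l - L_d$ gives a sum of terms of the form $L_a \bigwedge_{c\neq a,b}\omega_c$, indexed by pairs $(a,b)$ with $a\in\{l,d\}$. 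These are exactly the shape of the summands in $I_{\RW}$. I would record carefully the sign produced by moving $\omega_d$ into position, using the convention $(-1)^{a+b+(a<b)}$.

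\emph{Step 2: relate the RW-terms to one another.} Set
\[
J_{a,b} := (-1)^{a+b+(a<b)} \int_{Z_n} L_a \bigwedge_{c\neq a,b}\omega_c,
\]
so that $I_{\RW}$ is $(2\pi i)^{-(n-1)}\sum_{a\neq b}J_{a,b}$. The key identity comes from Stokes' theorem on a compactification of $Z_n \cong \mathfrak{M}_{0,n+2}$. For a fixed set $S$ of $2n-3$ edges we have $d\bigl(L_a \bigwedge_{c\in S}\omega_c\bigr) = \omega_a\wedge\bigwedge_{c\in S}\omega_c$, which is a top form of weight zero. The $\omega$'s are closed and real, $\omega_e = \dlog z_e + \dlog\bar z_e$, and any top wedge of $2n-2$ such forms built from $n$ holomorphic directions integrates to zero by type considerations (it is exact). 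Applying Stokes to $L_a\bigwedge_{c\neq a,b,f}\omega_c$, with $f$ varying, gives
\[
\sum_{a\notin\{b,f\}} \pm\int L_a \bigwedge_{c\neq a,b,f}\omega_c = \text{boundary terms}.
\]
The expected outcome is: for fixed $b$, the signed sum $\sum_a J_{a,b}$ of the RW-terms is independent of $b$, and its value is $\tfrac{1}{2n}$ of the total. It is also invariant under the swap relating $(a,b)$ and $(b,a)$.

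Combining the two steps, the right-hand side of \cref{prop:introNoLsum} equals $(2n-1)$ times a single "row sum" of the $J$'s. The $2n(2n-1)$ terms of $I_{\RW}$ are distributed equally over the rows, so the two sides agree. This is the mechanism behind the factor $2n-1$.

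\emph{Main obstacle.} The hard part is justifying Stokes, i.e.\ showing the boundary terms vanish. The logarithms are singular along the divisors where the points $z_u$ collide, so I would work on the real blow-up (Fulton--MacPherson type) compactification of $\mathfrak{M}_{0,n+2}$. On each boundary stratum I would check that either the restricted form has too few holomorphic differentials to be top degree, or the $\log$ factor times the restricted wedge integrates to zero on the small circle by the antisymmetry used in \cite{rossi14}. If a stratum gives a nonzero contribution, I would instead show that it cancels in the signed combination over $a$, using the same $E = 2V-2$ degree count that ensures convergence. The sign bookkeeping in Step 1 is routine but should be checked on $W_3$ against $60\zeta(3)$.
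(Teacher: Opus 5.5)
\textbf{There is a genuine gap in Step 2, and as written that step cannot work.}

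First, the individual $J_{a,b}$ are not defined on $Z_n$. The function $L_a$ is not invariant under scaling, and $\omega_c$ is not basic for the $\C^*$-action, since $\iota_R\omega_c=2$. Only the full RW combination descends, so each $J_{a,b}$ depends on a choice of slice. In the natural slice where the endpoints of $e_d$ are $0$ and $1$, one has $L_d=0$ and $\omega_d=0$. Hence $J_{a,b}=0$ for every $b\neq d$, and the whole integral sits in the single column $b=d$. Your claim that $\sum_a J_{a,b}$ is independent of $b$ is therefore false there.

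Second, even granting that claim, your accounting does not close: $(2n-1)$ times one such sum is $\tfrac{2n-1}{2n}$ of the total, not $I_{\RW}$. Also, Step 1 produces a sum over $b$ with $a=l$ fixed, not a sum over $a$ with $b$ fixed. And the terms $L_d\,\omega_d\wedge\cdots$ that Step 1 produces are not of RW shape.

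Third, you apply Stokes to the $(2n-3)$-form $L_a\bigwedge_{c\neq a,b,f}\omega_c$. Its differential contains no logarithm, so it cannot relate the log-weighted integrals $J_{a,b}$ to one another. Your displayed relation also integrates $(2n-3)$-forms over the $(2n-2)$-dimensional $Z_n$. The ``expected outcome'' is asserted rather than derived.

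\textbf{What actually has to be shown.} In the slice $\abs{z_d}=1$ the RW-integrand reduces to $\sum_{a\neq d}(-1)^{a+d+(a<d)}L_a\bigwedge_{c\neq a,d}\omega_c$. The proposition says exactly that the $2n-1$ integrals $J_{a,d}$, $a\neq d$, are all equal. Equivalently, on $C_A$ and using \cref{lem:rotationInvariance}, it says that $(-1)^{l-1}\int_{C_A}\log(\abs{\la_l}^2/\abs{\la_d}^2)\bigwedge_{e\neq l}\darg(\la_e)$ does not depend on $l$.

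On the RW side, the natural Stokes relation for this uses $d\bigl(L_aL_b\bigwedge_{c\neq a,b,d}\omega_c\bigr)$. Up to sign, this is the integrand of $J_{b,d}$ minus that of $J_{a,d}$. You would then have to prove that the boundary contributions of a form with two logarithms vanish on a compactification of $\mathfrak{M}_{0,n+2}$, including at infinity. None of that is done.

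\textbf{How the paper proceeds.} The paper avoids this boundary analysis entirely and obtains the equidistribution over $l$ on the canonical side.
\begin{enumerate}
\item The product $\Psi^{i_{\tau(1)},i_{\tau(2)}}\cdots\Psi^{i_{\tau(2n-1)},i_{\tau(1)}}$ in \cref{prop:canonicalDodgsonFormula} is cyclically invariant. This writes $\beta^{2n-1}_L$ as $(2n-1)$ times the terms with $i_{\tau(1)}=l$.
\item Under the bijection in the proof of \cref{thm:adjToDodgsonProd}, these terms are exactly the $l$-th summand of \cref{thm:paraToCan}.
\item \cref{thm:prodToPara,thm:prodToPos} are stated for fixed $l$, with the $A_d$ counterterm again vanishing by \cref{prop:canVanish}. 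They carry this summand back to $(2n-1)$ times the single $(l,d)$ RW term.
\item Fibre integration and Kontsevich's trick, as in \cref{prop:ogRWIntegral}, then give the $Z_n$ form.
\end{enumerate}
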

To pass from $Z_{n}$ to an affine chart, one may fix the endpoints of the distinguished edge $e_{d}$ at $0$ and $1$.
In this chart, $\abs{z_{d}} = 1$, the denominators $\abs{z_{d}}^2$ disappear.
Thus, after fixing the scale $\abs{z_d}=1$, the formula reduces the number of terms to be computed from $2n-1$ to one.
A further promising approach would be to express RW-integrals as the single-valued map applied to real $n$-fold integrals, 
using the methods introduced in \cite{brown21single}. We plan to investigate this in future work.

\subsection{Acknowledgements}
The author is grateful for the support and numerous insightful discussions with his supervisors, Erik Panzer and Francis Brown, 
during the course of this work. He also thanks Thomas Willwacher for helpful discussions and comments.
The author is funded through the Royal Society grant URF\textbackslash R1\textbackslash 201473 and the European Research Council (ERC) under
European Union's Horizon Europe programme (grant agreement No. 101167287).
For the purpose of Open Access, the author has applied a CC BY public copyright licence to any Author Accepted Manuscript (AAM) version
arising from this preprint.

\section{Overview}
This work is organised as follows. In the present section, we first extend the definitions of the integrals $I_{\RW}$ and $I_{\can}$
from graphs to matrices $A \in \mathbb{R}^{2n \times n}$. 
Next, we outline the main ideas of the proof of \cref{thm:EqualityOfIntegralsGraphs} and discuss the main technical difficulties.
The remaining sections are organised as follows. In \cref{sec:propsOfInts}, 
we establish several properties of the integrals that will be used in the full proof.
In \cref{sec:proofOfMainTheorems,sec:prodToPos,sec:prodToPara,sec:paraToCan}, we give complete proofs of the steps outlined below. 
Finally, in \cref{app:Dodgson}, we collect a number of standard matrix identities used in the preceding arguments.
For $k \in \N$, we denote by $[k]$ the ordered set $(1,\ldots,k)$.

\subsection{Generalisation to matrices}
Let $n \geq 3$ and  consider a real  $2n \times n$ matrix $A$ of rank $n$, 
such that no row of $A$ is equal to $0$. Denoting the rows of $A$ by $a_{i}$, we regard each $a_{i}$ as a complex-linear functional on $\C^{n}$,
obtaining $2n$ linear forms $\la_{i}: \C^{n} \to \C$ given by $z \mapsto \la_{i}(z) := a_{i} z$.
For brevity and readability we often write $\la_{i}$ for $\la_{i}(z)$.
We define the RW-integral associated with $A$ by
\[
I_{\RW}(A) = \frac{(2i)^{2n-1}}{2 (2 \pi i)^{n}} \int_{C_{A}} \sum_{l=1}^{2n} (-1)^{l-1} \log(\abs{\la_{l}(z)}^2) \bigwedge_{e \neq l} \darg(\la_{e}(z))
,\] 
where, $C_{A} = \conf_{A}(\C) / \R_{> 0}$ and $\conf_{A}(\C) := \{z \in \C^{n} \mid \la_{i}(z) \neq 0 \forall i \in [2n] \}$.
Although the integrand involves logarithms and arguments, the resulting integral $I_{\RW}(A)$ is a well-defined single-valued function of $A$.

This definition of the RW-integral appears to differ from the initial definition given in \cref{sec:introduction}.
To recover $I_{\RW}(G)$ for a given graph $G$, we take $A$ to be a reduced incidence matrix of $G$, so that $a_{e}(z) = z_{u} - z_{v}$ for
$e = (u,v) \in E_{G}$.
Passing from the full to the reduced incidence matrix is geometrically equivalent to fixing the removed vertex at $0$,
which accounts for the translation quotient $\C$ present in the original space $Z_{n}$.
Since the generalised space $C_{A}$ already quotients by positive real scaling $\R_{> 0}$, it only remains to account for rotations.
We achieve this by considering the canonical projection $\pi: C_{A} \to C_{A} / S^{1}$ and integrating along the $S^{1}$-fibers.
The resulting integral over $C_{A} / S^{1}$ agrees with the original definition of $I_{\RW}$, after exchanging the angular differentials with
logarithmic differentials of absolute values; see \cref{prop:ogRWIntegral}.

In the graph case, the condition $\la_{i}(z) \neq  0 \forall i \in [2n]$ reduces to $z_{j} \neq z_{k}$ for all edges $(j,k) \in E_{G}$.
Taking all possible pairs $(j,k) \in [n]^2$ with $j\neq k$, corresponding to the edges of the complete graph $K_{n}$,
yields the classical configuration space $\conf_{n}(\C) = \{z \in \C^{n} \mid z_{j} \neq z_{k} \forall j,k \in [n], j \neq k\}$.
The spaces $\conf_{A}(\C)$ therefore generalise the classical configuration spaces by allowing arbitrary linear forms $\la_{i}$, rather than only
forms of the type $z_{j} - z_{k}$.

\begin{remark}
Despite the notation, $I_{\RW}(A)$ depends only on the linear forms $\la_1(z)$, \ldots, $\la_{2n}(z)$, 
and not on the chosen matrix representation.
Indeed, it is invariant under right multiplication by $\GL_{n}(\R)$, which corresponds to a real change of basis of $\C^{n}$.
Moreover, $I_{\RW}(A)$ is invariant under rescaling of the individual linear forms: for any diagonal $2n \times  2n$ matrix $D$, one has 
$I_{\RW}(D A) = I_{\RW}(A)$. 
Thus, $I_{\RW}(A)$ may equivalently be regarded as a function of the associated arrangement of hyperplanes $H_{i} = \ker(\la_{i}(z))$.
\end{remark}

With the matrix $A$, we can also generalise the canonical integrals.
Let $\sigma_{2n} \subseteq \RP^{2n-1}$ denote the projective simplex with homogeneous coordinates $x = (x_1,\ldots,x_{2n})$, satisfying $x_{i} \geq 0$.
We define a map 
\[
L_{A}(x)\colon  \sigma_{2n} \to \mathcal{P}^{\geq 0}_{n}, \qquad L_{A}(x) = A^{T} \diag(x) A
\]
from the simplex $\sigma_{2n}$ into the space $\mathcal{P}_{n}^{\geq 0}$ of positive semidefinite symmetric $n \times n$ matrices.
We denote its image by $\sigma_{A}$. 
This construction generalises the tropical Torelli map \cite{chan12,caporaso10} from the moduli space of tropical curves to the moduli space of tropical abelian varieties.
In the graph case, when $A$ is a reduced incidence matrix of a graph with $2n$ edges and $n+1$ vertices, the map $L_{A}$ coincides exactly with
the tropical Torelli map.

The canonical integral associated with $A$ is then defined by
\[
I_{\can}(A) = \int_{\sigma_{A}} \beta^{2n-1}_{X} = \int_{\sigma_{2n}} \beta^{2n-1}_{L_{A}}
.\] 
As in the case of the RW-integral, $I_{\can}(A)$ is invariant under right multiplication of $A$ by $\GL_{n}(\R)$.
On the level of $L_{A}$, this corresponds to the conjugation action $L_{A} \mapsto P^{T} L_{A} P$ for $P \in \GL_{n}(\R)$. 
In particular, $I_{\can}(A)$ depends only on the underlying linear forms $\la_{1}, \ldots, \la_{2n}$ and not on the specific matrix representation $A$.
The definition of $I_{\RW}$ and $I_{\can}$ can be extended to all  $2n \times n$-matrices $A$ by setting them equal to zero whenever
$\rank(A) < n$ or $A$ contains a zero row.

In \cref{sec:proofOfMainTheorems} we prove the following generalisation of \cref{thm:EqualityOfIntegralsGraphs}, which
is recovered by taking $A$ to be a reduced incidence matrix $\widetilde{\epsilon}$.
\begin{theorem}\label{thm:EqualityOfIntegrals}
	Let $n \geq 3$ and $A \in \R^{2n \times n}$ of rank $n$, such that no row of $A$ is equal to $0$.
	Then
	\[
	I_{\RW}(A) = I_{\can}(A)
	.\] 
\end{theorem}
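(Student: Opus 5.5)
We follow the strategy announced in the introduction: build an auxiliary integral on the product of position space and parameter space, and evaluate it in the two possible orders. Concretely, on $C_{A} \times \sigma_{2n}$ (or an $\R_{>0}$-extension of it), we consider a Gaussian-type form built from the quadratic form
\[
Q_{x}(z) = \sum_{e=1}^{2n} x_{e}\abs{\la_{e}(z)}^{2} = \bar z^{T} L_{A}(x)\, z .
\]
We take $\exp(-Q_{x}(z))$, or a scale-invariant variant such as $\log Q_x$ or $Q_{x}^{-k}$ suited to the projective setting, multiplied by a differential form of mixed degree. Its components are $\dd{x_{e}}$ paired with $\dd{\abs{\la_e}^2}$ and $\darg(\la_e)$. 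We choose this form so that it is closed up to an exact piece and has total degree equal to $\dim(C_{A}\times\sigma_{2n})$. The natural candidate is the exponential of the odd form $\sum_{e} (x_{e}\,\dd{\abs{\la_{e}}^2} + \abs{\la_e}^2\dd{x_{e}})$ twisted by $\darg$'s, in the spirit of Mathai--Quillen and Schwinger representations.

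\textbf{Parameter integral first.} For fixed $z$, we expand the form and integrate over $\sigma_{2n}$, using
\[
\int_{0}^{\infty} e^{-x\abs{\la}^2} \frac{\dd{x}}{x} \;\sim\; -\log\abs{\la}^{2}
\]
(suitably regularised via a difference of two such terms) and $\int_0^\infty e^{-x\abs{\la}^2}\dd(x\abs{\la}^2)=1$. The $2n$ edge parameters integrate out: one edge, the $l$-th, produces a logarithm, and every other edge produces a $\darg(\la_{e})$. The alternating sign $(-1)^{l-1}$ comes from moving $\dd{x_{l}}$ past the other differentials. This should reproduce exactly the integrand of $I_{\RW}(A)$, with the constant $(2i)^{2n-1}/(2(2\pi i)^n)$ coming from Gaussian normalisation. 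The logarithm appears precisely because the simplex is projective: one scale is left over, and it is absorbed by the $\R_{>0}$ quotient in $C_A$.

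\textbf{Position integral first.} For fixed $x$ in the interior of $\sigma_{2n}$, the integrand in $z$ is Gaussian with covariance $L_{A}(x)^{-1}$. The $z$-integral is then computed by Wick's theorem, i.e.\ by determinants and permanents of minors of $\adj(L_A)$ and of $A$. This yields an explicit $(2n-1)$-form in $x$ of the shape
\[
\frac{Q(x)}{\Psi^{n}}\,\frac{\Omega(x)}{x_{d}}, \qquad \Psi = \det L_A(x),
\]
as previewed in \cref{sec:effectiveFormulas}. Here we fix a distinguished row $d$, which is legitimate because of the $d$-independence of \cref{prop:introNoLsum}. We then identify this form with $\tr((L_{A}^{-1}\dd{L_{A}})^{2n-1})$. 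To do so we expand $\dd{L_{A}} = \sum_e a_{e}^{T}a_{e}\dd{x_{e}}$, so that every term of the trace is a product of rank-one pieces. Each cyclic product $\prod_j a_{e_j}L^{-1}a_{e_{j+1}}^{T}$ is a bilinear form in $L^{-1}=\adj(L)/\Psi$. Antisymmetrising over the $2n-1$ edges and applying Cauchy--Binet together with Dodgson/Jacobi identities (\cref{app:Dodgson}) should match the permanent expression, with the global factor $2n-1$ coming from the cyclic invariance of the trace.

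\textbf{Justification.} Before either evaluation, the Fubini exchange has to be justified. We need absolute convergence of the auxiliary integral, uniformly near the boundary of $\sigma_{2n}$ (where $L_A$ degenerates) and near the hyperplanes $\la_i=0$. Since only rank $n$ and nonzero rows are assumed, convergence must come from the $E=2V-2$ degree balance and from positivity of $Q_{x}$. We would use the convergence properties established in \cref{sec:propsOfInts} and, if necessary, a regulator such as $x_{d}=1$ or a factor $\abs{\la_d}^{2s}$, which is removed at the end.

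The main obstacle is the second step. The trace formula has massive cancellations, and the Wick-contraction output is a sum over pairings that matches $\beta^{2n-1}_{L_A}$ only after nontrivial determinantal identities. I would first test the identification on the three-wheel $W_{3}$ against \cref{eq:canThreeWheel}, and then prove it in general by induction on the number of $\dd{x_e}$ factors, using $\dd(L^{-1}) = -L^{-1}\dd{L}\,L^{-1}$.
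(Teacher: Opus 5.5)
Your outline follows the architecture the paper itself announces: an auxiliary integral, parametric integration one way, Gaussian/Wick integration the other, then a matrix identity. However, the step that carries the real content is only asserted, and the tools you name for it do not reach it. After the Gaussian integration you get, for each $l\neq d$ and each splitting $(S,T)$ of $[2n]\setminus\{l,d\}$ into two $(n-1)$-sets, a term $\det(A_{Sd,\bullet})\det(A_{Td,\bullet})\perm\bigl((AL^{-1}A^{T})_{S,T}\bigr)$, i.e.\ two constant minors times a permanent. The trace, by contrast, expands into $\sum_{\tau\in\mathbb{S}_{2n-1}}\sgn(\tau)$ times cyclic products of off-diagonal entries. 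It is not clear what an induction on the number of $\dd{x_e}$ factors would be: both sides are top-degree with exactly $2n-1$ such factors, and the identity is pointwise algebraic, so $\dd{(L^{-1})}=-L^{-1}\dd{L}\,L^{-1}$ never enters. Cauchy--Binet also does not turn a determinant times a permanent into a signed cycle sum.

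What is actually needed is two things. First, Jacobi's complementary-minor identity, which gives $\det(A_{Sd,\bullet})\det(A_{Td,\bullet})=(-1)^{n+1}\det(\adj(M)_{Sl,Tl})/\Psi^{n-1}$. Together with \cref{lem:eLeToM} for the permanent, this reduces everything to $\det(B_{Sl,Tl})\perm(B_{S,T})$ for the single matrix $B=M^{-1}$. Second, the identity of \cref{thm:adjToDodgsonProd}. Its proof encodes each term of $\det\cdot\perm$ as a functional digraph on $S\sqcup T\sqcup\{l\}$. Such a digraph is a disjoint union of cycles with exactly one odd cycle, the one through $l$. A sign-reversing involution (advance the even cycle with smallest vertex by one step, swapping its vertices between $S$ and $T$) cancels every configuration containing an even cycle. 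The surviving single $(2n-1)$-cycles are in sign-preserving bijection with the $\tau$ satisfying $i_{\tau(1)}=l$, and summing over $l$ gives the full cycle sum. Without this, the proposal does not prove the theorem.

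The analytic half is also open in ways that matter.

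\textbf{The auxiliary form.} You never fix it, and the candidate you name degenerates as stated: $\sum_e(x_e\dd{\abs{\la_e}^2}+\abs{\la_e}^2\dd{x_e})=\dd{Q_x}$ is a $1$-form, so its exponential is just $1+\dd{Q_x}$. Whatever you choose needs a polynomial $z$-numerator, so that Wick applies, and an explicit infrared scale, so that a logarithm can appear. The paper uses $\bigwedge_{e\neq l,d}(\overline{\la_e}\dd{\la_e}+\la_e\dd{\overline{\la_e}})\wedge\dd{\la_d}\wedge\dd{\overline{\la_d}}$ over $(\sum_e\alpha_e\abs{\la_e}^2+\alpha_l)^{2n}$, with mass term $\alpha_l$. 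It subtracts a counterterm in which $\abs{\la_l}^2$ is replaced by $\abs{\la_d}^2$.

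\textbf{The counterterm.} Your ``difference of two terms'' plays the counterterm's role, but you must also say what the subtracted term becomes after the $z$-integration. It is $I_{\can}(A_d)$ for the matrix with row $l$ replaced by row $d$. This vanishes by \cref{prop:canVanish}~c) only because of the repeated row.

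\textbf{Convergence.} It does not come from degree balance and positivity of $Q_x$ alone; the untruncated auxiliary integral is not absolutely convergent. The paper proceeds in three steps:
\begin{itemize}
\item Fubini is applied on $\sigma_{2n}^{\epsilon}$, where truncation plus the mass term keep the denominator nonzero on $\CP^n\times\sigma_{2n}^{\epsilon}$.
\item The $p$-derivatives are passed through the $z$-integral only for $\Re s>2n$, then analytically continued to $s=2$.
\item The limit $\epsilon\to0$ is taken separately on each side, using dominated convergence with a divided-difference bound on the RW side.
\end{itemize}
\Cref{sec:propsOfInts} supplies none of this.

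\textbf{Choice of $d$.} Citing \cref{prop:introNoLsum} to justify fixing $d$ is circular, since that proposition is deduced from this theorem. No justification is needed: the construction works for any fixed $d$. The stray $\log\abs{\la_d}^2$ on the RW side is killed by \cref{lem:rotationInvariance}, and \cref{prop:canonicalDodgsonFormula} gives the canonical integrand in the $d$-form directly.
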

Although both canonical and RW-integrals are defined for all $n$, we show in \cref{prop:RWvanish,prop:canVanish} 
that they  vanish whenever $n$ is even. 
In the graph case, this corresponds to the integrals vanishing for graphs with an odd number of vertices.

\begin{remark}
	We note that the canonical integrals admit a further extension to integration of wedge products of forms $\beta_{X}^{2n-1}$. 
	An analogous extension for RW-integrals is currently not known and appears to be more subtle. 
	One obstruction is that such generalised canonical integrals need not evaluate to single-valued polylogarithms. 
	For instance, in the case of the complete graph $K_{6}$, the integral of
	$\beta^{5}_{L_{K_{6}}} \wedge \beta^{9}_{L_{K_{6}}}$ involves the multiple zeta value $\zeta(3,5)$ \cite{brown21}, 
	which is conjecturally not a single-valued multiple zeta value.
\end{remark}

In the following, we restrict to the case of odd $n$, since the integrals vanish for even $n$ as discussed above.
Instead of restricting the canonical integrals to integration over subspaces $\sigma_{A}$, we can extend them to
arbitrary locally finite chains on $\mathcal{P}^{\geq 0}_{n} / \GL_{n}(\Z)$.
Brown shows in \cite{brown23} that these integrals are convergent for all such chains. This is because, after
passing to a suitable compactification,
the canonical form extends to the boundary and restricts to zero.
Consequently, it defines a class in the dual of the locally finite (Borel-Moore) homology,
\[
	[I_{\can}] \in \left(H^{lf}_{k}(\mathcal{P}_{n} / \GL_{n}(\Z);\R)\right)^{\vee}
.\] 
This space may, via integration pairing, be identified with the compactly supported cohomology
$H_{c}^{k}(\mathcal{P}_{n} / \GL_{n}(\Z); \R)$, which in turn is Poincar\'e dual to the group cohomology of $\GL_{n}$:
\[
H^{d_{n} - k}(\GL_{n}(\Z); \R) \cong H^{d_{n}-k}(\mathcal{P}_{n} / \GL_{n}(\Z);\R) \cong \left( H_{c}^{k}(\mathcal{P}_{n} /\GL_{n}(\Z);\R) \right)^{\vee}
,\]
where $d_{n} := n (n+1) / 2$. 
Thus, the cocycle $I_{\can}$, is related to the cohomology of the general linear group.

It is natural to ask what values this cocycle takes. To answer this, we use that locally finite homology is isomorphic to the homology
of the Voronoi complex:
\[
	H_{k}(\on{Vor}_{n}) \cong
	H^{lf}_{k}(\mathcal{P}_{n} / \GL_{n}(\Z);\R)
.\] 
The Voronoi complex $\on{Vor}_{n}$, as described in \cite{elbaz10}, is a cell complex whose cells are polyhedral cones in the space $\mathcal{P}^{\geq 0}_{n}$. Importantly, the rays (generators) of the Voronoi cells are rank-one positive semidefinite matrices.
Using simplicial subdivision (triangulation of the link) of these polyhedral cones, we can
reduce the computation of the cocycle $I_{\can}$ to the computation of the integrals for simplicial cones.
Observe then that the image $\sigma_{A}$ of the map $L_{A}(x)$ is a simplicial cone in $\mathcal{P}^{\geq 0}_{n}$.
Since the rays of the cone $\sigma_{A}$ are rank-one matrices, they can be written as $a_{i}^{T} a_{i}$ for some vector $a_{i}$.
Hence,
\[
	\sigma_{A} = \{ x_1 a_{1}^{T} a_{1} + x_2 a_{2}^{T} a_{2} + \ldots + x_{2n} a_{2n}^{T} a_{2n} \mid (x_1,\ldots,x_{2n}) \in \R^{2n} \}
.\]
Using \cref{thm:EqualityOfIntegrals} we can compute the cocycle $I_{\can}$ on the Voronoi complex via the RW-integrals.
Since the integration of RW-integrals is much better understood than that of canonical integrals, and arguing similar
to the proof sketch of \cref{prop:RWSingleValuedWeight}, we obtain the following description of the values taken by these cocycles.
As the Voronoi complex is rational, the rays can be represented integrally, i.e. with $a_{i} \in \Z^{n}$. 
We can therefore conclude:
\begin{corollary}
	The cocycle $I_{\can}$ on the Voronoi complex $\on{Vor}_{n}$ takes values in the $\Q$-vector space of single-valued polylogarithms
	with integer letters (poles), evaluated at integer points.
\end{corollary}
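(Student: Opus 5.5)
The plan has three steps: reduce the cocycle to simplicial cones, evaluate each cone as an RW-integral, and then identify what kind of number an RW-integral of an integral matrix is.

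\emph{Reduction to simplicial cones.} Any class in $H_k(\on{Vor}_n)$ is represented by a rational combination of Voronoi cells. Each cell is a rational polyhedral cone whose rays are rank-one matrices $a^Ta$ with $a\in\Z^n$. Triangulating the link of each cone without adding new rays writes each cell as a signed sum of simplicial cones spanned by rays of the original cell. By the convergence result of \cite{brown23} and additivity of integration, the value of $I_{\can}$ on a cycle is the same rational combination of the integrals over these simplicial cones. The cocycle is nonzero only in degree $k=2n-1$, the degree of $\beta^{2n-1}$. So the relevant cones have $2n$ rays and are exactly of the form $\sigma_A$ with $A\in\Z^{2n\times n}$ having rows $a_i$. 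Degenerate cones, where $\rank(A)<n$ or the image drops dimension, contribute zero, consistent with our extension of $I_{\can}$ by zero.

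\emph{Passage to RW-integrals.} For each such cone, \cref{thm:EqualityOfIntegrals} gives $I_{\can}(A)=I_{\RW}(A)$. It therefore suffices to show that $I_{\RW}(A)$, for an integer matrix $A$, is a rational combination of single-valued polylogarithms with integer letters evaluated at integer points. For even $n$ both sides vanish by \cref{prop:RWvanish,prop:canVanish}, so we may take $n$ odd. Following the strategy sketched for \cref{prop:RWSingleValuedWeight}, I would use the $\GL_n(\R)$-invariance to choose coordinates in which $\la_1,\dots,\la_n$ are the coordinate functions $z_1,\dots,z_n$. The remaining forms $\la_{n+1},\dots,\la_{2n}$ then have rational coefficients, which become integers after the diagonal rescaling $A\mapsto DA$ (also an invariance). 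Next, pass to the chart $z_n=1$ via the $S^1$-fibre integration of \cref{prop:ogRWIntegral}. This leaves an integral over $\C^{n-1}$ minus a hyperplane arrangement with integer coefficients, whose integrand is a product of one logarithm $\log|\la_l|^2$ and dlog-forms of $|\la_e|^2$.

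\emph{Iterated integration.} Integrate out the variables $z_{n-1},z_{n-2},\dots$ one at a time. I would use the single-valued integration theory for such integrals \cite{brown21single,schlotterer19} (equivalently, the algorithm of \cite{banks20,schnetz18}): integrating a single-valued polylogarithm times $\dd{z}\wedge\dd{\bar z}$-type rational forms over $\C$ produces single-valued polylogarithms whose letters are the roots (in the integrated variable) of the hyperplanes and of the discriminants formed so far. \textbf{The main obstacle} is to check that the alphabet stays "integer letters". Because the arrangement is linear with integer coefficients, the poles in each variable are integral linear combinations of the remaining variables, and the discriminant-like intersections are again such combinations. I would try to organise this as a linear-reducibility argument for hyperplane arrangements, where each elimination step keeps the arrangement linear and defined over $\Z$. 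Possibly one also needs to adjoin the intersection hyperplanes $\la_i-c\la_j$, which remain integral up to rescaling.

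Once all variables are integrated, the final values are single-valued polylogarithms in these integer letters, evaluated at the integer points left after setting $z_n=1$. Summing over the triangulation with rational coefficients gives the corollary.
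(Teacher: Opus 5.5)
Your proposal is correct and follows essentially the same route as the paper. Like the paper, you triangulate the Voronoi cells into simplicial cones $\sigma_A$ with integral rays, replace $I_{\can}(A)$ by $I_{\RW}(A)$ via \cref{thm:EqualityOfIntegrals}, and then argue as in the sketch of \cref{prop:RWSingleValuedWeight} by iterated single-valued integration. The alphabet-stability point you flag as the main obstacle is left implicit in the paper, and your linear-reducibility resolution works: eliminating $z_k$ from integral linear forms only produces the integral linear forms $a_{jk}\la_i - a_{ik}\la_j$, and any rational letters and arguments that remain at the last step can be made integral by a common rescaling.
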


\subsection{The idea behind proving \texorpdfstring{\cref{thm:EqualityOfIntegrals}}{the theorem}}
Let us next explain the mechanism behind the proof of
\cref{thm:EqualityOfIntegrals}. The goal is not to give a full proof, but
to sketch the correspondence between the RW-integral and the canonical
integral which underlies the argument. To keep the mechanism visible in this sketch
we freely interchange limits, derivatives and integrals, ignore convergence issues, and suppress
boundary/counterterm corrections and signs. These issues are addressed in the full proof
in \cref{sec:proofOfMainTheorems}.

The proof is organised around an auxiliary integral $I_{\aux}(A)$ on the product
space $\C^n \times \sigma_{2n}$. This integral serves as a bridge between
the two sides: integrating first over the simplex $\sigma_{2n}$ produces the
RW-integral, while integrating first over $\C^n$ produces the canonical
integral. Thus the equality is proved by realising both integrals as two
different evaluations of the same object. The diagram below summarises this
strategy:
\begin{figure}[htpb]
	\begin{tikzcd}
	& & I_{\aux} \text{ on } \C^n \times \sigma_{2n}
	\arrow[lldd,"\int_{\sigma_{2n}}", "\text{Feynman parametrisation}"']
	\arrow[rrdd, "\text{Gaussian integration}", "\int_{\C^{n}}"']
	& & \\ \\
	\text{integral on } \C^n
	\arrow[dd, "\text{scale integration}"]
	&  & & &
	\text{integral on } \sigma_{2n}
	\arrow[dd, "\text{matrix identities}"'] \\ \\
	I_{\RW} \text{ on } C_A
	& & & &
	I_{\can} \text{ on } \sigma_{2n}
	\end{tikzcd}
	\caption{The auxiliary integral $I_{\aux}$ relates the RW and the canonical integral.}
	\label{fig:proofSketch}
\end{figure}

Let $n \geq 3$, and let $d \in [2n]$. We write $z = (z_1,\ldots,z_{n})$ for the coordinates on $\C^{n}$.
The simplex $\sigma_{2n}$ is initially parametrised by coordinates $\alpha = (\alpha_1,\ldots,\alpha_{2n})$.
Later, when passing from the auxiliary integral to the canonical integral, we change variables by setting $\alpha_{i} = x_{i}^{-1}$.
Thus the auxiliary integral is written in the coordinates $z$ and $\alpha$. 
The  RW-integral is written in the $z$-variables, and the canonical integral is written in the dual simplex coordinates $x_{i} = \alpha_{i}^{-1}$.

Let $A \in \R^{2n \times n}$ of rank $n$ such that no row of $A$ is equal to $0$. Let $L(\alpha) = A^{T} \diag(\alpha) A$ and define
\[
P_{d}^{l} := \begin{pmatrix}[c|c]
	(\diag(\partial p) \cdot A)^{ld,\bullet}  & (\diag(\partial \overline{p}) \cdot A)^{ld,\bullet} \\ \hline
	a_{d} & 0\\
	0 & a_{d}
\end{pmatrix}
\qq{and}
M = \begin{pmatrix} 
\diag(x_1,\ldots,x_{2n}) &-A\\ A^{T} & 0
\end{pmatrix}.
\]
Here $\partial p = \left( \frac{\partial}{\partial p_1}, \ldots, \frac{\partial}{\partial p_{2n}} \right)$, and 
$(-)^{ld,\bullet}$ denotes deletion of the rows indexed by $l$ and $d$.
The matrix $P_{d}^{l}$ should be understood as a matrix whose entries are differential operators.
Thus, in expressions such as $\diag(\partial p)\cdot A$, the derivatives do not act on the constant matrix $A$. 
Rather, $A$ supplies the scalar coefficients of the corresponding differential operators. 
Explicitly, the $(i,j)$-entry of $\diag(\partial p)\cdot A$ is $a_{ij}\frac{\partial}{\partial p_i}$.

The auxiliary integral is defined as
\[
	I_{\aux}(A) = \frac{1}{(2 \pi i)^{n}}  \int_{\C^{n} \times \sigma_{2n}} \sum_{l=1}^{2n} \pm \lim_{p \to 0} \det(P_{d}^{l}) \frac{
	\left(\bigwedge_{i \in [n]} \dd{z_{i}} \wedge \dd{\overline{z_{i}}}\right) \Omega(\alpha)}
	{(z^{T} L(\alpha) \overline{z}  + p^{T} A z + \overline{p}^{T} A \overline{z} + \alpha_{l})^{2}}
\]
Here the $(z,\overline{z})$-variables are integrated over $\C^{n}$, while the $\alpha_{i}$ are the coordinates on $\sigma_{2n}$.
The variables $p$ and $\overline{p}$ are auxiliary variables: the differential operator $\det(P_{d}^{l})$ acts on the 
denominator as a function of $(p,\overline{p})$, and the result is then evaluated at $p = \overline{p} = 0$.

\subsubsection{The RW-integral side}
To recover the RW-integral, we must integrate out the simplex variables in the auxiliary integral. 
The mechanism governing this step is the Feynman parametrisation. 
While typically used (via the Schwinger trick) to rewrite a product of powers as a single simplex integral, 
we will use the identity in reverse to evaluate our inner integral. We recall the exact identity:
\begin{proposition}\label{prop:FeynmanPara}
	Let $k \in \N$, let $A_1,\ldots,A_{k} \in \R_{> 0}$ and let $s_1,\ldots,s_{k} \in \R_{>0}$. Then
	\[
	\frac{1}{A_1^{s_1} \ldots A_{k}^{s_{k}}} = \frac{\Gamma(s_1+\ldots+s_{k})}{\Gamma(s_1) \ldots \Gamma(s_{k})} \int_{\sigma_{k}}
	\frac{\alpha_1^{s_1-1}\ldots \alpha_{k}^{s_{k}-1}}
	{( \alpha_{1} A_1+\ldots+ \alpha_{k} A_{k})^{s_1+\ldots+s_{k}}} \Omega(\alpha)
	.\]
\end{proposition}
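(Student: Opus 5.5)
The plan is to prove the identity by induction on $k$, using as the only analytic input the Gamma-function integral
\[
\frac{1}{A^{s}} = \frac{1}{\Gamma(s)} \int_0^\infty t^{s-1} e^{-tA} \dd{t}, \qquad A,s>0,
\]
which is the Schwinger trick. For $k=1$ the simplex $\sigma_1$ is a point, $\Omega(\alpha)$ is the delta form at that point, and both sides equal $A_1^{-s_1}$. So the content is really the general case. I will argue it directly rather than inductively.

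First I write each factor as a Schwinger integral. Multiplying them gives
\[
\prod_i A_i^{-s_i} = \frac{1}{\prod_i\Gamma(s_i)} \int_{\R_{>0}^k} \prod_i t_i^{s_i-1}\, e^{-\sum_i t_iA_i}\, \dd{t_1}\cdots \dd{t_k}.
\]
Next I pass to "polar" coordinates adapted to the projective simplex, $t_i = \lambda \alpha_i$ with $\lambda>0$ and $[\alpha]\in\sigma_k$. The standard fact is that $\dd{t_1}\cdots\dd{t_k} = \lambda^{k-1}\dd{\lambda}\wedge\Omega(\alpha)$ on the affine chart $\sum\alpha_i=1$. Behind this is the Euler identity $\iota_{E}(\dd{t_1}\cdots\dd{t_k}) = \Omega(t)$ for the Euler vector field $E$. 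The integrand is homogeneous of total degree $\sum s_i - k$ in $t$, which together with $\lambda^{k-1}$ gives $\lambda^{S-1}$ with $S=\sum_i s_i$.

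I then carry out the $\lambda$-integral using the Gamma integral again:
\[
\int_0^\infty \lambda^{S-1} e^{-\lambda\sum\alpha_iA_i}\dd{\lambda} = \frac{\Gamma(S)}{(\sum_i\alpha_iA_i)^{S}}.
\]
What remains is the stated integrand over $\sigma_k$. This form is homogeneous of degree $0$ in $\alpha$: the numerator has degree $S-k$, $\Omega$ contributes degree $k$, and the denominator has degree $S$. Hence it is a well-defined projective form, and the choice of affine chart is irrelevant.

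All the manipulations are justified by Tonelli, since every integrand is positive because $A_i,s_i>0$. The only step needing care is the change-of-variables Jacobian and orientation convention for $\Omega$ on $\sigma_k$. I would check it on the chart $\alpha_k = 1-\sum_{i<k}\alpha_i$, where $\Omega$ restricts to $\dd{\alpha_1}\cdots\dd{\alpha_{k-1}}$ up to the standard sign. That step is the main, though mild, obstacle.
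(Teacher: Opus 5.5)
Your argument is correct. It is the standard Schwinger-parameter derivation. The paper gives no proof of this proposition: it only recalls the identity as classical and points to exactly this mechanism (``via the Schwinger trick'').

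The sign check you flag is automatic. The relation $\dd{t_1}\wedge\cdots\wedge\dd{t_k} = \lambda^{k-1}\,\dd{\lambda}\wedge\Omega(\alpha)$ holds as an identity of forms, so orienting $\sigma_k$ so that $\Omega$ is positive (the paper's convention) makes everything match. The $k=1$ case also needs no separate treatment: there $\Omega(\alpha)=\alpha_1$ is a $0$-form, not a delta form, and the general argument applies as is.
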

To apply this to our specific case, consider the auxiliary integral $I_{\aux}(A)$. Applying the differential
operator $\det(P_d^l)$ and setting $p=\overline p=0$ gives
\[
	\frac{\Gamma(2n)}{(2 \pi i)^{n}} \int_{\C^{n}} \int_{\sigma_{2n}} \sum_{l = 1}^{2n} \pm
	\frac{ \bigwedge_{e \neq l,d} ( \overline{\la_{e}} \dd{\la_{e}} - \la_{e} \dd{\overline{\la_{e}}}) 
		\wedge \dd{\la_{d}} \wedge \dd{\overline{\la_{d}}}}
	{(\sum_{e } \alpha_{e} \abs{\la_{e}}^2 + \alpha_{l})^{2n}} \Omega(\alpha)
\]
Observe that the inner integral over the simplex $\sigma_{2n}$ matches the right-hand side of the proposition for $k=2n$, $s_{i} = 1$ for
$1 \leq i \leq 2n$ and $A_{i} = \abs{\la_{i}}^2$ for $i \neq l$ and $A_{l} = \abs{\la_{l}}^2 + 1$.
By running the Feynman parametrisation backwards, we evaluate this simplex integral giving
\[
	\frac{(2 i)^{2n-1}}{2 (2 \pi i)^{n}} \int_{\C^{n}} \sum_{l=1}^{2n} \pm \frac{1}{\abs{\la_{l}}^2 + 1} \dlog(\abs{\la_{d}}^2) 
	\bigwedge_{e \neq l} \darg(\la_{e}) 
\]
where we used the identities
\[
	2 i \darg(\la_{e}) = \frac{\overline{\la_{e}} \dd{\la_{e}} - \la_{e} \dd{\overline{\la_{e}}} }{\abs{\la_{e}}^2} \qq{and}
	i \dlog(\abs{\la_{d}}^2) \wedge \darg(\la_{d}) = \frac{\dd{\la_{d}} \wedge \dd{\overline{\la_{d}}}}{\abs{\la_{d}}^2}
\]
It remains to pass from the integral over $\C^n$ to the integral over
$C_A$. We separate the radial scale using the map $C_A \times \R_{>0} \rightarrow \C^n \setminus \{0\}$ via $(z,r) \mapsto \sqrt{r} z$.
Pulling back the preceding integral gives,
\[
	\frac{(2 i)^{2n-1}}{2 (2 \pi i)^{n}} \int_{C_{A}} \sum_{l=1}^{2n} \pm  
	\left( \int_{0}^{\infty} \frac{1}{r \abs{\la_{l}}^2 + 1} \frac{\dd{r}}{r} \right) 
		\bigwedge_{e \neq l} \darg(\la_{e}) 
\]
The radial integral has a finite and a divergent constant part. Formally, this may be written as
\[
	\frac{(2 i)^{2n-1}}{2 (2 \pi i)^{n}}
	\int_{C_{A}} \sum_{l=1}^{2n} \pm \log(\abs{\la_{l}}^2) \bigwedge_{e \neq l} \darg(\la_{e}) + 
\log(0)  \frac{(2 i)^{2n-1}}{2 (2 \pi i)^{n}} \int_{C_{A}} \sum_{l=1}^{2n} \pm \bigwedge_{e \neq l} \darg(\la_{e})
\]
The first term is the RW-integral. The second term is the divergent constant
term. In the full proof it is cancelled by a corresponding counterterm.
At the level of this formal sketch, it vanishes morally as the integrand is invariant under rotation and hence not a volume form.
This route of integration, therefore,  gives $ I_{\aux}(A) = I_{\RW}(A)$.

\subsubsection{The canonical integral side}
Next, we compute $I_{\aux}$ in the other order, by integrating first over
$\C^n$. This part of the argument has three steps. First, completing the
square turns the integral over $\C^n$ into a standard complex Gaussian-type
integral. Second, the differential operator $\det(P_d^l)$ is expanded, and
its action on the resulting quadratic form is expressed in terms of permanents.
Third, the determinant-permanent expression which remains is identified with
the canonical form by a purely algebraic matrix identity.

For the first step, we replace the exponent $2$ in the denominator of
$I_{\aux}(A)$ by a complex parameter $s$, initially with $\Re(s)>2n$.
This places the integral in a region of absolute convergence, where the
Gaussian integration over $\C^n$ can be performed first. 
The resulting expression is then analytically continued back to $s=2$.
Completing the square in the denominator gives
\[
z^T L \overline z + p^T A z + \overline p^T A \overline z + \alpha_l = w^T \overline w + \alpha_l - \overline p^T A L^{-1}A^T p,
\]
after a change of variables in $\C^n$. The Jacobian of this change of
variables contributes the factor $\det(L)^{-1}$. Hence the standard complex
Gaussian integral gives
\begin{equation}\label{eq:SketchAfterComplexInt}
	I_{\aux}(A) = \int_{\sigma_{2n}} \sum_{l=1}^{2n} \pm \frac{1}{\det(L)} \frac{\Gamma(s-n)}{\Gamma(s)} \lim_{p \to 0} \det(P_{d}^{l}) 
	\frac{1}{(\alpha_{l}-\overline{p}^{T} A L^{-1} A^{T} p)^{s}} \Omega(\alpha)
\end{equation}

Next, we expand the differential operator $\det(P_d^l)$. This produces a sum over pairs $(S,T)$ partitioning $[2n]\setminus\{l,d\}$ into two sets
of equal cardinality. Here $S$ corresponds to choosing rows of $P^{l}_{d}$ giving holomorphic derivatives $\frac{\partial }{\partial p_{j}}$
and $T$ to choosing anti-holomorphic derivatives $\frac{\partial }{\partial \overline{p_{i}}}$.
After the Gaussian integration, the $p$-dependence occurs only through the quadratic form $\overline p^T A L^{-1}A^T p$.
Thus applying the $p$- and $\overline p$-derivatives and setting $p,\overline{p} = 0$ 
amounts to pairing the indices in $S$ with the indices in $T$ in all possible ways. 
Since these pairings do not carry alternating signs, the result is a permanent:
\begin{equation}\label{eq:SketchPermanent}
	\lim_{p \to 0} \prod_{i \in T} \frac{\partial }{\partial \overline{p_{i}}}  \prod_{j \in S} \frac{\partial }{\partial p_{j}} 
	\frac{1}{(\overline{p}^{T} A L^{-1} A^{T} p + \alpha_{l})^{q-n} }
	= \frac{(-1)^{n-1} \Gamma(s-1)}{\Gamma(s-n) \alpha_{l}^{s-1}}
	\perm(A L^{-1} A^{T})_{S,T}
\end{equation}
Here, $(-)_{S,T}$ denotes the submatrix with rows indexed by $S$ and columns indexed by $T$.

Substituting this identity into \cref{eq:SketchAfterComplexInt}, analytically continuing to $s=2$, and changing variables
$\alpha_i \mapsto 1/x_i$, we obtain an expression in terms of minors of the inverse graph matrix  $M^{-1}$.
More precisely, for the index sets $S,T$ appearing here, one has $(A L^{-1} A^{T})_{S,T} = (M^{-1})_{S,T}$ .
Thus the expression for $I_{\aux}(A)$ becomes
\begin{equation}\label{eq:SketchPermDet}
	I_{\aux}(A) = \int_{\sigma_{2n}} \sum_{l=1}^{2n} \sum_{S,T \in \mathcal{I}_{d}^{l}} 
	\pm\det((M^{-1})_{Sl, Tl})  \perm((M^{-1})_{S,T}) \frac{\Omega(x)}{x_{d}}
\end{equation}
Here $\mathcal{I}_{d}^{l} = \{(S,T) \in ([2n] \setminus \{l,d\})^2 \mid \abs{S} = \abs{T} = n-1 \text{ and } S \cap T = \emptyset\}$.

At this stage the analytic part of the argument is finished. What remains is
the algebraic identification of the determinant-permanent expression above
with the canonical integrand. 
The required combinatorial argument allows us to rewrite \cref{eq:SketchPermDet} as
\begin{equation}\label{eq:SketchCanonicalFinal}
	I_{\aux}(A) = \int_{\sigma_{2n}} \pm \frac{\Omega(x)}{x_{d}}
 \sum_{\tau \in \mathbb S_{2n-1}} \sgn(\tau) M^{-1}_{i_{\tau(1)}i_{\tau(2)}} M^{-1}_{i_{\tau(2)}i_{\tau(3)}}
		\cdots M^{-1}_{i_{\tau(2n-2)}i_{\tau(2n-1)}} M^{-1}_{i_{\tau(2n-1)}i_{\tau(1)}}
	\end{equation}
It remains to recognise the right-hand side as the canonical integral. For
this, we use the standard trace formula for the canonical form. Since the
canonical form associated to $L$ agrees with the canonical form associated to
$M$, it is enough to apply the trace formula to $M$. The form
$\tr((M^{-1}\dd M)^{2n-1})$ is the trace of the $(2n-1)$-st power of the matrix with entries $M^{-1}_{ij}\dd{x_j}$.
Expanding this trace gives a signed sum over cyclic products of the entries of $M^{-1}$.
Factoring out the projective volume form $\Omega(x)$ gives precisely the integrand of  \cref{eq:SketchCanonicalFinal},
and hence $I_{\aux}(A)=I_{\can}(A)$.

\subsection{Comments on the difficulty of the proof}
The preceding argument explains the structure of the proof, but it suppresses
several substantial analytic points. First, the integral $I_{\aux}$ over
$\C^n \times \sigma_{2n}$ is not absolutely convergent. In the full
proof, this is handled by truncating the simplex: one replaces
$\sigma_{2n}$ by the region on which all coordinates satisfy
$\alpha_i>\epsilon$, for some $\epsilon>0$. The relevant transformations
are then performed at fixed $\epsilon$, and only afterwards at the two ends of the argument, where the resulting
integrals are well defined for $\epsilon = 0$, is the limit $\epsilon \to 0$ taken.

Furthermore, for the scale integration on the RW-integral side to be well defined, one must introduce a counterterm in the auxiliary integral. 
This counterterm is obtained by replacing $l$ with $d$, while in both the original term and the counterterm the sum over $l$ is restricted to $l \neq d$.

Finally, one has to justify the repeated interchange of limits, derivatives
and integrals. These technical points account for a significant part of the
length of the full proof.

\subsection{Notation}\label{sec:notation}
Before we come to the main sections of this work we need to introduce some notations.
Unless stated otherwise, $x$, $\alpha$, and $z$ denote vectors of coordinates 
\[
	[x_1:\ldots :x_{n}] \in \RP^{n-1}, \quad [\alpha_1 : \ldots : \alpha_{n}] \in \RP^{n-1} \qq{and} 
	z_1,\ldots,z_{n} \in \C^{n}
\]
for some fixed $n \in \N$.
For homogeneous coordinates of $\RP^{n-1}$ (such as $x$ and $\alpha$) and for complex coordinates $z \in \C^{n}$, we define 
the (projective) volume forms
\[
	\Omega(x) = \sum_{i=1}^{n} (-1)^{i-1} x_{i} \dd{x_{1}} \wedge \ldots \wedge \widehat{\dd{x_{i}}} \wedge \ldots \wedge \dd{x_{n}}
	\qq{and} \Omega(z) = \bigwedge_{i=1}^{n} \dd{z} \wedge \dd{\overline{z}}
.\]
We denote by $\diag(x)$ or $\diag(x_1,\ldots,x_{n})$ the diagonal matrix associated to the elements $x_1,\ldots,x_{n}$.
The permanent of a $n \times n$ matrix $A = (a_{ij})$ is defined by
\[
\perm(A) = \sum_{\sigma \in \mathbb{S}_{n}} a_{1\sigma(1)} \ldots a_{k \sigma(k)}
.\] 
where $\mathbb{S}_{n}$ denotes the symmetric group on $n$ elements.
The adjugate of an invertible matrix $A$, that is the matrix $\det(A) \cdot A^{-1}$, is denoted by $\adj(A)$.

We write $[n]$ for the ordered set $(1,\ldots,n)$. For two ordered sets $S = (s_1,\ldots,s_{k})$ and $T = (t_1,\ldots,t_{l})$, we
define their concatenation by
\[
S T := (s_1,\ldots,s_{k},t_1,\ldots,t_{l})
.\] 
By a slight abuse of notation, we write $S  d$ instead of $S (d)$ when appending a single element $d$.
For an ordered set $S$, we denote by $S \setminus \{s_{i_1},\ldots,s_{i_{j}}\}$ the ordered subset obtained by removing
the indicated elements, with the induced ordering inherited from $S$.

Since the elements of $S,T \subseteq \N$ are themselves ordered, 
each such set carries two natural orders: the given order and the standard increasing order on $\N$.
For an ordered subset $A \subseteq \N$ of cardinality $n$, let $\pi \in \mathbb{S}_{n}$  be the permutation that reorders the elements of $A$ in
increasing order. We define $\sgn(A) := \sgn(\pi)$.
For $x \in A$, we write $A_{< x} := \{a \in A \mid a < x\}$ for the set of elements of $A$ smaller than $x$.
For $a,b \in [n]$ we also use the notation $(a < b)$ for the indicator function, which equals $1$ if $a < b$ and $0$ otherwise.

Let $B$ be a $m \times  n$ matrix and let $S \subseteq [m], T \subseteq [n]$ be ordered subsets.
We define
\[
	B_{S,T} = (b_{ij})_{i \in S, j \in T} \qq{and}
	B^{S,T} = (b_{ij})_{i \in [m] \setminus S, j \in [n] \setminus T}
\] 
Thus, $B_{S,T}$ denotes the submatrix of $B$ whose rows are indexed by $S$ and whose columns are indexed by $T$, taken in their prescribed orders.
Similarly, $B^{S,T}$ denotes the submatrix obtained by deleting the rows indexed by $S$ and columns indexed by $T$.
The notation $B_{S,T}$ is intended to generalise the entry notation $b_{ij}$: the subscripts indicate the rows and columns that determine the submatrix, 
while the superscripts in $B^{S,T}$ indicate the rows and columns that are omitted.

To indicate selection or removal of only rows or only columns, we use a placeholder symbol $\bullet$. Specifically,
\[
	\begin{array}{ll}
		B_{S,\bullet} = (b_{ij})_{i \in S, j \in [n]} \qquad 
					  &B_{\bullet,T} = (b_{ij})_{i \in [m], j \in T} \\
B^{S,\bullet} = (b_{ij})_{i \in [m] \setminus S, j \in [n]} \qquad
			  &B^{\bullet,T} = (b_{ij})_{i \in [m], j \in [n] \setminus T}
.\end{array}
\]

\graphicspath{{Images/}}

\section{Properties of canonical and RW-Integrals}\label{sec:propsOfInts}

\subsection{RW-Integrals}
In the overview we defined the generalised RW-Integrals as follows:
\begin{definition}\label{def:RWIntegral}
	Let $n \geq 3$ and  consider a real  $2n \times n$ matrix $A \in \R^{2n \times n}$ of rank $n$, 
	such that no row of $A$ is equal to $0$. Denoting the rows of $A$ by $a_{i}$, we obtain $2n$ linear forms
	$\la_{i}: \C^{n} \to \C$ given by $z \mapsto \la_{i}(z) := a_{i} z$.
	We define the RW-integral associated with $A$ by
	\[
	I_{\RW}(A) = \frac{(2i)^{2n-1}}{2 (2 \pi i)^{n}} \int_{C_{A}} \sum_{l=1}^{2n} (-1)^{l-1} \log(\abs{\la_{l}(z)}^2) \bigwedge_{e \neq l} \darg(\la_{e}(z))
	,\] 
	where, $C_{A} = \conf_{A}(\C) / \R_{> 0}$ and $\conf_{A}(\C) := \{z \in \C^{n} \mid \la_{i}(z) \neq 0 \forall i \in \{1,\ldots,2n\} \}$.
	We equip $C_{A}$ with the orientation induced from $\iota_{R} \Omega(z)$, where $\Omega(z)$ is the standard volume form on  $\C^{n}$ 
	and $R$ is the radial vector field
	\[
		R = \sum_{i=1}^{n} z_{i} \frac{\partial }{\partial z_{i}} + \overline{z_{i}} \frac{\partial }{\partial \overline{z_{i}}} 
	.\] 
	Since $\iota_{R} \Omega$ is rescaled by a positive function under the $\R_{> 0}$ action,
	it induces a well-defined orientation on the quotient $C_{A}$. 
	Thus fixing our convention for integration.
\end{definition}

To show that the integral is indeed well-defined we need to show that the integrand is defined on $C_{A}$, that
is that it is scale invariant. For this we need the following lemma:

\begin{lemma}\label{lem:rotationInvariance}
	The differential form $\omega = \sum_{l=1}^{2n} (-1)^{l-1} \bigwedge_{e \neq l} \darg(\la_{e})$ on $\conf_{A}(\C)$ vanishes identically.
\end{lemma}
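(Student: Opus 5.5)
Write $\theta_e := \darg(\la_e)$ for each $e \in [2n]$; each $\theta_e$ is a closed real $1$-form on $\conf_{A}(\C)$. The form $\omega = \sum_{l}(-1)^{l-1}\bigwedge_{e\neq l}\theta_e$ is the contraction $\iota_V(\theta_1\wedge\cdots\wedge\theta_{2n})$ for any vector field $V$ with $\theta_e(V)=1$ for every $e$. It is also the value of the ``boundary'' operator $\sum_l (-1)^{l-1}\prod_{e\ne l}$ applied formally to $(\theta_1,\dots,\theta_{2n})$. The plan is to show that $\omega$ is a $(2n-1)$-form built from $2n$ one-forms that all lie in a subspace too small to support it. This happens in two steps: first reduce to a statement about linear dependence at each point, then do linear algebra.

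Fix a point $z\in\conf_{A}(\C)$ and work in the cotangent space $T^*_z\conf_A(\C)\cong(\R^{2n})^*$, which has real dimension $2n$. Each $\theta_e = \Im(\dd{\la_e}/\la_e)$ lies in the real span of the $2n$ real one-forms $\Re\dd{z_j},\Im\dd{z_j}$. Consider the generalised Euler/rotation vector field $J = \sum_j i z_j\partial_{z_j} - i\overline{z_j}\partial_{\overline{z_j}}$, which generates the diagonal $S^1$-action $z\mapsto e^{it}z$. Since every $\la_e$ is complex linear, we get $\theta_e(J) = 1$ for all $e$.

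Now use the multilinear identity
\[
\iota_J(\theta_1\wedge\cdots\wedge\theta_{2n}) = \sum_l (-1)^{l-1}\theta_l(J)\bigwedge_{e\ne l}\theta_e = \omega .
\]
It is therefore enough to show that $\theta_1\wedge\cdots\wedge\theta_{2n}=0$ at every point. This top-degree wedge vanishes precisely when the $\theta_e$ are linearly dependent in the $2n$-dimensional space $T^*_z$, and they are: each $\theta_e$ also annihilates the radial field $R$. Indeed, $\la_e$ is homogeneous, so $R$ rescales $\la_e$ by a real factor, and hence $\theta_e(R)=0$. All $2n$ covectors thus lie in the annihilator of $R\neq0$, which has dimension $2n-1$, so they are dependent and the wedge vanishes. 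This gives $\omega=0$ pointwise.

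The main point to check carefully is the computation $\theta_e(R)=0$ and $\theta_e(J)=1$. This is immediate from $\la_e(rz)=r\la_e(z)$ and $\la_e(e^{it}z)=e^{it}\la_e(z)$, using the real-linearity of $A$. The only subtlety is that $R\neq 0$ on $\conf_{A}(\C)$, which holds because $z=0$ is excluded: $\la_e(0)=0$ for every $e$. With that, no global or analytic argument is needed; the lemma is a pointwise linear-algebra fact.
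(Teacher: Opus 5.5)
Your proof is correct and follows the paper's argument: you write $\omega$ as the contraction of the top form $\bigwedge_e \darg(\la_e)$ with the rotation field, and that top form vanishes because every $\darg(\la_e)$ annihilates the nowhere-vanishing radial field $R$. Your linear-dependence phrasing (all $2n$ covectors lie in the $(2n-1)$-dimensional annihilator of $R$) is equivalent to the paper's statement that $\iota_R$ is injective on top-degree forms. Your direct check $\darg(\la_e)(R)=0$ is slightly more precise than the paper's appeal to scaling invariance.
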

\begin{proof}
	Define the vector field $X$ and the top-degree form $\alpha$ on $\conf_{A}(\C)$ as
	\[
	X = i\sum_{i=1}^{n} z_{i} \frac{\partial }{\partial z_{i}} -\overline{z_{i}} \frac{\partial }{\partial \overline{z_{i}}}, 
	\qquad \alpha = \bigwedge_{e} \darg(\la_{e})
	.\] 
	where $X$ is the vector field generating the diagonal $S^{1}$-action.
	Then $\iota_{X} \darg(\la_{e}) = 1$ and thus $\omega = \pm\iota_{X} \alpha$.

	Now let $R$ be the vector field generating real scaling. Since $\darg(\la_{e})$ is invariant under real scaling,
	it follows that $\iota_{R} \alpha = 0$.
	Since $R$ is nowhere vanishing and $\alpha$ has top degree on $\conf_A(\C)$, contraction with
	$R$ is injective on top-degree forms. Therefore $\alpha=0$, and so $\omega = \pm \iota_{X} \alpha = 0$.
\end{proof}
This allows us to show the scale invariance:
\begin{corollary}\label{cor:scaleInvariance}
	Let $\lambda \in \R_{> 0}$ and let $\Phi_{\lambda}(z)\colon  C_{A} \to  C_{A}, z \mapsto \lambda z$ be the scaling action,
	then
	\[
	\Phi_{\lambda}^{*} \omega_{\RW}(A) = \omega_{\RW}(A)
	.\] 
\end{corollary}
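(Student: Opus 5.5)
The plan is to read off how each ingredient of $\omega_{\RW}(A) = \sum_{l=1}^{2n} (-1)^{l-1} \log(\abs{\la_{l}}^2) \bigwedge_{e \neq l} \darg(\la_{e})$ transforms under $\Phi_{\lambda}$, strictly speaking working on $\conf_{A}(\C)$, where the scaling $z \mapsto \lambda z$ is an honest diffeomorphism. Since each $\la_{e}$ is linear, $\Phi_{\lambda}^{*}\la_{e} = \lambda \la_{e}$ with $\lambda > 0$ real. It follows immediately that $\Phi_{\lambda}^{*}\darg(\la_{e}) = \darg(\lambda\la_{e}) = \darg(\la_{e})$, because multiplying by a positive real does not change the argument. (Equivalently, $\darg(\la_e) = \Im(\dd{\la_e}/\la_e)$, and the factor $\lambda$ cancels in the logarithmic derivative.) For the logarithms we get $\Phi_{\lambda}^{*}\log(\abs{\la_{l}}^2) = \log(\abs{\la_{l}}^2) + \log(\lambda^2)$.

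Substituting these into $\omega_{\RW}(A)$ gives
\[
\Phi_{\lambda}^{*}\omega_{\RW}(A) = \omega_{\RW}(A) + \log(\lambda^2) \sum_{l=1}^{2n} (-1)^{l-1} \bigwedge_{e \neq l} \darg(\la_{e}).
\]
The correction term is $\log(\lambda^2)$ times exactly the form $\omega$ of \cref{lem:rotationInvariance}. That lemma says $\omega$ vanishes identically on $\conf_{A}(\C)$, so $\Phi_{\lambda}^{*}\omega_{\RW}(A) = \omega_{\RW}(A)$ on $\conf_{A}(\C)$.

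It remains to pass to the quotient $C_{A}$. The form $\omega_{\RW}(A)$ is $\R_{>0}$-invariant by the computation above. It is also horizontal, meaning $\iota_{R}\,\omega_{\RW}(A) = 0$, because $\iota_{R}\darg(\la_{e}) = 0$ for every $e$: the radial flow preserves each argument. An invariant, horizontal form on the total space of a principal $\R_{>0}$-bundle is basic, so it descends to a well-defined form on $C_{A}$. The induced scaling on $C_{A}$ then preserves it, which is the claimed identity.

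The only step with real content is the cancellation of the $\log\lambda$ term, and it is entirely supplied by \cref{lem:rotationInvariance}. The descent to $C_{A}$ is routine bookkeeping.
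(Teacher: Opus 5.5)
Your proof is correct and follows the paper's: you pull back $\darg(\la_e)$ and $\log(\abs{\la_l}^2)$ under scaling and kill the $\log(\lambda^2)$ correction term with \cref{lem:rotationInvariance}. Your added check that $\iota_R\,\omega_{\RW}(A)=0$ is also correct. Together with invariance, it is exactly what makes the form basic and lets it descend to $C_A$, a point the paper leaves implicit.
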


\begin{proof}
	Using the fact that $\darg(\lambda \la_{e}) = \darg(\la_{e})$ and \cref{lem:rotationInvariance} we calculate:
\begin{align*}
		\Phi_{\lambda}^{*} \omega_{\RW}(A) &= \sum_{l=1}^{2n} (-1)^{l-1} \log(\lambda^2 \abs{\la_{l}}^2) \bigwedge_{e \neq l} \darg(\la_{e})\\
	&=\omega_{\RW}(A) + \log(\lambda^2) \sum_{l=1}^{2n} (-1)^{l-1} \bigwedge_{e \neq l} \darg(\la_{e}) = \omega_{\RW}(A)
,\end{align*}
which proves the corollary.
\end{proof}

\begin{remark}[Invariance properties]\label{rem:RWGLnInvariance}
	The RW-integrals $I_{\RW}(A)$ enjoy several natural invariance properties:
	\begin{enumerate}
		\item Right $\GL_{n}(\R)$-invariance: for $P \in \GL_{n}(\R)$, we have $I_{\RW}(AP) = I_{\RW}(A)$.
			Indeed, writing $\omega(A)$ for the integrand in \cref{def:RWIntegral}, we compute
	\[
	I_{\RW}(AP) = \int_{C_{AP}} \omega(AP)(z) = \int_{C_{AP}} \omega(A)(Pz) = \int_{C_{A}} \omega(A)(y) = I_{\RW} (A)
	.\] 
	where we used the change of variables $y = P z$ and $(a_{i} P) z = a_{i} y$.
\item Dependence on the ordering: the value of $I_{\RW}(A)$ depends on the ordering of the rows of $A$ only up to a sign,
	corresponding to the permutation of differential forms in the wedge product.
\item Row rescaling: the integral is invariant under rescaling of the rows of $A$, i.e. for $D$ a diagonal $2n \times 2n$ matrix,
	we have $I_{\RW}(D A) = I_{\RW}(A)$. 
	Indeed, let $d_1,\ldots,d_{2n}$ be the diagonal entries of $D$. Then $C_{D A} = C_{A}$, as the conditions $\la_{i}(z) = 0$ are invariant
	under rescaling. Since $(D A)_{i}(z) = d_{i} \la_{i}(z)$ we have $\log(\abs{(D A)_{l}}^2) = \log(\abs{d_{l}}^2) + \log(\abs{\la_{l}}^2)$
	and $\darg((D A)_{i}(z)) = \darg(\la_{i}(z))$. Hence,
	\[
		I_{\RW}(D A) = I_{\RW}(A) + \sum_{l=1}^{2n} (-1)^{l-1} \log(\abs{d_{l}}^2)  \int_{C_{A}} \bigwedge_{e\neq l} \darg(\la_{e}) = I_{\RW}(A)
	.\] 
	Here the additional terms vanish by the Kontsevich vanishing lemma \cite[Section 6.6.1]{kontsevich03}.
\end{enumerate}
\end{remark}

Next, we show that the RW-integral as defined in \cref{sec:introduction,def:RWIntegral} agree. 
The representation from \cref{sec:introduction} is often more suited
for explicit computations and for establishing vanishing properties, whereas the form of \cref{def:RWIntegral} is slightly more natural
and needed for the proof of \cref{thm:EqualityOfIntegrals}.
\begin{proposition}\label{prop:ogRWIntegral}
	Let $Z_{A} = \conf_{A}(\C) / \C^{*}$, where $\C^{*}$ acts diagonally by rotation and scaling and $Z_{A}$ is equipped with the canonical
	orientation induced from $\C^{n-1}$. Then
	\[
	I_{\RW}(A) = \frac{(-1)^{n-1}}{(2 \pi i)^{n-1}} \int_{Z_{A}} \sum_{\substack{l,d\\ l \neq d}} (-1)^{l + d + (l<d)} \log(\abs{\la_{l}}^2) 
	\bigwedge_{e \neq l,d} \dlog(\abs{\la_{e}}^2)
	,\] 
\end{proposition}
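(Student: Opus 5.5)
We will prove this by integrating out the $S^1$-fibres of the projection $\pi\colon C_A \to Z_A = C_A/S^1$. The first step is to rewrite the integrand on $C_A$ in a form adapted to the fibration. Fix $d\in[2n]$. On $\conf_A(\C)$ the forms $\darg(\la_e)-\darg(\la_d)=\darg(\la_e/\la_d)$ are basic for the diagonal $S^1$-action: they are invariant, and they are annihilated by the generator $X$ of \cref{lem:rotationInvariance}, since $\iota_X\darg(\la_e)=1$ for every $e$. Substituting $\darg(\la_e)=\darg(\la_e/\la_d)+\darg(\la_d)$ into $\bigwedge_{e\neq l}\darg(\la_e)$ and expanding, every term containing two copies of $\darg(\la_d)$ vanishes. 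Hence, for $l\neq d$,
\[
\bigwedge_{e\neq l}\darg(\la_e)=\pm\,\darg(\la_d)\wedge\bigwedge_{e\neq l,d}\darg(\la_e/\la_d),
\]
and the term with $l=d$ is handled by a similar expansion. Since the total $(2n-1)$-form is basic apart from one factor $\darg(\la_d)$, fibre integration is simply contraction with $X$ followed by multiplication by $2\pi$. Concretely,
\[
\pi_*\Big(\sum_l(-1)^{l-1}\log|\la_l|^2\bigwedge_{e\neq l}\darg(\la_e)\Big)=2\pi\sum_l(-1)^{l-1}\log|\la_l|^2\,\iota_X\bigwedge_{e\neq l}\darg(\la_e),
\]
where the sign is fixed by comparing the orientation $\iota_R\Omega(z)$ on $C_A$ with the product of the orientation of $Z_A$ and the fibre orientation given by $X$. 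Using $\iota_X\darg(\la_e)=1$ and Leibniz, we get
\[
\iota_X\bigwedge_{e\neq l}\darg(\la_e)=\sum_{d\neq l}\pm\bigwedge_{e\neq l,d}\darg(\la_e).
\]
This is the origin of the double sum over $(l,d)$. The sign is $(-1)^{\mathrm{pos}(d)-1}$, where $\mathrm{pos}(d)$ is the position of $d$ in $[2n]\setminus\{l\}$, namely $d-(d>l)$. Combined with $(-1)^{l-1}$, this gives $(-1)^{l+d+(l<d)}$ up to a global sign. Each resulting $(2n-2)$-form is basic: by the same argument as in \cref{lem:rotationInvariance}, it equals the corresponding wedge of $\darg(\la_e/\la_{d'})$ for any fixed $d'$, so it descends to $Z_A$.

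The second step replaces the $\darg$ forms on $Z_A$ by $\dlog|\cdot|^2$ forms. On the complex manifold $Z_A$ of complex dimension $n-1$, write $f_e=\la_e/\la_{d'}$, which is holomorphic in the affine chart $\la_{d'}=1$. Then $\dlog|f_e|^2=\dlog f_e+\overline{\dlog f_e}$ and $2i\,\darg f_e=\dlog f_e-\overline{\dlog f_e}$. A top-degree form of type $(n-1,n-1)$ built from $2n-2$ of these one-forms only receives contributions from terms with exactly $n-1$ holomorphic and $n-1$ antiholomorphic factors. In each such term, replacing every $\darg$ by $\dlog|\cdot|^2$ changes the coefficient by $(2i)^{2n-2}(-1)^{n-1}$, since each antiholomorphic factor flips a sign. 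This matches the constant: $\frac{(2i)^{2n-1}}{2(2\pi i)^n}\cdot 2\pi\cdot(2i)^{-(2n-2)}(-1)^{n-1}=\frac{(-1)^{n-1}}{(2\pi i)^{n-1}}$, up to the orientation sign. Finally, because $\sum_{d}\pm\bigwedge_{e\neq l,d}\dlog|f_e|^2$ is insensitive to the choice of $d'$ (the $\log|\la_{d'}|^2$ corrections telescope exactly as in \cref{cor:scaleInvariance}), we can rewrite the expression intrinsically in terms of $\dlog|\la_e|^2$, which is well-defined on $Z_A$ only in this combined sense. That is the form stated.

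The main obstacle is the sign bookkeeping. It requires (i) the orientation comparison between $\iota_R\Omega(z)$ on $C_A$, the complex orientation of $Z_A$ and the fibre orientation, and (ii) verifying that the Leibniz signs produce exactly $(-1)^{l+d+(l<d)}$. For (i), I would check the orientation in the simplest case: either $n=1$ with a single form, or a chart where $\la_{d}=1$ and the fibre is parametrised by $e^{i\theta}$. For (ii), I would write $\iota_X$ applied to the ordered wedge explicitly. A secondary point is justifying fibre integration for a form that is merely integrable (logarithmic singularities); this is handled by Fubini on the trivialised bundle $Z_A\times S^1$ over the chart.
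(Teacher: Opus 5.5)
Your route is the paper's. You integrate over the $S^1$-fibres of $C_A\to Z_A$ to get the factor $2\pi$ and the double sum; your Leibniz sign $(-1)^{l-1}(-1)^{d-(d>l)-1}=(-1)^{l+d+(l<d)}$ is correct. You then use Kontsevich's type-$(n-1,n-1)$ trick to trade $\darg$ for $\dlog\abs{\cdot}^2$, and your constant agrees with the paper's. Working with $\iota_X$ instead of the paper's trivialisation $\darg(\la_e)=\dd{\varphi}+(\text{form on the base})$ is only a cosmetic difference.

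One intermediate claim is false and needs replacing. A single summand $\bigwedge_{e\neq l,d}\darg(\la_e)$ is \emph{not} basic. Its contraction with $X$ is the Leibniz sum $\sum_{f\neq l,d}\pm\bigwedge_{e\neq l,d,f}\darg(\la_e)$, which is in general nonzero. In particular it does not equal $\bigwedge_{e\neq l,d}\darg(\la_e/\la_{d'})$.

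Only the $d$-summed form $\iota_X\bigwedge_{e\neq l}\darg(\la_e)$ descends to $Z_A$, and your own first display already identifies it. For any single $d\neq l$ it equals $(-1)^{d-(d>l)-1}\bigwedge_{e\neq l,d}\darg(\la_e/\la_d)$. Apply the type argument to this one wedge of $\darg(f_e)$'s. Then use the analogous identity $(-1)^{d-(d>l)-1}\bigwedge_{e\neq l,d}\dlog\abs{\la_e/\la_d}^2=\iota_{R/2}\bigwedge_{e\neq l}\dlog\abs{\la_e}^2=\sum_{d'\neq l}(-1)^{d'-(d'>l)-1}\bigwedge_{e\neq l,d'}\dlog\abs{\la_e}^2$, which follows since $\iota_{R/2}\dlog\abs{\la_e}^2=1$.

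This is what your final paragraph is reaching for ("well-defined only in this combined sense"). With the termwise claim replaced by this summed statement, the proof goes through. The paper sidesteps the issue by implicitly fixing a trivialisation (holomorphic chart), in which each summand is an honest, section-dependent form on $Z_A$.
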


\begin{proof}
	Recall the formula for $I_{\RW}$ as in \cref{def:RWIntegral}. 
	Let $\pi: C_{A} \to C_{A} / S^{1}$ denote the canonical projection.
	Since  $C_{A} / S^{1}$ is canonically isomorphic to $Z_{A} = \conf_{A} / \C^{*}$, and $\pi$ is a trivial $S^{1}$-bundle,
	we may integrate along the fibres. Writing $\dd{\varphi}$ for the angular form on $S^{1}$, we obtain
	\begin{align*}
		I_{\RW}(A) &= \frac{(2i)^{2n-1}}{2 (2 \pi i)^{n}} 
		\int_{C_{A}} \sum_{l=1}^{2n} (-1)^{l-1} \log(\abs{\la_{l}}^2) \bigwedge_{e \neq l} (\dd{\varphi} + \darg(\la_{e}))\\
		&= \frac{(2i)^{2n-1}}{2(2 \pi i)^{n}} \int_{Z_{A}} \int_{S^{1}} \dd{\varphi} 
		\sum_{\substack{l,d\\ l \neq d}} (-1)^{l + d + (l<d)} \log(\abs{\la_{l}}^2) 
	\bigwedge_{e \neq l,d} \darg(\la_{e})
.\end{align*}
where we used absolute convergence (\cref{thm:prodToPos}) to justify the integration along the fibre.

Integrating $\dd{\varphi}$ over the fibre yields a factor of $2\pi$, and hence
	\begin{equation}\label{eq:RWdargToLog}
		I_{\RW}(A) = \frac{(2i)^{2n-2}}{(2 \pi i)^{n-1}} \int_{Z_{A}} \sum_{\substack{l,d\\l \neq d}} (-1)^{l + d + (l<d)}
		\log(\abs{\la_{l}}^2) \bigwedge_{e \neq l,d} \darg(\la_{e})
	.\end{equation}
	To replace the $\darg$-forms by $\dlog$-forms, we use a standard trick due to Kontsevich \cite[Section 6.6.1]{kontsevich03}:
	\begin{align*}
		\bigwedge_{e \neq l,d} \darg(\la_{e}) &= \frac{1}{(2 i)^{2n-2}} \bigwedge_{e \neq l,d} \left( \dlog(\la_{e}) - \dlog(\overline{\la_{e}}) \right) \\
	&= \frac{(-1)^{n-1}}{(2 i)^{2n-2}} \bigwedge_{e \neq l,d} \left( \dlog(\la_{e}) + \dlog(\overline{\la_{e}}) \right) 
	= \frac{(-1)^{n-1}}{(2 i)^{2n-2}} \bigwedge_{e \neq l,d} \log(\abs{\la_{e}}^2)
.\end{align*}
In the second equality, we use that only terms with $n-1$ holomorphic and $n-1$ anti-holomorphic differentials contribute, 
as we are considering a top-degree form on an $(n-1)$-dimensional complex manifold.

	Substituting this identity into \cref{eq:RWdargToLog} yields the desired expression and completes the proof.
\end{proof}

We will now sketch the proof of the weight and values the RW-integrals take in the case of graphs, as explained in \cref{sec:svAndWeight}, that is:
\begin{proposition}\label{prop:RWSingleValuedWeight}
	Let $G$ be a graph with $n+1$ vertices and $2n$ edges. Then, the RW-integral and thus the canonical integral, evaluates to single-valued
	MZVs of weight $n$.
	\[
	I_{\can}(G) =  I_{\RW}(G) \in \mathcal{Z}^{\sv}_{n}
	.\] 
\end{proposition}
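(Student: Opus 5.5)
The plan is to work entirely on the RW side, using \cref{prop:ogRWIntegral}, and then transfer the result to $I_{\can}(G)$ via \cref{thm:EqualityOfIntegralsGraphs}. For a graph with reduced incidence matrix, the linear forms are $\la_e = z_u - z_v$. Fixing one vertex at $0$ (translation) and another at $1$ (using the $\C^{*}$-quotient) identifies $Z_A$ with an open dense subset of $\mathfrak{M}_{0,n+2}(\C)$: the configuration space of $n-1$ free points $z_2,\dots,z_n \in \C\setminus\{0,1\}$, pairwise distinct, with edges of $G$ only imposing a subset of these conditions. The integrand is
\[
\log\abs{z_l}^2 \bigwedge_{e\neq l,d}\dlog\abs{z_e}^2 .
\]
After expanding each $\dlog\abs{z_e}^2 = \dlog z_e + \dlog \overline{z_e}$, it becomes a finite $\Q$-linear combination of products of a single-valued logarithm with holomorphic times antiholomorphic dlog forms in the letters $z_i$ and $z_i - z_j$. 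This is exactly the class of integrands covered by the theory of single-valued integration on $\mathfrak{M}_{0,N}$ of \cite{brown21single} (see also \cite{schlotterer19,vanhove22}). That theory asserts that convergent integrals over $\mathfrak{M}_{0,N}(\C)$ of forms $\omega\wedge\overline{\nu}$, with $\omega,\nu$ global logarithmic forms and coefficients single-valued polylogarithms, are single-valued periods of $\mathfrak{M}_{0,N}$, hence lie in $\mathcal{Z}^{\sv}$.

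The key steps, in order, are as follows.
\begin{enumerate}
\item Rewrite $I_{\RW}(G)$ in the chart above via \cref{prop:ogRWIntegral}. Here absolute convergence is taken from \cref{thm:prodToPos}, referenced earlier.
\item Integrate iteratively, one point $z_k$ at a time, following the algorithm of \cite{schnetz18,banks20}. At each stage the partial integrand is a single-valued multiple polylogarithm in the remaining variables, with letters in $\{0,1,z_j\}$, wedged with dlog forms. The complex integral over $z_k$ is evaluated by the single-valued residue theorem: take a single-valued antiderivative in $z_k$, then sum residues at $0$, $1$, the other $z_j$, and $\infty$. This again yields single-valued polylogarithms in the remaining variables.
\item After all $n-1$ integrations, the leftover constant is a single-valued polylogarithm evaluated at $0$, $1$ or $\infty$, i.e.\ an element of $\mathcal{Z}^{\sv}$.
\end{enumerate}

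The weight count is done alongside the integration. The initial integrand has weight $1$ from the logarithm plus $2(n-1)$ dlog forms, i.e.\ $n-1$ holomorphic and $n-1$ antiholomorphic ones. Each complex integration step consumes one holomorphic and one antiholomorphic form and raises the weight of the single-valued function by one: the antiholomorphic primitive adds weight one, and the residue preserves it. After $n-1$ steps the weight is therefore $1+(n-1)=n$. Weight homogeneity should be tracked throughout, so that the final result lands in $\mathcal{Z}^{\sv}_n$ rather than merely in $\mathcal{Z}^{\sv}$ up to weight $n$. Equivalently, one can cite the motivic statement of \cite{brown21single}: the integral is the single-valued period of a framed mixed Tate motive of weight $n$ over $\Z$.

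The main obstacle is the convergence and regularisation bookkeeping in the iterated integration. Individual terms of the sum over $(l,d)$, and the intermediate single-valued primitives, may diverge at the boundary divisors even though the total integral converges. I would handle this by first using \cref{prop:introNoLsum}, so that only a single term with a fixed pair $(l,d)$ has to be integrated. I would then invoke the convergence criterion of \cite{brown21single}, which relies only on the total form being integrable. This criterion guarantees that regularised boundary contributions cancel and do not introduce non-single-valued constants such as $\zeta(2k)$.
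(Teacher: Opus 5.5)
Your proposal is correct and follows essentially the same route as the paper's sketch. You work in an affine chart of the representation from \cref{prop:ogRWIntegral}, integrate out $z_n,\dots,z_2$ one at a time via single-valued primitives and Schnetz's residue theorem so that the weight rises from $1$ to $1+(n-1)=n$, and identify the final regularised evaluations of single-valued polylogarithms in the alphabet $\{0,1\}$ as elements of $\mathcal{Z}^{\sv}_n$. One harmless slip: the chart of $\mathfrak{M}_{0,n+2}$ sits inside $Z_A$ as an open dense subset, not the other way round, since the graph imposes fewer non-coincidence conditions; this changes nothing because the difference has measure zero, and your extra care about convergence via \cref{prop:introNoLsum} goes beyond what the paper's sketch addresses.
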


The proof sketch is based on the theory of \cite{vanhove22,banks20}.
\begin{proof}[Sketch of proof]
Let $X_{k} = \{0,1,z_2,\ldots,z_{k}\}$ denote both a set of points in $\C$ and the corresponding alphabet.
We write $\conf_{k}(\C) = \{z \in \C^{k-1} \mid z_{i} - z_{j} \neq 0 \text{ for } z_{i},z_{j} \in X_{k}\}$ for the configuration space.
Let $\mathcal{A}^{d}(X_{k})$ be the $\Q$-vector space of single-valued $d$-forms on $\conf_{k}(\C)$ generated by wedges 
of $\dlog(z_{i}-z_{j})$ and $\dlog(\overline{z_{i}} - \overline{z_{j}})$ for $z_{i},z_{j} \in X_{k}$. 

Let $\mathcal{V}_{w}(X_{k})$ be the $\Q$-vector space of single-valued polylogarithms of weight $w$ \cite{brown04single}. Is is
generated by products $\mathcal{L}_{v_1}(a_1) \ldots \mathcal{L}_{v_{r}}(a_{r})$ where $v_{i}$ is a word in the alphabet $X_{k}$, $a_{i} \in X_{k}$,
and the weight is the sum of the word lengths.
We set $\mathcal{U}_{w}^{d} := \mathcal{A}^{d} \otimes \mathcal{V}_{w}$.

The key property is that these spaces are stable under integration in the following sense: 
If $\omega \in \mathcal{U}_{w}^{2k}(X_{k})$ is a top-degree form, then there exists a primitive $\alpha \in \mathcal{U}_{w}^{2k-1}(X_{k})$, 
with $\dd{\omega} = \alpha$, which has bidegree $(0,1)$ in $\dd{z_{k}} \wedge \dd{\overline{z_{k}}}$.
Schnetz's residue theorem \cite{schnetz13} gives:
\[
	\frac{1}{2 \pi i} \int_{z_{k} \in \C \setminus X_{k-1}} \omega = \sum_{j=1}^{k-1} \operatorname*{Res}_{\overline{z_{k}} = \overline{z_{j}}} \alpha 
	\in \mathcal{U}_{w+1}^{2k-2}(X_{k-1})
.\]
Thus integrating out one complex variable lowers the degree of the form by $2$ and raises the polylogarithmic weight by $1$.

We now apply this to the RW-integral representation from \cref{prop:ogRWIntegral}, 
where we have identified $Z_{A}$ with $\C^{n-1}$ by fixing $z_1 = 1$.
The integrand lies in $\mathcal{U}_{1}^{2n-2}(X_{n})$, since $\log(\abs{z_{i} - z_{j}}^2) = \mathcal{L}_{z_{j}}(z_{i}) - \mathcal{L}_{0}(z_{j})$ 
is a single-valued
hyperlogarithm of weight $1$.

Integrating successively the variables $z_{n},z_{n-1},\ldots,z_{2}$, using the primitive and residue formula at each step gives 
a sequence of integrands in the following spaces
\[
	\mathcal U^{2n-2}_1(X_n)
	\longrightarrow
	\mathcal U^{2n-4}_2(X_{n-1})
	\longrightarrow \cdots \longrightarrow
	\mathcal U^0_n(X_1).
\]
The final result is therefore a single-valued polylogarithm of weight $n$ in the alphabet $\{0,1\}$, evaluated at $0$ or $1$. 
Since the evaluations at $0$ vanish, the remaining terms are single-valued polylogarithms in the alphabet $\{0,1\}$ evaluated at $1$. 
These are precisely the single-valued MZVs. Hence the RW-integral lies in the space of single-valued MZVs of weight $n$.
\end{proof}

Finally, we record three vanishing properties of the RW-Integrals:
\begin{proposition}\label{prop:RWvanish}
	The RW-integral $I_{RW}(A)$ vanishes whenever one of the following conditions holds:
	\begin{enumerate}[a)]
		\item $n$ is even,
		\item there exists $i \in [2n]$ such that $\rank(A^{i,\bullet}) < n$.
		\item there exists $j \in [n]$ such that $A^{\bullet,j}$ is in block-diagonal form after a suitable permutation of rows and columns.
	\end{enumerate}
\end{proposition}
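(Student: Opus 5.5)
I treat the three conditions separately, each via a symmetry or a degeneration of the integrand in \cref{def:RWIntegral}, using the forms from \cref{prop:ogRWIntegral} where convenient. Absolute convergence is needed throughout to justify changes of variables and fibre integration; I take it from \cref{thm:prodToPos}.

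\emph{a) $n$ even.} I would use complex conjugation $c\colon z\mapsto \overline z$. Since $A$ is real, $c$ preserves $\conf_{A}(\C)$ and commutes with the $\R_{>0}$-action, so it descends to $C_{A}$. It fixes $\log(\abs{\la_l}^2)$ and sends $\darg(\la_e)$ to $-\darg(\la_e)$. The integrand is therefore multiplied by $(-1)^{2n-1}=-1$.

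Next I compute the effect of $c$ on the orientation. On $\C^{n}$, $c$ reverses $\Omega(z)$ by $(-1)^n$, and $c_*R=R$. Hence on $C_A$, oriented by $\iota_R\Omega$, $c$ has degree $(-1)^n$. This gives $I_{\RW}(A)=(-1)^{n}\cdot(-1)\,I_{\RW}(A)$, which forces $I_{\RW}(A)=0$ for $n$ even. Equivalently, the prefactor $(2i)^{2n-1}/(2\pi i)^n$ is imaginary for even $n$ while the integral must be real. The sign bookkeeping is the only subtle point.

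\emph{b) $\rank(A^{i,\bullet})<n$.} The remaining $2n-1$ forms $\la_e$ with $e\ne i$ factor through a projection $q\colon\C^n\to\C^{m}$ with $m\le n-1$. Every term with $l\neq i$ contains the $2n-2$ forms $\darg(\la_e)$, $e\ne i,l$, together with a $\log$ of a function pulled back along $q$. I would work on $Z_A$ via \cref{prop:ogRWIntegral}, where $\dim_\R Z_A=2n-2$. There each term is a wedge of $\dlog\abs{\la_e}^2$ for $e\neq l,d$, and this is a $(2n-2)$-form.

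\begin{itemize}
\item Terms with $i\notin\{l,d\}$ consist of $\log$ and $\dlog$ factors built only from forms pulled back from $\C^{m}/\C^*$, which has real dimension at most $2n-4$. A $(2n-2)$-form pulled back from such a space vanishes.
\item The terms with $l=i$, or with $d=i$, are the delicate ones, since one factor involves $\la_i$. For $d=i$ the $\dlog$ factors still all come from $\C^m$, so these terms vanish as before.
\item For $l=i$, $\log\abs{\la_i}^2$ multiplies a form pulled back from a space of real dimension at most $2n-4$, but the form has degree $2n-3$ and sits on $Z_A$ of dimension $2n-2$. It is still pulled back, so it vanishes once its degree exceeds the base dimension. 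Since $2n-3>2n-4$, these terms vanish too.
\end{itemize}

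Checking this degree count carefully is the main step in b).

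\emph{c) Block-diagonal case.} After deleting column $j$, the rows split into two groups $E_1\sqcup E_2$ whose forms involve disjoint sets of coordinates. This is the two-vertex-cut situation, with $z_j$ playing the role of the second cut vertex. I would use $\GL_n(\R)$-invariance (\cref{rem:RWGLnInvariance}) to put the coordinates in block form. Then I would act on $Z_A$ by independent scaling and rotation of the block-$E_2$ coordinates relative to $z_j$, using the substitution $w\mapsto \lambda w$ on the second block with $z_j$ fixed.

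Each $\la_e$ with $e\in E_2$ is affine in the block-2 variables and $z_j$. I would instead apply the Kontsevich-type argument of \cref{lem:rotationInvariance} to the sub-configuration. There is a free $\C^*$-action on the fibre over the block-1 data. The contracted form, with one $\dlog$ and one log of the block, transforms by adding terms that integrate to zero. The integrand then decomposes as a product of forms on lower-dimensional pieces whose degrees cannot all be top, so it integrates to zero.

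I expect c) to be the main obstacle: making the fibration and the dimension count precise, and handling the single $\log$ factor, which must be absorbed via \cref{cor:scaleInvariance}-style scale invariance on the sub-block.
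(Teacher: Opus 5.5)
Your part a) is correct and is the paper's argument, carried out on $C_A$ instead of $Z_A$. The count $I_{\RW}(A)=(-1)^{n+1}I_{\RW}(A)$ is right. Your ``equivalently'' remark about the imaginary prefactor proves nothing by itself, since reality of $I_{\RW}(A)$ is not known independently.

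In b) the case analysis is inverted. If $i\notin\{l,d\}$, the factor $\dlog(\abs{\la_i}^2)$ \emph{does} occur in $\bigwedge_{e\neq l,d}$, so these terms are not pulled back from $\C^m/\C^*$. It is the terms with $i\in\{l,d\}$ whose form part is entirely pulled back. Also, for $l=i$ that form has degree $2n-2$, not $2n-3$. The repair is easy, and your first sentence almost says it. In every term at most one $1$-form factor involves $\la_i$. The remaining factors (at least $2n-3$ on $Z_A$, at least $2n-2$ on $C_A$) come from a space of real dimension at most $2n-4$ (resp.\ $2n-3$), so their wedge already vanishes. This is the paper's observation that, after a change of basis, only $\la_i$ involves $z_1$, so no term contains both $\dd{z_1}$ and $\dd{\overline{z_1}}$.

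Part c) has a genuine gap. The $\C^*$-action you propose is not a symmetry of the integrand. For $e\in E_2$ one has $\la_e=v_e\cdot w+c_e z_j$ with $c_e\neq 0$ in general. So $w\mapsto\lambda w$ with $z_j$ fixed does not rescale $\la_e$ and does not preserve $\darg(\la_e)$; in the graph picture, no nontrivial similarity of $\C$ fixes both cut vertices. Hence there is nothing for a contraction argument in the style of \cref{lem:rotationInvariance} to act on, and that lemma is a pointwise identity anyway, not a vanishing statement for integrals.

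Your final claim that the degrees ``cannot all be top'' is also false. Suppose $E_1$ has $2p+1$ rows and involves $p$ of the coordinates other than $z_j$. Then a term with $l\in E_1$ and $d\in E_2$ has exactly top degree on both factors, so degree counting cannot kill it.

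The missing idea is to pass to the chart $z_j=1$ of $Z_A$, where $Z_A\cong X_U\times X_V$ and every $\la_e(z,1)$ depends on only one factor. Expanding the integrand gives a sum of products in which the single logarithm sits in one factor. The other factor is then a log-free wedge of $\darg$ (equivalently $\dlog\abs{\cdot}^2$) forms of affine functions. Either its degree is not top, or it is top and its integral vanishes by Kontsevich's vanishing lemma \cite[Section 6.6.1]{kontsevich03}. Fubini, using absolute convergence, then gives $I_{\RW}(A)=0$.
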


\begin{proof}
	We first prove a).
	Complex conjugation changes the orientation of the $(n-1)$-dimensional complex manifold $Z_{A} = \conf_{A}(\C) / \C^{*}$  by
	$(-1)^{n-1}$, while leaving the integrand invariant. Hence,
	\[
	I_{\RW}(A) = (-1)^{n-1} I_{\RW}(A)
	,\] 
	and a) follows trivially.

	We now prove b). Suppose that $\rank(A^{i,\bullet}) < n$.  Then the rows $a_1,\ldots,a_{i-1},a_{i+1},\ldots,a_{2n}$
	span a vector space of dimension at most $n-1$. Consequently, there exists a matrix $P \in \GL_{n}(\R)$ such that the first column of 
	$A^{i,\bullet} P$ vanishes identically. Define $\widetilde{A} := A P$.

	By the $\GL_{n}$-invariance of the RW-integral (\cref{rem:RWGLnInvariance}), we have $I_{\RW}(\widetilde{A}) = I_{\RW}(A)$.
	Take the form of the RW-integral from \cref{prop:ogRWIntegral}, and 
	identify $Z_{\widetilde{A}}$ with 
	$Y_{\widetilde{A}} := \{z \in \C^{n-1} \mid \widetilde{\la}_{i}(z,1) \neq 0 \forall i \in [2n]\}$
	by fixing the affine chart $z_{n } = 1$. We obtain
	\[
	I_{\RW}(A) = \frac{(-1)^{n-1}}{(2 \pi i)^{n-1}} \int_{Y_{\widetilde{A}}} \sum_{\substack{l,d\\ l \neq d}} (-1)^{l + d + (l<d)} 
	\log(\abs{\widetilde{\la}_{l}(z,1)}^2) 
	\bigwedge_{e \neq l,d} \dlog(\abs{\widetilde{\la}_{e}(z,1)}^2)
	.\]
	By construction, the first coordinate appears only in the linear form $\widetilde{a}(z,1)$. Hence, among all differential forms
	$\dlog(\abs{\widetilde{\la}_{e}(z,1)}^2)$, only the term corresponding to $e = i$ can contain $\dd{z_1}$ or $\dd{\overline{z_1}}$.
	Therefore, any term appearing in the integrand contains at most one of $\dd{z_1}$ and $\dd{\overline{z_1}}$, but never both. 
	Since a nonzero top-degree form on the complex manifold $Y_{\widetilde{A}} \subseteq \C^{n-1}$ must contain both, 
	the integrand vanishes identically.

	Finally, we prove c). By multiplying $A$ on the right by a suitable permutation matrix $P \in \GL_{n}(\R)$,
	we may assume that the $n$-th column is the one whose removal yields a block-diagonal matrix.
	Thus, we may write
	\[
	A = \begin{pmatrix} 
		U &0 &*\\
		0 &V &*
	\end{pmatrix} 
	.\] 
	where $U \in \R^{i\times j}$ and $V \in \R^{(2n-i) \times (n-1-j)}$.
	In particular, the linear forms $\boldsymbol{u}_{k}: \C^{j} \to \C$ corresponding to the rows of $U$ depend only on the variables $z_1,\ldots,z_{j}$, 
	while the linear forms $\boldsymbol{v}_{k}: \C^{n-1-j} \to \C$ corresponding
	to the rows of $V$ depend only on $z_{j+1},\ldots,z_{n-1}$.

	Using the form of the RW-integral from \cref{prop:ogRWIntegral} and identifying 
	$Z_{A}$ with the affine chart $Y_{A}$ as in the proof of b), we see that $Y_{A}$ decomposes as a product
	$Y_{A} \cong X_{U} \times X_{V}$ where
	\begin{align*}
		X_{U} &:= \{z \in \C^{j} \mid \boldsymbol{u}_{k}(z) \neq 0 \forall k \in \{1,\ldots,i\}\}\\
		X_{V} &:=\{z \in \C^{n-1-j} \mid \boldsymbol{v}_{k}(z,1) \neq 0 \forall k \in \{i+1,\ldots,2n\}\}
	.\end{align*}
	Consequently, the integrand splits into a product of a form depending only on the variables for $U$ ($z_1,\ldots,z_{j}$), and a form
	depending only on the variables for $V$ ($z_{j+1},\ldots,z_{n-1}$), that is
	\begin{align*}
		I_{\RW}(A) &= \sum_{d = 1}^{2n} \sum_{\substack{l = 1\\ l \neq d}}^{i} \pm
		\Bigg( \int_{X_{U}} \log(\abs{\la_{l}}^2) \bigwedge_{\substack{e=1\\ e \neq l,d}}^{i} \darg(\la_{e}) \Bigg)
		\Bigg( \int_{X_{V}} \bigwedge_{\substack{e=i+1\\ e \neq d}}^{2n} \darg(\la_{e}) \Bigg)\\
		&+  \sum_{d = 1}^{2n} \sum_{\substack{l = i\\ l \neq d}}^{2n} \pm
		\Bigg( \int_{X_{U}} \bigwedge_{\substack{e=1\\ e \neq d}}^{i} \darg(\la_{e}) \Bigg)
		\Bigg( \int_{X_{V}} \log(\abs{\la_{l}}^2) \bigwedge_{\substack{e=i+1\\ e \neq l,d}}^{2n} \darg(\la_{e}) \Bigg)
	.\end{align*}
	By the Kontsevich vanishing lemma \cite[Section 6.6.1]{kontsevich03} it follows that the integral over $X_{V}$ in the first sum, and the integral
	over $X_{U}$ in the second sum vanishes, and hence $I_{\RW}(A) = 0$. 
\end{proof}

\begin{remark}\label{rem:RWTwoVertexCut}
	In the case of the RW-integral associated to a graph $G$, c) implies that the integral vanishes whenever $G$ contains a two-vertex cut.
	This also follows from \cite[Lemma 7.5]{rossi14}.

	Indeed, let $G$ be a graph with a two-vertex cut given by vertices $u$, $v$. Choose an incidence matrix  $\epsilon$ of $G$ such that the first
	$k$ edges belong to one connected component of $G \setminus \{u,v\} $ and the remaining edges belong to the other component.
	Moreover, assume that the columns corresponding to $u$ and $v$ are the first and second columns of $\epsilon$, respectively.

	Now consider the reduced incidence matrix $\widetilde{\epsilon} = \epsilon^{\bullet,1}$. 
	Then the RW-integral of $G$ is given by the RW-integral associated to $A = \widetilde{\epsilon}$. 
	Since the first $k$ rows of $\epsilon$ correspond to
	one connected component of $G \setminus \{u,v\} $, and the remaining rows correspond to the other, it follows that $\widetilde{\epsilon}^{\bullet,1}$ 
	has block-diagonal form. Hence, by \cref{prop:RWvanish}~c), we conclude that $I_{\RW}(G) = I_{\RW}(\widetilde{\epsilon}) = 0$.
\end{remark}

\subsection{Canonical Integrals}
In the overview we defined the canonical integral as follows:
\begin{definition}\label{def:canonicalInt}
	Let $n \geq 3$, and let $A \in \R^{2n \times n}$ of rank $n$, such that no row of $A$ is equal to  $0$.
	Further, let $\sigma_{2n} \subseteq \RP^{2n-1}$ denote the simplex with homogeneous coordinates $x = (x_1,\ldots,x_{2n})$, satisfying $x_{i} \geq 0$.
	$L_{A}(x):  \sigma_{2n} \to \mathcal{P}^{\geq 0}_{n}$ given by $L_{A}(x) = A^{T} \diag(x) A$
	from the simplex $\sigma_{2n}$ into the space $\mathcal{P}_{n}^{\geq 0}$ of positive semidefinite symmetric $n \times n$ matrices.
	The canonical integral and the canonical form are defined by
	\[
		I_{\can}(A) = \int_{\sigma_{2n}} \beta^{2n-1}_{L} \qq{and} \beta^{2n-1}_{L} = \tr((L^{-1} \dd{L})^{2n-1})
	.\] 
	We orient $\sigma_{2n}$ by requiring that the projective volume form $\Omega(x)$ evaluates positively 
	on the ordered frame $\partial_{2} \wedge \ldots \wedge \partial_{2n}$.
\end{definition}

\begin{remark}
	The canonical forms are invariant under the right action of $\GL_{n}(\R)$ on $A$, given by $A \mapsto A P$ for $P \in \GL_{n}(\R)$.
	On the level of $L$, this corresponds to invariance under conjugation $L \mapsto P^{T} L P$.
	In fact, this invariance characterises the canonical forms as the unique primitive $\GL_{n}(\R)$-invariant differential forms up to scalar.

	Moreover, the same canonical forms arise from different matrix representations associated to graphs.
	For example, if $M$ denotes the graph matrix from \cref{def:graphMatrices}, then \cite[Prop.~6.23]{brown21} shows that
	$\beta^{2n-1}_{L} = \beta^{2n-1}_{M}$.
\end{remark}

The expression for the canonical form as the trace of a high power of $L^{-1} \dd{L}$ is not particularly well suited for explicit computations, 
since the variables and differentials are heavily intertwined. Using Dodgson polynomials (see \cref{app:Dodgson}), 
one can instead derive a representation as a polynomial times a volume form:
\begin{proposition}\label{prop:canonicalDodgsonFormula}
	Let $d \in [2n]$ and let $I_{d} = (i_1,\ldots,i_{2n-1}) = I \setminus \{d\} $ with the induced ordering.
	Then the canonical form can be written as
	\[
		\beta_{L}^{2n-1} = (-1)^{d+1} \frac{f_{d}(x)}{x_{d}} \Omega(x)
	.\]
	with
	\[
	f_{d}(x) = \frac{1}{\Psi^{2n-1}} \sum_{\tau \in \mathbb{S}_{2n-1}} \sgn(\tau) 
	\Psi^{i_{\tau(1)},i_{\tau(2)}} \Psi^{i_{\tau(2)},i_{\tau(3)}} \ldots \Psi^{i_{\tau(2n-1)},i_{\tau(1)}} 
	.\] 
	where $\Psi^{i,j}$ denotes the Dodgson polynomial associated to $L$ and $\Psi$ is the graph polynomial (\cref{def:graphMatrices}).
\end{proposition}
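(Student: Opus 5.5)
We expand the trace formula directly and then pass from the projective picture to a single affine chart. Write $L=L(x)=A^{T}\diag(x)A$, so $\dd{L}=\sum_{k}\partial_{k}L\,\dd{x_{k}}$ with $\partial_{k}L=a_{k}^{T}a_{k}$, a rank-one matrix. Then
\[
\tr\bigl((L^{-1}\dd{L})^{2n-1}\bigr)=\sum_{k_1,\ldots,k_{2n-1}}\tr\bigl(L^{-1}a_{k_1}^{T}a_{k_1}\cdots L^{-1}a_{k_{2n-1}}^{T}a_{k_{2n-1}}\bigr)\,\dd{x_{k_1}}\wedge\ldots\wedge\dd{x_{k_{2n-1}}}.
\]
Using the rank-one structure, the trace collapses cyclically to the product $\prod_{j}(a_{k_j}L^{-1}a_{k_{j+1}}^{T})$, indices taken mod $2n-1$. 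Only sequences with distinct $k_j$ survive the wedge, so the $k_j$ run over a set of $2n-1$ distinct edges.

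The key identification is $a_{i}L^{-1}a_{j}^{T}=\adj(L)$-expression$/\Psi$. By the standard Dodgson identity from \cref{app:Dodgson}, this equals $\Psi^{i,j}/\Psi$, up to the sign convention built into the definition of $\Psi^{i,j}$ via the graph matrix $M$. This is essentially the statement $(AL^{-1}A^{T})_{ij}=(M^{-1})_{ij}$, which holds because a Schur complement of $M$ gives $M^{-1}$ in the edge block.

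Next we pass to projective forms. Since $\beta_{L}^{2n-1}$ is $\GL$-invariant, it is homogeneous of degree $0$ under $x\mapsto\lambda x$. It is also annihilated by the Euler vector field (Brown shows it is a projective form), so it is a multiple $g(x)\,\Omega(x)$. To find $g$, restrict to the chart where we read off the coefficient of $\dd{x_{i_1}}\wedge\ldots\wedge\dd{x_{i_{2n-1}}}$, i.e.\ all differentials except $\dd{x_d}$. In $\Omega(x)$ that coefficient is $(-1)^{d-1}x_d$. In the trace expansion it is $\sum_{\tau}\sgn(\tau)\prod_j\Psi^{i_{\tau(j)},i_{\tau(j+1)}}/\Psi^{2n-1}$: reordering the wedge to increasing order produces $\sgn(\tau)$, and each ordering of the distinct indices $\{i_1,\ldots,i_{2n-1}\}$ corresponds to exactly one $\tau$. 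Dividing gives $g=(-1)^{d+1}f_d(x)/x_d$, as claimed.

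The main obstacle is sign bookkeeping. Two signs must be tracked: the sign convention of $\Psi^{i,j}$ relative to $a_iL^{-1}a_j^{T}$, and the sign in the cyclic collapse of the trace, which we must confirm involves no transposition of the rank-one factors. One further point needs care: the identification $\beta=g\,\Omega$ requires that $\iota_E\beta=0$ for the Euler field $E$. We would check this directly. Since $\iota_E\dd{L}=L$, we get $\iota_E\beta=(2n-1)\tr\bigl((L^{-1}\dd{L})^{2n-2}\bigr)$, which vanishes because the trace of an even power of a matrix of odd $1$-forms is zero by cyclicity and anticommutation.
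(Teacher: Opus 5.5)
\paragraph{The gap: the key identification.}
The step that fails is the one you call the key identification. The relation $a_iL^{-1}a_j^{T}=\pm\Psi^{i,j}/\Psi$, i.e.\ $(AL^{-1}A^{T})_{ij}=(M^{-1})_{ij}$, is false, and the discrepancy is not a sign. The Schur complement of the block $D=\diag(x)$ in $M$ is $A^{T}D^{-1}A=L(x^{-1})$, not $L(x)$. So the edge block of $M^{-1}$ is $D^{-1}-D^{-1}A\,L(x^{-1})^{-1}A^{T}D^{-1}$, and for $i\neq j$
\[
(M(x)^{-1})_{ij}=-\frac{\bigl(A\,L(x^{-1})^{-1}A^{T}\bigr)_{ij}}{x_i\,x_j}.
\]
This is \cref{lem:eLeToM} with $L$ evaluated at $x^{-1}$, as in its proof. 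Already for $A=(a_1,a_2)^{T}$ one has $(AL^{-1}A^{T})_{12}=a_1a_2/(a_1^2x_1+a_2^2x_2)$, whereas $\Psi^{1,2}/\Psi=a_1a_2/(a_2^2x_1+a_1^2x_2)$.

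\paragraph{What your expansion actually proves.}
Insert the correct relation into your cyclic products. The extra factor $\prod_j x_{k_j}^{-2}$ and the sign $(-1)^{2n-1}$ are exactly what $\dd{(x_k^{-1})}=-x_k^{-2}\dd{x_k}$ produces. So your rank-one expansion really proves $\tr\bigl((M^{-1}\dd{M})^{2n-1}\bigr)=\iota^{*}\beta^{2n-1}_{L}$ with $\iota(x)=x^{-1}$. On edge indices $(M^{-1})_{ij}=(-1)^{i+j}\Psi^{i,j}/\Psi$, and these signs cancel around any cycle, so the Dodgson expression in the statement is the canonical form of $M$.

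Passing from $\beta_L$ to it is therefore a form-level statement, $\beta_L=\beta_M$ (Brown's Proposition~6.23 / Corollary~6.24). That is precisely the input the paper's proof takes from \cite{brown21} rather than derives. Your computation shows that for $L=A^{T}\diag(x)A$ this identification goes through $x\mapsto x^{-1}$ (compare $\Psi_A(x)=\det L_A(x^{-1})\prod_e x_e$). So what is missing is not sign bookkeeping. You need to state correctly which Laplacian the Dodgson polynomials belong to, and then either cite Brown's result or carry out the inversion.

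\paragraph{The remaining steps.}
Everything downstream matches the paper: discarding repeated indices, $\sgn(\tau)$ from reordering, the coefficient $(-1)^{d-1}x_d$ in $\Omega(x)$, and the Euler-field step. Your direct verification of $\iota_E\beta=0$ is a nice self-contained substitute for Brown's Theorem~2.1, but the coefficient is $1$, not $2n-1$. Since $\iota_E$ is an antiderivation and $\iota_E(L^{-1}\dd{L})$ is the identity matrix, $\iota_E\bigl((L^{-1}\dd{L})^{2n-1}\bigr)=\sum_{j=1}^{2n-1}(-1)^{j-1}(L^{-1}\dd{L})^{2n-2}=(L^{-1}\dd{L})^{2n-2}$. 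Its trace still vanishes by graded cyclicity, so your conclusion stands.
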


\begin{proof}
	By  \cite[Corollary 6.24]{brown21} the canonical form $\beta_{L}^{2n-1}$ is equal to $\tr(\eta_{L}^{2n-1})$ where 
	$\eta_{L}$ is defined as the $2n \times 2n$-matrix of $1$-forms given by
	\[
		(\eta_{L})_{ij} = \left( \frac{\Psi^{ij}}{\Psi} \dd{x_{j}} \right), \quad 1 \leq i \leq j \leq 2n
	.\] 
	Expanding the matrix product yields
	\begin{equation} \label{eq:etaExpansion}
		\beta_{L}^{2n-1} = \tr(\eta_{L}^{2n-1}) = \frac{1}{\Psi^{2n-1}} 
		\sum_{k_1,\ldots,k_{2n-1} = 1}^{2n} \Psi^{k_1,k_2} \Psi^{k_2,k_3} \ldots \Psi^{k_{2n-1},k_1} 
		\dd{x_{k_2}} \wedge \ldots \wedge \dd{x_{k_{2n-1}}} \wedge \dd{x_{k_1}}
	.\end{equation}
	Since $\dd{x_{k_{l}}} \wedge \dd{x_{k_{m}}} = 0$ for $l = m$, only terms for which the indices $k_1,\ldots,k_{2n-1}$ are pairwise distinct
	contribute to the sum.
	As there are exactly $2n-1$ such indices, choosing values for $k_1,\ldots,k_{2n-1}$ is equivalent to choosing one element
	$j \in [2n]$ to omit and then choosing a permutation of the remaining elements $I _{j} = [2n] \setminus \{j\}$.
	Thus, \cref{eq:etaExpansion} becomes
	\[
		\beta_{L}^{2n-1} = \frac{1}{\Psi^{2n-1}} \sum_{j \in [2n]}
		\sum_{\tau \in \mathbb{S}_{2n-1}} \Psi^{i_{\tau(1)},i_{\tau(2)}} \Psi^{i_{\tau(2)},i_{\tau(3)}} \ldots \Psi^{i_{\tau(2n-1)},i_{\tau(1)}} 
		\dd{x_{i_{\tau(1)}}} \wedge \ldots \wedge \dd{x_{i_{\tau(2n-1)}}}
	.\]
	Reordering the differential forms into increasing order $\dd{x_{i_1}} \wedge \ldots \wedge \dd{x_{i_{2n-1}}}$ produces precisely
	the sign $\sgn(\tau)$. Hence
	\begin{equation}\label{eq:canEulerField}
		\beta_{L}^{2n-1} = \sum_{j \in [2n]} f_{j}(x) 
		\dd{x_{1}} \wedge \cdots \wedge \widehat{\dd{x_{j}}} \wedge  \cdots \wedge \dd{x_{2n}}
	.\end{equation}
	Finally, \cite[Theorem 2.1]{brown21} shows that $\beta_{L}^{2n-1}$ is a projective differential form.
	Therefore, its contraction with the Euler vector field $E = \sum_{i=1}^{2n} x_{i} \frac{\partial }{\partial x_{i}}$ vanishes,
	\begin{align*}
		\iota_{E} \beta_{L}^{2n-1} 
		= \sum_{\substack{j,k \in [2n]\\ k < j}} \left( (-1)^{j} x_{j} f_{k}(x) - (-1)^{k} x_{k} f_{j}(x) \right)
		\bigwedge_{i \neq j,k} \dd{x_{i}} = 0
	,\end{align*}
	and $(-1)^{k} \frac{f_{k}(x)}{x_{k}} = (-1)^{j} \frac{f_{j}(x)}{x_{j}}$ for all $j,k \in [2n]$.
	We conclude that \cref{eq:canEulerField} is equal to
	\[
	(-1)^{d} \frac{f_{d}(x)}{x_{d}} \sum_{j=1}^{2n} (-1)^{j} x_{j} \dd{x_{1}} \wedge \cdots \wedge \widehat{\dd{x_{j}}} \wedge \cdots \wedge \dd{x_{2n}}
	= (-1)^{d+1} \frac{f_{d}(x)}{x_{d}} \Omega(x). \qedhere 
	\]
\end{proof}

Finally, we record three vanishing properties of the canonical forms:
\begin{proposition}\label{prop:canVanish}
	The canonical form $\beta^{2n-1}_{L}$ vanishes whenever one of the following conditions holds:
	\begin{enumerate}[a)]
		\item $n$ is even,
		\item there exist $i \in [2n]$ such that $\rank(A^{i,\bullet}) < n$,
    	\item $A$ has a repeated row, i.e. there exist $i \neq j$ in $[2n]$ such that $a_i = a_j$.
	\end{enumerate}
\end{proposition}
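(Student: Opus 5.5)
We treat the three conditions separately, using the invariance of $\beta^{2n-1}_{L}$ under $L \mapsto P^{T} L P$ for $P \in \GL_{n}(\R)$ (equivalently $A \mapsto AP$) to put $A$ into a convenient normal form first.

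For a) we use the transpose symmetry of the trace formula. Since $L$ is symmetric, $\dd{L}$ is symmetric as well. Taking the transpose inside the trace gives
\[
\tr\big((L^{-1}\dd{L})^{2n-1}\big) = \tr\big(((L^{-1}\dd{L})^{2n-1})^{T}\big).
\]
Transposing a product of $k$ matrices of $1$-forms reverses the order of the factors. Restoring the original order of the $1$-forms in each wedge product produces the sign of the reversal permutation, $(-1)^{k(k-1)/2}$, with $k=2n-1$. This gives
\[
\beta = (-1)^{(2n-1)(n-1)}\,\tr\big((\dd{L}\,L^{-1})^{2n-1}\big).
\]
Cyclicity of the trace for odd-degree products, which holds because moving a $1$-form matrix past a $(2n-2)$-form matrix introduces no sign, identifies $\tr((\dd{L}\,L^{-1})^{2n-1})$ with $\beta$. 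Hence $\beta = (-1)^{n-1}\beta$, and $\beta$ vanishes for $n$ even. (This is the standard fact that the primitive Borel forms $\beta^{4k+3}$ vanish on symmetric matrices; one could alternatively cite \cite{brown21}.)

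For b), suppose $\rank(A^{i,\bullet})<n$. Choose $P$ so that the first column of $A^{i,\bullet}P$ is zero, as in the proof of \cref{prop:RWvanish}. Then $L = \sum_e x_e a_e^T a_e$ depends on $x_i$ only through the $(1,1)$-entry and the first row and column, while the remaining variables never touch the first row and column. We write $L = x_i\, b^T b + L'$, where $L'$ has first row and column equal to zero, so $L'$ is singular. We use \cref{prop:canonicalDodgsonFormula} with $d=i$. Then the form only involves $\dd{x_e}$ for $e\neq i$, and it suffices to show the coefficient $f_i$ vanishes. Equivalently, by projectivity, we restrict to the slice $x_i = 1$. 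There the form is the pullback of $\beta$ along a map from the $(2n-2)$-dimensional space of the remaining $x_e$ into a family of matrices $b^Tb + L'$. Its image lies in a set of dimension at most $2n-2$, but we need a more precise reason. A cleaner route is to apply the conjugation $L\mapsto P^TLP$ by unipotent $P$, which makes $L$ block diagonal $\diag(c, L_{0})$, with $L_0$ built only from the other $x_e$ and $c$ a function of all $x$. Then $\beta^{2n-1}$ of a block sum is the sum of the blocks' $\beta^{2n-1}$ by additivity of the primitive trace forms. The $1\times 1$ block contributes $\tr((c^{-1}\dd{c})^{2n-1})=0$, and the $(n-1)\times(n-1)$ block $L_0$ depends on only $2n-1$ variables but is projectively invariant under their scaling, so its top-degree form $\beta^{2n-1}_{L_0}$ on a $(2n-2)$-dimensional projective space vanishes. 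The main obstacle is making the block-diagonalisation honest. Since the conjugating matrix depends on $x$, I would verify that $\beta$ is invariant under $x$-dependent conjugation up to exact/trace terms, which the $\GL_n$-invariance of $\beta$ as a form on $\mathcal{P}_n$ guarantees only for constant $P$. I would instead use a Schur complement with $P$ depending only on $L'$-entries, expecting it to work.

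For c), if $a_i=a_j$ then $L$ depends on $x_i,x_j$ only through $x_i+x_j$. Hence $\beta$ is pulled back along the linear projection collapsing these two coordinates. The image is a form of degree $2n-1$ on a space of homogeneous dimension $2n-1$, that is, on projective dimension $2n-2$. It is also annihilated by contraction with the Euler field, and is therefore zero.
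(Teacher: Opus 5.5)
Your parts a) and c) are correct and essentially the paper's. In a) you derive by hand the transpose identity $\beta^{2n-1}_{L}=(-1)^{n+1}\beta^{2n-1}_{L^{T}}$, which the paper simply quotes from Brown. Part c) is the same pullback argument along $(x_i,x_j)\mapsto x_i+x_j$, ending in a $(2n-1)$-form on projective dimension $2n-2$.

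The gap is in b). You never establish the block decomposition: you flag it as the main obstacle and suspect the conjugating matrix must depend on $x$. That worry would be justified if it did, since then $\dd{(P^{T}LP)}\neq P^{T}\dd{L}\,P$. But you end with \emph{expecting it to work}, so b) is not proved as written.

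The missing observation is that a constant change of basis suffices. If $\rank(A)=n$ and $\rank(A^{i,\bullet})<n$, then $a_i$ is not in the span $W$ of the other rows. Hence $\R^n=W\oplus\R a_i$ (as row vectors), and you can choose a constant $P\in\GL_n(\R)$ with $WP=\{0\}\times\R^{n-1}$ and $a_iP$ equal to the first standard basis vector.

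Concretely, start from your normal form with $b=a_iP$; here $b_1\neq 0$, since otherwise the first column of $AP$ would vanish. Compose with the constant unipotent matrix whose first row is $(1,-b_2/b_1,\ldots,-b_n/b_1)$ and which is the identity below. It sends $b$ to $b_1$ times the first basis vector and fixes every row with vanishing first entry.

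This is exactly your Schur complement. Writing $L=x_i b^{T}b+\diag(0,R)$, the coefficient $\ell_{11}^{-1}\ell_{1*}=b'/b_1$ with $b'=(b_2,\ldots,b_n)$ is independent of $x$. The Schur complement is $x_i b'^{T}b'+R-x_i b'^{T}b'=R$, which is built only from the $x_e$ with $e\neq i$. So $L$ becomes $\diag(b_1^2x_i,R)$ after a constant conjugation, and your statement that $c$ is a function of all $x$ is wrong: $c=b_1^2x_i$.

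With this, your block argument closes and coincides with the paper's. The $1\times 1$ block gives $(\dd{x_i}/x_i)^{2n-1}=0$, and $\beta^{2n-1}_{R}$ vanishes because $R$ is linear in only $2n-1$ variables and the form is annihilated by the Euler field.

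Your abandoned slice argument also miscounts. On $x_i=1$ there are $2n-1$ free variables, not $2n-2$, so a $(2n-1)$-form there is top degree and nothing vanishes for dimensional reasons alone.
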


\begin{proof}
	We first prove a). By Property $3$ of \cite[Lemma 4.3]{brown21},  $\beta^{2n-1}_{L} = (-1)^{n+1} \beta_{L^{T}}^{2n-1}$.
	Since $L$ is symmetric, the vanishing for even $n$ follows trivially.
	
	We now prove b).
	As in the proof of \cref{prop:RWvanish}, after replacing $A$ by $AP$ for a suitable matrix $P \in \GL_{n}(\R)$, 
	we may assume that the first column of $A^{i,\bullet}$ vanishes identically. 
	Then $L$ has block form
	\[
	L = \begin{pmatrix} 
		x_1 &0\\
		0 &R\\
	\end{pmatrix} 
	.\] 
	where $R \in \R[x_2,\ldots,x_{2n}]^{(n-1) \times (n-1)}$ is independent of  $x_1$. Consequently,
	\begin{align*}
		\beta^{2n-1}_{L} = \tr((L^{-1} \dd{L})^{2n-1}) &= \tr(\begin{pmatrix} \frac{\dd{x_1}}{x_1} &0\\ 0 &R^{-1} \dd{R} \end{pmatrix}^{2n-1})\\
		&= \tr(\begin{pmatrix} (\frac{\dd{x_1}}{x_1})^{2n-1} &0\\ 0 &(R^{-1} \dd{R})^{2n-1}\end{pmatrix})
		= \left(\frac{\dd{x_1}}{x_1}\right)^{2n-1} + \beta^{2n-1}_{R} = 0
	.\end{align*}
	where the first term vanished trivially since $\dd{x_{i}} \wedge \dd{x_{i}} = 0$ and the second term also
	vanished because $R$ depends on only $2n-1$ variables, so $\beta_{R}^{2n-1}$ is a $(2n-1)$-form on a space of dimension $2n- 2$.

	Finally, we prove c). By permuting the rows of $A$, we may assume that $a_1 = a_2$, 
	since $\beta_{L}^{2n-1}$ is unaffected except for a relabelling of the coordinates $x_{i}$. Expanding $L$ as a linear function of the coordinates
	gives
	\[
	L = x_1 (a_1^{T} a_1) + \ldots + x_{2n} (a_{2n}^{T} a_{2n})
	= (x_1 + x_2) (a_1^{T} a_1) + x_3 (a_3^{T} a_3) + \ldots + x_{2n} (a_{2n}^{T} a_{2n})
	.\] 
	Let $\widetilde{A}$ be the matrix whose rows are $(a_1,a_3,\ldots,a_{2n})$, and let
	$\widetilde{L} = \widetilde{A}^{T} \diag(x_1,\ldots,x_{2n-1}) \widetilde{A}$.

	Consider the map $\pi: \mathbb{RP}^{2n-1}_{> 0} \setminus \{[1:-1:0:\ldots:0]\}  \to \mathbb{RP}^{2n-2}_{> 0}$ given by
	$(x_1,\ldots,x_{2n}) \to (x_1 + x_2,x_3,\ldots,x_{2n})$.
	Then $L$ is the pullback of $\widetilde{L}$ under $\pi$, and hence
	\[
	\beta^{2n-1}_{L} = \pi^{*} \beta^{2n-1}_{\widetilde{L}}
	.\] 
	Since $\beta^{2n-1}_{\widetilde{L}}$ is a $(2n-1)$-form on the $(2n-2)$-dimensional space $\mathbb{RP}^{2n-2}_{> 0}$,
	it must vanish identically. Therefore, $\beta_{L}^{2n-1} = 0$.
\end{proof}
With these properties of the RW- and canonical integrals established, we can now proceed to prove the main theorems.

\graphicspath{{Images/}}

\section{Proofs of main theorems}\label{sec:proofOfMainTheorems}
We turn to the proof of the main theorem \cref{thm:EqualityOfIntegrals}, in full rigor.
Referring back to \cref{fig:proofSketch}, the proof is divided as follows. In \cref{sec:prodToPos}, we establish the left-hand side of the diagram,
namely the passage from the auxiliary integral to the RW-integral.
In \cref{sec:prodToPara}, we prove the top-arrow on the right-hand side of \cref{fig:proofSketch}, corresponding to the complex integration.
Finally, in \cref{sec:paraToCan}, we prove the final matrix identity needed to match the intermediate result with the canonical form.

In the proof given in this section, we combine these results and introduce the necessary regularising terms to ensure that all integrals
remain convergent. 
Furthermore, we prove \cref{prop:introNoLsum} and show that the auxiliary integral is absolutely convergent, allowing us to integrate in either order.

\begin{theorem}
	Let $n \geq 3$ and let $A \in \R^{2n \times n}$ of rank $n$, such that no row of $A$ is equal to $0$. Then
	\[
		I_{\RW}(A) = I_{\can}(A)
	.\] 
\end{theorem}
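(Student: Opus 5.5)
We will prove the theorem by following the strategy of \cref{fig:proofSketch}, now with regularisation. First we dispose of trivial cases. If $n$ is even, both sides vanish by \cref{prop:RWvanish}~a) and \cref{prop:canVanish}~a). If some $A^{i,\bullet}$ has rank $<n$, both sides vanish by part b) of the same propositions. So we may assume $n$ odd and that every $A^{i,\bullet}$ has rank $n$. We then fix a distinguished row $d\in[2n]$ and define a regularised auxiliary integral. In it the sum over $l$ is restricted to $l\neq d$, and from each term we subtract the counterterm obtained by replacing $\alpha_l$ by $\alpha_d$ in the denominator. The simplex is truncated to $\sigma_{2n}^{\epsilon}=\{\alpha_i>\epsilon\}$, and the exponent $2$ is replaced by a complex parameter $s$. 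For $\epsilon>0$ fixed and $\Re(s)$ large, we will show that the resulting integral over $\C^n\times\sigma_{2n}^\epsilon$ is absolutely convergent. The denominator is then bounded below by $\epsilon|z|^2+\epsilon$-type quantities, since $L(\alpha)\geq \epsilon A^TA$ is positive definite. This justifies Fubini and differentiation under the integral sign in $p,\overline p$.

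\textbf{RW side.} We integrate over $\alpha$ first. At $s=2$ and $\epsilon\to0$, the Feynman parametrisation (\cref{prop:FeynmanPara}) evaluates the inner simplex integral. The $\det(P_d^l)$ derivatives produce the $\darg$/$\dlog$ integrand. Pulling back along $C_A\times\R_{>0}\to\C^n\setminus\{0\}$ leaves the radial integral
\[
\int_0^\infty\Big(\frac{1}{r|\la_l|^2+1}-\frac{1}{r|\la_d|^2+1}\Big)\frac{\dd r}{r}=\log\frac{|\la_d|^2}{|\la_l|^2},
\]
which is now convergent thanks to the counterterm. This yields an expression of the shape in \cref{prop:introNoLsum}. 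We then remove the $\log|\la_d|^2$ contributions using \cref{lem:rotationInvariance} (equivalently the Kontsevich vanishing lemma, as in \cref{rem:RWGLnInvariance}). Symmetrising over $d$ recovers $I_{\RW}(A)$ together with the factor $(2n-1)$. The order of limits here needs dominated convergence as $\epsilon\to0$. For this we would use absolute convergence of the RW integrand on $C_A$ (logarithmic singularities against $\dlog$ forms), which is the content referred to as \cref{thm:prodToPos}.

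\textbf{Canonical side.} We integrate over $\C^n$ first, for $\Re(s)>n$. Completing the square and performing the complex Gaussian integral gives \cref{eq:SketchAfterComplexInt}. The derivatives in $p,\overline p$ then expand into a sum over $(S,T)\in\mathcal I_d^l$ of $\det$-minors of $A$ times permanents of $AL^{-1}A^T$, as in \cref{eq:SketchPermanent}. Both the term and the counterterm are meromorphic in $s$. We continue them to $s=2$ and substitute $\alpha_i=1/x_i$. Using Schur complements of the graph matrix $M$ (the Dodgson identities of \cref{app:Dodgson}), $(AL^{-1}A^T)_{S,T}$ becomes minors of $M^{-1}$, giving \cref{eq:SketchPermDet}. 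What remains is algebraic, namely the identity asserted in \cref{thm:paraToCan}: the sum over $l$ and $(S,T)$ of $\det\cdot\perm$ equals $(2n-1)$ times the cyclic signed sum in \cref{eq:SketchCanonicalFinal}. That sum equals $f_d(x)$ by \cref{prop:canonicalDodgsonFormula} with $\beta_L=\beta_M$. We would try to prove it by expanding $\perm$ and $\det$ and grouping terms into cycles. A pairing of $S$ with $T$ together with a permutation coming from the determinant assembles into a single $(2n-1)$-cycle through $l$; the remaining terms cancel via Laplace expansion. The $\epsilon\to0$ limit on this side is controlled by Brown's convergence of $\int_{\sigma_{2n}}\beta_L$.

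\textbf{Main obstacle.} We expect the main difficulty to be the analytic bookkeeping. The truncation must be compatible with both orders of integration, the analytic continuation in $s$ must commute with the $\epsilon\to0$ limit, and the counterterm must be checked to contribute zero on the canonical side. We would try to show this last point by noting that the counterterm has no $\alpha_l$-dependence beyond the measure, so its contribution vanishes by the rank/degree argument of \cref{prop:canVanish}~b). The combinatorial cycle identity is the second hardest step.
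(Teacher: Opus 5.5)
Your architecture is the paper's: an auxiliary integral on $\C^n\times\sigma^\epsilon_{2n}$, evaluated by Feynman parametrisation plus radial integration on one side, and by Gaussian integration with an $s$-regulator, \cref{lem:eLeToM} and the cycle combinatorics on the other. However, the step you yourself single out, the counterterm, fails as you set it up.

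Suppose the counterterm replaces the additive $\alpha_l$ by $\alpha_d$. Feynman parametrisation turns $(\sum_e\alpha_e\abs{\la_e}^2+\alpha_d)^{-2n}$ into $\bigl(\prod_{e\neq d}\abs{\la_e}^2\,(\abs{\la_d}^2+1)\bigr)^{-1}$. Against the numerator $\prod_{e\neq l}\abs{\la_e}^2\,\dlog(\abs{\la_d}^2)\wedge\bigwedge_{e\neq l}\darg(\la_e)$ this gives the factor $\abs{\la_d}^2/\bigl(\abs{\la_l}^2(\abs{\la_d}^2+1)\bigr)$, not $1/(\abs{\la_d}^2+1)$. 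After rescaling, the difference of radial integrands behaves like $(1-\abs{\la_d}^2/\abs{\la_l}^2)\,r^{-1}$ as $r\to0$, so the divergence is not removed. On the canonical side the same counterterm yields $1/\alpha_d$ instead of $1/\alpha_l$ after the $p$-derivatives. That is $x_d/x_l$ times the $l$-th summand of \cref{thm:paraToCan}, which does not vanish. The radial integral you display is correct only for the paper's counterterm. There the \emph{coefficient} of $\alpha_l$ changes from $\abs{\la_l}^2$ to $\abs{\la_d}^2$ and the $+\alpha_l$ is kept; equivalently, it is the same construction for the matrix $A_d$ obtained from $A$ by replacing row $l$ with $a_d$.

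Your argument that the counterterm drops out on the canonical side is also wrong. In either version the counterterm depends on $\alpha_l$, and $A_d$ generally does not satisfy \cref{prop:canVanish}~b). What kills it is \cref{prop:canVanish}~c): $A_d$ has the repeated row $a_d$, so $L_{A_d}$ depends on $x_l,x_d$ only through $x_l+x_d$, and hence $\beta_{L_{A_d}}=0$. Because $A_d$ depends on $l$, you cannot first sum over $l$ and then quote \cref{thm:paraToCan}. You need its fixed-$l$ refinement: the $l$-th summand alone equals $\beta_L/(2n-1)$. This holds because both of the following fix $l$:
\begin{itemize}
\item the sign-reversing involution that cancels even cycles (it advances an even cycle between $S$ and $T$, flipping $\sgn(S,T)$ and fixing the monomial; this is the mechanism, not a Laplace expansion);
\item the bijection with $\mathbb S_{2n-1}$.
\end{itemize}
In addition, rotating a $(2n-1)$-cycle is an even permutation.

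Two smaller points. First, your factors of $2n-1$ are misplaced. With the $l$-sum inside the auxiliary integral, the RW side gives $I_{\RW}(A)$ directly: add the vanishing $l=d$ term and discard $\log\abs{\la_d}^2$ via \cref{lem:rotationInvariance}, with no symmetrisation over $d$. Likewise, \cref{thm:adjToDodgsonProd} carries no factor $2n-1$. Second, letting $\epsilon\to0$ inside the simplex integral requires the truncated identity \cref{thm:regFeynmanParam} and an $\epsilon$-uniform dominating function for the regulated radial result. Integrability of the limiting RW integrand does not supply such a function.
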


\begin{proof}
	By \cref{prop:RWvanish,prop:canVanish} we may restrict to the case where $n$ is odd and $\rank(A_{i,\bullet}) = n$ for all $i \in [2n]$.

	Let $\epsilon > 0$ and define the truncated affine simplices
	\[
		\sigma_{2n}^{\epsilon} := \{\alpha \in \R^{2n}_{\geq \epsilon} \mid \sum_{i=1}^{2n} \alpha_{i} = 1\} 
		\qq{and} \widetilde{\sigma}_{2n}^{\epsilon} := \{x \in \R_{\geq 0}^{2n} \mid 
	x_{i} \leq \frac{1}{\epsilon} \forall  i \in [2n] \text{ and } \sum_{i=1}^{2n} \frac{1}{x_{i}} = 1\}
	.\]
	Then the regularised auxiliary integral is defined as
	\begin{align}
	I_{\aux}^{\epsilon}(A) = \frac{\Gamma(2n)}{(2 \pi i)^{n}}
		\sum_{\substack{l \in [2n]\\ l \neq d}} (-1)^{l+ d + (d<l)} &\left(\int_{\C^{n} \times \sigma_{2n}^{\epsilon}} 
	\frac{\bigwedge_{e \neq l,d} ( \overline{\la_{e}} \dd{\la_{e}} + \la_{e} \dd{\overline{\la_{e}}}) 
		\wedge \dd{\la_{d}} \wedge \dd{\overline{\la_{d}}} }{(\sum_{e \neq l} \alpha_{e} \abs{\la_{e}}^2 + \alpha_{l} \abs{\la_{l}}^2 + \alpha_{l})^{2n}}
		\Omega(\alpha) \right.\notag \\
		- &\left.\int_{\C^{n} \times  \sigma_{2n}^{\epsilon}} \frac{\bigwedge_{e \neq l,d} ( \overline{\la_{e}} \dd{\la_{e}} + \la_{e} \dd{\overline{\la_{e}}})
		\wedge \dd{\la_{d}} \wedge \dd{\overline{\la_{d}}} }{(\sum_{e \neq l} \alpha_{e} \abs{\la_{e}}^2 + \alpha_{l} \abs{\la_{d}}^2 + \alpha_{l})^{2n}}
	  \Omega(\alpha) \right)\label{eq:auxRegInt}
  \end{align}
	Observe that the two integrals differ only in which linear form $\abs{\la_i}^2$ appears as the coefficient of $\alpha_l$. 
	In the first integral this is $\abs{\la_l}^2$, while in the second it is $\abs{\la_d}^2$. 
	Translating this into the language of the matrix $A$, the first term arises from $A$, 
	whereas the second arises from the matrix obtained from $A$ by replacing the $l$-th row with the $d$-th row of $A$.
	To distinguish these cases clearly in the remainder of the proof, for $m \in [2n]$ we denote by $A_m$ the matrix obtained from $A$
	by replacing its $l$-th row with the $m$-th row of $A$. 
	For $m=l$, this is simply the original matrix $A$. 
	Thus, the first integral in \cref{eq:auxRegInt} is associated with $A_l$, and the second with $A_d$.

	All expressions normally derived from $A$ can likewise be defined in terms of $A_{m}$. We denote the Laplacian matrix
	of $A_{m}$ by $L_{m}$, the graph matrix of $A_{m}$ by $M_{m}$, the First Symanzik polynomial of $A_{m}$ by $\Psi_{m}$
	and, for $i,j \in [2n]$, the corresponding Dodgson polynomial associated to $A_{m}$ by $\Psi^{i,j}_{m}$.

	We can now show that, in the limit  $\epsilon \to 0$, $I_{\aux}^{\epsilon}(A)$ is equal to $I_{\RW}(A)$.
	Using the absolute convergence provided by \cref{thm:productSpaceConvergence}, $I_{\aux}^{\epsilon}(A)$ can be rewritten as the integral
	first over $\sigma_{2n}^{\epsilon}$ and then over $\C^{n}$. Applying \cref{thm:prodToPos}, then yields
	\[
	\lim_{\epsilon \to 0} I_{\aux}^{\epsilon}(A) = \frac{(2i)^{2n-1}}{2 (2 \pi i)^{n}}
	\sum_{\substack{l \in [2n]\\ l \neq d}} (-1)^{l-1} \int_{C_{A}} \log\bigg(\frac{\abs{\la_{l}}^2}{\abs{\la_{d}}^2}\bigg) 
	\bigwedge_{e \neq l} \darg(\la_{e})
	.\] 
	Observe next, that the missing summand $l = d$ is equal to $0$ as the logarithm evaluates to $0$. Adding it in we find
	\begin{align*}
		\lim_{\epsilon \to 0} I_{\aux}^{\epsilon}(A) &= \frac{(2i)^{2n-1}}{2 (2 \pi i)^{n}}
	\int_{C_{A}} \sum_{l\in [2n]} (-1)^{l-1} \log\bigg(\frac{\abs{\la_{l}}^2}{\abs{\la_{d}}^2}\bigg)
	\bigwedge_{e \neq l} \darg(\la_{e})\\
	&= \frac{(2i)^{2n-1}}{2 (2 \pi i)^{n}}
	\int_{C_{A}} \sum_{l\in [2n]} (-1)^{l-1} \log( \abs{\la_{l}}^2) \bigwedge_{e \neq l} \darg(\la_{e})
	= I_{\RW}(A)
	.\end{align*}
	where in the second equality, we used the observation from \cref{lem:rotationInvariance}, that the following vanishes
	\[
		\log(\abs{\la_{d}}^2) \sum_{l=1}^{2n} (-1)^{l-1} \bigwedge_{e \neq l} \darg(\la_{e}) = 0
	.\] 
	This concludes one half of the proof.

	To show that, in the limit $\epsilon \to 0$, $I_{\aux}^{\epsilon}(A)$ is equal to $I_{\can}(A)$ we proceed as follows.
	Let $\mathcal{I}_{d}^{l} := \{(S,T) \subseteq  ([2n] \setminus \{l,d\})^2 \mid \abs{S} = \abs{T} = n-1 \text{ and } S \cap T = \emptyset \}$
	denote the set of ordered bipartitions of $[2n] \setminus \{l,d\}$ into two sets of size $n-1$.
	Using the absolute convergence from \cref{thm:productSpaceConvergence}, we may rewrite the integral over 
$\C^{n} \times \sigma_{2n}^{\epsilon}$ as an integral first over $\C^{n}$ and then over $\sigma_{2n}^{\epsilon}$.
	Applying \cref{thm:prodToPara} to each of the two integrals, we find that $I_{\aux}^{\epsilon}(A)$ is equal to
	\begin{align*}
		\sum_{\substack{l\in [2n]\\  l \neq d}} 
		&\left(\int_{\widetilde{\sigma}_{2n}^{\epsilon}}
		\frac{(-1)^{\frac{n + 1}{2}+d + l+(l<d)}}{\Psi^{n}_{l}} \sum_{S,T \in \mathcal{I}_{d}^{l}} \sgn(S,T) 
		\det(A_{Sd,\bullet}) \det(A_{Td,\bullet}) \perm(\adj(M_{l})_{S,T}) \frac{\Omega(x)}{x_{d}} \right.\\
			- &\left. \int_{\widetilde{\sigma}_{2n}^{\epsilon}}
		\frac{(-1)^{\frac{n + 1}{2}+d+l+(l<d)}}{\Psi^{n}_{d}} \sum_{S,T \in \mathcal{I}_{d}^{l}} \sgn(S,T)
		\det(A_{Sd,\bullet}) \det(A_{Td,\bullet}) \perm(\adj(M_{d})_{S,T}) \frac{\Omega(x)}{x_{d}} \right)
.\end{align*}
Moving the sum over $l$ inside each integral and applying \cref{thm:paraToCan} to the integrand, we obtain
\[
	I_{\aux}^{\epsilon}(A) = \left(\int_{\widetilde{\sigma}_{2n}^{\epsilon}}
		 \beta_{L_{l}} - \int_{\widetilde{\sigma}_{2n}^{\epsilon}} \beta_{L_{d}} \right)
.\] 
Both integrals are absolutely convergent for all $\epsilon > 0$. For $\epsilon = 0$, the domain of integration becomes
$\widetilde{\sigma}_{2n}^{0} = \left\{ x \in \R^{2n}_{\geq 0} \mid \sum_{i=1}^{2n} \frac{1}{x_{i}} = 1 \right\} $,
which is just a parametrisation of the projective simplex $\sigma_{2n}$ from the introduction.

Therefore, for $\epsilon = 0$ the integrals coincide with the canonical integrals for $A_{l}$ and $A_{d}$, and are thus absolutely convergent.
Consequently, we may take the limit and obtain
\[
	\lim_{\epsilon \to 0} I_{\aux}^{\epsilon}(A) = I_{\can}(A_{l}) - I_{\can}(A_{d})
.\] 
Finally, that $A_{d}$ contains the $d$-th row twice: once as row $d$ and once as
row $l$. Hence, by \cref{prop:canVanish}, the second integral vanishes.
Thus, we have shown that $I_{\RW}(A) = \lim_{\epsilon \to 0} I_{\aux}^{\epsilon}(A) = I_{\can}(A)$ which concludes the proof.
\end{proof}

Next we prove a generalisation of \cref{prop:introNoLsum} to matrices $A$, 
that is that the sum over the choice of logarithm edge in the definition of $I_{\RW}$ can be removed:
\begin{proposition}
	Let $n \geq 3$, and let $A \in \R^{2n \times n}$ of rank $n$, such that no row of $A$ is equal to $0$.
	Further, let $l,d \in [2n]$ with $l \neq d$, then 
	\begin{equation}\label{eq:RWWithoutL}
	I_{RW}(A) = (2n-1)\frac{ (-1)^{l-1}(2i)^{2n-1}}{2 (2 \pi i)^{n}} \int_{C_{A}} \log\bigg(\frac{\abs{\la_{l}}^2}{\abs{\la_{d}}^2}\bigg) 
	\bigwedge_{e \neq l} \darg(\la_{e})
.\end{equation}
	Moreover, on the reduced space $Z_{A} = \conf_{A}(\C) / \C^{*} $, this gives
	\[
		I_{RW}(A) = (2n-1) \frac{(-1)^{l+d+(l<d)}}{(2 \pi i)^{n-1}} \int_{Z_{A}} \log\bigg(\frac{\abs{\la_{l}}^2}{\abs{\la_{d}}^2}\bigg) 
		\bigwedge_{e \neq l,d} \dlog(\abs{\la}_{e}^2)
	.\] 
\end{proposition}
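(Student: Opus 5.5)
The plan is to reuse the proof of the main theorem, but to stop before summing over $l$. In that proof the auxiliary integral was a sum over $l\neq d$ of terms $I^{\epsilon}_{\aux,l}(A)$, each a difference of an $A_l$-integral and an $A_d$-integral. For fixed $l,d$ we already know two evaluations of each summand. Applying \cref{thm:productSpaceConvergence} and \cref{thm:prodToPos} summand by summand gives
\[
\lim_{\epsilon\to0} I^{\epsilon}_{\aux,l}(A)=\frac{(-1)^{l-1}(2i)^{2n-1}}{2(2\pi i)^n}\int_{C_A}\log\Bigl(\frac{\abs{\la_l}^2}{\abs{\la_d}^2}\Bigr)\bigwedge_{e\neq l}\darg(\la_e).
\]
On the other side, \cref{thm:prodToPara} turns each summand into a parameter-space integral. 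So it suffices to show that, after the limit, every summand with $l\neq d$ contributes exactly $\frac{1}{2n-1}I_{\can}(A)$. Since there are $2n-1$ values of $l\neq d$, this yields \eqref{eq:RWWithoutL}.

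To prove the equidistribution I would not use the parameter side directly. Instead I would argue on the RW side, by showing that the summand $J_{l,d}:=(-1)^{l-1}\int_{C_A}\log(\abs{\la_l}^2/\abs{\la_d}^2)\bigwedge_{e\neq l}\darg(\la_e)$ is independent of $l\neq d$.

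First I would show antisymmetry. Writing $\log(\abs{\la_l}^2/\abs{\la_d}^2)=\log\abs{\la_l}^2-\log\abs{\la_d}^2$ and splitting $J_{l,d}$ accordingly, I would combine \cref{lem:rotationInvariance} with the contraction identity $\omega=\pm\iota_X\alpha$ from its proof. This should show that the terms $(-1)^{l-1}\log\abs{\la_m}^2\bigwedge_{e\neq l}\darg(\la_e)$, summed over all $l$, vanish for every fixed $m$. Applied with $m=d$, this gives $\sum_{l}J_{l,d}=I_{\RW}$-type expressions independent of $d$, which is exactly the computation already done in the main proof.

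Next I would establish the key relation $J_{l,d}=J_{l,d'}+J_{d',d}$-type cocycle identities. These come from $\log\frac{\abs{\la_l}^2}{\abs{\la_d}^2}=\log\frac{\abs{\la_l}^2}{\abs{\la_{d'}}^2}+\log\frac{\abs{\la_{d'}}^2}{\abs{\la_d}^2}$. The difficulty is that the wedge factor depends on $l$ and not on which ratio is used. I would therefore use a Stokes argument: $\dd{}\bigl(\log\abs{\la_l}^2\log\abs{\la_d}^2\bigwedge_{e\neq l,d}\darg\bigr)$ on $Z_A$ relates swapping the roles of $l$ and $d$. Boundary contributions would vanish by the same Kontsevich-vanishing reasoning used in \cref{rem:RWGLnInvariance} and \cref{prop:RWvanish}.

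The main obstacle is this symmetry step. If the Stokes route proves messy, I would fall back on the canonical side. I would show via \cref{thm:paraToCan}, whose proof presumably produces $\beta_{L_l}$ from the full $l$-sum, that for a single $l$ the integrand equals $\frac{1}{2n-1}\beta_L$ modulo exact projective forms that integrate to zero on $\sigma_{2n}$. This would use the fact that $\beta$ restricts to zero on boundary faces, as in \cite{brown23}.

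For the second formula I would copy the proof of \cref{prop:ogRWIntegral}, applied to the single summand \eqref{eq:RWWithoutL}. Integrating along the $S^1$-fibre, the factor $\darg(\la_e)=\dd{\varphi}+\dots$ contributes only one angular form. Here the only $d'$ surviving the fibre integration is handled by choosing the section $\arg\la_d=0$ (fixing $\la_d$ real positive). With that section, $\darg(\la_d)=\dd{\varphi}$ exactly, producing the sign $(-1)^{d+(l<d)}$ from moving it to the front, and a factor $2\pi$. The remaining $\darg$'s become $\dlog\abs{\cdot}^2$ by the Kontsevich trick, giving the factor $(-1)^{n-1}(2i)^{-(2n-2)}$. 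Collecting constants yields $(2n-1)\frac{(-1)^{l+d+(l<d)}}{(2\pi i)^{n-1}}$, where $(-1)^{n-1}=1$ since $n$ is odd; for even $n$ both sides vanish by \cref{prop:RWvanish}.
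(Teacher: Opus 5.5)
Your reduction is sound. By \cref{thm:productSpaceConvergence} and \cref{thm:prodToPos}, the $l$-th summand of the auxiliary integral converges to the right-hand side of the first identity divided by $2n-1$, so everything rests on showing that this summand is independent of $l$. That independence is the entire content of the proposition, and your proposal does not establish it.

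Your ``antisymmetry'' step only reproduces what \cref{lem:rotationInvariance} already gives in the main proof, namely that the full sum $\sum_l J_{l,d}$ does not depend on $d$. It says nothing about a single summand.

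The Stokes step fails as written. $\bigwedge_{e\neq l,d}\darg(\la_e)$ has $2n-2$ factors, which is the real dimension of $Z_A$. So $\log\abs{\la_l}^2\log\abs{\la_d}^2\bigwedge_{e\neq l,d}\darg(\la_e)$ is already of top degree, and its differential vanishes identically. Moreover, what you must compare is two different $l,l'$ with $d$ fixed, not $l$ with $d$. A primitive that would do the job is $\log\frac{\abs{\la_l}^2}{\abs{\la_d}^2}\log\frac{\abs{\la_{l'}}^2}{\abs{\la_d}^2}\bigwedge_{e\neq l,l',d}\dlog\frac{\abs{\la_e}^2}{\abs{\la_d}^2}$; its differential is, up to sign, the difference of the $l$- and $l'$-integrands. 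But then all of the difficulty sits in the boundary term.

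The Kontsevich vanishing lemma is a statement about integrals of pure products of angle forms, and it does not apply here. The boundary integrand carries two logarithms and $\dlog\abs{\cdot}^2$ factors, which, unlike the $\darg$'s, do not extend to the Axelrod--Singer compactification. The boundary contribution is therefore only defined as a limit over truncations, and nothing you cite shows this limit vanishes. Proving that it does is as hard as the proposition itself.

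Your fallback points to the right side, but the mechanism is not ``modulo exact forms'', and it does not need the boundary behaviour of $\beta$. The needed identity is exact, pointwise and purely combinatorial:
\begin{enumerate}
\item In \cref{prop:canonicalDodgsonFormula} the cyclic product $\Psi^{i_{\tau(1)},i_{\tau(2)}}\cdots\Psi^{i_{\tau(2n-1)},i_{\tau(1)}}$ is unchanged under $\tau\mapsto\tau\circ(1\,2\,\cdots\,2n-1)$. This $(2n-1)$-cycle is an even permutation, so $f_d$ equals $2n-1$ times the sub-sum over those $\tau$ with $i_{\tau(1)}=l$.
\item In the proof of \cref{thm:adjToDodgsonProd}, both the sign-reversing involution of \cref{prop:cycleCancellation} and the bijection $\tau\mapsto(S,T,l,\sigma,\varrho)$ keep $l=i_{\tau(1)}$ fixed. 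Hence the $l$-th summand of \cref{thm:paraToCan} is exactly $\beta^{2n-1}_L/(2n-1)$.
\item Applied to $A_d$ (row $l$ replaced by row $d$), the same argument shows the counterterm's $l$-th summand is $\beta^{2n-1}_{L_d}/(2n-1)$, which vanishes by \cref{prop:canVanish}~c).
\end{enumerate}
Now \cref{thm:prodToPara} and \cref{thm:prodToPos} at this fixed $l$ give $I_{\can}(A)=(2n-1)\,\frac{(-1)^{l-1}(2i)^{2n-1}}{2(2\pi i)^n}\int_{C_A}\log\frac{\abs{\la_l}^2}{\abs{\la_d}^2}\bigwedge_{e\neq l}\darg(\la_e)$. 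The main theorem then replaces $I_{\can}$ by $I_{\RW}$.

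Your derivation of the second formula is fine. Choosing the section $\arg\la_d=0$, so that only $\darg(\la_d)$ supplies the fibre direction, is a clean way to carry out the argument of \cref{prop:ogRWIntegral}.
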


\begin{proof} Let $l,d \in [2n]$ with $l \neq d$.
	Observe, that in the formula for the canonical form in \cref{prop:canonicalDodgsonFormula} the product of 
	Dodgson polynomials is invariant under the cyclic permutation
	$(1 2 \ldots 2n-1)$. Thus we can fix $i_{\tau(1)}$ to be $l$ and find
	\[
		\beta^{2n-1}_{L} = (2n-1) \frac{(-1)^{d+l+(l<d)}}{\Psi^{2n-1}} \sum_{\tau \in \mathbb{S}_{2n-2}} \sgn(\tau) \Psi^{l,j_{\tau(1)}} 
		\Psi^{j_{\tau(1)},j_{\tau(2)}} \cdots \Psi^{j_{\tau(2n-2)}, l}
	,\]
	where $(j_1,\ldots,j_{2n-2}) = [2n] \setminus \{l,d\}$.
	Tracing back through the proof of \cref{thm:paraToCan}  this is exactly equal to just one term for a fixed $l$ in the sum over $l$, that is
	\[
	\beta^{2n-1}_{L} = (2n-1) \frac{(-1)^{\frac{n + 1}{2}+d+l+(l<d)}}{x_{d} \Psi^{n}} \sum_{S,T \in \mathcal{I}_{d}^{l}} \sgn(S,T)
	\det(A_{Sd,\bullet}) \det(A_{Td,\bullet}) 
	\perm(\adj(M)_{S,T}) \Omega(x)
	\]
	Finally, arguing as in the proof of \cref{thm:EqualityOfIntegrals} one can apply \cref{thm:prodToPara,thm:prodToPos}, 
	which are given for fixed $l$, to the canonical integral
	of $\beta^{2n-1}_{L}$ as given above, to obtain
	\[
	I_{\RW}(A) = I_{\can}(A) = \int_{\sigma_{2n}} \beta^{2n-1}_{L} = 
	\frac{(-1)^{l-1}(2i)^{2n-1}}{2(2 \pi i)^{n}} \int_{C_{A}} \log\bigg(\frac{\abs{\la_{l}}^2}{\abs{\la_{d}}^2}\bigg) 
	\bigwedge_{e \neq l} \darg(\la_{e})
	.\] 
	which proves the first statement.
	To prove the second statement, proceed exactly as in the proof of \cref{prop:ogRWIntegral}, 
	by integrating along the fibres of the canonical projection of  $\pi: C_{A} \to C_{A} / S^{1}$ and then replacing the darg-forms  by dlog-forms.
\end{proof}

\subsection{Absolute convergence of the auxiliary integral}

\begin{theorem}\label{thm:productSpaceConvergence}
	Let $n \geq 3$ odd, let $\epsilon > 0$, let $l,d,m \in [2n]$ with $l \neq d$ and 
	$A \in \R^{2n \times n}$ such that $\rank(A_{i,\bullet}) = n$ for all $i \in [2n]$.
	Then the following integral converges absolutely
	\begin{equation}\label{eq:absConvInt}
		\int_{\C^{n} \times \sigma_{2n}^{\epsilon}} \frac{\bigwedge_{e \neq l,d} (\overline{\la_{e}} \dd{\la_{e}} + \la_{e} \dd{\overline{\la_{e}}})
		\wedge \dd{\la_{d}} \wedge \dd{\overline{\la_{d}}}}
	{(\sum_{e\neq l} \alpha_{e} \abs{\la_{e}}^2 + \alpha_{l} \abs{\la_{m}}^2 + \alpha_{l})^{2n}} \Omega(\alpha)
.\end{equation}
\end{theorem}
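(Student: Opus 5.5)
We will bound the absolute value of the integrand on $\C^{n}\times\sigma^{\epsilon}_{2n}$ by an explicit positive function and show that its integral is finite. On $\sigma_{2n}^{\epsilon}$ every $\alpha_i\ge\epsilon$, and $\sigma_{2n}^{\epsilon}$ is compact. The first step is to bound the denominator uniformly in $\alpha$:
\[
D:=\sum_{e\neq l}\alpha_e\abs{\la_e}^2+\alpha_l\abs{\la_m}^2+\alpha_l \;\geq\; \epsilon\Big(1+\sum_{e\neq l}\abs{\la_e}^2\Big).
\]
Since $\rank(A_{l,\bullet})=n$ by hypothesis (reading this as the rank of $A$ with row $l$ deleted, as in the reduction at the start of the main proof), the rows $a_e$ with $e\neq l$ span $\R^{n}$. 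Hence $Q(z):=\sum_{e\neq l}\abs{\la_e(z)}^2$ is a positive definite Hermitian form, so $Q(z)\ge c\abs{z}^2$ for some $c>0$. This gives $D\ge \epsilon(1+c\abs{z}^2)$, uniformly in $\alpha\in\sigma_{2n}^{\epsilon}$. The rank hypothesis is exactly what keeps $D$ from degenerating along directions where the surviving forms vanish.

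The second step is to bound the numerator. Write the form as $N(z)\,\bigwedge_i \dd{z_i}\wedge\dd{\overline{z_i}}$. Each factor $\overline{\la_e}\dd{\la_e}+\la_e\dd{\overline{\la_e}}$ has coefficients that are linear in $(z,\overline z)$, while $\dd{\la_d}\wedge\dd{\overline{\la_d}}$ has constant coefficients. Hence $N$ is a polynomial of degree at most $2n-2$, and $\abs{N(z)}\le C(1+\abs{z})^{2n-2}$. The factor $\Omega(\alpha)$ is a bounded smooth measure on the compact simplex, so after Fubini–Tonelli for nonnegative functions the $\alpha$-integral contributes only a finite constant.

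Combining the two bounds, the absolute integrand is at most
\[
C'\,\frac{(1+\abs{z})^{2n-2}}{(1+\abs{z}^2)^{2n}}
\]
times Lebesgue measure on $\C^{n}\cong\R^{2n}$. This decays like $\abs{z}^{-2n-2}$, which is integrable at infinity in real dimension $2n$. There is no local singularity because $D\ge\epsilon>0$ everywhere. The integral therefore converges absolutely.

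The only delicate point is the lower bound on $D$, specifically that it does not rely on $\abs{\la_m}^2$ or on $\alpha_l$ for the $z$-decay. I expect no further obstruction, since both the truncation $\epsilon>0$ and the rank hypothesis are built into the statement. Neither the oddness of $n$ nor the choice of $m$ is needed for this bound.
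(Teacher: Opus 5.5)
Your argument is correct, and it reaches the conclusion by a more hands-on route than the paper.

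The paper compactifies $\C^{n}$ to $\CP^{n}$ and pulls back numerator and denominator. It observes that the powers of $\abs{z_0}$ cancel exactly. It then argues that the resulting density is continuous on the compact space $\CP^{n}\times\sigma_{2n}^{\epsilon}$, because the homogenised denominator could only vanish when $z_0=0$ and all $\la_e$ with $e\neq l$ vanish, which the rank hypothesis rules out. Boundedness on a compact space of finite measure then gives absolute convergence.

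You instead do the power counting directly on $\C^{n}$. You use the same two inputs: $\alpha_i\geq\epsilon$, and positive-definiteness of $\sum_{e\neq l}\abs{\la_e}^2$, which is where the rank hypothesis enters in both proofs. These give $D\geq\epsilon(1+c\abs{z}^2)$ uniformly in $\alpha$. Comparing the degree $2n-2$ of the numerator with the degree $4n$ of the denominator then yields an explicit dominating function of order $\abs{z}^{-2n-2}$. The exact cancellation of $\abs{z_0}$-powers in the paper is the geometric form of this same count, since the Fubini--Study volume form decays like $\abs{z}^{-2n-2}$ at infinity.

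What each approach buys:
\begin{itemize}
\item Your version is more elementary and quantitative. It produces an explicit dominating function and shows the margin: any exponent $k>2n-1$ in the denominator would suffice. It also avoids checking the homogenisation formulas for the differential forms.
\item The compactification packages the whole estimate into a single continuity statement. It is the template the paper reuses for the regulated integrals in Lemma~\ref{lem:prodToParaBound}.
\end{itemize}

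Your reading of the hypothesis as $\rank(A^{l,\bullet})=n$, with row $l$ deleted, is also the one the paper's proof actually uses.
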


\begin{proof}
	We prove the absolute convergence by compactifying the integration domain. Let $\pi: \CP^{n} \setminus \{z_0=0\}  \to \C^{n},
	[z_0 : z_1 : \ldots :z_{n}] \mapsto \left( \frac{z_1}{z_0}, \ldots, \frac{z_{n}}{z_0} \right)$ be the standard affine chart.
Pulling back each term of the wedge product in the numerator in \cref{eq:absConvInt} by $\pi$ gives
	\[
		\pi^{*} \left(\overline{\la_{e}} \dd{\la_{e}} + \la_{e} \dd{\overline{\la_{e}}}\right) = \frac{\overline{\la_{e}} \dd{\la_{e}} + \la_{e} \dd{\overline{\la_{e}}}}{|z_0|^2}
		- \frac{|\la_{e}|^2}{|z_0|^2} \left( \frac{\dd{z_{0}}}{z_0} + \frac{\dd{\overline{z_{0}}}}{\overline{z_0}} \right) 
		\qq{and} \pi^{*}(\dd{\la_{d}}) = \frac{\dd{\la_{d}}}{z_{0}} - \la_{d} \frac{\dd{z_0}}{z_0^2}
	.\]
	Importantly, $\dd{z_0}$ and $\dd{\overline{z_0}}$ can appear at most once and therefore the full numerator pulls back to an expression of the form
	\[
		\pi^{*} \bigg( \bigwedge_{e \neq l,d} (\overline{\la_e}\dd{a_e} + \la_e \dd{\overline{\la_e})} \wedge \dd{\la_d} \wedge \dd{\overline{\la_d}}\bigg)
		=\frac{Q(z,\overline{z})}{|z_0|^{4n}}
		\left( \sum_{i,j=0}^{n}  z_{i} \overline{z_{j}} \left( \dd{z_{j}} \wedge \dd{\overline{z_{i}}} \right) 
			\bigwedge_{e \neq i,j} \dd{z_{e}} \wedge \dd{\overline{z_{e}}}\right)
	\]
	Here, $Q(z,\overline{z})$ is a polynomial in the homogeneous coordinates and the expression in parentheses 
	is the standard volume form on $\CP^{n}$ which we denote by $\Omega([z])$.
	Pulling back the denominator in \cref{eq:absConvInt} by $\pi$ we obtain
	\[
	\pi^{*} \bigg(\sum_{e\neq l} \alpha_{e} |\la_{e}|^2 + \alpha_{l} |\la_{m}|^2 + \alpha_{l}\bigg)^{2n} = 
	\frac{(\sum_{e\neq l} \alpha_{e} |\la_{e}|^2 + \alpha_{l} |\la_{m}|^2 + \alpha_{l})^{2n}}{|z_0|^{4n}}
	.\] 
	Hence, the $z_0$-factor from the numerator exactly cancels the $z_0$-factor coming from the denominator and the whole integrand becomes
	\[
		\int_{\CP^{n} \times \sigma_{2n}^{\epsilon}} \left[ \frac{Q(z,\overline{z})}
		{(\sum_{e\neq l} \alpha_{e} |\la_{e}|^2 + \alpha_{l} |\la_{m}|^2 + \alpha_{l} |z_0|^2 )^{2n}} \right] 
		\Omega([z]) \Omega(\alpha)
	\]
	Let $z = (z_1,\ldots,z_{n})$. We denote the function in brackets by $g( [z_0 :z],\alpha)$.
	It remains to show that $g( [z_0:z],\alpha)$ is continuous on $\CP^{n} \times \sigma_{2n}^{\epsilon}$.
	Since $\alpha_{i} \geq \epsilon$ for all $i \in [2n]$, and the denominator is a sum of non-negative terms, it can vanish only if
	\[
		\la_{e}(z) = 0 \text{ for all } e \neq l,\quad \la_{m}(z) = 0 \qq{and} z_0 = 0
	.\] 
	By assumption $\rank(A_{l,\bullet}) = n$, so the linear forms $(\la_{e})_{e \neq l}$ span $(\C^{n})^{*}$. 
	Hence $\la_{e}(z) = 0$ for all $e \neq l$ implies $z = (z_1,\ldots,z_{n}) = (0,\ldots,0)$.
	Together with $z_0 = 0$, this would imply that all homogeneous coordinates $z_{i}$ ($0 \leq i \leq n$ ) vanish.
	This however is impossible as the origin in not a point in $\CP^{n}$.
	Therefore, the denominator is strictly positive on $\CP^{n} \times  \sigma_{2n}^{\epsilon}$ 
	and $g$ is continuous.

	Consequently $g$ is bounded as $\CP^{n}$ and $\sigma_{2n}^{\epsilon}$ are compact. As the spaces also have finite measure,
	the integral of $\abs{g}$ over their product is finite.
	This proves the absolute convergence.
\end{proof}

\graphicspath{{Images/}}

\section{From product space to position space - Theorem \ref*{thm:prodToPos}}\label{sec:prodToPos}
We show the first step in proving \cref{thm:EqualityOfIntegrals} by showing that the auxiliary integral is equal to 
the RW-integral (up to some constant):
\begin{theorem}\label{thm:prodToPos}
	Let $n \geq 3$ odd, $\epsilon > 0$, $l,d \in [2n]$ such that $l \neq d$ and $A \in \R^{2n \times  n}$ such that $\rank(A_{i,\bullet}) = n$ 
	for all $i \in [2n]$ and no row of $A$ is equal to $0$. 
	Further, let $\sigma_{2n}^{\epsilon} = \{\alpha \in \R^{2n}_{\geq \epsilon} \mid \sum_{i=1}^{2n} \alpha_{i} = 1\}$
	be the truncated unit coordinate simplex. Then
	\begin{multline*}
		\frac{(2i)^{2n-1}}{2} \int_{C_{A}} \log\bigg(\frac{\abs{\la_{l}}^2}{\abs{\la_{d}}^2}\bigg) \bigwedge_{e \neq l} \darg(\la_{e})
=\Gamma(2n) \lim_{\epsilon \to 0}  (-1)^{d + (d<l)}
\int_{\C^{n}} \int_{ \sigma_{2n}^{\epsilon}}\\
	\left( \frac{\bigwedge_{e \neq l,d} ( \overline{\la_{e}} \dd{\la_{e}} + \la_{e} \dd{\overline{\la_{e}}}) 
		\wedge \dd{\la_{d}} \wedge \dd{\overline{\la_{d}}}
}{(\sum_{e \neq l} \alpha_{e} \abs{\la_{e}}^2 + \alpha_{l} \abs{\la_{l}}^2 + \alpha_{l})^{n}}
	-\frac{\bigwedge_{e \neq l,d} ( \overline{\la_{e}} \dd{\la_{e}} + \la_{e} \dd{\overline{\la_{e}}}) 
		\wedge \dd{\la_{d}} \wedge \dd{\overline{\la_{d}}}
}{(\sum_{e \neq l} \alpha_{e} \abs{\la_{e}}^2 + \alpha_{l} \abs{\la_{d}}^2 + \alpha_{l})^{n}}\right)  \Omega(\alpha)
.\end{multline*}	
\end{theorem}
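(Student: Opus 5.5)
The plan is to evaluate the inner simplex integral in closed form for fixed $z\in\conf_A(\C)$, and then to split off the radial direction of $\C^n$ by the map $C_A\times\R_{>0}\to\C^n\setminus\{0\}$, $(w,r)\mapsto\sqrt r\,w$. I read the exponent in the denominators as the top-degree value $2n$ (as in \cref{eq:auxRegInt}), since only then does \cref{prop:FeynmanPara} apply with all $s_i=1$.

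\textbf{Step 1: the simplex integral.} Fix $z$ with $\la_i(z)\neq0$ for all $i$. As $\epsilon\to0$, $\sigma_{2n}^\epsilon$ increases to the open simplex and the integrands are positive. By monotone convergence, \cref{prop:FeynmanPara} with $k=2n$ and $s_i=1$ gives
\[
\Gamma(2n)\int_{\sigma_{2n}}\frac{\Omega(\alpha)}{(\sum_e\alpha_eA_e)^{2n}}=\prod_e A_e^{-1}.
\]
For the first term take $A_e=\abs{\la_e}^2$ for $e\ne l$ and $A_l=\abs{\la_l}^2+1$; for the second term replace $A_l$ by $\abs{\la_d}^2+1$. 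Distributing the factors $\abs{\la_e}^{-2}$ over the numerator yields
\[
\left(\frac{1}{\abs{\la_l}^2+1}-\frac{1}{\abs{\la_d}^2+1}\right)\bigwedge_{e\neq l,d}\dlog(\abs{\la_e}^2)\wedge\frac{\dd{\la_d}\wedge\dd{\overline{\la_d}}}{\abs{\la_d}^2}.
\]
Then I rewrite $\dd{\la_d}\wedge\dd{\overline{\la_d}}/\abs{\la_d}^2=-i\,\dlog(\abs{\la_d}^2)\wedge\darg(\la_d)$.

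\textbf{Step 2: interchange of limit and $\C^n$-integral.} This is the step I expect to be the main obstacle. Each term separately has a divergent $\C^n$-integral: it is logarithmic at $r\to0$ and $r\to\infty$ along the scale direction. Only the difference is integrable. So I would bound the integrand for $\epsilon>0$ by the $\epsilon=0$ difference and apply dominated convergence. The difference of the two Feynman integrals is monotone in $\epsilon$ only up to sign, so instead I would write it as a single integral with numerator proportional to $\alpha_l(\abs{\la_d}^2-\abs{\la_l}^2)$ after a mean-value expansion, and dominate its absolute value. For this I use the compactification argument of \cref{thm:productSpaceConvergence} near hyperplanes, together with decay $O(r^{-1})$ at large $r$ and $O(1)$ at small $r$ along the scale.

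\textbf{Step 3: scale integration.} Pull back along $(w,r)\mapsto\sqrt r\,w$. The forms $\dlog(\abs{\la_e}^2)$ pick up $\dd r/r$, and only one such term survives in the wedge product. The forms $\darg(\la_e)$ are scale invariant. Since the numerator form is closed and degree-counting leaves exactly one $\dd r/r$, the pullback equals $(\dd r/r)\wedge\eta$, where $\eta$ restricted to $C_A$ is
\[
\sum_{e\neq l}\pm\bigwedge_{e'\neq l,e}\dlog(\abs{\la_{e'}}^2)\wedge\darg(\la_d)\cdots
\]
I would convert the remaining $\dlog$'s to $\darg$'s by the trick of Kontsevich used in \cref{prop:ogRWIntegral}. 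For $a,b>0$ the radial integral gives
\[
\int_0^\infty\left(\frac{1}{ra+1}-\frac{1}{rb+1}\right)\frac{\dd r}{r}=\log(b/a),
\]
which produces $\log(\abs{\la_d}^2/\abs{\la_l}^2)$.

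\textbf{Step 4: assembling the form and constants.} The remaining work is bookkeeping. I would check that contraction with $R$ of the top form on $\conf_A(\C)$ gives $\bigwedge_{e\neq l}\darg(\la_e)$ up to the sign $(-1)^{d+(d<l)}$ from moving $\darg(\la_d)$ into position. The constant $(2i)^{2n-1}/2$ should come from the factor $i$ of Step 1 together with the $(2i)^{-(2n-2)}$ and $(-1)^{n-1}$ of the $\dlog\to\darg$ conversion. The orientation on $C_A$ is the one from $\iota_R\Omega(z)$, so the fibrewise Fubini step respects signs. The sign flip of the logarithm, $\log(\abs{\la_d}^2/\abs{\la_l}^2)$ versus the statement's $\log(\abs{\la_l}^2/\abs{\la_d}^2)$, is to be absorbed here together with $n$ odd.
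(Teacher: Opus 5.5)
Your route is correct and genuinely different from the paper's. You are also right that the exponent in the denominators must be $2n$.

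\textbf{How the two routes differ.} The paper keeps $\epsilon>0$ until the very end. It evaluates the truncated simplex integral in closed form via the regularised Feynman parametrisation (\cref{thm:regFeynmanParam}). It then performs the scale integral at fixed $\epsilon$ by partial fractions (\cref{lem:partialFractionDecomp}), where each of the two terms converges separately. Only then does it let $\gamma\to0$ on $C_A$, by dominated convergence with a divided-difference bound (\cref{prop:absoluteBoundAndConvergence}). You instead send $\epsilon\to0$ already on $\C^n$. What remains is only the elementary integral $\int_0^\infty\bigl(\frac{1}{ru+1}-\frac{1}{rv+1}\bigr)\frac{\dd{r}}{r}=\log(v/u)$, with $u=\abs{\la_l}^2$ and $v=\abs{\la_d}^2$ on $C_A$. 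This removes the need for \cref{thm:regFeynmanParam}, the residue cancellation, and the divided-difference estimate (whose constant in the paper depends on the intermediate point $\xi$).

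\textbf{Signs.} Your bookkeeping closes. For $n$ odd the numerator equals $\frac{(2i)^{2n-1}}{2}(-1)^{d+(l<d)}\prod_{e\neq l}\abs{\la_e}^2\,\dlog(\abs{\la_d}^2)\wedge\bigwedge_{e\neq l}\darg(\la_e)$. Note that the conversion contributes $(2i)^{2n-2}$, not its inverse. The product $(-1)^{d+(d<l)}(-1)^{d+(l<d)}=-1$ is exactly what turns $\log(v/u)$ into the logarithm of the statement.

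\textbf{Step 2 is easier than you expect, but the tool you cite does not apply.} Let $F_\epsilon(z)$ be $\Gamma(2n)\int_{\sigma_{2n}^{\epsilon}}(D_l^{-2n}-D_d^{-2n})\Omega(\alpha)$ times the numerator, where $D_l,D_d$ are the two denominators.
\begin{itemize}
\item Domination. Since $D_d-D_l=\alpha_l(\abs{\la_d}^2-\abs{\la_l}^2)$, for fixed $z$ the $\alpha$-integrand has the sign of $\abs{\la_d}^2-\abs{\la_l}^2$ for every $\alpha$. Hence $\abs{F_\epsilon}\le\abs{F_0}$ pointwise, and no mean-value expansion is needed.
\item Integrability of $F_0$ cannot come from \cref{thm:productSpaceConvergence}. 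Its continuity argument on $\CP^n$ relies on $\alpha_i\geq\epsilon>0$, whereas $F_0$ is singular along every hyperplane $\la_e=0$.
\item Your scale estimates do not combine naively with local hyperplane estimates. The implicit constant in your $O(r^{-1})$ decay is $\abs{v-u}/(uv)$, which is not uniform as $u\to0$ or $v\to0$. Also, against $\dd{r}/r$ the small-$r$ behaviour is $O(r)$, not $O(1)$.
\item The clean fix is Tonelli along the scale fibres. Because the radial integrand has constant sign, $\int_{\C^n}\abs{F_0}=2^{2n-2}\int_{C_A}\abs{\log(u/v)}\,\abs{\bigwedge_{e\neq l}\darg(\la_e)}$. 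This is exactly the absolute convergence of the left-hand side, which you must prove anyway. The tool for it is the real-oriented (Axelrod--Singer) blow-up argument at the end of \cref{prop:absoluteBoundAndConvergence}.
\end{itemize}

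\textbf{Order of the $\dlog$-to-$\darg$ conversion.} Do the conversion on $\C^n$, where the bidegree count is available, before pulling back. On the odd-dimensional $C_A$ there is no bidegree to count. So convert upstairs and descend by contracting with $R$, as you indicate in Step 4.
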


The idea of the proof is to use the Feynman parametrisation to rewrite the integrand. 
However, for this to work properly, we have to regularise it:
\subsection{Regularised Feynman parametrisation}
The classic Feynman parametrisation \cref{prop:FeynmanPara} is only defined when all $A_{i}$ are nonzero.
For our purpose, we need to extend this to allow some $A_{i}$ to go to zero.
For simplicity we reduce to the case where all exponents $s_1,\ldots,s_{n}$ are equal to $1$.
We find the following regularised identity:
\begin{theorem}[Regularised Feynman parametrisation]\label{thm:regFeynmanParam}
	Let $n \in \N$, let $A_1,\ldots,A_{n} \in \R_{\geq 0}$ not all zero and define $A := \sum_{i=1}^{n} A_{i}$.
	Further, let $\epsilon > 0$ and $t := 1 - n \epsilon$ and define the truncated unit coordinate simplex as
	$\sigma^{\epsilon}_{n} = \{ \alpha \in \R_{\geq \epsilon}^{n} \mid \sum_{i=1}^{n} \alpha_{i} = 1\}$.
	Then,
	\[
	\frac{1}{t} \prod_{i=1}^{n} \frac{1}{(A_{i} + \frac{\epsilon}{t} A)} = \Gamma(n) \int_{\sigma^{\epsilon}_{n}}
	\frac{1}
	{(\alpha_{1} A_1+\ldots+ \alpha_{n} A_{n})^{n}} \Omega(\alpha)
	.\] 
\end{theorem}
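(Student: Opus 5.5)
The identity will follow from the classical Feynman parametrisation (\cref{prop:FeynmanPara}) by an affine change of variables that maps the full simplex $\sigma_n$ onto the truncated simplex $\sigma_n^{\epsilon}$. Every $\alpha \in \sigma_n^{\epsilon}$ can be written uniquely as $\alpha_i = \epsilon + t\beta_i$ with $\beta \in \sigma_n$, since $\sum_i \alpha_i = n\epsilon + t\sum_i\beta_i = 1$. This affine map is a bijection $\sigma_n \to \sigma_n^{\epsilon}$ whenever $t = 1-n\epsilon > 0$. We assume this throughout, as otherwise $\sigma^\epsilon_n$ is empty or a point.

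Under the substitution, the denominator becomes
\[
\sum_i \alpha_i A_i = \epsilon A + t\sum_i \beta_i A_i = t\sum_i \beta_i\Big(A_i + \tfrac{\epsilon}{t}A\Big),
\]
using $\sum_i\beta_i = 1$ to distribute the constant $\epsilon A$ over the $\beta_i$. Set $B_i := A_i + \frac{\epsilon}{t}A$. These are strictly positive because $A>0$ (not all $A_i$ vanish) and $\epsilon/t>0$. This is exactly what removes the obstruction to applying \cref{prop:FeynmanPara} when some $A_i=0$.

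For the measure we work on the affine simplex, writing $\Omega(\alpha)$ in the chart $\alpha_n = 1-\sum_{i<n}\alpha_i$ as $\dd{\alpha_1}\wedge\cdots\wedge\dd{\alpha_{n-1}}$ up to the orientation convention. The map $\beta\mapsto\alpha$ then contributes a Jacobian $t^{n-1}$, with positive orientation since $t>0$. The integrand contributes $t^{-n}\big(\sum_i\beta_iB_i\big)^{-n}$. The right-hand side therefore equals
\[
\Gamma(n)\, t^{n-1}t^{-n}\int_{\sigma_n}\frac{\Omega(\beta)}{(\sum_i\beta_iB_i)^n}.
\]
Applying \cref{prop:FeynmanPara} with all $s_i=1$, whose prefactor is $\Gamma(n)/\Gamma(1)^n=\Gamma(n)$, the integral equals $\Gamma(n)^{-1}\prod_i B_i^{-1}$. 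Altogether this gives $\frac1t\prod_i B_i^{-1}$, as claimed.

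There is no serious obstacle. The only points needing care are bookkeeping ones: checking that the projective form $\Omega$ restricted to the affine slice $\sum\alpha_i=1$ agrees with the convention used in \cref{prop:FeynmanPara}, and that the orientation is preserved and the power of $t$ comes out as exactly $t^{-1}$. One could also note that the identity extends continuously to $\epsilon\to0$ whenever all $A_i>0$, recovering \cref{prop:FeynmanPara}, but this is not needed for the statement.
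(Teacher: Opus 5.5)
Your proof is correct and is essentially the paper's argument. The paper applies the classical Feynman parametrisation to $B_i = A_i + \frac{\epsilon}{t}A$ and then uses the affine substitution $\beta = (\alpha-\epsilon)/t$ between $\sigma_n^{\epsilon}$ and $\sigma_n$, with the restricted volume form scaling by exactly $t^{n-1}$; the only difference is that it starts from the product side rather than the integral side. Your explicit assumption $t>0$ (i.e.\ $\epsilon<1/n$) is left implicit in the statement but is genuinely needed, since otherwise $\sigma_n^{\epsilon}$ is empty or a single point and the identity fails.
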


\begin{proof}
	Since at least one $A_{i}$ is nonzero, $A > 0$ and  the classic Feynman parametrisation applies
	\[
	\frac{1}{t} \prod_{i=1}^{n} \frac{1}{(A_{i} + \frac{\epsilon}{t} A)} =
	\frac{\Gamma(n)}{t} \int_{\sigma^{0}_{n}} \frac{1}{(\sum_{i=1}^{n} \beta_{i} A_{i} + \frac{\epsilon}{t} A)^{n}} \Omega(\beta)
	,\] 
	where we used that $\sum_{i=1}^{n} \beta_{i} = 1$.
	Consider the map $\varphi: \sigma_{n}^{\epsilon} \to \sigma_{n}^{0}, \alpha \mapsto \frac{\alpha - \epsilon}{t}$.
	One computes $\varphi(\sigma^{0}_{n}) = \sigma_{n}^{\epsilon}$ and using $\varphi$ as a change of variables finds
	\begin{equation}\label{eq:genFeynmanPara}
		\frac{1}{t} \prod_{i=1}^{n} \frac{1}{(A_{i} + \frac{\epsilon}{t} A)} = \Gamma(n)
		\int_{\sigma_{n}^{\epsilon}} \frac{t^{n-1}}{(\sum_{i=1}^{n} \alpha_{i} A_{i} )^{n}} \Omega\left(\frac{\alpha - \epsilon}{t}\right)
	.\end{equation}
	On the simplex $\sigma_{n}$, consider the affine chart given by $\alpha_{n} = 1- \sum_{i=1}^{n-1} \alpha_{i}$. In these coordinates,
	\[
		\Omega(\alpha)  = (-1)^{n-1} \dd{\alpha_{1}}\ldots \dd{\alpha_{n-1}}
		\qq{and} S(\alpha) := \sum_{i=1}^{n} (-1)^{n} \bigwedge_{e \neq i} \dd{\alpha_{e}} = n (-1)^{n-1} \dd{\alpha_{1}}\ldots \dd{\alpha_{n-1}}
	.\] 
	A direct computation yields
	\[
		\Omega\left(\frac{\alpha - \epsilon}{t}\right) = \frac{1}{t^{n}} \Omega(\alpha) - \frac{\epsilon}{t^{n}} S(\alpha)
		= \frac{1}{t^{n}} (1-n \epsilon) (-1)^{n-1} \dd{\alpha_1}\ldots\dd{\alpha_{n-1}} = \frac{\Omega(\alpha)}{t^{n-1}}
	.\] 
	Substituting into \cref{eq:genFeynmanPara} completes the proof.
\end{proof}

To prove \cref{thm:prodToPos}, we will also need the following partial fraction decomposition identity:
\begin{lemma}\label{lem:partialFractionDecomp}
	Let  $n \in \N$, let $A_{i} \in \R_{> 0}$ and $b_{i} \in \R_{> 0}$ for $i \in [n]$ and let $m \in \N$, $m < n-1$, then
	\[
		\frac{ r^{m}}{\prod_{i=1}^{n} (A_{i} r + b_{i})}  = (-1)^{m} \sum_{i=1}^{n} \frac{c_{i}}{A_{i} r + b_{i}}
		\qq{where} c_{j} = \frac{(- b_{j})^{m} A_{j}^{n-1-m}}{\prod_{k\neq j} (A_{j} b_{k} - A_{k} b_{j})}
	.\] 
	Further, the  $c_{i}$ satisfy $\sum_{i=1}^{n} \frac{c_{i}}{A_{i}} = 0$.
\end{lemma}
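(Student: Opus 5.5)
The plan is a standard partial fraction decomposition in the variable $r$, together with a residue (cover-up) computation for the coefficients and an asymptotic argument for the relation $\sum_i c_i/A_i = 0$. Throughout I assume, as is implicit in the formula for $c_j$, that the poles $r_j := -b_j/A_j$ are pairwise distinct, i.e.\ $A_j b_k - A_k b_j \neq 0$ for $j \neq k$. Otherwise the stated $c_j$ are undefined, and in the application one would reach this case by a limiting or perturbation argument.

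First, consider $F(r) = r^m / \prod_{i=1}^n (A_i r + b_i)$ as a rational function of $r \in \C$. Since $m < n-1 < n$, it is a proper rational function. Its denominator has $n$ distinct simple roots $r_j$. Hence there are constants $\gamma_j$ with $F(r) = \sum_j \gamma_j/(A_j r + b_j)$, and both sides are rational functions, so the identity holds for all $r$ away from the poles.

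Second, I determine $\gamma_j$ by the cover-up rule. Multiply by $(A_j r + b_j)$ and evaluate at $r = r_j$:
\[
\gamma_j = \frac{r_j^m}{\prod_{k \neq j}(A_k r_j + b_k)} = \frac{(-b_j)^m A_j^{-m}\, A_j^{\,n-1}}{\prod_{k\neq j}(A_j b_k - A_k b_j)}.
\]
Here I used $A_k r_j + b_k = (A_j b_k - A_k b_j)/A_j$. This is exactly $c_j$ up to the global sign $(-1)^m$ appearing in the statement. I would carefully reconcile this sign with the paper's conventions, i.e.\ check whether the factor $(-1)^m$ is meant to absorb the sign of $(-b_j)^m$ so that the expression reads $b_j^m$. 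The structure of the argument is unaffected, and if needed I would verify the sign by checking the case $n=2$, $m=0$ directly.

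Third, for the relation $\sum_i c_i/A_i = 0$, multiply the identity by $r$ and let $r \to \infty$. The left-hand side behaves like $r^{m+1-n} \prod_i A_i^{-1}$, which tends to $0$ because $m+1 < n$; this is exactly where the hypothesis $m < n-1$, rather than merely $m < n$, is used. On the right-hand side, each $r\, c_i/(A_i r + b_i)$ tends to $c_i/A_i$. Hence $\sum_i c_i/A_i = 0$, independently of the sign factor. There is no real obstacle; the only delicate points are the sign bookkeeping in the second step and stating the distinct-pole assumption explicitly.
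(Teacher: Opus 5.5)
Your proposal is correct and follows the paper's proof. The paper also clears denominators and evaluates at $r=-b_j/A_j$ to find the coefficients. It obtains $\sum_i c_i/A_i=0$ by comparing coefficients of $r^{n-1}$ in $r^m=\sum_i c_i\prod_{k\neq i}(A_kr+b_k)$, which is the same information as your limit of $r$ times the identity as $r\to\infty$.

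The sign discrepancy you flagged is real. The paper's own derivation establishes $r^m/\prod_i(A_ir+b_i)=\sum_i c_i/(A_ir+b_i)$ with no $(-1)^m$ prefactor, so the factor in the statement is a typo. It is harmless in the application, where the exponent $2n-2$ is even. Your proposed check $n=2$, $m=0$ cannot detect it, since $(-1)^0=1$; the case $n=3$, $m=1$ does.
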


\begin{proof}
	This is mostly an exercise in partial fraction decomposition. That is, we want to find $c_{i} \in \R$ such that
	\begin{equation}\label{eq:partialFractionProof}
	\frac{r^{m}}{\prod_{i=1}^{n} (A_{i} r + b_{i})} = \sum_{i=1}^{n} \frac{c_{i}}{A_{i} r + b_{i}}
	\implies r^{m} = \sum_{i=1}^{n} c_{i} \prod_{k \neq i} (A_{k} r + b_{k})
	,\end{equation}
	where we multiplied both sides by $\prod_{i=1}^{n} (A_{i} r + b_{i})$.
	Evaluating at $r = -\frac{b_{j}}{A_{j}}$ for $j \in [n]$ gives
	\[
		\left( - \frac{b_{j}}{A_{j}} \right)^{m} = \sum_{i=1}^{n} c_{i} \prod_{k \neq i} \left(A_{k} \left( - \frac{b_{j}}{A_{j}} \right)  + b_{k}\right)
		= \frac{c_{j}}{A_{j}^{n-1}} \prod_{k \neq j} (A_{j} b_{k} - A_{k} b_{j})\\
	,\]
	and solving for $c_{j}$ proves the first part.
	Further, take \cref{eq:partialFractionProof} and compare coefficients of $r^{n-1}$ on both sides. As $m < n-1$ this gives
	$0 = \left( \prod_{i=1}^{n} A_{i} \right) \sum_{i=1}^{n} \frac{c_{i}}{A_{i}}$ and thus 
	$\sum_{i=1}^{n} \frac{c_{i}}{A_{i}} = 0$, which concludes the proof.
\end{proof}

\subsection{Proof of \cref{thm:prodToPos}}
For the rest of this section we define the following objects:
\begin{definition}
	Let $n \in \N$, $l,d \in [2n]$, let $\epsilon > 0$ and $\gamma = \frac{\epsilon}{1- 2n \epsilon}$.
	Let $A \in \R^{2n \times n}$ such that  $\rank(A_{i,\bullet}) = n$ for all $i \in [2n]$.
	Further, define the following shorthands:
	\[
		S = \sum_{i \neq l} \abs{\la_{i}}^2, \quad
		\begin{array}{l}
		\widetilde{S}_{l} = S + \abs{\la_{l}}^2\\
		\widetilde{S}_{d} = S + \abs{\la_{d}}^2
		\end{array}
		\qq{ and } 
		\begin{array}{l}
		S_{l} = \widetilde{S}_{l} + 1\\
		S_{d} = \widetilde{S}_{d} + 1 
		\end{array}
	\]
\end{definition}

Starting from the right-hand side (RHS) of \cref{thm:prodToPos}, observe that as $\abs{\la_{l}}^2 + 1$ and $\abs{\la_{d}}^2 + 1$ 
are positive we can apply \cref{thm:regFeynmanParam} to obtain that the RHS is equal to
\begin{align*}
	\lim_{\epsilon \to 0} \frac{(-1)^{d + (d<l)}}{t} \int_{\C^{n}}
		\frac{\bigwedge_{e \neq l,d} ( \overline{\la_{e}} \dd{\la_{e}} + \la_{e} \dd{\overline{\la_{e}}}) \wedge \dd{\la_{d}} \wedge \dd{\overline{\la_{d}}}}
		{(\abs{\la_{l}}^2 +\gamma S_{l}+1) \prod_{e \neq l} (\abs{\la_{e}}^2 + \gamma S_{l})}
	- \frac{\bigwedge_{e \neq l,d} ( \overline{\la_{e}} \dd{\la_{e}} + \la_{e} \dd{\overline{\la_{e}}}) \wedge \dd{\la_{d}} \wedge \dd{\overline{\la_{d}}}}
{(\abs{\la_{d}}^2 + \gamma S_{d}+1)  \prod_{e \neq l} (\abs{\la_{e}}^2 + \gamma S_{d})}
.\end{align*}

Notice, that  $\overline{\la_{e}} \dd{\la_{e}} - \la_{e} \dd{\overline{\la_{e}}} = 2i \abs{\la_{e}}^2 \darg(\la_{e})$ and
$\dd{\la_{d}} \wedge \dd{\overline{\la_{d}}} = i \abs{\la_{d}}^2 \darg(\la_{d}) \wedge \dlog(\abs{\la_{d}}^2)$.
It follows that
\begin{align*}
	\bigwedge_{e \neq l,d} (\overline{\la_{e}} \dd{\la_{e}} + \la_{e} \dd{\overline{\la_{e}}})
	\wedge \dd{\la_{d}} \wedge \dd{\overline{\la_{d}}}
	&=(-1)^{n-1} \bigwedge_{e \neq l,d} (\overline{\la_{e}} \dd{\la_{e}} - \la_{e} \dd{\overline{\la_{e}}})
	\wedge \dd{\la_{d}} \wedge \dd{\overline{\la_{d}}}\\
	&=  (-1)^{d + (l<d)}\frac{(2i)^{2n-1}}{2}\prod_{e \neq l}\abs{\la_{e}}^2 \dlog(\abs{\la_{d}}^2)  \bigwedge_{e \neq l} \darg(\la_{e})
.\end{align*}
Here, in the first equality we used that an equal number of holomorphic and anti-holomorphic components need to be chosen, and thus exactly $n-1$ minus
appear.
The sign in the second equality arises from moving $\dlog(\abs{\la_{d}}^2)$ to the front giving a sign of $(-1)^{2n-1}$ and moving $\darg(\la_{d})$ into its appropriate position contributing a sign of  $(-1)^{2n - d - (d<l)}$, as well as noting that $n$ is odd.
The RHS of \cref{thm:prodToPos} is therefore equal to
\begin{align*}
		\lim_{\epsilon \to 0} \frac{(2i)^{2n-1}}{(-2)t} \int_{\C^{n}} \left(
		\frac{\prod_{e \neq l} \frac{\abs{\la_{e}}^2}{(\abs{\la_{e}}^2 + \gamma S_{l})}}
			{(\abs{\la_{l}}^2 + \gamma S_{l}+1)}
		-\frac{\prod_{e \neq l} \frac{\abs{\la_{e}}^2}{(\abs{\la_{e}}^2 + \gamma S_{d})}}
			{(\abs{\la_{d}}^2 + \gamma S_{d}+1)} \right)
		\dlog(\abs{\la_{d}}^2) \bigwedge_{e \neq l} \darg(\la_{e})
.\end{align*}
In the next step we integrate over the scale. For this, consider the map $\varphi: C_{A} \times  \R_{\geq 0} \to V, (z,r) \mapsto \sqrt{r} z$
where $V = \{z \in \C^{n} \mid \la_{i}(z) \neq 0 \forall i \in [2n]\}$. 
This map is a diffeomorphism onto $V$. The hypersurfaces $\{\la_{i}(z) = 0\} $, as well as the origin corresponding to $r=0$,
have Lebesgue measure zero. Hence replacing $\C^{n}$ by $V$ does not change the value of the integral, and we may use $\varphi$ as a change of variables.
Thus the RHS of \cref{thm:prodToPos} is equal to
\begin{align}\label{eq:preScaleIntegration}
		\lim_{\epsilon \to 0} \frac{(2i)^{2n-1}}{(-2) t} \int_{C_{A}} \int_{0}^{\infty} \left(
		\frac{\prod_{e \neq l} \frac{r^{2n-2}\abs{\la_{e}}^2}{ (r(\abs{\la_{e}}^2 + \gamma \widetilde{S}_{l}) + \gamma)} }
		{(r(\abs{\la_{l}}^2 +\gamma \widetilde{S}_{l}) + 1+ \gamma)}
		-\frac{\prod_{e \neq l} \frac{r^{2n-2}\abs{\la_{e}}^2}{ (r(\abs{\la_{e}}^2 + \gamma \widetilde{S}_{d}) + \gamma)} }
		{(r(\abs{\la_{d}}^2 +\gamma \widetilde{S}_{d}) + 1+\gamma)} 
		\right) \dd{r} \bigwedge_{e \neq l} \darg(\la_{e})
\end{align}
Here we used $\darg(\sqrt{r} \la_{e}) = \darg(\la_{e})$ and hence the term $\dlog(\abs{\la_{d}}^2)$ has to contribute a $\dd{r}$
to obtain a volume form.
For the $r$-integration we use the partial-fraction identity \cref{lem:partialFractionDecomp}.
To simplify notation set
\[
\begin{gathered}
A_1 := |\la_{l}|^2 + \gamma \widetilde S_l,\qquad
A_i := |\la_{k_i}|^2 + \gamma \widetilde S_l\quad(2\le i\le 2n),\\
A_1' := |\la_{d}|^2 + \gamma \widetilde S_d,\qquad
A_i' := |\la_{k_{i}}|^2 + \gamma \widetilde S_d\quad(2\le i\le 2n),
\end{gathered}
\]
where $\{k_1,\dots,k_{2n-1}\}=[2n]\setminus\{l\}$, and set $b_1:=\gamma+1$, $b_i:=\gamma$ for $2\le i\le 2n$.
Let $c_j$ and $c_j'$ denote the residues appearing in the partial-fraction decomposition \cref{lem:partialFractionDecomp}.
Then
\begin{align*}
	&\int_{0}^{\infty} \left(
	\frac{\prod_{e \neq l} \frac{r^{2n-2}\abs{\la_{e}}^2}{ (r(\abs{\la_{e}}^2 + \gamma \widetilde{S}_{l}) + \gamma)} }
	{(r(\abs{\la_{l}}^2 +\gamma \widetilde{S}_{l}) + 1+ \gamma)}
	-\frac{\prod_{e \neq l} \frac{r^{2n-2}\abs{\la_{e}}^2}{ (r(\abs{\la_{e}}^2 + \gamma \widetilde{S}_{d}) + \gamma)} }
	{(r(\abs{\la_{d}}^2 +\gamma \widetilde{S}_{d}) + 1+\gamma)} 
	\right) \dd{r}\\
	=&\prod_{k \neq l} \abs{\la_{k}}^2 \lim_{R \to \infty} \int_{0}^{R} \left (\frac{1}{\prod_{i=1}^{2n} (A_{i} r + b_{i})} - \frac{1}{\prod_{i=1}^{2n} (A_{i}' r +b_{i})} \right) r^{2n-2}
\dd{r}\\
=&\prod_{k \neq l} \abs{\la_{k}}^2 \lim_{R \to \infty}  \sum_{i=1}^{2n} \int_{0}^{R} 
\left(\frac{c_{i}}{A_{i}r + b_{i}} - \frac{c_{i}'}{A_{i}' r + b_{i}} \right) \dd{r}\\
=&\prod_{k \neq l} \abs{\la_{k}}^2 \lim_{R \to \infty} \sum_{i=1}^{2n} \left( \frac{c_{i}}{A_{i}} \log\bigg( \frac{A_{i} R + b_{i}}{b_{i}}\bigg) - 
\frac{c_{i}'}{A_{i}'} \log\bigg( \frac{A_{i}' R + b_{i}}{b_{i}}\bigg)\right) \\
	\overset{R' = \frac{1}{R}}{=}&\prod_{k \neq l} \abs{\la_{k}}^2 \lim_{R' \to 0} \sum_{i=1}^{2n} 
	\left( \frac{c_{i}}{A_{i}} \log\bigg(\frac{A_{i}}{b_{i}} + R'\bigg) - 
\frac{c_{i}'}{A_{i}'} \log\bigg(\frac{A_{i}'}{b_{i}} + R'\bigg) \right) -  \log(R') \sum_{i=1}^{2n} \left( \frac{c_{i}}{A_{i}} - \frac{c_{i}'}{A_{i}'}\right)
\displaybreak[0]
\\
	\overset{(\dagger)}{=}&\prod_{k \neq l} \abs{\la_{k}}^2 \sum_{i=1}^{2n} \left( \frac{c_{i}}{A_{i}} \log\bigg(\frac{A_{i}}{b_{i}}\bigg) - 
\frac{c_{i}'}{A_{i}'} \log\bigg(\frac{A_{i}'}{b_{i}}\bigg) \right)
\end{align*}
Here, $(\dagger)$ follows from the cancellation of the leading-order residues as in \cref{lem:partialFractionDecomp}.
From observing that $\widetilde{S}_{l} - \abs{\la_{l}}^2 = S$ and $\widetilde{S}_{d} - \abs{\la_{d}}^2 = S$ one calculates that
$c_{i} / A_{i}$ and $c_{i}' / A_{i}'$ are equal and given by
\begin{align*}
	\frac{c_{1}}{A_{1}} = \frac{- (1+\gamma)^{2n-2}}{\prod_{k \neq l}( (1+\gamma ) \abs{\la_{k}}^2 + \gamma S) } \qq{and}
	\frac{c_{i}}{A_{i}} =
	\frac{1}{(1+\gamma) \abs{\la_{i}}^2 + \gamma S) \prod_{\substack{k\neq i,l}} (\abs{\la_{i}}^2 - \abs{\la_{k}}^2)} 
.\end{align*}
Applying this to \cref{eq:preScaleIntegration} it follows that the RHS of \cref{thm:prodToPos} is equal to
\begin{multline}\label{eq:posLimExchange}
	\lim_{\epsilon \to 0} \frac{(2i)^{2n-1}}{(-2) t} \int_{C_{A}} 
	\left( \sum_{i \neq l} \frac{1}{\prod_{\substack{k\neq i\\k \neq l}} \abs{\la_{i}}^2 - \abs{\la_{k}}^2} \cdot 
	\frac{\prod_{k \neq l}  \abs{\la_{k}}^2 }{((1+\gamma) \abs{\la_{i}}^2 + \gamma S)} 
	\log\bigg(\frac{\gamma \abs{\la_{l}}^2 + \abs{\la_{i}} + \gamma S}{\gamma \abs{\la_{d}}^2 + \abs{\la_{i}} + \gamma S}\bigg) \right. \\
\left.- \frac{(1+\gamma)^{2n-2} \prod_{k\neq l} \abs{\la_{k}}^2}{\prod_{k \neq l} ((1+\gamma) \abs{\la_{k}}^2 + \gamma S)}
\log\bigg(\frac{(1+\gamma) \abs{\la_{l}}^2 + \gamma S}{(1+\gamma) \abs{\la_{d}}^2 + \gamma S }\bigg) \rule{0cm}{0.9cm} \right) 
\bigwedge_{e \neq l} \darg(\la_{e})
.\end{multline}
In the limit $\epsilon \to 0$ the term in brackets will collapse to $- \log(\abs{\la_{l}}^2 / \abs{\la_{d}}^2)$ showing the result.
To take the limit, we need to exchange it with the integral. The following proposition allows us to do this.
\begin{proposition}\label{prop:absoluteBoundAndConvergence}
	For any $\gamma < 1$ the integrand in \cref{eq:posLimExchange} is bounded absolutely by $g: \{z \in \C^{n} \mid \abs{z} = 1 \text{ and } 
	\la_{i}(z) \neq 0 \forall i \in [2n]\} \to \C$ given by
	 \[
g(z) := C \left\lvert 1 + \log\bigg(\frac{|\la_{l}|^2}{|\la_{d}|^2}\bigg)\right\rvert \left| \bigwedge_{e\neq l} \darg(\la_{e}) \right| 
	.\] 
	for $C \in \R_{> 0}$. The function $g$ is absolutely integrable on its domain.
\end{proposition}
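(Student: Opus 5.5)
The plan is to treat the two claims separately: first the pointwise domination of the integrand in \cref{eq:posLimExchange}, uniformly in $\gamma<1$, and then the integrability of $g$. For the domination I would use scale invariance and restrict to the unit sphere $\abs{z}=1$. There the rank hypothesis $\rank(A_{l,\bullet})=n$ gives constants $0<c\le S\le C'$, so every quantity involving $S$ is harmless. The angular form $\bigwedge_{e\neq l}\darg(\la_e)$ is common to both summands, so only the scalar coefficient in the bracket needs to be bounded by $C\,\bigl\lvert 1+\log(\abs{\la_l}^2/\abs{\la_d}^2)\bigr\rvert$.

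\textbf{The second summand.} Each factor $\abs{\la_k}^2/((1+\gamma)\abs{\la_k}^2+\gamma S)$ lies in $[0,1]$, and $(1+\gamma)^{2n-2}\le 2^{2n-2}$. For the logarithm I would use monotonicity. Adding the same positive constant $\gamma S$ to both $(1+\gamma)\abs{\la_l}^2$ and $(1+\gamma)\abs{\la_d}^2$ moves their ratio towards $1$. Hence this logarithm lies between $0$ and $\log(\abs{\la_l}^2/\abs{\la_d}^2)$, which gives the bound directly.

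\textbf{The first sum (the main obstacle).} Its individual terms contain the divided-difference denominators $\prod_{k\neq i,l}(\abs{\la_i}^2-\abs{\la_k}^2)$. These blow up where two $\abs{\la_k}^2$ coincide, so termwise bounds fail, and the sum must be controlled as a whole. My first approach would avoid the partial-fraction form entirely and return to the $r$-integral in \cref{eq:preScaleIntegration}. The bracketed coefficient equals $\int_0^\infty (F_l(r)-F_d(r))\,\dd{r}$, where
\[
F_m(r)=\frac{P_m(r)}{r(\abs{\la_m}^2+\gamma\widetilde S_m)+1+\gamma},
\]
and $P_m(r)$ is a product of factors lying in $[0,1]$. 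I would split the difference as
\[
\frac{P_l-P_d}{\cdots}+P_d\Bigl(\frac1{\cdots_l}-\frac1{\cdots_d}\Bigr).
\]
For the second piece, integrating $1/(ar+1+\gamma)-1/(br+1+\gamma)$ exactly yields $\log(b/a)$-type bounds. For the first piece, $P_l-P_d$ is controlled by $\gamma\abs{\widetilde S_l-\widetilde S_d}$ times derivative bounds. On the range where it matters, these combine into an $O(1)$ contribution, uniformly in $\gamma<1$, using that $r\gamma$ is paired with $\gamma S\ge\gamma c$ in every factor.

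If this splitting does not close, the fallback is to read the first sum as an order-$(2n-2)$ divided difference. The function is $u\mapsto h(u)$, with
\[
h(u)=\log\Bigl(\frac{\gamma\abs{\la_l}^2+u+\gamma S}{\gamma\abs{\la_d}^2+u+\gamma S}\Bigr)\frac{1}{(1+\gamma)u+\gamma S},
\]
evaluated at the nodes $\abs{\la_k}^2$. By the mean value form, the sum equals $h^{(2n-2)}(\xi)/(2n-2)!$ for some $\xi$ in the range of the nodes. The derivatives $h^{(j)}$ would then be bounded by hand against the prefactor $\prod_{k\neq l}\abs{\la_k}^2$. Uniformity in $\gamma$ is exactly the delicate point in both approaches.

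\textbf{Integrability of $g$.} Away from the hyperplanes $\la_e=0$ everything is smooth on the compact sphere. Near a flat where a set $J$ of forms vanish, $\bigwedge_{e\neq l}\darg(\la_e)$ has singularities of order $\prod_{e\in J}\abs{\la_e}^{-1}$, but it can have nonzero components only transverse to the flat. I would check convergence in local coordinates adapted to the flat, or on blow-up charts of the arrangement. The rank condition that every $2n-1$ rows have rank $n$ forces the number of vanishing forms to be compatible with the codimension, so the power counting is at worst logarithmic. This is the same mechanism as Kontsevich's convergence of angle-form integrals and as the compactness argument in \cref{thm:productSpaceConvergence}. The extra $\log$ factor only adds integrable logarithmic singularities, so $g$ is absolutely integrable.
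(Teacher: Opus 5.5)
Your primary route differs from the paper's, and it is the one that actually closes. The paper stays with the partial-fraction form \cref{eq:posLimExchange}. It bounds the second term exactly as you do, using factors in $[0,1]$ and \cref{lem:logBound}. It then writes the first sum as $\prod_{k\neq l}\abs{\la_k}^2$ times the divided difference $h_\gamma[x_1,\ldots,x_{2n-1}]$, bounds the prefactor crudely by $c^{2n-1}$, and applies \cref{thm:meanDividedDiff} to get $\xi^{-(2n-1)}\bigl(1+\abs{\log(\abs{\la_l}^2/\abs{\la_d}^2)}\bigr)$. As written, its ``constant'' $C$ therefore contains $\xi^{-(2n-1)}$, where $\xi=\xi(z,\gamma)$ is only known to lie among the nodes. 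The only available lower bound on $\xi$ is the smallest node, which gives a multiple of $(\min_{e\neq l}\abs{\la_e}^2)^{-(2n-1)}$; this is not integrable near the hyperplanes. Your fallback inherits exactly this defect, because one unknown $\xi$ cannot be played off against $\prod_{k\neq l}\abs{\la_k}^2$. On the divided-difference side, the repair is the Hermite--Genocchi formula $h[x_1,\ldots,x_N]=\int_{\sigma_N}h^{(N-1)}(\sum_i\alpha_ix_i)\,\Omega(\alpha)$ with $N=2n-1$. Combine it with $\abs{h^{(N-1)}(u)}\lesssim u^{-N}\bigl(1+\abs{\log(\abs{\la_l}^2/\abs{\la_d}^2)}\bigr)$ and with $\int_{\sigma_N}(\sum_i\alpha_ix_i)^{-N}\Omega(\alpha)=1/\bigl((N-1)!\prod_ix_i\bigr)$ from \cref{prop:FeynmanPara}. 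This cancels the prefactor exactly, which is the same mechanism as your return to the $r$-integral.

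Your route still needs two corrections.

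First, $F_m$ is missing a factor $1/r$, since the $\dd{r}$ comes from $\dlog(\abs{\la_d}^2)$. The correct form is $F_m=P_m/\bigl(r(ra_m+1+\gamma)\bigr)$, with $a_m=(1+\gamma)\abs{\la_m}^2+\gamma S$, $P_m=\prod_{e\neq l}f_e(\widetilde S_m)$ and $f_e(s)=r\abs{\la_e}^2/\bigl(r\abs{\la_e}^2+\gamma(rs+1)\bigr)\in[0,1]$. Without the $1/r$ your second piece diverges at $r=\infty$. With it, that piece has a fixed sign and is bounded by $\abs{\log(a_d/a_l)}\le\abs{\log(\abs{\la_d}^2/\abs{\la_l}^2)}$.

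Second, the first piece is not $O(1)$, but it does not need to be. We have $\abs{\partial_s f_e}=f_e\,\gamma r/\bigl(r\abs{\la_e}^2+\gamma(rs+1)\bigr)\le r/(rs+1)$, and $\widetilde S_l,\widetilde S_d\ge S$. Put $\Delta=\abs{\la_l}^2-\abs{\la_d}^2$. These give $\abs{P_l-P_d}\le(2n-1)\abs{\Delta}\,r/(rS+1)$, so the first piece is at most $(2n-1)\abs{\Delta}\log(S/a_l)/(S-a_l)$. Now use $S\ge c>0$, $a_l\ge\abs{\la_l}^2$ and $\abs{\Delta}\le\max(\abs{\la_l}^2,\abs{\la_d}^2)$. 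This yields a bound $C\bigl(1+\abs{\log(\abs{\la_l}^2/\abs{\la_d}^2)}\bigr)$, uniformly in $\gamma<1$. The piece genuinely grows like $\log(1/\gamma)$ when, say, $\abs{\la_l}^2\ll\gamma\approx\abs{\la_e}^2$ for one $e\neq l,d$ and all other forms are of order one.

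Note that the dominating function must be read as $C\bigl(1+\abs{\log(\cdot)}\bigr)$. The typeset $\abs{1+\log(\cdot)}$ vanishes on a hypersurface, so it cannot dominate anything.

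Your integrability sketch parallels the paper's real-oriented blow-up argument, but the mechanism you name is off. The operative fact is that each $\darg(\la_e)$ is annihilated by the radial field. Hence, transversally to a flat of complex codimension $k$, at most $2k-1$ of the vanishing angle forms can appear. The transverse radial integral is then $\int\rho^{m}\,\dd{\rho}$ with $m\ge0$, which is convergent, not borderline. The angular singularities are handled inductively, and the rank hypothesis plays no role in this step.
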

Before proving the proposition we finish the proof of \cref{thm:prodToPos}.
As $ \gamma = \frac{\epsilon}{1-2n \epsilon}$ is invertible we can change the limit from $\epsilon \to 0$ to $\gamma \to 0$.
Under this transformation $t = \frac{1}{1+n \gamma}$. 
By \cref{prop:absoluteBoundAndConvergence} the dominated convergence theorem (\cref{thm:dominatedConvergenceTheorem}) 
applies to the integral in \cref{eq:posLimExchange}
and we can take the limit in $\gamma$ on the integrand of \cref{eq:posLimExchange} to find that the RHS of \cref{thm:prodToPos}
is equal to
\[
	\frac{(2i)^{2n-1}}{2} \int_{C_{A}} \log\bigg(\frac{\abs{\la_{l}}^2}{\abs{\la_{d}}^2}\bigg) \bigwedge_{e \neq l,d} \darg(\la_{e})
,\] 
which concludes the proof of \cref{thm:prodToPos}. \qed

To prove \cref{prop:absoluteBoundAndConvergence} we need the following additional lemma:
\begin{lemma}\label{lem:logBound}
	Let $w \in \R_{\geq 0}$ and $u,v \in \R_{> 0}$ then
	\[
	\abs{\log\left(\frac{u+w}{v+w}\right)} \leq \abs{\log\left(\frac{u}{v}\right)}
	.\] 
\end{lemma}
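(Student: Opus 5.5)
We prove the inequality $\abs{\log\frac{u+w}{v+w}} \leq \abs{\log\frac{u}{v}}$ by reducing to monotonicity of a one-variable function. By the symmetry $\log\frac{u+w}{v+w} = -\log\frac{v+w}{u+w}$, the two sides are unchanged when $u$ and $v$ are swapped, so we may assume without loss of generality that $u \geq v > 0$. Since $w \geq 0$, we then also have $u+w \geq v+w > 0$. Hence both $\frac{u}{v}$ and $\frac{u+w}{v+w}$ are at least $1$, and both logarithms are non-negative. The absolute values can therefore be dropped, and it suffices to show
\[
\frac{u+w}{v+w} \leq \frac{u}{v},
\]
because $\log$ is monotonically increasing on $\R_{>0}$.

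This last inequality is elementary. Cross-multiplying by the positive quantities $v$ and $v+w$, it becomes
\[
v(u+w) \leq u(v+w), \qquad\text{that is,}\qquad vw \leq uw.
\]
This holds since $w \geq 0$ and $u \geq v$.

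Alternatively, one can argue as follows. The function $f(w) = \log(u+w) - \log(v+w)$ has derivative
\[
f'(w) = \frac{1}{u+w} - \frac{1}{v+w} \leq 0 \qquad \text{for } u \geq v,
\]
so $f$ is non-increasing. It is also non-negative on $[0,\infty)$ and tends to $0$ as $w \to \infty$. Therefore $0 \leq f(w) \leq f(0) = \log\frac{u}{v}$.

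No step here presents a real obstacle. The only point needing care is the case reduction: taking the absolute value of the logarithm is what makes the swap of $u$ and $v$ legitimate, and the assumption $w \geq 0$ is exactly what is used in the final inequality $vw \leq uw$.
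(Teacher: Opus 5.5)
Your proof is correct and follows essentially the same route as the paper, which also sandwiches $\frac{u+w}{v+w}$ between $1$ and $\frac{u}{v}$ (treating the cases $u>v$ and $u<v$ separately rather than by symmetry) and then applies monotonicity of $\log$. Your cross-multiplication step $vw \leq uw$ supplies the justification of that sandwich, which the paper only asserts, and your assumption $u \geq v$ also covers the trivial case $u=v$ that the paper's case split omits.
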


\begin{proof}
Observe first that
 \[
	 1 \leq \frac{u+w}{v+w} \leq \frac{u}{v} \qq{ if $u > v$ and } \frac{u}{v} \leq \frac{u+w}{v+w} \leq 1 \qquad \text{if } u < v
 \] 
Hence, it follows that
 \[
	 \abs{\log\left(\frac{u+w}{v+w}\right)} = \begin{cases}
		 \log(\frac{u+w}{v+w}) & \text{ if } u >v\\
		 -\log(\frac{u+w}{v+w}) & \text{ if } u < v\\
	 \end{cases} \leq 
	 \begin{cases}
		 \log(\frac{u}{v}) & \text{ if } u >v\\
		 -\log(\frac{u}{v}) & \text{ if } u < v\\
	 \end{cases} = \abs{\log\left(\frac{u}{v}\right)}
 \] 
 as $\log(x)$ is monotonically increasing and $-\log(x)$ is monotonically decreasing.
\end{proof}

\begin{proof}[Proof of \cref{prop:absoluteBoundAndConvergence}]
	First, we identify $C_{A}$ with $Y_{A} := \{z \in \conf_{A}(\C) \mid \abs{z}=1\} \subseteq S^{2n-1}$ by fixing the scale of $z$ as $1$.
	On the chart $Y_{A}$ all $\abs{\la_{i}}^2$ can be bounded by some $c \in \R_{> 0}$ as 
	\[
		\abs{\la_{i}}^2 = \sum_{k=1}^{n} a_{ik}^2 \abs{z_{i}}^2 \leq \abs{z}^2 \max_{k \in [n]} a_{ik}^2 \leq c := \max_{i \in [2n], k \in [n]} a_{ik}^2
	.\]
	To bound the sum over $i$ we are going to use the theory of divided differences (see \cref{ap:divDiff}).
	Define the function $h_{\gamma}(t) : \R_{\geq 0} \to \R$ as
	\[
		h(t) := \frac{1}{(\gamma (t + S)+t)} 
		\log\bigg(\frac{t + \gamma \abs{\la_{l}}^2 + \gamma S}{t + \gamma \abs{\la_{d}}^2 + \gamma S}\bigg) \qq{and} h_{\gamma}(t) := 0 \text{ if } S = 0
	.\] 
	The function $h$ is smooth in $t$ and continuous in the parameters $\gamma$ and $\abs{\la_{k}}^2$.
	Denote $I = \{i_1,\ldots,i_{2n-1}\} := \{1,\ldots,2n\} \setminus \{l\} $ and $x_{j} = |\la_{i_{j}}|^2$ for
	$1 \leq j \leq 2n-1$. Then the absolute value of the first part of the integrand of \cref{eq:posLimExchange} can be rewritten and bounded as:
	\begin{align*}
		\abs{\sum_{i\neq l} \frac{ \log\Big(\frac{t + \gamma \abs{\la_{l}}^2 + \gamma S}{t + \gamma \abs{\la_{d}}^2 + \gamma S}\Big) \prod_{k\neq l} \abs{\la_{k}}^2}
		{((1+\gamma) t + \gamma S ) \prod_{\substack{k\neq i \\k\neq l}} (\abs{\la_{i}}^2 - \abs{\la_{k}}^2)} }
		\leq c^{2n-1} \abs{\sum_{j=1}^{2n-1}  \frac{h(t)}{\prod\limits_{k \neq j} (x_{j} - x_{k})}}
		= c^{2n-1} \abs{h[x_1,\ldots,x_{2n-2}]}
	.\end{align*}
	Here $h_{\gamma}[x_1,\ldots,x_{2n-1}]$ is the divided difference as defined in \cref{def:dividedDifference} and we used
	\cref{prop:dividedDifference}. Further, we observed that $\prod_{k \neq l} \abs{\la_{k}}^2 \leq c^{2n-1}$.
	By the mean value theorem for divided differences (\cref{thm:meanDividedDiff}) there exists $\xi \in \R_{> 0}$ such that
	\[
		h_{\gamma}[x_1,\ldots,x_{2n-1}] = \frac{1}{(2n-2)!} \frac{\partial^{2n-2} }{\partial t^{2n-2}} h_{\gamma}(\xi) 
	.\]
	With this $ c^{2n-1} \abs{h_{\gamma}[x_1,\ldots,x_{2n-1}]}$ is bounded independently of $\gamma$ as follows:
	\begin{align*}
		\abs{ h_\gamma[x_1,\ldots,x_{2n-1}]}
		&\leq \abs{ \frac{(1+\gamma)^{2n-2}}{(\xi(\gamma+1)+\gamma S)^{2n-1}} 
			\log\bigg(\frac{\xi+\gamma \abs{\la_{l}}^2+\gamma S}{\xi+\gamma \abs{\la_{d}}^2+\gamma S}\bigg)}\\
		&\quad{}+ \sum_{k=1}^{2n-2}\frac{(\gamma+1)^{2n-2-k}}{k(\xi(\gamma+1)+\gamma S)^{2n-1-k}}
	\abs{\frac{1}{(\xi+\gamma \abs{\la_{l}}^2+\gamma S)^k}-\frac{1}{(\xi+\gamma\abs{\la_{d}}^2+\gamma S)^k}}\\
		&\le \frac{2^{2n-2}}{\xi^{2n-1}}\abs{\log\bigg(\frac{\abs{\la_{l}}^2}{\abs{\la_{d}}^2}\bigg)}
		+ \sum_{k=1}^{2n-2}\frac{2^{2n-1-k}}{k\,\xi^{2n-1}}.
\end{align*}
The first step comes from calculating the $(2n-2)$-th derivative and the triangle inequality.
The second step uses that  $\xi (\gamma +1) + \gamma S \geq \xi$, $\xi + \gamma \abs{\la_{j}}^2 + \gamma S \geq \xi$,
triangle inequality and \cref{lem:logBound}.

To bound the second term of the integrand in \cref{eq:posLimExchange} we proceed similarly by using \cref{lem:logBound}
and $(1+\gamma) \abs{\la_{k}}^2 + \gamma S \geq \abs{\la_{k}}^2$. To obtain
\[
\abs{\frac{(1+\gamma)^{2n-2} \prod_{k\neq l} \abs{\la_{k}}^2}{\prod_{k \neq l} ((1+\gamma) \abs{\la_{k}}^2 + \gamma S)}
\log\bigg(\frac{(1+\gamma) \abs{\la_{l}}^2 + \gamma S}{(1+\gamma) \abs{\la_{d}}^2 + \gamma S }\bigg)}
\leq 2^{2n-2} \log\bigg(\frac{\abs{\la_{l}}^2}{\abs{\la_{d}}^2}\bigg) 
.\] 
Combining both shows that the integrand of \cref{eq:posLimExchange} can be bounded by the function $g$ as stated
\[
g(z) := C \abs{1 + \log\bigg(\frac{\abs{\la_{l}}^2}{\abs{\la_{d}}^2}\bigg)} \abs{ \bigwedge_{e\neq l} \darg(\la_{e})}
.\] 
where $C = \max(2^{2n-2},c^{2n-1} \frac{2^{2n-2}}{\xi^{2n-1}},c^{2n-1} \sum_{k=1}^{2n-2} \frac{2^{2n-1-k}}{k \xi^{2n-1}})$.
This proves the first part of the proposition.

To prove the absolute convergence of the integral of $g$ over $Y_{A}$ we first compactify $Y_{A}$.
This is done by iteratively performing real-oriented blow-ups along the strata $H_{I} := \bigcap_{i \in I} \{\la_{i}(z) = 0\}$,
starting with the highest codimension strata and proceeding in decreasing codimension.
Locally, this replaces the normal directions to each stratum by their sphere of directions, thereby recording the
relative rates and arguments with which the $\la_{i}$ tend to zero.
This gives a compact manifold with corners $\overline{Y_{A}}$, following the Axelrod--Singer construction \cite{axelrod94}.
On $\overline{Y_{A}}$, each angular map $\la_{e} / \abs{\la_{e}}$ extends continuously to the boundary, so the forms $\darg(\la_{e})$ 
extend with controlled boundary behaviour. Thus we can write
\[
	\bigwedge_{e \neq l} \darg(\la_{e}) = h(z) \vol_{\overline{Y_{A}}}
.\] 
where $h$ is continuous on $\overline{Y_{A}}$ and $\vol_{\overline{Y_{A}}}$ is a smooth volume form.
Since $\overline{Y_{A}}$ is compact, $\abs{h(z)}$ is bounded by a constant $M \in \R_{> 0}$. Hence
\[
\int_{Y_{A}} g(z) \leq C M \int_{\overline{Y_{A}}} \left( 1+ \left| \log(|\la_{l}|^2) \right| +\left| \log(|\la_{d}|^2) \right| \right)
\vol_{\overline{Y_{A}}}
.\] 
It remains only to note that the logarithmic singularities are integrable.. 
In local coordinates near the boundary, we can choose variables $r_{i} \in (0,c_{i}]$ that measure the distance to the boundary, 
so that $\abs{\la_{e}}$ is, up to multiplication by a bounded non-zero function, of the form $r_1 \cdots r_{k}$. 
Thus the logarithmic terms are locally bounded by finite sums of the form $\sum_{i=1}^{k} \abs{\log(r_{i})}$ near $r_{i}=0$, 
and it suffices to check that such functions are integrable, which implies the absolute integrability of $g$,
\[
\int_{0}^{c_{i}} \log(r_{i}) \dd{r_{i}} = c_{i} (\log(c_{i}) - 1) < \infty. \qedhere 
\]
\end{proof}

\graphicspath{{Images/}}

\section{From product space to parameter space - Theorem \ref*{thm:prodToPara}}\label{sec:prodToPara}
We prove the second step of \cref{thm:EqualityOfIntegrals} by showing that, in each term in the auxiliary integral,
the complex variables can be integrated out, yielding an expression purely in terms of the graph matrix $M_{m}$.

For this we will again need the matrix  $A_{m}$, obtained from $A$ by replacing its $l$-th row with the $m$-th row of $A$, for fixed 
$l,m \in[2n]$. For $m = l$, this is simply the original matrix $A$.
From $A_{m}$ we define the Laplacian matrix $L_{m} = A_{m}^{T} \diag(\alpha) A_{m}$, the to $A_{m}$ associated graph matrix $M_{m}$,
and the first Symanzik Polynomial $\Psi_{m} = \det(M_{m})$; see \cref{def:graphMatrices}.

\begin{theorem}\label{thm:prodToPara}
	Let $n \geq 3$ odd, $l,d,m \in [2n]$ with $l \neq d$, and let $A \in \R^{2n \times  n}$ such 
	that $\rank(A_{i,\bullet}) = n$ for all $i \in [2n]$ and no row of $A$ is equal to $0$.
	Let $\epsilon > 0$ and define the truncated affine simplices
	\[
		\sigma_{2n}^{\epsilon} := \{\alpha \in \R^{2n}_{\geq \epsilon} \mid \sum_{i=1}^{2n} \alpha_{i} = 1\} 
		\qq{and} \widetilde{\sigma}_{2n}^{\epsilon} := \{x \in \R_{\geq 0}^{2n} \mid 
	x_{i} \leq \frac{1}{\epsilon} \forall  i \in [2n] \text{ and } \sum_{i=1}^{2n} \frac{1}{x_{i}} = 1\}
	.\]
	Then:
	\begin{multline*}
		\Gamma(2n) \int_{\sigma_{2n}^{\epsilon}} \int_{\C^{n}}
		\frac{ \bigwedge_{e \neq l,d} ( \overline{\la_{e}} \dd{\la_{e}} + \la_{e} \dd{\overline{\la_{e}}}) 
		\wedge \dd{\la_{d}} \wedge \dd{\overline{\la_{d}}}}
		{(\sum_{e \neq l} \alpha_{e} \abs{\la_{e}}^2 + \alpha_{l} \abs{\la_{m}}^2 + \alpha_{l})^{2n}} \Omega(\alpha)\\
		=\int_{\widetilde{\sigma}_{n}^{\epsilon}} \frac{ (-1)^{\frac{n + 1}{2}}(2\pi i)^{n}}{\Psi_{m}^{n}} \frac{1}{x_{d}}
		\sum_{S,T \in \mathcal{I}_{d}^{l}} \sgn(S,T) 
		\det(A_{Sd,\bullet}) \det(A_{Td,\bullet}) \perm((\adj(M_{m}))_{S,T}) \Omega(x)
	.\end{multline*}
	Here, the indexing set is given as
	$\mathcal{I}_{d}^{l} = \{(S,T) \in ([2n] \setminus \{l,d\})^2 \mid \abs{S} = \abs{T} = n-1 \text{ and } S \cap T = \emptyset\}$.
\end{theorem}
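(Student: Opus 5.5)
The plan is to carry out, at fixed $\alpha\in\sigma_{2n}^{\epsilon}$, the complex integral over $\C^{n}$ exactly, and only afterwards change variables $\alpha_i = 1/x_i$ on the simplex. Write $L_m(\alpha) = A_m^{T}\diag(\alpha)A_m$, so the denominator is $(z^{T}L_m\overline{z}+\alpha_l)^{2n}$. This matrix is positive definite for $\alpha_i\ge\epsilon$, because $\rank(A_{l,\bullet})=n$ and row $l$ of $A_m$ is only replaced. I will expand the numerator multilinearly in holomorphic and antiholomorphic parts. Each factor $\overline{\la_e}\dd{\la_e}+\la_e\dd{\overline{\la_e}}$ contributes either $\overline{\la_e}\dd{\la_e}$ or $\la_e\dd{\overline{\la_e}}$, and a top-degree form on $\C^n$ requires exactly $n-1$ of each. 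This produces the sum over $(S,T)\in\mathcal I_d^l$: here $S$ indexes the edges $e$ that contribute $\dd{\la_e}$ (and the coefficient $\overline{\la_e}$), and $T$ those that contribute $\dd{\overline{\la_e}}$ (and $\la_e$). Then $\bigwedge_{e\in S}\dd{\la_e}\wedge\dd{\la_d}=\det(A_{Sd,\bullet})\,\dd z_1\cdots\dd z_n$, and similarly for $T$. After reordering into $\Omega(z)$ this gives $\sgn(S,T)\det(A_{Sd,\bullet})\det(A_{Td,\bullet})$ times a sign depending only on $n$. The remaining scalar integral is
\[\int_{\C^n}\frac{\prod_{e\in S}\overline{\la_e}\prod_{f\in T}\la_f}{(z^{T}L_m\overline z+\alpha_l)^{2n}}\,\Omega(z).\]

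To evaluate this I will use the generating-function trick from the overview. Replace the exponent $2n$ by $s$ with $\Re s$ large, add sources $p^{T}A z+\overline p^{T}A\overline z$, and recover the monomial by differentiating in $p,\overline p$ at $0$. Completing the square $w=L_m^{1/2}z+\dots$ contributes $\det(L_m)^{-1}$, and the radial integral gives $\pi^{n}\Gamma(s-n)/\Gamma(s)$ times $(\alpha_l-\overline p^{T}AL_m^{-1}A^{T}p)^{n-s}$. Since $n-1$ derivatives are taken in each of $p_S$ and $\overline p_T$ at $p=0$, only the term of degree $n-1$ in the quadratic form survives. That term is a sum over bijections $S\to T$ without signs, so it yields $\perm((A L_m^{-1}A^{T})_{S,T})\,\alpha_l^{1-s}$ times a Gamma ratio. (More directly: Wick's theorem for the complex Gaussian moment $\int\prod\overline{\la_e}\prod\la_f e^{-z^TL\overline z}$, combined with $\int_0^\infty$ of a Schwinger parameter, gives the same result without analytic continuation.) Setting $s=2n$, all Gamma factors combine with $\Gamma(2n)$ into a rational constant times $(2\pi i)^n$ and a power of $\alpha_l$. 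Note that $S,T$ avoid the index $l$, so entries of $AL_m^{-1}A^T$ and $A_mL_m^{-1}A_m^T$ coincide there.

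The last step, which I expect to be the main obstacle, is purely algebraic bookkeeping under $\alpha_i=1/x_i$. I will use the Schur-complement identities from \cref{app:Dodgson} relating $L_m(\alpha)$ to the graph matrix $M_m(x)$ with $\diag(x)$. These are $\det L_m(\alpha)=\Psi_m(x)\prod_i\alpha_i$ (up to the convention for $\Psi_m$), and for off-diagonal index sets $(A_mL_m^{-1}A_m^T)_{S,T}=-\diag(x)_{S}\,(M_m^{-1})_{S,T}\,\diag(x)_{T}$ up to sign; more precisely $M_m^{-1}$ restricted to the edge block equals $\diag(\alpha)-\diag(\alpha)A L^{-1}A^T\diag(\alpha)$, whose off-diagonal part is what enters. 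Then $\perm$ of the $(n-1)\times(n-1)$ block becomes $\Psi_m^{-(n-1)}\perm(\adj(M_m)_{S,T})$ times monomials in $x$. The powers of $\alpha_l$, the factor $\prod\alpha_i$ from the determinant, and the Jacobian $\Omega(\alpha)=\pm\prod_i x_i^{-2}\,\Omega(x)$ (valid on the chart $\sum 1/x_i=1$) must then conspire to leave exactly $\frac{1}{x_d}\Psi_m^{-n}\Omega(x)$. I will verify the weight count: the total homogeneity must be $-2n$ in $x$, consistent with projective invariance. The sign $(-1)^{(n+1)/2}$ collects the signs from reordering $\dd z\wedge\dd\overline z$, the $(-1)^{n-1}$ from the quadratic form, and orientation. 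The exchange of the $p$-derivatives with the integral is justified by uniform positivity of $L_m$ on $\sigma_{2n}^\epsilon$.
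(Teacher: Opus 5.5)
Your proposal is correct and follows essentially the paper's route: the same $(S,T)$-expansion (you apply it to the numerator before integrating, the paper applies it to $\det(P_d^l)$ afterwards), the same Gaussian evaluation yielding $\det(L_m)^{-1}\alpha_l^{-1}\perm\bigl((A_mL_m^{-1}A_m^{T})_{S,T}\bigr)$, and the same substitution $\alpha_i=1/x_i$ using $\det L_m(\alpha)=\Psi_m(x)\prod_i\alpha_i$ and the off-diagonal identity $A_mL_m^{-1}A_m^{T}=-\diag(x)M_m^{-1}\diag(x)$, which you get more directly from the block inverse of $M_m$ than \cref{lem:eLeToM} does (your Wick--Schwinger variant also usefully avoids the analytic continuation and the Leibniz/dominated-convergence step). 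One bookkeeping slip to fix: if $s$ is the exponent of the source integral, it must be specialised to $s=2$, not $s=2n$. Then exactly $\alpha_l^{-1}$ survives and the constant is $\pm(2\pi i)^n$ with no extra rational factor, as your homogeneity count will confirm.
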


To prove \cref{thm:prodToPara} we start on the left-hand side (LHS) and rewrite the integrand using two lemmata:
\begin{lemma}\label{lem:wedgeToDeriv}
Let $L_{m} = A^{T}_{m} \diag(\alpha) A_{m}$ and $p = (p_1,\ldots,p_{2n})^{T}$. Then
\begin{multline*}
	\Gamma(2n)
	\frac{ \bigwedge_{e \neq l,d} ( \overline{\la_{e}} \dd{\la_{e}} + \la_{e} \dd{\overline{\la_{e}}}) 
	\wedge \dd{\la_{d}} \wedge \dd{\overline{\la_{d}}}}
	{(\sum_{e \neq l} \alpha_{e} \abs{\la_{e}}^2 + \alpha_{l} \abs{\la_{m}}^2 + \alpha_{l})^{2n}}\\
	=\bigg[
	\bigwedge_{e\neq l,d} (\frac{\partial }{\partial \overline{p_{e}}} \dd{\la_{e}} - \frac{\partial }{\partial p_{e}}  \dd{\overline{\la_{e}}}) 
	\wedge \dd{\la_{d}} \wedge \dd{\overline{\la_{d}}} \bigg]
	\left. \left[ \frac{1} 
	{(z^{T} L_{m} \overline{z}  + p^{T} A_{m} z + \overline{p}^{T} A_{m} \overline{z} + \alpha_{l})^{2}}
	\right] \right|_{p,\overline{p} = 0}
\end{multline*}
\end{lemma}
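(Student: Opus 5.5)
The plan is to evaluate the right-hand side directly by letting the differential operators act on $F(p,\overline p) := (Q + p^{T} A_{m} z + \overline{p}^{T} A_{m}\overline{z} + \alpha_l)^{-2}$, where $Q := z^{T} L_m \overline{z}$, and then comparing with the left-hand side term by term. The first step is to rewrite the denominator. Since $L_m = A_m^{T}\diag(\alpha)A_m$ and the $l$-th row of $A_m$ is $a_m$, we have
\[
Q = \sum_{e\neq l}\alpha_e\abs{\la_e}^2 + \alpha_l\abs{\la_m}^2 .
\]
Hence at $p=\overline p=0$ the base $D := Q+\alpha_l$ is exactly the denominator on the left. Moreover, the $e$-th component of $A_m z$ is $\la_e$ for every $e\neq l$. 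Since the operator only involves $e\neq l,d$, differentiating in $p_e$ brings down $\la_e$, and differentiating in $\overline{p_e}$ brings down $\overline{\la_e}$. The subtlety that row $l$ of $A_m$ differs from $a_l$ never enters.

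Next, I expand the wedge product. Each factor $\partial_{\overline{p_e}}\dd{\la_e} - \partial_{p_e}\dd{\overline{\la_e}}$ has an even operator part and an odd form part, so the operators commute past all forms without sign. Expanding over the $2n-2$ indices $e\neq l,d$ gives a sum over subsets $U\subseteq[2n]\setminus\{l,d\}$. For $e\in U$ we take the holomorphic differential $\dd{\la_e}$ together with $\partial_{\overline{p_e}}$; for $e\notin U$ we take $-\dd{\overline{\la_e}}$ together with $\partial_{p_e}$. All derivatives hit distinct variables. Because the base is affine in $(p,\overline p)$, a total of $k=2n-2$ derivatives gives
\[
\partial^{k}F = (-1)^{k}\frac{(k+1)!}{D^{k+2}}\prod(\text{brought-down factors}).
\]
With $k=2n-2$ this is $\Gamma(2n)\,D^{-2n}$, which matches both the prefactor and the exponent. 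The term indexed by $U$ therefore equals
\[
\Gamma(2n)\,D^{-2n}\bigwedge_{e}\bigl(\overline{\la_e}\dd{\la_e}\ \text{or}\ -\la_e\dd{\overline{\la_e}}\bigr),
\]
with the sign $(-1)^{|U^c|}$ coming from the minus signs in the operator.

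The remaining step is to reconcile signs, and I expect this to be the main obstacle. Expanding the left-hand numerator $\bigwedge_{e}(\overline{\la_e}\dd{\la_e}+\la_e\dd{\overline{\la_e}})$ gives the same sum over $U$, but with all signs $+$. On the right the term for $U$ carries $(-1)^{|U^c|}$. Only top-degree forms survive after wedging with $\dd{\la_d}\wedge\dd{\overline{\la_d}}$, and these are forms of bidegree $(n,n)$ on $\C^n$. Since $\dd{\la_d}$ already contributes one holomorphic differential, the surviving terms have $|U| = n-1$ and $|U^c| = n-1$. As $n$ is odd, $(-1)^{n-1}=1$, so the signs agree. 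Terms with the wrong bidegree vanish on both sides, because $\C^n$ has no forms of bidegree other than $(n,n)$ in top degree. This is the same holomorphic/antiholomorphic counting argument used in the proof of \cref{prop:ogRWIntegral}. Finally, I need to verify the constant: $(-1)^{2n-2}(2n-1)! = \Gamma(2n)$, together with the $+$ versus $-$ convention, which is absorbed by the even count.
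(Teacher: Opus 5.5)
Your proof is correct and follows essentially the same route as the paper. Each numerator factor $\la_e$ or $\overline{\la_e}$ is traded for a derivative of the shifted denominator, so the $2n-2$ derivatives produce $(2n-1)!=\Gamma(2n)$ and raise the exponent from $2$ to $2n$; you run the computation from the right-hand side, whereas the paper moves factors out of the left-hand side, and your bidegree count showing that the minus signs in front of $\partial_{p_e}$ give $(-1)^{n-1}=1$ for odd $n$ makes explicit a sign the paper's proof leaves implicit (it only remarks that the number of derivatives is even, and \cref{lem:wedgeToDet} uses the $+$ convention).
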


\begin{proof}
	To move a factor from the numerator to the denominator observe that for $D \in \C$ and $k > 1$ 
	\[
		\frac{\la_{e}}{D^{k}} =  -\frac{1}{k-1} \frac{\partial }{\partial p} 
		\frac{1}{\left(D + p \la_{e}\right)^{k-1}} \bigg|_{p,\overline{p}=0}
	.\] 
	Whenever no $\frac{\partial }{\partial p_{e}}$ or 
	$\frac{\partial }{\partial \overline{p_{e}}} $ appears $ p_{e} \la_{e}$ or 
	$\overline{p_{e}}\, \overline{\la_{e}}$ can still be added to the denominator as setting $p_{e}$ or $\overline{p_{e}}$ to zero removes these terms.
	Applying the above identity to each $\la_{e}$ and adding $p_{l} \la_{m}$ and $\overline{p_{l}} \overline{\la_{m}}$ to the denominator
	the LHS of \cref{lem:wedgeToDeriv} is equal to
	\begin{multline*}
		\frac{\Gamma(2n)}{\prod_{k=1}^{2n-2} (2n-k)} 
		\Bigg[\bigwedge_{e\neq l,d} 
		\left(\frac{\partial }{\partial \overline{p_{e}}} \dd{\la_{e}} - \frac{\partial }{\partial p_{e}}  \dd{\overline{\la_{e}}}\right) 
		\wedge \dd{\la_{d}} \wedge \dd{\overline{\la_{d}}} \Bigg]\\
		\cdot \Bigg[\frac{1}
		{(\sum\limits_{e \neq l} \alpha_{e} \abs{\la_{e}}^2 +\alpha_{l} \abs{\la_{m}}^2 +  \sum\limits_{e \neq l} p_{e} \, \la_{e} + p_{l} \la_{m} +
		\sum\limits_{e \neq l} \overline{p_{e}} \, \overline{\la_{e}} + \overline{p_{l}}\, \overline{\la_{m}}+ \alpha_{l})^{2}}\Bigg]
		\Bigg|_{p,\overline{p} = 0}
	\end{multline*}
	where taking $2n-2$ derivatives reduces the exponent from $2n$  to $2$ and the sign disappears as the number of derivatives is even.
	To finish the proof observe that
	\[
		\sum_{e \neq l} \alpha_{e} \la_{e}(z) \overline{\la_{e}(z)} + \alpha_{l} \la_{m}(z) \overline{\la_{m}(z)}  = z^{T} A_{m}^{T} D_{m} A_{m} \overline{z} 
		= z^{T} L_{m} \overline{z},\quad
		\sum_{e \neq l}  p_{e} \la_{e}(z) + p_{l} \la_{m}(z) = p^{T} A_{m} z
	\] 
	as well as $\prod_{k=1}^{2n-2} (2n-k) = \Gamma(2n)$.
\end{proof}

\begin{lemma}\label{lem:wedgeToDet}
	\begin{align*}
		\bigwedge_{\substack{e \neq l,d}} \left(\frac{\partial }{\partial \overline{p_{e}}}\dd{\la_{e}} + 
		\frac{\partial}{\partial p_{e}} \dd{\overline{\la_{e}}} \right)
		\wedge \dd{\la_{d}} \wedge \dd{\overline{\la_{d}}}
		= (-1)^{\frac{n^2 - n}{2}} \det(P_{d}^{l}) \Omega(z) \qq{where} \Omega(z) = \bigwedge_{i \in [n]} \dd{z_{i}} \wedge \dd{\overline{z_{i}}}
	\end{align*}
	where $P_{d}^{l}$ is the matrix
	\[
		P_{d}^{l} := \begin{pmatrix}[c|c]
			(\diag(\partial p) \cdot A)^{ld,\bullet}  & (\diag(\partial \overline{p}) \cdot A)^{ld,\bullet} \\ \hline
			a_{d} & 0\\
			0 & a_{d}
		\end{pmatrix} 
	\]
	where the rows $l$ and $d$ have been removed from $\diag(\partial p ) \cdot A$
	and $\partial p $ and $\partial \overline{p}$ are the vectors  
	$ \partial p = (\frac{\partial }{\partial p_1},\ldots,\frac{\partial }{\partial p_{2n}})^{T}$
	and $\partial \overline{p} = (\frac{\partial }{\partial \overline{p_1}},\ldots,\frac{\partial }{\partial \overline{p_{2n}}})^{T}$.
\end{lemma}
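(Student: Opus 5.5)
The plan is to expand each factor of the wedge product in the coordinates $\dd{z_j}$, $\dd{\overline{z_j}}$ and recognise the resulting coefficient of $\Omega(z)$ as a determinant. The differential operators $\partial/\partial p_e$, $\partial/\partial\overline{p_e}$ commute with each other and with the (constant) forms. So they can be treated as commuting scalar coefficients, and the usual multilinear algebra of wedge products of $1$-forms applies verbatim.

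\textbf{Step 1: write each factor as a $1$-form on $\C^n$.} For $e\neq l,d$ we have $\dd{\la_e}=\sum_j a_{ej}\dd{z_j}$ and $\dd{\overline{\la_e}}=\sum_j a_{ej}\dd{\overline{z_j}}$, since $A$ is real. Hence the $e$-th factor is the $1$-form whose coefficient vector, in the ordered basis $(\dd{z_1},\dots,\dd{z_n},\dd{\overline{z_1}},\dots,\dd{\overline{z_n}})$, is $\bigl(\tfrac{\partial}{\partial\overline{p_e}}a_{e\bullet},\ \pm\tfrac{\partial}{\partial p_e}a_{e\bullet}\bigr)$. The last two factors have coefficient rows $(a_d,0)$ and $(0,a_d)$.

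\textbf{Step 2: use the determinant identity for wedges.} For $2n$ one-forms $\omega_k=\sum_j c_{kj}\,\dd{y_j}$ one has
\[
\omega_1\wedge\cdots\wedge\omega_{2n}=\det(c)\,\dd{y_1}\wedge\cdots\wedge\dd{y_{2n}} .
\]
This holds with commuting operator-valued coefficients as well, since the proof only uses antisymmetry of the $\dd{y_j}$ and multilinearity. Applied here, it gives the determinant of the $2n\times 2n$ operator matrix whose rows are indexed by $e\neq l,d$ in the induced order, followed by the two rows of $a_d$.

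\textbf{Step 3: match this with $P^l_d$.} Two discrepancies remain.
\begin{itemize}
\item The holomorphic block carries $\partial/\partial\overline{p_e}$ and the antiholomorphic block carries $\partial/\partial p_e$, whereas $P^l_d$ has $\partial p$ on the left and $\partial\overline p$ on the right. Either read the statement up to this relabelling, or swap the two column blocks, which costs $(-1)^{n^2}=-1$ for odd $n$.
\item The relative sign between $\partial/\partial\overline{p_e}$ and $\partial/\partial p_e$ has to be tracked; it is affected by the sign convention stated in the lemma versus that of \cref{lem:wedgeToDeriv}. For the integration it does not matter: only terms with exactly $n-1$ holomorphic and $n-1$ antiholomorphic choices survive, so a uniform sign on one block contributes only a global $(-1)^{n-1}=+1$.
\end{itemize}

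\textbf{Step 4: reorder the volume form.} Convert $\dd{z_1}\wedge\cdots\wedge\dd{z_n}\wedge\dd{\overline{z_1}}\wedge\cdots\wedge\dd{\overline{z_n}}$ into $\Omega(z)=\bigwedge_i\dd{z_i}\wedge\dd{\overline{z_i}}$. Moving each $\dd{\overline{z_i}}$ leftwards past $\dd{z_{i+1}},\dots,\dd{z_n}$ costs $\sum_i (n-i)=\tfrac{n(n-1)}{2}$ transpositions, which produces exactly the factor $(-1)^{\frac{n^2-n}{2}}$.

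\textbf{Main obstacle.} The only real difficulty is the sign bookkeeping in Steps 3 and 4: reconciling the $\pm$ and the $p$ versus $\overline p$ placement with the stated $P^l_d$, and confirming that the block swap and the per-block sign combine to $+1$ when $n$ is odd. I would check this on the term-by-term expansion over holomorphic/antiholomorphic choices $(S,T)$ with $|S|=|T|=n-1$, which is also the form used afterwards.
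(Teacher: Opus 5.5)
Your route is the paper's. The paper also writes each factor as a coefficient row $P_j$ against $Z=(\dd{z_1},\dots,\dd{z_n},\dd{\overline{z_1}},\dots,\dd{\overline{z_n}})$. It then expands the wedge as $\sum_{\sigma}\sgn(\sigma)\prod_i P_{i\sigma(i)}\bigwedge_i Z_i=\det(P^l_d)\bigwedge_i Z_i$ and reorders to $\Omega(z)$ at cost $(-1)^{\frac{n^2-n}{2}}$, which are your Steps 2 and 4. You are right that the factors as written pair $\partial/\partial\overline{p_e}$ with $\dd{\la_e}$, whereas the rows of $P^l_d$ produce $\frac{\partial}{\partial p_e}\dd{\la_e}+\frac{\partial}{\partial\overline{p_e}}\dd{\overline{\la_e}}$. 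The paper silently identifies the two when it writes the factor as $P_jZ$.

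Your tally in Step 3 is wrong, though. Let $C$ be the coefficient matrix you actually obtain. Swapping its two column blocks costs $(-1)^{n}$, but it also turns the last two rows into $(0,a_d)$, $(a_d,0)$. One further row transposition is then needed to reach $P^l_d$, so the net factor is $(-1)^{n+1}=+1$ for odd $n$, i.e.\ $\det C=\det P^l_d$ exactly. Equivalently, exchanging $\partial p\leftrightarrow\partial\overline p$ exchanges $S$ and $T$ in the $(S,T)$-expansion. There $\sgn(T,S)=(-1)^{(n-1)^2}\sgn(S,T)=\sgn(S,T)$, and $\det(A_{Sd,\bullet})\det(A_{Td,\bullet})$ is symmetric.

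The relative minus sign in \cref{lem:wedgeToDeriv} contributes $(-1)^{n-1}=+1$ as an exact algebraic identity, not only after integration. Every nonzero term of the determinant takes exactly $n-1$ antiholomorphic-block entries from the first $2n-2$ rows: the row $(0,a_d)$ takes the remaining one and $(a_d,0)$ takes none.

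So all corrections equal $+1$, and the lemma holds as stated for odd $n$, the standing assumption of \cref{thm:prodToPara}. Your combination ``block swap $(-1)$ times per-block sign $(+1)$ gives $+1$'' is inconsistent. Taken literally, it would produce the wrong overall sign.
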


\begin{proof}
	Let $I = (i_1,\ldots,i_{2n-2}) := [2n] \setminus \{l,d\}$. Denote the $j$-th row of $P_{d}^{l}$ by $P_{j}$ 
	and let $Z$ denote the vector  $(\dd{z_1},\ldots,\dd{z_{n}}, \dd{\overline{z}_{1}},\ldots, \dd{\overline{z}_{n}})^{T}$.
	Then
	\[
		\frac{\partial }{\partial \overline{p_{i_{j}}}}  \dd{\la_{i_{j}}(z)} + \frac{\partial }{\partial p_{i_{j}}}  
		\dd{\overline{\la_{i_{j}}(z)}} =  P_{j} Z
		\qq{and} \dd{\la_{d}(z)} = P_{2n-1} Z
		\qq{and} \dd{\overline{\la_{d}(z)}} = P_{2n} Z
	.\] 
	From this it follows that,
	\[
		\bigwedge_{e \neq l,d} \left(\frac{\partial }{\partial \overline{p_{e}}}\dd{\la_{e}} + 
		\frac{\partial}{\partial p_{e}} \dd{\overline{\la_{e}}} \right)
		\wedge \dd{\la_{d}} \wedge \dd{\overline{\la_{d}}}
		= \bigwedge_{i \in [2n]} P_{i} Z
		= \sum_{\sigma \in \mathbb{S}_{2n}} \bigwedge_{i \in [2n]} P_{i \sigma(i)} Z_{\sigma(i)}
	,\]
	Here we use multilinearity of the wedge product in the components of $Z$. 
	Expanding the product amounts to summing over all possible ways of selecting, for each factor in the wedge product, 
	one element of $Z$, under the condition that the chosen elements are pairwise distinct. 
	Such selections are naturally indexed by permutations $\sigma \in \mathbb{S}_{2n}$, 
	where in the $i$-th factor we choose $Z_{\sigma(i)}$, whose coefficient is $P_{i \sigma(i)}$.

	Next, we may factor out the scalar coefficients and reorder the wedge factors so that the terms $Z_{\sigma(i)}$ appear 
	in the order induced by $Z$. The required reordering is given by $\sigma^{-1}$, which contributes the sign $\sgn(\sigma)$. 
	Consequently, we obtain
	\[
		\sum_{\sigma \in \mathbb{S}_{2n}} \bigwedge_{i \in [2n]} P_{i \sigma(i)} Z_{\sigma(i)} = 
		\sum_{\sigma \in \mathbb{S}_{2n}} \sgn(\sigma) \prod_{i \in [2n]} P_{i \sigma(i)} \bigwedge_{i \in [2n]} Z_{i}
		= (-1)^{\frac{n^2 - n}{2}} \det(P_{d}^{l}) \Omega(z)
	,\]
	where the sign $(-1)^{\frac{n^2 - n}{2}}$ accounts for rewriting $\bigwedge_{i \in [2n]} Z_{i}$ in the order defining $\Omega(z)$
\end{proof}

Using \cref{lem:wedgeToDeriv,lem:wedgeToDet} and the fact that for $n$ odd $(-1)^{\frac{n^2 - n}{2}} = (-1)^{\frac{n+1}{2}}$,
we get the following intermediate result.
\begin{proposition}
	Let $s = 2$, then the LHS of \cref{thm:prodToPara} is equal to 
\begin{equation}\label{eq:prodToParaRegulator}
	(-1)^{\frac{n+1}{2}} \int_{\sigma_{2n}^{\epsilon}} \int_{\C^{n}} \left. \det(P_{d}^{l}) \frac{1}
	{(z^{T} L_{m} \overline{z}  + p^{T} A_{m} z + \overline{p}^{T} A_{m} \overline{z} + \alpha_{l})^{s}}
	\right|_{p,\overline{p} = 0} \Omega(z)
	\Omega(\alpha)
.\end{equation}
\end{proposition}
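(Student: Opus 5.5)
The plan is to substitute the two preceding lemmas directly into the left-hand side of \cref{thm:prodToPara}, pointwise in $\alpha\in\sigma_{2n}^{\epsilon}$, and then integrate.

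\textbf{Step 1: replace the numerator by derivatives.} Fix $\alpha$ with all $\alpha_i\geq\epsilon$. By \cref{lem:wedgeToDeriv}, $\Gamma(2n)$ times the integrand equals the operator-valued wedge
\[
\bigwedge_{e\neq l,d}\Big(\tfrac{\partial}{\partial\overline{p_e}}\dd{\la_e}-\tfrac{\partial}{\partial p_e}\dd{\overline{\la_e}}\Big)\wedge\dd{\la_d}\wedge\dd{\overline{\la_d}}
\]
applied to $(z^TL_m\overline z+p^TA_mz+\overline p^TA_m\overline z+\alpha_l)^{-2}$ and then evaluated at $p=\overline p=0$.

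\textbf{Step 2: fix the sign of the wedge.} \cref{lem:wedgeToDet} is stated with $+\frac{\partial}{\partial p_e}\dd{\overline{\la_e}}$ instead of $-$. To reconcile the two, I would expand the wedge of $2n$ one-forms on $\C^n$. Only terms with exactly $n$ holomorphic and $n$ antiholomorphic differentials survive. Since $\dd{\la_d}$ is holomorphic and $\dd{\overline{\la_d}}$ antiholomorphic, exactly $n-1$ of the $2n-2$ factors indexed by $e\neq l,d$ contribute their $\dd{\overline{\la_e}}$ part. Flipping the sign of that part therefore multiplies every surviving term by $(-1)^{n-1}=1$, because $n$ is odd. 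This is the same holomorphic/antiholomorphic counting used in \cref{prop:ogRWIntegral}.

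\textbf{Step 3: apply the determinant lemma.} \cref{lem:wedgeToDet} now gives $(-1)^{\frac{n^2-n}{2}}\det(P_d^l)\,\Omega(z)$. For odd $n$ one has $(-1)^{\frac{n^2-n}{2}}=(-1)^{\frac{n+1}{2}}$, which can be checked directly for $n\equiv1$ and $n\equiv3\pmod 4$. The constant matrix $A$ (equivalently $A_m$ on the retained rows) only supplies coefficients of the differential operators. The result is exactly the integrand of \cref{eq:prodToParaRegulator} with $s=2$, and it is integrated over $\C^n$ and then over $\sigma_{2n}^{\epsilon}$ with orientation $\Omega(z)\,\Omega(\alpha)$.

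\textbf{Main obstacle: keeping the rewriting legitimate.} The identities above are applied pointwise, before any integration, so no exchange of integrals is needed. The only real issue is that the $p$-derivatives must be taken before setting $p=0$, and the integrand only makes sense at $p=0$ after differentiating. For $\alpha_i\geq\epsilon$ the denominator $z^TL_m\overline z+\alpha_l$ is strictly positive. It therefore stays nonzero, and the expression is smooth in $p$ for small $p$ (take $|p|$ small relative to $\alpha_l$ and $|z|$, using $|p^TA_mz|\le C|p||z|$ together with $z^TL_m\overline z\ge c|z|^2$). Since the derivatives are evaluated at $p=0$, this local smoothness is all that is required.
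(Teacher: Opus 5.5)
Your argument is the paper's own proof, which consists exactly of combining \cref{lem:wedgeToDeriv} and \cref{lem:wedgeToDet} and then converting the sign for odd $n$. Steps 1 and 2 are fine. Step 2 in fact repairs a sign inconsistency between the two lemmas that the paper passes over, and your count is correct: exactly $n-1$ of the factors $e\neq l,d$ contribute a $\dd{\overline{\la_e}}$, which gives $(-1)^{n-1}=1$.

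The step that fails is the parity identity in Step 3.

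\begin{itemize}
\item For odd $n=2k+1$ one has $\frac{n^2-n}{2}=k(2k+1)\equiv k \pmod 2$ and $\frac{n+1}{2}=k+1$. Hence $(-1)^{\frac{n^2-n}{2}}=(-1)^{\frac{n-1}{2}}=-(-1)^{\frac{n+1}{2}}$ for \emph{every} odd $n$.
\item Already $n=3$ gives $(-1)^3=-1$ against $(-1)^2=+1$.
\item The mod-$4$ check you propose shows the opposite of what you assert: $n=4j+1$ makes the left exponent even and the right one odd, and $n=4j+3$ does the reverse.
\end{itemize}

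The reordering sign $(-1)^{\frac{n(n-1)}{2}}$ in \cref{lem:wedgeToDet} is correct: it is the sign relating $\dd{z_1}\wedge\cdots\wedge\dd{z_n}\wedge\dd{\overline{z_1}}\wedge\cdots\wedge\dd{\overline{z_n}}$ to $\Omega(z)$. So your Steps 1--3 actually establish \cref{eq:prodToParaRegulator} with prefactor $(-1)^{\frac{n-1}{2}}$, which is the negative of what is displayed. The integral there is not identically zero, since it is what eventually produces the nonzero canonical integral, so the sign genuinely matters.

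The paper's one-line proof relies on the same false identity, so you have reproduced an error in the source rather than verified it. The analogous conversion $(-1)^{\frac{n^2+n+2}{2}}=(-1)^{\frac{n+1}{2}}$ in the proof of \cref{thm:paraToCan} is off by the same sign, so the two slips may compensate in the main theorem. Still, the proposition as stated should carry $(-1)^{\frac{n-1}{2}}$.
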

To move the limits $p, \overline{p} = 0$ and the derivatives past the complex integral we need to replace the exponent $s=2$ of the denominator
by a regulator $s \in \C$ such that $\Re(s) > 2n$. All calculations can then be done in that open neighbourhood of $s$ and in the end
analytic continuation can be used to set $s=2$ again.
To use the dominated convergence theorem (\cref{thm:dominatedConvergenceTheorem}) and the Leibniz integral rule (\cref{thm:leibnizRule})
we bound the integrand:
\begin{lemma}\label{lem:prodToParaBound}
	Let $P$ be a differential operator in $\{p,\overline{p}\}$ of degree $(k,l)$ ($k$ holomorphic and $l$ anti-holomorphic derivatives).
	Let $\abs{p_{i}}^2 < \frac{\epsilon^2}{8 n}$, then the following bound holds:
	\[
		\abs{P \frac{1}
		{(z^{T} L_{m} \overline{z}  + p^{T} A_{m} z + \overline{p}^{T} A_{m} \overline{z} + \alpha_{l})^{s}}}
		\leq g(z) := \left( \frac{2}{\epsilon} \right)^{s+k+l} 
		\frac{\Gamma(s+k+l) \abs{F(z,\overline{z})}}{(|A_{m} z|^2 + 1)^{s+k+l}}
	,\] 
	where $F$ is some homogeneous polynomial in $\{z,\overline{z}\}$ of degree $(k,l)$.
	Further, $g: \C^{n} \to \R$ is absolutely integrable.
\end{lemma}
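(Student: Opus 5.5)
The plan is to compute the action of $P$ explicitly and then bound each resulting term. Write $D(z,p) := z^{T} L_{m} \overline{z} + p^{T} A_{m} z + \overline{p}^{T} A_{m} \overline{z} + \alpha_{l}$.

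\textbf{Step 1: explicit action of $P$.} Since $D$ is affine in $p$ and in $\overline{p}$ separately, a holomorphic derivative $\partial/\partial p_i$ pulls down the linear form $(A_m z)_i$, and an antiholomorphic one pulls down $(A_m\overline z)_i$. Second derivatives of $D$ vanish, so by induction
\[
P\, D^{-s} = (-1)^{k+l}\frac{\Gamma(s+k+l)}{\Gamma(s)}\, F(z,\overline{z})\, D^{-s-k-l},
\]
where $F$ is the product of the $k$ holomorphic and $l$ antiholomorphic linear forms selected by $P$. This $F$ is homogeneous of bidegree $(k,l)$. For a general constant-coefficient operator, $F$ is the corresponding linear combination of such products. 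For $\Re(s)>0$ we have $|\Gamma(s+k+l)/\Gamma(s)|\le$ a constant depending on $s$; I would absorb this into the $\Gamma(s+k+l)$ factor, or simply bound $1/|\Gamma(s)|$ on the compact $s$-range under consideration.

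\textbf{Step 2: lower bound on $|D|$.} This is the main point. On $\sigma_{2n}^{\epsilon}$ every $\alpha_i\ge\epsilon$, so
\[
\Re D \ge \epsilon\,|A_m z|^2 + \epsilon - 2\,|p|\,|A_m z|,
\]
where $|A_m z|^2=\sum_i|\la_{m,i}|^2$ and the bound uses Cauchy--Schwarz. The hypothesis gives $|p|^2 = \sum_i |p_i|^2 < 2n\cdot \epsilon^2/(8n) = \epsilon^2/4$, so $|p|<\epsilon/2$. Then
\[
2|p|\,|A_m z| \le \epsilon |A_m z| \le \tfrac{\epsilon}{2}\bigl(|A_m z|^2+1\bigr)
\]
by AM--GM. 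Combining, $|D|\ge \Re D\ge \frac{\epsilon}{2}(|A_m z|^2+1)$. Raising to the power $\Re(s)+k+l$ produces the factor $(2/\epsilon)^{s+k+l}$ and the stated $g$, reading $s$ as $\Re s$ in the exponent.

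\textbf{Step 3: integrability of $g$.} The rank assumption $\rank(A_{i,\bullet})=n$ implies that $A_m$ has rank $n$, since $A_m$ contains the rows of $A$ other than $l$. Hence $|A_m z|^2\ge c|z|^2$ for some $c>0$. Since $|F|\le C|z|^{k+l}$, we get
\[
g \lesssim \frac{(1+|z|)^{k+l}}{(1+c|z|^2)^{\Re s+k+l}}.
\]
This is integrable over $\C^n\cong\R^{2n}$ as soon as $2\Re s + k + l > 2n$, which holds for $\Re(s)>2n$. This last check is routine.

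The main obstacle is making Step 2 uniform in $p$. That is exactly where the smallness condition $|p_i|^2<\epsilon^2/(8n)$ is used to absorb the linear terms into the positive quadratic form.
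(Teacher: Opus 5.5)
Your proof is correct. Steps 1 and 2 match the paper in substance. The paper also computes $P D^{-s}$ explicitly, silently absorbing the factor $1/\Gamma(s)$ and the sign into the unspecified polynomial $F$. For the denominator it completes the square in each coordinate $y_i=(A_m z)_i$, using $\beta_i|y_i|^2+2\Re(p_i y_i)\ge \frac{\beta_i}{2}|y_i|^2-\frac{2|p_i|^2}{\beta_i}$. That is the same AM--GM estimate you apply globally after Cauchy--Schwarz (with $\overline{p}$ the conjugate of $p$, so $D$ is real), and both arrive at $\frac{\epsilon}{2}(|A_m z|^2+1)$.

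The genuine difference is the integrability of $g$.
\begin{itemize}
\item The paper homogenises via $z=y/y_0$ and pulls $g$ back to $\CP^n$. It then argues that the integrand is continuous (it says smooth) on the compact space $\CP^n$: the denominator $|A_m y|^2+|y_0|^2$ never vanishes, and the leftover power $y_0^{s-n-1+l}\,\overline{y_0}^{\,s-n-1+k}$ is harmless for $\Re(s)>n+1$. This mirrors the compactification argument of \cref{thm:productSpaceConvergence}, which keeps the paper's convergence proofs uniform.
\item You instead use the coercivity $|A_m z|^2\ge c|z|^2$ coming from $\rank(A_m)=n$, and count powers of $|z|$ at infinity.
\end{itemize}
Your route is more elementary and gives the slightly sharper condition $2\Re(s)+k+l>2n$. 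It also handles complex $s$ cleanly, by reading the exponent as $\Re(s)$ and bounding $|\Gamma(s+k+l)/\Gamma(s)|$ by a constant; the paper's statement glosses over both points.
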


\begin{proof}
Apply the differential operator to the fraction. For the numerator one gets some homogeneous polynomial in $\{z_{i},\overline{z_{i}}\}$ 
of degree $(k,l)$, where the coefficients arise from $A_{m}$. We denote it by $F(z,\overline{z})$:
\begin{equation}\label{eq:derivativesTakenF}
	\abs{P \frac{1}
	{(z^{T} L_{m} \overline{z}  + p^{T} A_{m} z + \overline{p}^{T} A_{m} \overline{z} + \alpha_{l})^{s}}}
	= \frac{\Gamma(s+k+l) \abs{F(z,\overline{z})}}
	{\abs{z^{T} L_{m} \overline{z}  + p^{T} A_{m} z + \overline{p}^{T} A_{m} \overline{z} + \alpha_{l}}^{s+k+l}}
.\end{equation}
To bound the denominator independently of $p$ and $\overline{p}$ we complete the square for each line of $A_{m}$ separately. 
For this denote $y = A_{m} z$ and $\beta = (\alpha_1,\ldots,\alpha_{l-1},\alpha_{m},\alpha_{l+1},\ldots,\alpha_{2n})^{T}$:
\begin{align*}
	z^{T} A_{m}^{T} D_{m} A_{m} \overline{z}  + p^{T} A_{m} z + \overline{p}^{T} A_{m} \overline{z} + \alpha_{l}
	&= \alpha_{l} + \sum_{i=1}^{2n} \beta_{i} y_{i} \overline{y_{i}} + p_{i} y_{i} + \overline{p_{i}} \overline{y_{i}}\\
	&= \alpha_{l} + \sum_{i = 1}^{2n} \frac{\beta_{i}}{2} \abs{y_{i} + \frac{2 \overline{p_{i}}}{\beta_{i}}}^2
	+ \sum_{i=1}^{2n} \frac{\beta_{i}}{2} \abs{y_{i}}^2 - \sum_{i=1}^{2n} \frac{2\abs{p_{i}}^2}{\beta_{i}}\\ 
	&\geq \epsilon + \sum_{i=1}^{2n} \frac{\beta_{i}}{2} \abs{y_{i}}^2 - 2\sum_{i=1}^{2n} \frac{\abs{p_{i}}^2}{\beta_{i}} \\
	&\geq \frac{\epsilon}{2} |y|^2 + \epsilon  - 4n \frac{\epsilon^2}{8n \epsilon} = \frac{\epsilon}{2} (|A_{m} z|^2) + 1)
,\end{align*}
where we used that $\alpha_{l} > \epsilon$, $\beta_{i} > \epsilon$ and $\abs{p_{i}}^2 < \frac{\epsilon^2}{16n} $ for all $i \in [2n]$.
Thus we bound \cref{eq:derivativesTakenF} as
\[
	\frac{\Gamma(s+k+l) \abs{F(z,\overline{z})}}{\abs{z^{T} L_{m} \overline{z}  + p^{T} A_{m} z + \overline{p}^{T} A_{m} \overline{z} + \alpha_{l}}^{s+k+l}}
	\leq g(z) := \left( \frac{2}{\epsilon} \right)^{s+k+l}  \frac{\Gamma(s+k+l) \abs{F(z,\overline{z})}}{(|A_{m} z|^2 + 1)^{s+k+l}}
.\] 
This finishes the first part. What remains is to show that $g$ is absolutely integrable on $\C^{n}$.

To this end, consider the integral of $g$ over $\C^{n}$ and pull it back to complex projective space via $[y_0:y] \mapsto \frac{y}{y_0}$:
\begin{multline*}
	\int_{\C^{n}} \left( \frac{2}{\epsilon} \right)^{s+k+l}  \frac{\Gamma(s+k+l) \abs{F(z,\overline{z})}}{(|A_{m} z|^2 + 1)^{s+k+l}} \Omega(z)\\
	= \int_{\CP^{n}} \left(\frac{2}{\epsilon}\right)^{s+k+l} \frac{\Gamma(s+k+l) \abs{F(y,\overline{y})}}{(|A_{m} y|^2 + |y_0|^2)^{s+k+l}}
	\frac{|y_0|^{2s+2k+2l}}{y_0^{k} \overline{y_0}^{l} |y_0|^{2n+2}} 
	\left( \sum_{i,j=0}^{n}  y_{i} \overline{y_{j}} \left( \dd{y_{j}} \wedge \dd{\overline{y_{i}}} \right) 
	\bigwedge_{e \neq i,j} \dd{y_{e}} \wedge \dd{\overline{y_{e}}}\right)
.\end{multline*}
The expression in parentheses is the standard volume form on $\CP^{n}$.
The crucial observation is that the integrand is smooth. Indeed,
$|A_{m} y |_{2}^2 + \abs{y_{0}}^2 \neq 0$ for all $ [y_0,y] \in \CP^{n}$,
since not all homogeneous coordinates $y_0,\ldots,y_{n}$ vanish simultaneously (as $0 \not\in \CP^{n}$ ), and since $A_{m}$ has rank $n$ 
as $\rank(A_{l,\bullet}) = n$ by definition of $A$. Consequently, the denominator never vanishes.

Moreover $F(y,\overline{y})$ is a polynomial, hence smooth and
\[
	\frac{|y_0|^{2s+2k+2l}}{y_0^{k} \overline{y_0}^{l} |y_0|^{2n+2}} 
	= y_0^{s-n-1+l}\ \overline{y_0}^{s-n-1+k}
,\] 
which is also smooth, since $\Re(s) > n+1$. Hence, the entire integrand defines a smooth function on $\CP^{n}$.
As $\CP^{n}$ is compact and the integrand is smooth, it is bounded. Since $\CP^{n}$ has finite volume,
the integral is finite. This proves the second part of the statement.
\end{proof}
By dominating the integrand of \cref{eq:prodToParaRegulator} with the function $g$ from \cref{lem:prodToParaBound}, 
corresponding to the differential operator $P = P_{d}^{l}$, we may apply the Dominated Convergence \cref{thm:dominatedConvergenceTheorem} 
to interchange the limits and the integral.
Subsequently, using the Leibniz integral rule (\cref{thm:leibnizRule}) with the appropriate dominating functions $g$, 
the derivatives can be passed through the integral one at a time. Hence, the left-hand side of \cref{thm:prodToPara} is equal to
\[
	(-1)^{\frac{n+1}{2}} \int_{\sigma_{2n}^{\epsilon}}\lim_{p \to 0} \det(P_{d}^{l}) \int_{\C^{n}} 
	\frac{1}{(z^{T} L_{m} \overline{z}  + p^{T} A_{m} z + \overline{p}^{T} A_{m} \overline{z} + \alpha_{l})^{s}}
	\Omega(z) \Omega(\alpha)
.\] 
As $L_{m}$ is real symmetric we can consider its Cholesky decomposition $L_{m} = C^{T} C$.
Applying the change of variables $y := C z$, we obtain
\[
	\frac{(-1)^{\frac{n+1}{2}}}{\det(L_{m})} \int_{\sigma_{2n}^{\epsilon}} \lim_{p \to 0} \det(P_{d}^{l}) \int_{\C^{n}} \frac{1}
	{(y^{T} \overline{y}  + p^{T} A_{m} C^{-1} y +\overline{p}^{T} A_{m} C^{-1} \overline{y} + \alpha_{l})^{s}}
	\Omega(y) \Omega(\alpha)
,\]
where we used that $\abs{\det(C)}^2 = \det(C^{T} C) = \det(L_{m})$. Next, we complete the square in the denominator:
\begin{multline*}
	y^{T} \overline{y}  +  p^{T} A_{m} C^{-1} y + \overline{p}^{T} A_{m} C^{-1} \overline{y} + \alpha_{l} \\
	= (y + (C^{-1})^{T} A_{m}^{T} \overline{p})^{T} (\overline{y} + (C^{-1})^{T} A_{m}^{T} p) + 
	(\alpha_{l} - \overline{p}^{T} A_{m} C^{-1} (C^{-1})^{T} A_{m}^{T} p)
,\end{multline*}
where we used that $p^{T} A_{m} C^{-1} y = y^{T} (C^{-1})^{T} A_{m}^{T} p$.
Moreover, since the transpose commutes with the inverse, we have  $C^{-1} (C^{-1})^{T} = (C^{T} C)^{-1} = L_{m}^{-1}$.
Substituting the completion of the square and changing variables $w := y + (C^{-1})^{T} A_{m}^{T} \overline{p}$, we obtain
\begin{equation}\label{eq:prodToParaIntegralToIntegrate}
	\frac{(-1)^{\frac{n+1}{2}}}{\det(L_{m})} \int_{\sigma_{2n}^{\epsilon}} \lim_{p \to 0} \det(P_{d}^{l}) \int_{\C^{n}} \frac{1}
	{(w^{T} \overline{w} + (\alpha_{l} - \overline{p}^{T} A_{m} L_{m}^{-1} A_{m}^{T} p))^{s}}
	\Omega(w) \Omega(\alpha)
.\end{equation}
The complex integral is then a repeated application of the following one-dimensional integral:
\begin{lemma}\label{lem:complexFracInt}
	Let $B \in \C$ such that $\Re(B) > 0$ and  $k > 1$, then
	\[
		\int_{\C} \frac{1}{(\abs{z}^2 + B)^{k}} \dd{z} \wedge \dd{\overline{z}}
		= \frac{2 \pi i}{k-1} \frac{1}{B^{k-1}}
	.\] 
\end{lemma}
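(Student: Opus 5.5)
We compute the integral directly in polar coordinates. The main point to watch is the orientation convention relating $\dd{z}\wedge\dd{\overline{z}}$ to the Lebesgue measure.

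Write $z = x+iy$. Then $\dd{z}\wedge\dd{\overline{z}} = -2i\,\dd{x}\wedge\dd{y}$. With the paper's conventions (orientation of $\C$ such that the integrals of $\dd{z}\wedge\dd{\overline{z}}$ come out with the stated sign $2\pi i$), we take the oriented volume element to be $\dd{z}\wedge\dd{\overline{z}} \leftrightarrow 2i\,\dd{x}\,\dd{y}$, up to the fixed orientation sign. We fix this sign at the end by matching against the case $k=2$, $B=1$ if necessary. Passing to polar coordinates $z=re^{i\theta}$ gives $\dd{x}\,\dd{y} = r\,\dd{r}\,\dd{\theta}$. The integrand depends only on $r$, so the angular integral contributes $2\pi$, and we obtain
\[
2i\cdot 2\pi\int_0^\infty \frac{r\,\dd{r}}{(r^2+B)^k}.
\]

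Substituting $u=r^2$, so that $r\,\dd{r}=\tfrac12\dd{u}$, turns this into
\[
2\pi i\int_0^\infty \frac{\dd{u}}{(u+B)^k} = 2\pi i\left[\frac{-1}{(k-1)(u+B)^{k-1}}\right]_0^\infty = \frac{2\pi i}{k-1}\,\frac{1}{B^{k-1}}.
\]
The antiderivative $(u+B)^{1-k}$ uses the principal branch. This branch is well defined and holomorphic along the path because $\Re(u+B)\ge \Re(B)>0$ for all $u\ge 0$, so $u+B$ never meets the branch cut $(-\infty,0]$.

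It remains to justify convergence, which we do for both real $k>1$ and, as needed later, complex $k$ with $\Re(k)>1$. The modulus satisfies $\abs{(u+B)^{-k}} \le C\,\abs{u+B}^{-\Re k}e^{\pi\abs{\Im k}}$. Since $\abs{u+B}\ge \max(\Re B,\ u-\abs{B})$, this bound is integrable on $[0,\infty)$ when $\Re k>1$. The boundary term at infinity vanishes for the same reason.

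The only real obstacle is the sign and orientation bookkeeping for $\dd{z}\wedge\dd{\overline{z}}$. Everything else is an elementary substitution, and the branch issue is handled by the positivity of $\Re(B)$.
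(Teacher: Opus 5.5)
Your proof is correct and is the same computation as the paper's: polar coordinates, the substitution $u=r^2$, and $\Re(B)>0$ to keep $u+B$ off the branch cut. You also add a convergence check and allow complex exponents with $\Re(k)>1$, which matches how the lemma is later applied with the complex regulator $s$. On the sign, $\dd{z}\wedge\dd{\overline{z}}=2ir\,\dd{\varphi}\wedge\dd{r}=-2i\,\dd{x}\wedge\dd{y}$, so the stated value holds exactly for the orientation in which $\dd{\varphi}\wedge\dd{r}$ is positive, which the paper's proof also uses tacitly; declare this convention outright rather than fixing the sign by ``matching against $k=2$, $B=1$'', which would be circular.
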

\begin{proof}
	To integrate change to polar coordinates $z = r e^{i \varphi}$ to obtain
	\[
		\int_{0}^{\infty} \int_{0}^{2\pi} \frac{2i r}{r^2 + B} \dd{\varphi} \wedge \dd{r} = 2 \pi i \int_{0}^{\infty} \frac{1}{(u+B)^{k}} \dd{u}
		= \frac{2 \pi i}{k-1} \frac{1}{B^{k-1}}
	\] 
	where the first step used the substitution $u = r^2$ and the second step used the fact that $\Re(y) > 0$ to rule out the existence
	of poles along $u \in [0,\infty)$.
\end{proof}
For our situation note that $\overline{p} A_{m} L_{m}^{-1} A_{m}^{T} p$ is a homogeneous quadratic polynomial in $p$ with coefficients in $\Q[\alpha]$.
Hence, there exists $0 < \delta < 1$ such that, whenever $\abs{p_{i}}^2 < \delta$, we have
\[
	\Re(\overline{p} A_{m} L_{m}^{-1} A_{m}^{T} p) < \frac{\alpha_{l}}{2}
.\]
Since we consider the limit $p \to 0$, we may restrict $p$ to this neighbourhood.

Under these conditions, \cref{lem:complexFracInt} can be applied repeatedly to the integral in \cref{eq:prodToParaIntegralToIntegrate},
as $s > 2n$ and the bound described above, ensures that 
the real part of the remaining terms in the denominator is positive. Consequently, the LHS of \cref{thm:prodToPara} is equal to
\begin{equation}\label{eq:prodToParaPostIntegration}
	(-1)^{\frac{n + 1}{2}} \frac{(2 \pi i)^{n}}{\det(L_{m})} \frac{\Gamma(s-n)}{\Gamma(s)}
	\int_{\sigma_{2n}^{\epsilon}} \lim_{p \to 0} \det(P_{d}^{l}) 
	\frac{1}{(\alpha_{l} - \overline{p} A_{m} L_{m}^{-1} A_{m}^{T} p)^{s-n}} \Omega(\alpha)
,\end{equation}
Next we apply the differential operator and subsequently take the limits.
Recall the structure of $P_{d}^{l}$ 
\[
	P_{d}^{l} := \begin{pmatrix}[c|c]
		(\diag(\partial p) \cdot A)^{ld,\bullet}  & (\diag(\partial \overline{p}) \cdot A)^{ld,\bullet} \\ \hline
		a_{d} & 0\\ 0 & a_{d}
	\end{pmatrix} 
.\] 
Observe that the first $2n-2$ rows of $P_{d}^{l}$ can be written in the form $(\partial p_{i} a_{i}, 0) + (0, \partial \overline{p}_{i}a_{i})$.
Let $I := (i_1,\ldots,i_{2n-2}) = [2n] \setminus \{l,d\}$.
By multilinearity of the determinant, we may expand $\det(P_{d}^{l})$ accordingly to obtain
\[
	\det(P_{d}^{l}) = \sum_{S,T \in \mathcal{I}_{d}^{l}} \det \begin{pmatrix}[c|c]
		\partial p_{i_1} \cdot  a_{i_1} \cdot  \mathbbm{1}_{i_1 \in S} & \partial \overline{p}_{i_1} \cdot  a_{i_1} \cdot  \mathbbm{1}_{i_1 \in T}\\
		\vdots & \vdots\\
		\partial p_{i_{2n-2}} \cdot  a_{i_{2n-2}} \cdot  \mathbbm{1}_{i_{2n-2} \in S} & 
		\partial \overline{p}_{i_{2n-2}} \cdot  a_{i_{2n-2}} \cdot  \mathbbm{1}_{i_{2n-2} \in T}\\
		a_{d} & 0\\
		0 & a_{d}
	\end{pmatrix} 
,\]
where $\mathcal{I}_{d}^{l} = \{(S,T) \in ([2n] \setminus \{l,d\})^2 \mid \abs{S} = \abs{T} = n-1 \text{ and } S \cap T = \emptyset\}$.
Crucially in each of the first $2n-2$ rows, at most one of the two characteristic functions can be equal to $1$.
Consequently, after reordering rows, each matrix in the sum can be written in block form: we permute the rows corresponding to $S$, together with the row
$(a_{d}, 0)$, to the top, and the rows corresponding to $T$ to the bottom.
The sign of this permutation is $\sgn(S,T) (-1)^{n-1}$. Using the multiplicativity of the determinant for block matrices, we obtain
\[
	\det(P_{d}^{l}) = \sum_{S,T \in \mathcal{I}_{d}^{l}} \sgn(S,T) (-1)^{n-1} \det \begin{pmatrix}
		\partial p_{s_1} \cdot  a_{s_1}\\
		\vdots\\
		\partial p_{s_{n-1}} \cdot  a_{s_{n-1}}\\
		a_{d}
		\end{pmatrix} \det \begin{pmatrix}
		\partial \overline{p}_{t_1} \cdot  a_{t_1}\\
		\vdots\\
		\partial \overline{p}_{t_{n-1}} \cdot  a_{t_{n-1}}\\
		a_{d}
	\end{pmatrix} 
.\] 
By linearity of the determinant, the factors $\partial p_{s_{j}}$ and $\partial \overline{p}_{t_{j}}$ may be extracted, yielding
\[
	\det(P_{d}^{l}) = \sum_{S,T \in \mathcal{I}_{d}^{l}} \sgn(S,T) (-1)^{n-1} 
	\det(A_{S d,\bullet}) \det(A_{T d,\bullet}) 
	\prod_{i \in T} \frac{\partial }{\partial \overline{p_{i}}} \prod_{j \in S} \frac{\partial }{\partial p_{j}} 
.\]
Looking back at \cref{eq:prodToParaPostIntegration} we can now apply the differential operator to the rational function 
$1 /(\alpha_{l} - \overline{p} A_{m} L_{m}^{-1} A_{m}^{T} p)^{k}$ for $k >0$.
Observe that taking the derivative with respect to a variable $p_{j}$ produces
\[
	\frac{\partial }{\partial p_{j}} \frac{1}{(\overline{p}^{T} A_{m} L_{m}^{-1} A_{m}^{T} p + \alpha_{l})^{k} }
	= -k \frac{\overline{p}( A_{m} L_{m}^{-1} A_{m}^{T})_{\bullet,j}}
	{(\overline{p}^{T} A_{m} L_{m}^{-1} A_{m}^{T} p + \alpha_{l})^{k+1} }
.\] 
Such a term can survive in the limit $p\to 0$ only if it is differentiated with respect to some $p_{i}$,
since otherwise the entire expression vanishes in the limit. 
Hence, taking derivatives and then limits amounts to summing over all perfect matchings between elements of $S$ and elements of $T$.
Each perfect matching corresponds to a permutation of $\sigma \in \mathbb{S}_{n}$, so the result can be written as a sum over all permutations.
This mechanism turns a differential operator applied to a quadratic expression into a permanent:
\begin{lemma}
\begin{align*}
	\lim_{p \to 0} \prod_{i \in T} \frac{\partial }{\partial \overline{p_{i}}}  \prod_{j \in S} \frac{\partial }{\partial p_{j}} 
	\frac{1}{(\overline{p}^{T} A_{m} L_{m}^{-1} A_{m}^{T} p + \alpha_{l})^{s-n} }
	&= \frac{(-1)^{n-1} \Gamma(s-1)}{\Gamma(s-n) \alpha_{l}^{s-1}}
	\sum_{\sigma \in \mathbb{S}_{n}} \prod_{i \in [n]} \left(A_{m} L_{m}^{-1} A_{m}^{T}\right)_{s_i t_{\sigma(i)}} \\
	&= \frac{(-1)^{n-1} \Gamma(s-1)}{\Gamma(s-n) \alpha_{l}^{s-1}}
	\perm\left(A_{m} L_{m}^{-1} A_{m}^{T}\right)_{S,T}
.\end{align*}
\end{lemma}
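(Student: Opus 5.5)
Write $Q := A_m L_m^{-1} A_m^{T}$, which is real symmetric, and $q(p,\overline{p}) := \overline{p}^{T} Q\, p = \sum_{a,b} Q_{ab}\,\overline{p_a}\, p_b$. Set $k := s-n$ and $f(u) := (u+\alpha_l)^{-k}$. The sign convention in the denominator is fixed by \cref{eq:prodToParaPostIntegration}, where the quadratic form enters as $\alpha_l - q$; I will follow that form and track the sign it produces. The plan is to reduce the mixed derivative to a Taylor-coefficient extraction.

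First, observe that the function depends on $(p,\overline{p})$ only through the bilinear form $q$, which is homogeneous of bidegree $(1,1)$. Expand $f$ as a power series around $u=0$:
\[
f(\alpha_l - q) = \sum_{j\geq 0} \frac{\Gamma(k+j)}{\Gamma(k)\, j!}\, \alpha_l^{-k-j}\, q^{j}.
\]
This series converges for $p$ in the small neighbourhood already fixed before \cref{eq:prodToParaPostIntegration}. The operator $\prod_{i\in T}\partial_{\overline{p_i}} \prod_{j\in S}\partial_{p_j}$, followed by evaluation at $p=0$, extracts the coefficient of the monomial $\prod_{j\in S} p_j \prod_{i\in T}\overline{p_i}$. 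Since $|S|=|T|=n-1$ and $S\cap T=\emptyset$, only the term of bidegree $(n-1,n-1)$ survives, namely $j=n-1$.

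Second, compute the coefficient of $\prod_{S} p_j \prod_{T}\overline{p_i}$ in $q^{n-1}$. Expanding the product, each of the $n-1$ factors picks one pair $(a,b)$ with $a\in T$ and $b\in S$. The pairs must use each index of $S$ and each index of $T$ exactly once, so they form a bijection $S\to T$. There are $(n-1)!$ orderings of the factors for each bijection, and the indices are distinct, so no multiplicities appear. Hence the coefficient is
\[
(n-1)!\sum_{\sigma} \prod_i Q_{t_{\sigma(i)} s_i} = (n-1)!\,\perm(Q_{S,T}),
\]
using the symmetry of $Q$. Differentiating this monomial gives a factor $1$, because each variable appears to the first power. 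Combining with the series coefficient, the $(n-1)!$ cancels the $j!$, and we get
\[
\frac{\Gamma(k+n-1)}{\Gamma(k)}\,\alpha_l^{-k-n+1} = \frac{\Gamma(s-1)}{\Gamma(s-n)\,\alpha_l^{s-1}}.
\]
The sign $(-1)^{n-1}$ comes from the relative sign of $q$ in the denominator: each factor of $-q$ contributes a $-1$.

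Note that the permutation in the statement is indexed by $\mathbb{S}_{n}$ but really ranges over bijections between the $(n-1)$-element sets $S$ and $T$; I would read it that way.

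The only delicate point is justifying the termwise Taylor extraction for complex $s$ with $\Re(s)>2n$. This is handled by holomorphy of $f(\alpha_l - q)$ in $(p,\overline{p})$, treated as independent variables, on the neighbourhood where $|q|<\alpha_l/2$. Alternatively, one can argue by induction on the number of derivatives, as the text suggests: each $\partial_{p_j}$ produces a factor linear in $\overline{p}$, and that factor must later be killed by a $\partial_{\overline{p_i}}$ for the term to survive at $p=0$.
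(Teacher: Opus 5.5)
Your argument is correct, but it organises the computation differently from the paper. The paper differentiates directly, as sketched just before the lemma. Write $Q=A_mL_m^{-1}A_m^{T}$. Each $\partial/\partial p_j$ ($j\in S$) can only hit the denominator, and it brings down the factor $\overline{p}^{T}Q_{\bullet,j}$, which is linear in $\overline{p}$. Each $\partial/\partial\overline{p_i}$ ($i\in T$) must then absorb one of these linear factors; if it hits the denominator instead, it leaves a factor linear in $p$ that vanishes at $p=0$. The surviving terms are therefore indexed by bijections $S\to T$, and the $n-1$ hits on the denominator give $(-1)^{n-1}\Gamma(s-1)/\Gamma(s-n)$.

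You instead expand in a binomial series and read off the coefficient of $\prod_{j\in S}p_j\prod_{i\in T}\overline{p_i}$. This separates the analytic factor $\frac{\Gamma(s-1)}{\Gamma(s-n)\,(n-1)!}\,\alpha_l^{-(s-1)}$ from the combinatorial factor $(n-1)!\,\perm(Q_{S,T})$. The paper's route is a finite computation with no convergence issue. Yours needs two easy facts: the series converges for $|q|<\alpha_l$, and the mixed Wirtinger derivatives at $0$ of a convergent power series in $(p,\overline{p})$ are its coefficients (times factorials, all equal to $1$ here). In exchange, it makes the cancellation of $(n-1)!$ and the origin of the sign transparent. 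Your reading of the sum as one over $\mathbb{S}_{n-1}$, i.e.\ over bijections between $S$ and $T$, is correct.

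The sign bookkeeping, however, is inconsistent as written. You say you follow the denominator $\alpha_l-q$. For that denominator your series $\sum_j\frac{\Gamma(k+j)}{\Gamma(k)\,j!}\alpha_l^{-k-j}q^j$ already has positive coefficients, so it yields $+\frac{\Gamma(s-1)}{\Gamma(s-n)\,\alpha_l^{s-1}}\perm(Q_{S,T})$. Attributing a further $(-1)^{n-1}$ to ``each factor of $-q$'' then counts that minus sign twice. The $(-1)^{n-1}$ in the statement belongs to the denominator as literally written, $\overline{p}^{T}Qp+\alpha_l$. There the coefficients of $(1+x)^{-k}$ alternate and give $(-1)^{n-1}$ at $j=n-1$.

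The two conventions differ by the factor $(-1)^{n-1}$, which is $1$ because $n$ is odd throughout \cref{thm:prodToPara}. Completing the square genuinely produces $\alpha_l-q$, so the paper's own lemma carries the same harmless mismatch. Nothing breaks, but you should expand the function you actually have and let the series produce the sign. Also, with $f(u)=(u+\alpha_l)^{-k}$, the function you expand is $f(-q)$, not $f(\alpha_l-q)$.
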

Here, the sign and the Gamma-factor arise from taking $n-1$ derivatives.
Substituting the expansion of $\det(P_{d}^{l})$ into the integrand of \cref{eq:prodToParaPostIntegration} and combining it with the above, we arrive at
\begin{multline}\label{eq:prodToParaPermRegulator}
	\frac{\Gamma(s-n)}{\Gamma(s)} \lim_{p \to 0} \det(P_{d}^{l}) 
	\frac{1}{(\alpha_{l} - \overline{p} A_{m} L_{m}^{-1} A_{m}^{T} p)^{s-n}}\\
	= \frac{\Gamma(s-1)}{\Gamma(s) \alpha_{l}^{s-1}} 
	\sum_{S,T \in \mathcal{I}_{d}^{l}} \sgn(S,T) 
	\det(A_{Sd,\bullet}) \det(A_{Td,\bullet}) \perm\left(A_{m} L_{m}^{-1} A_{m}^{T}\right)_{S,T}
\end{multline}
We now use analytic continuation to return to the case $s=2$.
Observe that both the expression in \cref{eq:prodToParaRegulator} and the expression in \cref{eq:prodToParaPermRegulator} are well defined at $s=2$. 
Moreover, we have shown that these two expressions agree for all $s$ with $\Re(s) > 2n$.
By analytic continuation, it follows that they must also coincide at $s=2$. 
Substituting $s=2$, we conclude that the left-hand side of \cref{thm:prodToPara} is equal to
\begin{equation}\label{eq:prodToParaPerm}
	\int_{\sigma_{n}^{\epsilon}} \frac{(-1)^{\frac{n +1}{2}}(2 \pi i)^{n}}{\det(L_{m})} \frac{1}{\alpha_{l}} 
	\sum_{S,T \in \mathcal{I}_{d}^{l}} \sgn(S,T) 
	\det(A_{Sd,\bullet}) \det(A_{Td,\bullet}) \perm\left(A_{m} L_{m}^{-1} A_{m}^{T}\right)_{S,T} \Omega(\alpha)\\
.\end{equation}

Finally, we pass to the dual representation. Consider the involution $\varphi: \sigma_{n} \to \sigma_{n}$:
\[
	\varphi(x) = \left(\frac{1}{x_1},\ldots,\frac{1}{x_{2n}}\right)^{T} \qq{under which}
	\varphi^{*} \Omega(\alpha) = \frac{(-1)^{2n-1}}{\prod_{k=1}^{2n} x_{k}^2} \Omega(x)
\] 
The truncated simplex $\sigma_{n}^{\epsilon}$ is being mapped to
\[
	\widetilde{\sigma}_{n}^{\epsilon} := \varphi(\sigma_{n}^{\epsilon}) 
	= \left\{x \in \R^{2n}_{\geq 0} \mid x_{i} \leq \frac{1}{\epsilon} \text{ for all } i \in [2n] \text{ and } 
	\sum_{i=1}^{2n} \frac{1}{x_{i}} = 1  \right\} 
.\] 
and the matrix $L_{m}$ transforms as $\widetilde{L}_{m} := \varphi^{*} L_{m} = A_{m}^{T} D_{x} A_{m}$.
Applying $\varphi$ to \cref{eq:prodToParaPerm}, we obtain
\begin{equation}\label{eq:paraToCanDualRep}
	\int_{\widetilde{\sigma}_{n}^{\epsilon}} \frac{(-1)^{\frac{n +1}{2}+1}}{\det(\widetilde{L}_{m})(2\pi i)^{-n}}
	\sum_{S,T \in \mathcal{I}_{d}^{l}} \sgn(S,T) 
	\det(A_{Sd,\bullet}) \det(A_{Td,\bullet}) \perm\left(A_{m} \widetilde{L}_{m}^{-1} A_{m}^{T}\right)_{S,T} 
	\frac{x_{l} \Omega(x)}{\prod_{k=1}^{2n} x_{k}^2}
.\end{equation}
Next, we rewrite the permanent using \cref{lem:eLeToM} which states that the matrix $A_{m} \widetilde{L}_{m}^{-1} A_{m}^{T}$ 
and  $-D_{x} M_{m}^{-1} D_{x}$ agree off the diagonal.
Since, $S$ and $T$ are disjoint, diagonal entries do not contribute to the relevant submatrix. Hence,
\[
	\perm\left(A_{m} \widetilde{L}_{m}^{-1} A_{m}^{T}\right)_{S,T}
	= \frac{(-1)^{n-1}}{\Psi_{m}^{n-1}} \perm(\adj(M_{m})_{S,T}) \prod_{i \in [2n] \setminus \{l,d\} } x_{i}
,\]
where we used the definition of the permanent together with the fact that $S \sqcup T = [2n] \setminus \{l,d\}$ and that $\det(M_{m}) = \Psi_{m}$.
Recall from \cref{def:graphMatrices} that
\[
	\det(\widetilde{L}_{m}) = \frac{\det(M_{m})}{\prod_{k \in [2n]} x_{k}}= \frac{\Psi_{m}}{\prod_{k \in [2n]} x_{k}}
.\] 
Substituting both identities into \cref{eq:paraToCanDualRep} yields that the left-hand side of \cref{thm:prodToPara} is equal to
\begin{equation*}\label{eq:paraToCanGraphMatrixRep}
\int_{\widetilde{\sigma}_{n}^{\epsilon}} \frac{(-1)^{\frac{n + 1}{2}} (2\pi i)^{n}}{\Psi_{m}^{n}} \frac{1}{x_{d}}
	\sum_{S,T \in \mathcal{I}_{d}^{l}} \sgn(S,T) 
	\det(A_{Sd,\bullet}) \det(A_{Td,\bullet}) \perm(\adj(M_{m})_{S,T}) \Omega(x)
,\end{equation*}
which concludes the proof of \cref{thm:prodToPara}.

\graphicspath{{Images/}}

\section{A new formula for canonical forms - Theorem \ref*{thm:paraToCan}}\label{sec:paraToCan}
In this section we identify the parameter space integrand obtained in \cref{thm:prodToPara} with the canonical forms \cref{def:canonicalInt}, that is
\begin{theorem}\label{thm:paraToCan}
	Let $n \geq 3$ odd, $d\in [2n]$ and $A \in \R^{2n \times  n}$.
	Define for  $l \in [2n] \setminus \{d\}$ the indexing set 
	$\mathcal{I}_{d}^{l} = \{(S,T) \in ([2n] \setminus \{l,d\})^2 \mid \abs{S} = \abs{T} = n-1 \text{ and } S \cap T = \emptyset\}$.
	Then:
	\begin{align*}
	\beta^{2n-1}_{L} = \sum_{\substack{l \in [2n]\\ l \neq  d}} \sum_{S,T \in \mathcal{I}_{d}^{l}} 
	\frac{(-1)^{\frac{n + 1}{2} + d+l+(l<d)}}{\Psi^{n}} \sgn(S,T) 
		\det(A_{Sd,\bullet}) \det(A_{Td,\bullet}) \perm(\adj(M)_{S,T}) \frac{\Omega(x)}{x_{d}}
	\end{align*}
\end{theorem}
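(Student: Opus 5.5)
The plan is to start from \cref{prop:canonicalDodgsonFormula} and reduce both sides to expressions in the entries of $M^{-1}$. First, I will use the cyclic invariance of the product of Dodgson polynomials (exactly as in the proof of the generalisation of \cref{prop:introNoLsum}) to fix the first index $i_{\tau(1)}=l$. This rewrites $f_d$ as $(2n-1)$ times a signed sum over $\tau\in\mathbb{S}_{2n-2}$ of closed cycles $\Psi^{l,j_{\tau(1)}}\Psi^{j_{\tau(1)},j_{\tau(2)}}\cdots\Psi^{j_{\tau(2n-2)},l}$, where $(j_1,\ldots,j_{2n-2})=[2n]\setminus\{l,d\}$. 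Averaging over $l\neq d$ instead of fixing one $l$ turns the prefactor $(2n-1)$ into the sum $\sum_{l\neq d}$, with the sign $(-1)^{d+l+(l<d)}$ coming from moving $l$ to the front. Next I will replace $\Psi^{i,j}/\Psi$ by the corresponding entries of $M^{-1}$, or equivalently of $\adj(M)/\Psi$, using the appendix identities (\cref{lem:eLeToM}). After this step $\beta_L^{2n-1}$ is a sum over $l$ of $\Psi^{-(2n-1)}$ times a signed sum over Hamiltonian cycles through $l$ on the vertex set $[2n]\setminus\{d\}$ in the matrix $\adj(M)$.

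The target has the form $\Psi^{-n}\det(\adj(M)_{Sl,Tl})$-like data times $\perm(\adj(M)_{S,T})$, and the next step produces it. By Jacobi's complementary minor identity, $\det(\adj(M)_{Sd,Td})$-type minors of $\adj(M)$ equal $\Psi^{n-1}$ times complementary minors of $M$. Since the $A$-block of $M$ is constant, those complementary minors collapse to $\pm\det(A_{Sd,\bullet})\det(A_{Td,\bullet})$ times monomials in $x$, and the monomials account for the $1/x_d$ and the power shift from $\Psi^{2n-1}$ to $\Psi^{n}$. This gives a representation
\[
\sum_{S,T}\pm\det(A_{Sd,\bullet})\det(A_{Td,\bullet})\perm(\adj(M)_{S,T})=\Psi^{\,n-1}\cdot(\text{cycle sum})/\text{monomial},
\]
to be verified at the level of polynomials. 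To prove it I will expand $\det(\adj(M)_{Sl,Tl})\cdot\perm(\adj(M)_{S,T})$ as a double sum over bijections $S\cup\{l\}\to T\cup\{l\}$ (signed) and $S\to T$ (unsigned). Composing the two bijections gives a $2$-regular structure alternating between $S$ and $T$; summing over bipartitions $(S,T)$ with $\sgn(S,T)$ should cancel every configuration that is not a single alternating Hamiltonian cycle through $l$, leaving $\sum_\tau\sgn(\tau)\prod$ over cycles. This is the step where I expect to use a sign-reversing involution: swap the two matchings on the first cycle not containing $l$, which flips the determinant sign while leaving the permanent factor unchanged.

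I expect the main obstacle to be the combinatorial cancellation together with exact sign bookkeeping. This covers $\sgn(S,T)$, $(-1)^{(n+1)/2}$, the ordering conventions $Sd$ versus $Sl$, and the claim that only alternating single cycles survive. Non-alternating cycles, where consecutive indices lie on the same side, should correspond to the symmetric-matrix terms that vanish by the antisymmetry in \cref{prop:canVanish}~a) for odd $n$. I would therefore first check the identity on $W_3$ ($n=3$), where the sums are small, to fix signs, and then run the involution argument in general. The remaining steps, Jacobi's identity and the conversion from Dodgson polynomials to $\adj(M)$, are routine applications of the appendix identities.
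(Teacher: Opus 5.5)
\textbf{Gap: the sign-reversing involution.} Your overall architecture matches the paper's. You start from \cref{prop:canonicalDodgsonFormula}, use Jacobi plus complementary minors to turn $\det(A_{Sd,\bullet})\det(A_{Td,\bullet})$ into $\pm\det(\adj(M)_{Sl,Tl})/\Psi^{n-1}$, then expand determinant times permanent into cycle covers and keep only Hamiltonian cycles through $l$. The gap is in the one step that carries the content, the sign-reversing involution.

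You propose to exchange the two matchings on a cycle $c$ avoiding $l$, with the sign flip coming from the determinant and $(S,T)$ left fixed. This does not work. Suppose $c$ has length $2r$. Exchanging the permanent edges $S\to T$ with the determinant edges $T\to S$ (the monomial is preserved only because the row block of $M^{-1}$ is symmetric) changes the determinant's permutation by an $r$-cycle. So the sign changes by $(-1)^{r-1}$, which is no flip when $r$ is odd. For a $2$-cycle $s\to t\to s$ the map is even the identity, a fixed point rather than a cancelling pair. Such configurations already occur for $n=3$; see the left graph in the paper's example with $\sigma=(23)$, $\varrho=\mathrm{id}$.

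What works, and what \cref{prop:cycleCancellation} does, is to change the bipartition instead:
\begin{itemize}
\item Keep the directed graph, and hence the monomial, literally fixed, with no symmetry of $B$ needed.
\item Advance every vertex of $c$ one step along the cycle. The $S$-vertices of $c$ move to $T$ and vice versa, and permanent edges become determinant edges.
\item Re-sort $S',T'$ and conjugate $\sigma,\varrho$ by the sorting permutations. The sorting signs enter twice and cancel.
\item $\sgn(S,T)$ changes by the sign of a $2r$-cycle acting on the concatenation $ST$, which is $-1$ for every $r$.
\item Choosing $c$ canonically (the even cycle with smallest minimal vertex) makes this a fixed-point-free involution.
\end{itemize}
So the reversal comes from $\sgn(S,T)$, not from the determinant.

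\textbf{Smaller points.}
\begin{itemize}
\item \emph{No monomials appear.} Since $|Sl|=n$, \cref{lem:detProductAsDodgson} applies with $R=\emptyset$ and gives $\Psi^{Sl,Tl}=\pm\det(A_{Sd,\bullet})\det(A_{Td,\bullet})$ exactly. The factor $\Omega(x)/x_d$ is already present on the left via \cref{prop:canonicalDodgsonFormula}. The only power shift is Jacobi's $\Psi^{n-1}$, which turns $\Psi^{-n}$ into $\Psi^{-(2n-1)}$ and matches $f_d$. Your displayed ``$\Psi^{n-1}\cdot(\text{cycle sum})/\text{monomial}$'' has the wrong exponent and a spurious monomial.
\item \emph{No non-alternating cycles arise, so nothing needs to be killed.} Every Hamiltonian cycle $l\to v_1\to\cdots\to v_{2n-2}\to l$ determines $S=\{v_1,v_3,\dots\}$ and $T=\{v_2,v_4,\dots\}$ uniquely. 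The survivors therefore biject with $\tau\in\mathbb{S}_{2n-1}$ via $l=i_{\tau(1)}$, and this holds for each $l$ separately, so your cyclic-averaging detour is unnecessary. In any case \cref{prop:canVanish}~a) concerns even $n$ and gives nothing for odd $n$.
\item \cref{lem:eLeToM} is not needed here. $\Psi^{i,j}/\Psi$ is an entry of $M^{-1}$ up to $(-1)^{i+j}$ by Cramer's rule, and these signs cancel around any cycle.
\end{itemize}
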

Before proving the theorem, we illustrate the new formula in the simplest case:

\begin{eg}\label{eg:threeWheelCanForm}
	Let us compute the canonical form for the three-wheel graph as in \cref{fig:threeWheel}. In this case $n = 3$ and
	there are $6$ pairs $(S,T)$. Modulo the symmetry exchanging $S$ and $T$, three cases remain.
	Their contributions are as given below, where the product of Dodgson polynomials comes from the permanent and the pre-factor
	is given by the sign and the two determinants:
\[
	\begin{array}{rcl}
(S,T) = (\{2,3\},\{4,5\})
&\rightsquigarrow&
\phantom{-}1 \cdot \bigl(\Psi^{2,4}\Psi^{3,5}+\Psi^{3,4}\Psi^{2,5}\bigr),\\
(S,T) = (\{2,4\},\{3,5\})
&\rightsquigarrow&
\phantom{-}0\cdot (\Psi^{2,3} \Psi^{4,5} + \Psi^{3,4} \Psi^{2,5}),\\
(S,T) = (\{2,5\},\{3,4\})
&\rightsquigarrow&
-1 \cdot \bigl(\Psi^{2,3}\Psi^{4,5}+\Psi^{3,5}\Psi^{2,4}\bigr).
\end{array}
\]
Here $\Psi^{i,j}$ denotes the Dodgson polynomials of \cref{app:Dodgson}, and satisfies $\Psi^{i,j} = \Psi^{j,i}$.

	Combining these contributions gives
	\begin{align*}
		\beta_{L_{W_{3}}}^{5} = -10 \frac{\Psi^{3,4} \Psi^{5,2} - \Psi^{3,2} \Psi^{5,4}}{\Psi^{3}} \frac{\Omega(x)}{x_1} 
		= -10 \frac{\Psi \Psi^{(3,5),(2,4)}}{\Psi^3}
		\frac{\Omega(x)}{x_1} = 10\frac{\Omega(x)}{\Psi^2}
	.\end{align*}
	In the second equality we used the Dodgson identity  
	$\Psi^{i_1,i_3} \Psi^{i_2,i_4} - \Psi^{i_1,i_4} \Psi^{i_2,i_3} = \Psi \Psi^{(i_1,i_2),(i_3,i_4)}$ \cite[Eq. 1.33]{golz19} 
	and in the last equality we evaluated 
	$\Psi^{(3,5),(2,4)} = - x_1$.
	Overall this agrees with the result from \cite[Example 4.7]{brown21}.
\end{eg}

To prove the theorem, we first express the two determinants in terms of $M$. This is established by the following lemma.
\begin{lemma}\label{lem:paraToCanDetProd}
	Let $S,T \in \mathcal{I}_{d}^{l}$ and $\Psi = \det(M)$. Then,
	\[
	\det(A_{Sd,\bullet}) \det(A_{Td,\bullet})
	= (-1)^{n+1} \frac{\det(\adj(M)_{Sl, Tl})}{\Psi^{n-1}}	
	.\] 
\end{lemma}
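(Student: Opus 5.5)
Since $\adj(M) = \Psi M^{-1}$ and $Sl$, $Tl$ both have cardinality $n$, the right-hand side equals $(-1)^{n+1}\Psi\,\det((M^{-1})_{Sl,Tl})$. So the plan is to prove the equivalent identity
\[
\det(A_{Sd,\bullet})\det(A_{Td,\bullet}) = (-1)^{n+1}\,\Psi\,\det\bigl((M^{-1})_{Sl,Tl}\bigr).
\]
We will use Jacobi's complementary minor theorem (presumably among the identities of \cref{app:Dodgson}). It states that for an invertible $N\times N$ matrix $M$ and index sets $I,J$ of equal size,
\[
\det((M^{-1})_{I,J}) = \pm \frac{\det(M^{J,I})}{\det M}.
\]
The sign is $(-1)^{\sum I+\sum J}$, adjusted for the prescribed orderings of $I$ and $J$. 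Applying this with $I=Sl$ and $J=Tl$ reduces the claim to
\[
\det(A_{Sd,\bullet})\det(A_{Td,\bullet}) = \pm \det(M^{Tl,Sl}).
\]

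Next we compute this complementary minor directly. Recall that $M=\begin{pmatrix}\diag(x)&-A\\A^T&0\end{pmatrix}$ has size $3n\times 3n$. Deleting the rows indexed by $Tl$ and the columns indexed by $Sl$ (all of these indices lie in the first $2n$ rows and columns) leaves the following rows and columns of the upper block:
\begin{itemize}
\item rows indexed by $[2n]\setminus(T\cup\{l\}) = S\cup\{d\}$;
\item columns indexed by $[2n]\setminus(S\cup\{l\}) = T\cup\{d\}$.
\end{itemize}
Here we use that $S\sqcup T\sqcup\{l,d\}=[2n]$. The remaining matrix is therefore
\[
\begin{pmatrix}\diag(x)_{Sd,Td} & -A_{Sd,\bullet}\\ (A^T)_{\bullet,Td} & 0\end{pmatrix}.
\]
Because $S\cap T=\emptyset$, the block $\diag(x)_{Sd,Td}$ has exactly one nonzero entry, namely $x_d$ in position $(d,d)$.

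The $x_d$ entry does not contribute, for the following reason. The $n\times n$ block $-A_{Sd,\bullet}$ occupies the top-right corner, and the $n\times n$ block $(A_{Td,\bullet})^T$ occupies the bottom-left corner. In a Laplace expansion, the lower $n$ rows have support only in the first $n$ columns, so they must use all of those columns. The upper rows are then forced into the last $n$ columns, and the $x_d$ entry never appears. The determinant therefore equals
\[
\pm\det(-A_{Sd,\bullet})\det(A_{Td,\bullet}^T) = \pm \det(A_{Sd,\bullet})\det(A_{Td,\bullet}),
\]
which gives the identity up to sign.

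The main obstacle is the sign bookkeeping, and I would handle it in three parts.
\begin{enumerate}
\item The block-swap sign of the anti-diagonal determinant is $(-1)^{n^2}=(-1)^n$. The factor from $-A$ contributes a further $(-1)^n$, so these two cancel.
\item The Jacobi sign is $(-1)^{\sum Sl+\sum Tl}$, corrected by $\sgn(Sl)\sgn(Tl)$ for the orderings. Since $S\sqcup T=[2n]\setminus\{l,d\}$, this sum is $2l+\sum_{[2n]}-l-d$, which has parity $n+l+d$ (using $\sum_{[2n]}=n(2n+1)$).
\item The rows and columns of the complementary minor are in increasing order, whereas $A_{Sd,\bullet}$ and $A_{Td,\bullet}$ use the orderings $S$ followed by $d$ and $T$ followed by $d$. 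Reconciling these produces $\sgn(Sd)\sgn(Td)$ together with the $\sgn(Sl)\sgn(Tl)$ factors.
\end{enumerate}
The $l$- and $d$-dependent pieces should cancel and leave $(-1)^{n+1}$. I would confirm the final sign against the three-wheel example in \cref{eg:threeWheelCanForm}, or against $n=1$-type toy cases, if the parity count becomes unwieldy.
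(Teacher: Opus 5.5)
Your proposal is correct and follows essentially the paper's route: Jacobi's complementary-minor identity (the paper's \cref{lem:dodgsonToAdjugate}; you correctly obtain the transposed minor $M^{Tl,Sl}$, which agrees with the paper's $\Psi^{Sl,Tl}$ because $\Psi^{I,J}=\Psi^{J,I}$ for row indices), then the block-structure evaluation of that minor as $\pm\det(A_{Sd,\bullet})\det(A_{Td,\bullet})$, which is the $R=\emptyset$ case of \cref{lem:detProductAsDodgson}, and then the same sign count. You do not need a numerical check to finish the signs: since $S\sqcup T=[2n]\setminus\{l,d\}$, you have $\sgn(Sl)\sgn(Tl)=(-1)^{|S_{>l}|+|T_{>l}|}=(-1)^{l+(d>l)}$ and $\sgn(Sd)\sgn(Td)=(-1)^{d+(l>d)}$, and together with your Jacobi parity $(-1)^{n+l+d}$ these give exactly $(-1)^{n+1}$, because precisely one of $(l>d)$ and $(d>l)$ equals $1$.
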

\begin{proof}
	Since, $S \sqcup T \sqcup \{l,d\} = [2n]$ the complimentary minors of $A$ satisfy
	\[
		\det(A_{Sd,\bullet}) = (-1)^{\abs{S_{> d}}} \det(A^{Tl,\bullet}) \qq{and}
		\det(A_{Td,\bullet}) = (-1)^{\abs{T_{> d}}} \det(A^{Sl,\bullet})
	\]
	Consequently,
	\begin{align*}
		\det(A_{Sd,\bullet}) \det(A_{Td,\bullet}) &= (-1)^{\abs{S_{> d}} + \abs{T_{> d}}} \det(A^{Tl,\bullet}) \det(A^{Sl,\bullet})
		= (-1)^{\abs{S_{> d}} + \abs{T_{> d}}} \Psi^{Sl, Tl}\\
		&= (-1)^{\abs{S_{> d}} + \abs{T_{> d}}}
		 (-1)^{\sum_{i \in (S l)} i + \sum_{j \in (T l)} j} \sgn(\pi_{I}) \sgn(\pi_{J}) \frac{\det(\adj(M)_{Sl, Tl})}{\Psi^{n-1}}
	.\end{align*}
	In the second equality we used \cref{lem:detProductAsDodgson} with $\abs{R}= 0$, and 
	in the third equality \cref{lem:dodgsonToAdjugate} with $I = S l$ and  $J = T l$.
	It remains to simplify the overall sign. First observe that
	\[
		(-1)^{\abs{S_{> d}} + \abs{T_{> d}}} = (-1)^{2n - d - (l > d)}
	\qq{and} \sgn(\pi_{I}) \sgn(\pi_{J}) = (-1)^{\abs{S_{> l}} + \abs{T_{> l}}} = (-1)^{2n - l - (l < d)}
	.\]
	Further,
	\[
		(-1)^{\sum_{i \in (S l)} i + \sum_{j \in (T l)} j} = (-1)^{\left(\sum_{i=1}^{2n} i \right)+ l - d} = (-1)^{2 n^2 + n + l + d} = (-1)^{n + l + d}
	\]
	Combining these contributions yields
	\[
		\det(A_{Sd,\bullet}) \det(A_{Td,\bullet})
		= (-1)^{n+1} \frac{\det(\adj(M)_{Sl, Tl})}{\Psi^{n-1}}
	\] 
	which proves the claim..
\end{proof}

Applying \cref{lem:paraToCanDetProd} to the right-hand side of \cref{thm:adjToDodgsonProd} and using that $n$ is odd, gives
\begin{equation}\label{eq:canonicalPermDet}
\sum_{\substack{l \in [2n]\\ l \neq  d}} \sum_{S,T \in \mathcal{I}_{d}^{l}} 
	\frac{(-1)^{\frac{n + 1}{2} + d+l+(l<d)}}{\Psi^{2n-1}} \sgn(S,T) 
	\det(\adj(M)_{Sl, Tl})  \perm(\adj(M)_{S,T}) \frac{\Omega(x)}{x_{d}}
.\end{equation}
We can then apply the following combinatorial identity:
\begin{theorem}\label{thm:adjToDodgsonProd} 
	Let $n \geq 2$, $m > n/2$ and $d \in [2n]$. Let $B$ be a $m \times m$ matrix and denote its entries by $b_{ij}$.
	Further let $I = (i_1,\ldots,i_{2n-1}) := [2n] \setminus \{d\}$. Then
	\begin{multline*}
		(-1)^{\frac{n^2 + n+2}{2}}\sum_{\substack{l \in [2n]\\ l \neq  d}} \sum_{S,T \in \mathcal{I}_{d}^{l}} \sgn(S,T) (-1)^{l+(d<l)} 
		\det(B_{Sl,Tl}) \perm(B_{S,T})\\
			= \sum_{\tau \in \mathbb{S}_{2n-1}} \sgn(\tau) b_{i_{\tau(1)} i_{\tau(2)}}\, b_{i_{\tau(2)} i_{\tau(3)}}
	\cdots b_{i_{\tau(2n-2)} i_{\tau(2n-1)}}\, b_{i_{\tau(2n-1)} i_{\tau(1)}}
		.\end{multline*}
\end{theorem}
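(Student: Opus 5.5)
We will prove the identity by expanding both sides into signed sums over the same combinatorial objects, namely cyclic words in the index set $I=[2n]\setminus\{d\}$. The right-hand side is a sum over $\tau\in\mathbb S_{2n-1}$ of $\sgn(\tau)$ times a single cycle product $b_{j_1j_2}b_{j_2j_3}\cdots b_{j_{2n-1}j_1}$, where $(j_1,\ldots,j_{2n-1})=(i_{\tau(1)},\ldots,i_{\tau(2n-1)})$. Since $2n-1$ is odd, a cyclic rotation of $\tau$ is an even permutation, so each Hamiltonian cycle on $I$ with a chosen direction appears exactly $2n-1$ times with the same sign. As in the proof of \cref{prop:introNoLsum}, we may therefore fix $j_1=l$, obtaining $(2n-1)$ times a sum over $l$-rooted directed Hamiltonian cycles. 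We will not fix $l$, however, since the left-hand side sums over $l$.

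On the left-hand side, for fixed $l$ and $(S,T)\in\mathcal I_d^l$, we expand
\[
\det(B_{Sl,Tl})=\sum_{\rho}\sgn(\rho)\prod_{u\in Sl}b_{u\rho(u)}, \qquad \rho\colon Sl\to Tl \text{ a bijection},
\]
and $\perm(B_{S,T})=\sum_{\mu}\prod_{s\in S}b_{s\mu(s)}$ with $\mu\colon S\to T$ a bijection. Each product of a $\rho$-term and a $\mu$-term is a monomial in which every element of $S$ has out-degree $2$ and in-degree $0$, every element of $T$ has in-degree $2$, and $l$ has in- and out-degree $1$. Reversing the $\mu$-edges, which uses the symmetry $B^T=B$ available in the intended application $B=\adj(M)$, gives a $2$-regular multigraph on $I$. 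This is a disjoint union of cycles alternating between $S$ and $T$ through the $\rho$- and $\mu$-edges, with $l$ appearing on exactly one cycle.

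The key step is a sign-reversing involution that cancels every configuration with more than one cycle and every configuration in which the alternation is incompatible. The plan is to toggle $\rho$ and $\mu$ on a cycle not containing $l$, using a canonical choice such as the cycle containing the smallest vertex. This swap preserves the permanent factor, since the permanent has no sign, while changing $\sgn(\rho)$ by the parity of the cycle. It should also change $\sgn(S,T)$ by swapping the roles of some $S$- and $T$-vertices, and the hope is that the net sign is $-1$.

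The surviving terms are single Hamiltonian cycles through $l$ that alternate between $S$ and $T$. Conversely, each directed Hamiltonian cycle on $I$ rooted at $l$ determines $(S,T)$ uniquely: alternate positions along the cycle give $S$ and $T$, each of size $n-1$. It also determines $\rho$ and $\mu$ uniquely. So each cycle appears once for each choice of root $l\in I$, which accounts for the $2n-1$ multiplicity found on the right-hand side.

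The remaining task is sign bookkeeping. We must show that the product
\[
(-1)^{\frac{n^2+n+2}{2}}\,\sgn(S,T)\,(-1)^{l+(d<l)}\,\sgn(\rho)
\]
equals $\sgn(\tau)$ for the corresponding cycle $\tau$. We would check this by induction on transpositions of adjacent cycle entries, anchored at the identity cycle for a convenient $l$. The main obstacle is this sign computation together with the involution's sign, since $\sgn(S,T)$ is defined through the reordering conventions of the block expansion of $\det(P_d^l)$. If the direct involution proves awkward, the fallback is to verify the identity first for $n=3$, as in \cref{eg:threeWheelCanForm}, to calibrate the constant, and then argue via the multilinear structure in the entries $b_{ij}$.
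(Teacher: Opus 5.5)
Your outline is the paper's proof. It expands $\det$ and $\perm$ into cycle covers of $I$; as you noticed, this needs $B$ symmetric, which the paper uses tacitly by writing the determinant factors as $b_{\widetilde{t}_i\widetilde{s}_{\sigma(i)}}$. It then cancels all configurations containing an even cycle by an involution on the even cycle $c$ with smallest minimal vertex. Finally it matches the surviving Hamiltonian configurations with $\mathbb{S}_{2n-1}$ by reading the cycle from $l$.

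What you leave open are the two sign verifications, and these are the real content. For the involution, toggling $\rho$ and $\mu$ on $c$ with $S,T$ fixed does not work. Writing $|c|=2r$, it changes $\sgn(\rho)$ only by $(-1)^{r-1}$, and on a $2$-cycle with $\rho(s)=\mu(s)$ it is the identity. You must also swap the $S$/$T$ labels of every vertex of $c$; this is where symmetry of $B$ enters. The paper makes the sign transparent by keeping the positional permutations $(\sigma,\varrho)$ fixed and moving each label of $c$ one step along $c$:
\begin{itemize}
\item the edge set, hence the monomial, is unchanged;
\item the concatenated word $ST$ changes by a $2r$-cycle, so $\sgn(S,T)$ flips;
\item the re-sorting permutations $\alpha,\beta$ enter $\sgn(S,T)$ and $\sgn(\sigma)$ in the same way and cancel.
\end{itemize}

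The final bookkeeping cannot be completed as you set it up, because for odd $n$ the stated prefactor is off by a sign. Take $n=3$, $d=6$, $l=1$, $S=\{2,4\}$, $T=\{3,5\}$, with determinant term $b_{21}b_{43}b_{15}$ (a $3$-cycle, sign $+1$) and permanent term $b_{23}b_{45}$. Then $(-1)^{7}\sgn(S,T)(-1)^{l+(d<l)}\sgn(\rho)=(-1)(-1)(-1)(+1)=-1$. In fact all ten configurations producing $b_{12}b_{23}b_{34}b_{45}b_{15}$ have weight $-1$. The ten $\tau$ producing it on the right (rotations of the words $(1,2,3,4,5)$ and $(1,5,4,3,2)$) are all even. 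So the two sides have coefficients $-10$ and $+10$.

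The slip in the paper is the step $\sgn(l,s'_1,t'_1,\ldots,s'_{n-1},t'_{n-1})=(-1)^{\frac{n^2-n}{2}+1}\sgn(l,s'_1,\ldots,s'_{n-1},t'_1,\ldots,t'_{n-1})$. The de-interleaving sign is actually $(-1)^{\binom{n-1}{2}}$. Carrying this through, with $\sgn(\gamma)=(-1)^{n-1}$, the correct prefactor is $(-1)^{\frac{n^2-n+2}{2}}$, which equals $(-1)^{\frac{n+1}{2}}$ for odd $n$. The paper later replaces $(-1)^{\frac{n^2+n+2}{2}}$ by $(-1)^{\frac{n+1}{2}}$ in the proof of \cref{thm:paraToCan}; that step is off by the same sign, so the two slips cancel there.

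Your plan to calibrate at $n=3$ is the right instinct. As written, though, both decisive sign steps are unverified, and the second one targets an identity that is false as stated.
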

Before we prove this identity we finish the proof of \cref{thm:paraToCan}:
We apply \cref{thm:adjToDodgsonProd} to \cref{eq:canonicalPermDet}.
In this setting $m = 3n$ and $B = M^{-1}$. The $(i,j)$-th entry of $M^{-1}$ is then given as $\Psi^{i,j} / \Psi$,
where $\Psi^{i,j} = \det(M^{i,j})$ is a Dodgson polynomial and $\Psi = \det(M)$.
This gives that the right-hand side of \cref{thm:paraToCan} is equal to
\[
	\frac{(-1)^{d+1}}{\Psi^{2n-1}} \sum_{\tau \in \mathbb{S}_{2n-1}} \sgn(\tau) \Psi^{i_{\tau(1)},i_{\tau(2)}}\, \Psi^{i_{\tau(2)},i_{\tau(3)}}
	\cdots \Psi^{i_{\tau(2n-1)},i_{\tau(1)}} \frac{\Omega(x)}{x_{d}}
.\]
Here we observed, that as $n$ is odd, so that $(-1)^{\frac{n^2+n+2}{2}} = (-1)^{\frac{n+1}{2}}$.
Applying \cref{prop:canonicalDodgsonFormula} concludes the proof.

\subsection{Proof of \cref{thm:adjToDodgsonProd}}
For the entire section fix $d \in [2n]$ and let $I  = (i_1,\ldots,i_{2n-1})= [2n] \setminus \{ d\}$.
Denote the ordered set $S\,l$ by $\widetilde{S}$ and the ordered set $T\,l$ by $\widetilde{T}$.
Then, expanding the permanent and the determinant on the left-hand side of \cref{thm:adjToDodgsonProd} we obtain
\begin{equation}\label{eq:dodgsonSum}
	\sum_{l \in [2n] \setminus \{d\}} \sum_{S,T \in \mathcal{I}_{d}^{l}}\sum_{\sigma \in \mathbb{S}_{n}}\sum_{\varrho \in \mathbb{S}_{n-1}} 
	W_{d}(S,T,l,\sigma,\varrho)
\end{equation}
where,
\[
W_{d}(S,T,l,\sigma,\varrho) :=
(-1)^{\frac{n^2 + n+2}{2} + l + (d<l)}\sgn(S,T) \sgn(\sigma)
\prod_{i \in [n]} b_{\widetilde{t}_{i} \widetilde{s}_{\sigma(i)}}
\prod_{j \in [n-1]} b_{s_{j} t_{\varrho(j)}}
.\] 
as the entries of $B$ are given by Dodgson polynomials $b_{i,j}$.
First, we want to understand which terms in this large sum cancel.
To this end we encode each summand by a directed graph:
\begin{definition}
	Let $G(S,T,l,\sigma,\varrho) = (V_{G},E_{G})$ be the directed graph with vertices $ V_{G} := S \sqcup T \sqcup \{l\}$ 
	and edges $E_{G} := \{(\widetilde{t}_i,\sigma(\widetilde{s}_i) \mid i \in [n]\} \cup \{(s_{i},t_{\varrho(i)}) \mid i \in [n-1] \}$.
	Then,
	\[
	\prod_{(u,v) \in E_{G}} b_{u v} = \prod_{i \in [n]} b_{\widetilde{t}_{i} \widetilde{s}_{\sigma(i)}}
	\prod_{j \in [n-1]} b_{s_{j} t_{\varrho(j)}}
	.\] 
\end{definition}

By construction, each vertex of $G$ has one incoming and one outgoing edge and is thus a disjoint union of directed cycles.
Moreover, the induced subgraph on $S \sqcup T$ is bipartite, with edges oriented alternatively from $S \to T$ and from $T \to S$;
hence any cycle contained fully in $S \sqcup T$ has even length.
The vertex $l$ has exactly one incoming edge from $T$ and one outgoing edge to $S$.
If one removes $l$ and replaces these two edges by a single edge, the resulting cycle lies in the bipartite subgraph $S \sqcup T$ and therefore
has even length. It follows that the original cycle containing $l$ has odd length.
We conclude:
\begin{fact}\label{fct:GCycles}
	The graph $G$ is a disjoint union of cycles. It contains exactly one odd cycle, namely the one containing $l$.
	All other cycles (if any) are even and lie in the bipartite subgraph $S \sqcup T$.
\end{fact}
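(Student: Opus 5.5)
The plan is to read $G$ as the functional graph of a permutation of $V_G = S \sqcup T \sqcup \{l\}$ and then count parities along each cycle.

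\textbf{Step 1: every vertex has in- and out-degree one.} First I fix the type of each edge. An edge $(\widetilde{t}_i, \widetilde{s}_{\sigma(i)})$ goes from $\widetilde{T} = T\,l$ to $\widetilde{S} = S\,l$. An edge $(s_j, t_{\varrho(j)})$ goes from $S$ to $T$. Since $\sigma$ and $\varrho$ are bijections, the degrees are as follows:
\begin{itemize}
\item a vertex $s \in S$ has exactly one outgoing edge (into $T$, via $\varrho$) and exactly one incoming edge (from $\widetilde{T}$, via $\sigma$);
\item a vertex $t \in T$ has exactly one outgoing edge (into $\widetilde{S}$, via $\sigma$) and exactly one incoming edge (from $S$, via $\varrho$);
\item the vertex $l$ lies in both $\widetilde{S}$ and $\widetilde{T}$, so it has exactly one outgoing and one incoming edge, both coming from the $\sigma$-part.
\end{itemize}
So the edge set is the graph of a permutation $\pi$ of $V_G$, and therefore $G$ is a disjoint union of directed cycles, namely the cycles of $\pi$. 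A self-loop at $l$ can occur, when $\sigma$ sends the position of $l$ in $\widetilde{T}$ to the position of $l$ in $\widetilde{S}$. It is allowed and counts as a cycle of length $1$.

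\textbf{Step 2: parity of the cycles.} The only possible edge types are $S\to T$, $T\to S$, $T\to l$, $l\to S$ and $l\to l$. In particular there are no edges $S\to S$, $T\to T$, $S\to l$ or $l\to T$.

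Take a cycle that avoids $l$. Its vertices alternate between $S$ and $T$, so its length is even.

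Now take the cycle through $l$. If it is the self-loop, its length is $1$. Otherwise, leaving $l$ we must enter $S$, and the walk then alternates $S, T, S, T, \dots$ until it re-enters $l$. Re-entry is only possible from $T$. So the cycle has the form $l, s_1, t_1, \dots, s_k, t_k, l$ with $k \ge 1$, and its length is $2k+1$, which is odd.

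Since $l$ lies on exactly one cycle, there is exactly one odd cycle, and all other cycles lie in $S \sqcup T$ and are even. As a consistency check, $\abs{V_G} = 2(n-1)+1$ is odd, which agrees with an odd number of odd cycles.

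I expect no real obstacle here. The only care needed is the bookkeeping at $l$, which belongs to both $\widetilde{S}$ and $\widetilde{T}$: one must check that it receives exactly one incoming and one outgoing $\sigma$-edge, and one must allow for the degenerate self-loop.
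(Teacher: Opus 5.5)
Your proof is correct and follows the paper's argument. Degree counting shows $G$ is the graph of a permutation, and the alternation $S\to T\to S$ forces cycles avoiding $l$ to be even and the cycle through $l$ to be odd; the paper gets the latter by splicing out $l$, you by walking $l,s_1,t_1,\dots,s_k,t_k,l$ directly. Your explicit treatment of the self-loop at $l$ (which occurs when $\sigma(n)=n$) is a worthwhile addition, since the paper's claim that $l$ always has an incoming edge from $T$ and an outgoing edge to $S$ overlooks this case, though the conclusion is unaffected because a loop has odd length $1$.
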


To illustrate this consider the following example:
\begin{eg}
Let $S=\{s_1,s_2\}$, $T=\{t_1,t_2\}$ and let $l$ be the additional vertex.
Take $\sigma=(23)$ and $\varrho=\mathrm{id}$. The corresponding graph $G$ is shown on the left in \cref{fig:egCycleRepr}.
If instead $\widetilde{\sigma}=(23)$ and $\varrho=(12)$, the resulting graph is shown on the right.

\begin{figure}[htbp]
	\centering
	\begin{tikzpicture}
	\begin{pgfonlayer}{nodelayer}
		\node [style=internal vertex] (0) at (-7, -2) {};
		\node [style=internal vertex] (1) at (-5, -2) {};
		\node [style=internal vertex] (2) at (-3, 0) {};
		\node [style=internal vertex] (3) at (-5, 2) {};
		\node [style=internal vertex] (4) at (-7, 2) {};
		\node [style=none] (5) at (-2, 0) {$l$};
		\node [style=none] (6) at (-5, 3) {$s_2$};
		\node [style=none] (7) at (-5, -3) {$t_2$};
		\node [style=none] (8) at (-7, 3) {$s_1$};
		\node [style=none] (9) at (-7, -3) {$t_1$};
		\node [style=internal vertex] (10) at (1, -2) {};
		\node [style=internal vertex] (11) at (3, -2) {};
		\node [style=internal vertex] (12) at (5, 0) {};
		\node [style=internal vertex] (13) at (3, 2) {};
		\node [style=internal vertex] (14) at (1, 2) {};
		\node [style=none] (15) at (6, 0) {$l$};
		\node [style=none] (16) at (3, 3) {$s_2$};
		\node [style=none] (17) at (3, -3) {$t_2$};
		\node [style=none] (18) at (1, 3) {$s_1$};
		\node [style=none] (19) at (1, -3) {$t_1$};
	\end{pgfonlayer}
	\begin{pgfonlayer}{edgelayer}
		\draw [style=arrow, bend left] (0) to (4);
		\draw [style=arrow, bend left] (4) to (0);
		\draw [style=arrow] (1) to (2);
		\draw [style=arrow] (2) to (3);
		\draw [style=arrow] (3) to (1);
		\draw [style=arrow] (12) to (13);
		\draw [style=arrow] (13) to (10);
		\draw [style=arrow] (10) to (14);
		\draw [style=arrow] (14) to (11);
		\draw [style=arrow] (11) to (12);
	\end{pgfonlayer}
\end{tikzpicture}
	\caption{Graphs with distinct cycle structure arising from different $\sigma$ and $\varrho$.}
	\label{fig:egCycleRepr}
\end{figure}

In the first case the graph decomposes into two cycles: an odd cycle containing $l$, and an even cycle entirely contained in $S\sqcup T$.
In contrast, the graph on the right consists of a single cycle of odd length.
\end{eg}

The cancellations in \cref{eq:dodgsonSum} are characterised as follows:
\begin{proposition}\label{prop:cycleCancellation}
	The sum of all terms $W_{d}(S,T,l,\sigma,\varrho)$ in \cref{eq:dodgsonSum} for which $G(S,T,l,\sigma,\varrho)$ contains an even cycle
	vanishes.
\end{proposition}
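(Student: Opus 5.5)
We will prove the proposition with a sign-reversing involution on the set of summands $(S,T,l,\sigma,\varrho)$ whose graph $G(S,T,l,\sigma,\varrho)$ contains an even cycle. The involution must preserve the monomial $\prod_{(u,v)\in E_G} b_{uv}$ and reverse the sign of $W_d$. By \cref{fct:GCycles}, every even cycle lies in the bipartite subgraph on $S\sqcup T$ and alternates between $S$ and $T$.

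\textbf{Construction of the involution.} Among the even cycles of $G$, fix one canonically. We take the even cycle containing the smallest vertex of $[2n]\setminus\{d\}$ that lies on any even cycle. Let $C$ be this cycle, with vertex set $C_S\sqcup C_T$, where $C_S\subseteq S$, $C_T\subseteq T$ and $|C_S|=|C_T|=k\ge 1$. The new datum is $(S',T') = ((S\setminus C_S)\cup C_T,\ (T\setminus C_T)\cup C_S)$, with $l$ unchanged. The edge set is kept the same, but the roles are swapped. Edges $T\to S$ of $C$, which were recorded by $\sigma$, become edges $S'\to T'$ and are recorded by $\varrho'$. Edges $S\to T$ of $C$, formerly recorded by $\varrho$, become edges $T'\to S'$ and are recorded by $\sigma'$. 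All other edges keep their assignments. The resulting graph is literally the same directed graph, so the monomial is unchanged. Since $G$ is unchanged, the same cycle $C$ is chosen again, and $C$ is still even. Hence the map is an involution on the set of terms with an even cycle. The ordered sets $S',T'$ are given some fixed convention, for instance the increasing order, and $\sgn(S',T')$ is computed relative to it.

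\textbf{Sign computation.} We must show $W_d(S',T',l,\sigma',\varrho') = -W_d(S,T,l,\sigma,\varrho)$. The prefactor $(-1)^{\frac{n^2+n+2}{2}+l+(d<l)}$ is unchanged. We need to compute
\[
\sgn(S',T')\sgn(\sigma')\cdot \sgn(S,T)\sgn(\sigma).
\]
We will encode $\sigma$ as a bijection $\widetilde T\to\widetilde S$ and $\varrho$ as a bijection $S\to T$. The sign $\sgn(S,T)\sgn(\sigma)$ can then be rewritten as the sign of a single permutation of $[2n]\setminus\{d\}$, or of a perfect-matching-type object. Concretely, it is the sign of the arrangement obtained by listing $\widetilde S$ followed by the $\sigma$-images paired with $\widetilde T$. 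Note that $\varrho$ carries no sign, since it comes from the permanent. Swapping $C_S\leftrightarrow C_T$ and rerouting the $k$ edges of $C$ from $\sigma$ into $\varrho$ (and conversely) changes the bookkeeping permutation by transposing the $k$ pairs of positions of $C_S$ and $C_T$. It also replaces the partial bijection $C_T\to C_S$ by its reverse-shifted counterpart along the cycle, which differs by a $k$-cycle. We expect the net sign to be $(-1)^k\cdot(-1)^{k-1} = -1$. This is the main obstacle: keeping precise track of the ordering conventions in $\sgn(S,T)$, in the concatenation $Sl$, $Tl$, and in how $\sigma$ indexes $\widetilde t_i\mapsto\widetilde s_{\sigma(i)}$. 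We would first verify the case $k=1$, a 2-cycle $s\to t\to s$, by a direct transposition count, then treat general $k$ by decomposing the cycle swap into $k$ such elementary exchanges composed with a cyclic relabelling.

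\textbf{Conclusion.} Since the involution is fixed-point free (it changes $(S,T)$ because $k\ge1$), preserves the monomial and flips the sign, the terms with an even cycle cancel in pairs, and their total in \cref{eq:dodgsonSum} vanishes.
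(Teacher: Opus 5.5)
Your involution is the right one and agrees with the paper's. You use the same canonical choice of even cycle and the same swap of $C_S$ and $C_T$ as sets, so the directed graph and the monomial are unchanged. The involution and fixed-point-freeness arguments are also fine.

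The gap is the sign reversal, which is the whole content of the proposition and which you only assert (``we expect $(-1)^k(-1)^{k-1}=-1$''). The heuristic is not yet a computation, for three reasons:
\begin{itemize}
\item ``Transposing the $k$ pairs of positions'' and ``differs by a $k$-cycle'' presuppose a matching between the slots of $C_S$ in $S$ and the slots of $C_T$ in $T$, which you never fix.
\item With increasing order on $S',T'$, both $\sgn(S',T')$ and $\sgn(\sigma')$ pick up sorting signs. These depend on how the vertices of $C$ interleave with the rest of $[2n]\setminus\{l,d\}$, and you would have to show they cancel.
\item The reduction to $k=1$ cannot be done at the level of summands. Exchanging a single $S$--$T$ pair on a cycle of length $2k\ge 4$ creates $S\to S$ or $T\to T$ edges, so the intermediate data are not terms of the sum.
\end{itemize}

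The idea that closes this, and is what the paper does, is to perform the swap in place. Write $\pi$ for the permutation of $[2n]\setminus\{d\}$ given by $G$. In its slot in $S$ or $T$, replace each vertex $u$ of $C$ by its predecessor $\pi^{-1}(u)$. The slot of $l$ is untouched, since $\pi^{\pm1}(l)\notin C$.

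With this convention the same index permutations $\sigma$ and $\varrho$ describe the same edge set. For $t_i\in C_T$ the new entries are $t_i'=\pi^{-1}(t_i)$ and $s'_{\sigma(i)}=\pi^{-1}(\pi(t_i))=t_i$. So the $\sigma$-edge $t_i'\to s'_{\sigma(i)}$ is exactly the former $\varrho$-edge $\pi^{-1}(t_i)\to t_i$, and symmetrically for $\varrho$. Hence $\sgn(\sigma)$ is literally unchanged.

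Meanwhile the concatenation $S'T'$ arises from $ST$ by applying the $2k$-cycle $\pi^{-1}|_C$ to its entries, so $\sgn(S',T')=-\sgn(S,T)$. Finally, sorting $S'$ and $T'$ by permutations $\alpha,\beta$ that fix the slot of $l$ multiplies both $\sgn(S',T')$ and $\sgn(\sigma)$ by $\sgn(\alpha)\sgn(\beta)$. These factors cancel, which gives $W_d\mapsto -W_d$.
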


\begin{proof}
	To prove the result we define an involution $f$ on the space spanned by $(S,T,l,\sigma,\varrho)$ such that $G(S,T,l,\sigma,\varrho)$
	contains an even cycle, and show that $W_{d}(f(S,T,l,\sigma,\varrho)) = - W_{d}(S,T,l,\sigma,\varrho)$.
	It follows that in the sum over all such tuples $(S,T,l,\sigma,\varrho)$ containing even cycles, the terms cancel pairwise and the sum vanishes.

	Let $c = (c_1,\ldots,c_{2r})$ be the even cycle of $G$ whose smallest vertex is minimal amongst all even cycles and 
	orient $c$ such that $c_1$ is the smallest vertex and $c_2 < c_{2r}$.

	Since $c$ is an even cycle, its vertices alternate between $S$ and $T$. Define new ordered sets $S'$ and $T'$ by flipping the vertices of $c$ 
	between $S$ and $T$ by advancing them along the cycle by one. Concretely, if in $c$ a vertex $x \in c \cap S$ is followed by $y \in c \cap T$,
	then, in $T$, $y$ is replaced by $x$ and analogous if $x \in c \cap T$ is followed by $y \in \cap S$, then, in $S$, $y$ is replaced by $x$.
	This is illustrated in \cref{fig:cycleAdvance}.
	
	\begin{figure}[htpb]
		\centering
		\begin{tikzpicture}
	\begin{pgfonlayer}{nodelayer}
		\node [style=internal vertex] (0) at (-7, -2) {};
		\node [style=internal vertex] (1) at (-5, -2) {};
		\node [style=internal vertex] (3) at (-5, 2) {};
		\node [style=internal vertex] (4) at (-7, 2) {};
		\node [style=none] (6) at (-5, -3) {3};
		\node [style=none] (7) at (-5, 3) {5};
		\node [style=none] (8) at (-7, -3) {1};
		\node [style=none] (9) at (-7, 3) {2};
		\node [style=internal vertex] (10) at (-1, -2) {};
		\node [style=internal vertex] (11) at (-1, 2) {};
		\node [style=internal vertex] (12) at (-3, -2) {};
		\node [style=internal vertex] (13) at (-3, 2) {};
		\node [style=internal vertex] (14) at (1, 0) {};
		\node [style=none] (15) at (-3, -3) {4};
		\node [style=none] (16) at (-3, 3) {6};
		\node [style=none] (17) at (-1, -3) {8};
		\node [style=none] (18) at (-1, 3) {7};
		\node [style=none] (19) at (2, 0) {$l$};
		\node [style=none] (38) at (-8.5, 3) {$S$:};
		\node [style=none] (39) at (-8.5, -3) {$T$:};
		\node [style=internal vertex] (40) at (7, -2) {};
		\node [style=internal vertex] (41) at (9, -2) {};
		\node [style=internal vertex] (42) at (9, 2) {};
		\node [style=internal vertex] (43) at (7, 2) {};
		\node [style=none] (44) at (7, 3) {3};
		\node [style=none] (45) at (13, -3) {5};
		\node [style=none] (46) at (9, 3) {1};
		\node [style=none] (47) at (7, -3) {2};
		\node [style=internal vertex] (48) at (13, -2) {};
		\node [style=internal vertex] (49) at (13, 2) {};
		\node [style=internal vertex] (50) at (11, -2) {};
		\node [style=internal vertex] (51) at (11, 2) {};
		\node [style=internal vertex] (52) at (15, 0) {};
		\node [style=none] (53) at (11, -3) {4};
		\node [style=none] (54) at (11, 3) {6};
		\node [style=none] (55) at (13, 3) {8};
		\node [style=none] (56) at (9, -3) {7};
		\node [style=none] (57) at (16, 0) {$l$};
		\node [style=none] (58) at (5.5, 3) {$S'$:};
		\node [style=none] (59) at (5.5, -3) {$T'$:};
	\end{pgfonlayer}
	\begin{pgfonlayer}{edgelayer}
		\draw [style=arrow] (0) to (3);
		\draw [style=arrow] (3) to (10);
		\draw [style=arrow] (10) to (11);
		\draw [style=arrow] (11) to (1);
		\draw [style=arrow] (1) to (4);
		\draw [style=arrow] (4) to (0);
		\draw [style=arrow] (40) to (42);
		\draw [style=arrow] (42) to (48);
		\draw [style=arrow] (48) to (49);
		\draw [style=arrow] (49) to (41);
		\draw [style=arrow] (41) to (43);
		\draw [style=arrow] (43) to (40);
		\draw [style=arrow] (50) to (52);
		\draw [style=arrow] (52) to (51);
		\draw [style=arrow] (51) to (50);
		\draw [style=arrow] (12) to (14);
		\draw [style=arrow] (14) to (13);
		\draw [style=arrow] (13) to (12);
	\end{pgfonlayer}
\end{tikzpicture}
		\caption{Passing from $(S,T)$ to $(S',T')$ by advancing the cycle $(1 5 8 7 3 2)$ by one.}
		\label{fig:cycleAdvance}
	\end{figure}

	Let $\alpha,\beta \in \mathbb{S}_{n-1}$ be the permutations that sort $S'$ and $T'$ increasingly and extend them to permutations on $[n]$ by
	fixing  $n$. Then define new sets and permutations
	\[
		\widehat{S} = \alpha(S'), \quad \widehat{T} = \beta(T') \qq{as well as} \widehat{\varrho} := \beta^{-1} \circ \varrho \circ \alpha^{-1},
		\quad \widehat{\sigma} = \alpha^{-1} \circ \sigma \circ \beta^{-1}
	.\]
	This defines a map
	\[
	f(S,T,l,\sigma,\varrho) = (\widehat{S},\widehat{T},l,\widehat{\sigma},\widehat{\varrho})
	.\] 
	To see that $f$ is an involution observe, that when applying $f$ a second time the same cycle $c$ is being chosen and its elements
	are being moved forward another step, that is they are being flipped back to their original positions. The resulting sorting permutations are exactly
	the inverses of the ones from the first application. Thus, $f$ is a self-inverse bijection.

	What remains to show is that
	\[
	W_{d}(\widehat{S},\widehat{T},l,\widehat{\sigma},\widehat{\varrho}) = - W_{d}(S,T,l,\sigma,\varrho)
	.\] 
	Indeed, the transformation does not change the directed graph itself, it only relabels the cycle decomposition. Hence the monomial
	\[
	\prod_{(u,v) \in E_{G}} b_{u v} = \prod_{i \in [n]} b_{\widetilde{t}_{i} \widetilde{s}_{\sigma(i)}}
	\prod_{j \in [n-1]} b_{s_{j} t_{\varrho(j)}}
	.\] 
	is unchanged. It remains to compare signs. Observe that
	\[
	\sgn(\widehat{S},\widehat{T}) \sgn(\widehat{\sigma}) = \sgn(S',T') \sgn(\sigma)
	.\] 
	as the only differences come from the sorting permutations $\alpha$ and $\beta$ which appear twice and thus cancel.
	It remains to consider the sign change from replacing $(S,T)$ by $(S',T')$. If we concatenate the ordered sets $S$ and $T$ 
	then passing to $S'$ and $T'$ amounts to applying the cycle $(c_1 c_2 \ldots c_{2r})$. As this is an even cycle its sign is  $-1$.
	Therefore
	\[
		\sgn(S,T) = - \sgn(S',T') \qq{and hence} 
	W_{d}(\widehat{S},\widehat{T},l,\widehat{\sigma},\widehat{\varrho}) = - W_{d}(S,T,l,\sigma,\varrho)
	,\]
	which concludes the proof.
\end{proof}

We can now turn to proving \cref{thm:adjToDodgsonProd}.
\begin{proof}[Proof of \cref{thm:adjToDodgsonProd}]
	By \cref{prop:cycleCancellation} and \cref{fct:GCycles}, only those tuples $(S,T,l,\sigma,\varrho)$ survive for which $G$ consists of a single 
	odd cycle of length $2n-1$. Let $I = (i_1,\ldots,i_{2n-1}) = [2n] \setminus \{d\}$ and let
	\[
	H = \{(S,T,l,\sigma,\varrho) \mid G(S,T,l,\sigma,\varrho) \text{ is a single cycle}\} 
	.\] 
	To prove the theorem we construct a bijection $f$ between $\mathbb{S}_{2n-1}$ and $H$ and show that $\tau$ and $f(\tau)$ give the same
	product of Dodgson polynomials in their respective settings.

	For $\tau \in \mathbb{S}_{2n-1}$ set
	\[
	l = i_{\tau(1)}, \quad S' = (i_{\tau(2)}, i_{\tau(4)}, \ldots, i_{\tau(2n-2)}), \text{ and } T' = (i_{\tau(3)}, i_{\tau(5)}, \ldots, i_{\tau(2n-1)})
	.\] 
	Let $\alpha, \beta \in \mathbb{S}_{n-1}$ be the permutations that sort $S'$ and $T'$ increasingly and extend them to permutations on $[n]$ 
	by fixing $n$, and let $\gamma = (1 2 \ldots n) \in \mathbb{S}_{n}$. Then define 
	\[
		S = \alpha(S'), \quad T = \beta(T') \qq{as well as} \varrho = \beta \alpha^{-1}, \quad \sigma = \alpha \gamma \beta^{-1}
	.\] 
	We then define the map $f(\tau) = (S,T,l,\sigma,\varrho)$.
	In fact $f$ is a bijection $f: \mathbb{S}_{2n-1} \to H$. First observe that $f(\tau) \in H$: By construction the edges of $G(f(\tau))$ are precisely
	\[
	i_{\tau(1)} \to i_{\tau(2)} \to i_{\tau(3)} \to \cdots \to i_{\tau(2n-1)} \to i_{\tau(1)}
	.\]
	Thus $G(f(\tau))$ is a single directed cycle.
	
	Conversely let $(S,T,l,\sigma,\varrho) \in H$. Since $G$ is a single directed cycle, there is a unique cycle ordering
	\[
	l \to s_1' \to t_1' \to s_2' \to t_2' \to \cdots \to s_{n-1}' \to t_{n-1}' \to l
	,\] 
	obtained by traversing the cycle from $l$. Reading of the vertices gives a unique permutation $\tau \in \mathbb{S}_{2n-1}$ such that
	\[
		i_{\tau(1)} = l, \quad i_{\tau(2j)} = s_{j}', \text{ and } i_{\tau(2j+1)} = t_{j}' \qquad (1 \leq j \leq n-1)
	.\] 
	Applying $f$ to $\tau$ recovers $(S,T,l,\sigma,\varrho)$ showing that $f$ is bijective.

	What remains to show is that
	\begin{equation}\label{eq:WdEqualTau}
		W_{d}(f(\tau)) = \sgn(\tau) b_{i_{\tau(1)} i_{\tau(2)}} b_{i_{\tau(2)} i_{\tau(3)}} \cdots b_{i_{\tau(2n-1)} i_{\tau(1)}}
	.\end{equation}
	Observe that ignoring the sign
	\[
	\pm W_{d}(f(\tau)) = \prod_{i \in [n]} b_{\widetilde{t}_{i} \widetilde{s}_{\sigma(i)}}
	\prod_{j \in [n-1]} b_{s_{j} t_{\varrho(j)}} = \prod_{(u,v) \in E_{G(f(\tau))}} b_{u v}
	.\]
	which is equal (up to the sign) to the right-hand side of \cref{eq:WdEqualTau} as the graph $G(f(\tau))$ is exactly the directed cycle
	\[
	i_{\tau(1)} \to i_{\tau(2)} \to \ldots \to i_{\tau(2n-1)} \to i_{\tau(1)}
	.\] 
	To compute the sign let $(S,T,l,\sigma,\varrho) = f(\tau)$ and $S'$ and $T'$ as well as  $\alpha$ and $\beta$ as in the definition of $f$ above.
	Then
	\begin{align*}
		\sgn(\tau) &= \sgn(l,s'_{1},t'_{1},\ldots,s'_{n-1},t'_{n-1}) = (-1)^{\frac{n^2 - n}{2}+1} \sgn(l,s'_1,\ldots,s'_{n-1},t'_1,\ldots,t'_{n-1}) \\
		&=(-1)^{\frac{n^2 - n}{2}+1} \sgn(l, s_1,\ldots,s_{n-1},t_1,\ldots,t_{n-1}) \sgn(\alpha) \sgn(\beta)\\
		&=(-1)^{\frac{n^2 - n}{2}+1} \sgn(l I ) \sgn(S,T) \sgn(\gamma) \sgn(\sigma) \\
		&= (-1)^{\frac{n^2 + n + 2}{2}} (-1)^{l + (d < l)} \sgn(S,T) \sgn(\sigma)
	\end{align*}
	Here we used that $S = \alpha(S')$, $T = \beta(T')$, $\sigma = \alpha \gamma \beta^{-1}$ and that $l I$ and $l S T$ differ exactly by the permutation
	whose sign is given by $\sgn(S,T)$ as well as that $\sgn(\gamma) = (-1)^{n-1} = 1$ as $\gamma$ is an $n$-cycle.
	This concludes the proof of \cref{thm:adjToDodgsonProd}.
\end{proof}

\appendix
\crefalias{section}{appendix}
\graphicspath{{Images/}}

\section{Dodgson polynomials}\label{app:Dodgson}
Let $n,m \in \N$. In this section, we introduce several matrices and polynomials associated to graphs, 
and more generally to the $2n \times n$-matrices considered throughout this work, or even arbitrary real $n \times m$ matrices.

Originally, these objects originate in graph theory and extend naturally to arbitrary matrices of the above form. 
For this reason, we retain the terminology from the graph-theoretic setting, even though, for a general matrix $A$, 
the objects need not have any direct graph interpretation.

The correspondence between graphs $G$ and matrices $A$ is given by taking $A$ to be a reduced incidence matrix of $G$. 
Roughly speaking, the rows of $A$ correspond to edges of the graph, while the columns correspond to vertices.

\begin{definition}\label{def:graphMatrices}
	Let $A$ be a $n \times m$ matrix, and let $x = (x_1,\ldots,x_{2n})$ be homogeneous coordinates on the positive
	coordinate simplex $\sigma_{n} \in \RP^{n-1}$.

	The Laplacian matrix $L_{A}(x)$ and the graph matrix $M_{A}(x)$ associated to $A$ are defined by
	\[
	L_{A}(x) = A^{T} \diag(x) A \qquad
		M_{A}(x) = \begin{pmatrix}[c|c]
			\diag(x) & - A\\ \hline
			A^{T} & 0
		\end{pmatrix} 
	.\] 
	Furthermore, we define the polynomial
	\[
		\Psi_{A}(x) = \det(M_{A}(x)) =  \det(L_{A}(x^{-1})) \prod_{e \in [n]} x_{e}
	.\] 
	In the graph-theoretic setting $\Psi_{A}(x)$ is the first Symanzik polynomial.
\end{definition}

\begin{remark}
	For brevity, we will often omit the dependence on $A$ and $x$ from the notation and simply write $L$, $M$, and $\Psi$
instead of  $L_{A}(x)$, $M_{A}(x)$, and $\Psi_{A}(x)$. 
The parameters $A$ and $x$ will only be included when necessary for clarity or emphasis.
\end{remark}

As a generalisation of $\Psi$, one may also consider minors of the graph matrix $M$.

\begin{definition}\label{def:dodgsons}
	Let $I, J \subseteq [n + m]$ with $\abs{I} = \abs{J}$. The determinant of the submatrix of $M$ 
	obtained by removing the rows indexed by $I$ and the columns indexed by $J$,
	\[
		\Psi^{I,J} = \det(M^{I,J})
	\]
	is called a Dodgson polynomial of $M$. 
	In the special case where $I = \{i\} $ and $J = \{j\} $ consist of single elements, we write
	$\Psi^{i,j}$ instead of $\Psi^{\{i\},\{j\}}$.

	Recall that the matrix $M$ naturally decomposes into two types of rows and columns: the first $n$ correspond to rows of $A$,
	while the remaining $m$ correspond to the columns of $A$. Accordingly we call an index $i \in [n+m]$, a row index if $i \leq n$, and
	a column index if $i > n$.
	Row indices will be denoted by $e$, and column indices by $v$, following the graph-theoretic interpretation 
	in which they correspond to edges and vertices, respectively.
\end{definition}
The following symmetry properties follow immediately from the block structure of $M$ :
\begin{fact}
	If $I$ and $J$ consist entirely of row indices or entirely of column indices, then $\Psi^{I,J} = \Psi^{J,I}$.
	However if $e$ is a row index and $v$ is a column index, then $\Psi^{e,v} = - \Psi^{v,e}$.
\end{fact}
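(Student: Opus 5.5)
The symmetry properties follow directly from the block structure
\[
M=\begin{pmatrix}[c|c] \diag(x) & -A\\ \hline A^{T} & 0\end{pmatrix},
\]
and the plan is to compare the submatrices $M^{I,J}$ and $M^{J,I}$ through transposition. Let $D=\diag(1,\ldots,1,-1,\ldots,-1)$, with $n$ entries $+1$ for the row indices and $m$ entries $-1$ for the column indices. Then
\[
M^{T}=\begin{pmatrix}[c|c] \diag(x) & A\\ \hline -A^{T} & 0\end{pmatrix}=DMD,
\]
which we check blockwise: conjugating by $D$ multiplies the off-diagonal blocks by $-1$ and leaves the diagonal blocks unchanged.

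Next, deleting rows $I$ and columns $J$ of $M^{T}$ gives $(M^{J,I})^{T}$, so $\det(M^{T})^{I,J}=\Psi^{J,I}$. On the other hand, $(DMD)^{I,J}=D_{I}\,M^{I,J}\,D_{J}$, where $D_{I}$ is the diagonal matrix of signs for the remaining row indices $[n+m]\setminus I$, and $D_{J}$ is the same for the remaining columns. Taking determinants gives
\[
\Psi^{J,I}=\det(D_{I})\det(D_{J})\,\Psi^{I,J}=(-1)^{\,(m-c(I))+(m-c(J))}\Psi^{I,J},
\]
where $c(K)$ denotes the number of column indices in $K$. The sign is therefore $(-1)^{c(I)+c(J)}$.

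The two cases then follow by counting. If $I$ and $J$ consist entirely of row indices, then $c(I)=c(J)=0$. If they consist entirely of column indices, then $c(I)=c(J)=|I|$, so $c(I)+c(J)$ is even. In both cases $\Psi^{I,J}=\Psi^{J,I}$. If $I=\{e\}$ is a row index and $J=\{v\}$ is a column index, then $c(I)+c(J)=1$, which gives $\Psi^{e,v}=-\Psi^{v,e}$.

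The only step that needs care is the bookkeeping in the identity $(DMD)^{I,J}=D_{I}M^{I,J}D_{J}$, that is, confirming that deleting rows and columns commutes with multiplication by diagonal matrices. It does, because diagonal scaling acts entrywise on rows and columns separately. No earlier result of the paper is needed.
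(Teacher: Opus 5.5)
Your proof is correct, and it is the argument the paper has in mind when it says these symmetries ``follow immediately from the block structure of $M$''; the paper gives no further detail. Your identity $M^{T}=DMD$ with $D=\diag(1,\ldots,1,-1,\ldots,-1)$, together with $(M^{T})^{I,J}=(M^{J,I})^{T}$, makes that remark precise and yields the general relation $\Psi^{J,I}=(-1)^{c(I)+c(J)}\Psi^{I,J}$, from which both stated cases follow at once.
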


Next, we discuss several key identities relating Dodgson polynomials, the Laplacian $L$, and the graph matrix $M$. 
All of these identities are already known in the literature, and we indicate where they were originally proved. 
However, since many of them were established only in the graph-theoretic setting, 
we also provide proofs in the more general context of arbitrary $n\times m$ matrices $A$.

First we can express the entries of $L_{A}^{-1}(x^{-1})$ in terms of Dodgson polynomials:
\begin{lemma}[{\cite[Lemma 3.9]{brown21}}]\label{lem:laplToDodgson}
	Let $(u,v) \in [m]^2$, then
	\[
		(L_{A}^{-1}(x^{-1}))_{u,v} = (-1)^{u+v} \frac{\Psi^{n+u,n+v}}{\Psi}
	.\] 
\end{lemma}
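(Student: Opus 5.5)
The identity follows from block inversion of the graph matrix $M = M_{A}(x)$, combined with Cramer's rule. Write
\[
M = \begin{pmatrix} D & -A \\ A^{T} & 0 \end{pmatrix}, \qquad D = \diag(x).
\]
On the open simplex $D$ is invertible, so we may take the Schur complement of the $D$-block. It is
\[
0 - A^{T} D^{-1}(-A) = A^{T} D^{-1} A = L_{A}(x^{-1}).
\]
This gives two facts. The determinant factorises as $\det(M) = \det(D)\det(L_{A}(x^{-1}))$, which is the second expression for $\Psi$ in \cref{def:graphMatrices}; in particular $\Psi \neq 0$ wherever $L_{A}(x^{-1})$ is invertible, which holds when $A$ has rank $m$. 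Moreover, the standard block-inverse formula states that the lower-right $m\times m$ block of $M^{-1}$ equals the inverse of the Schur complement, so
\[
(M^{-1})_{n+u,\,n+v} = \bigl(L_{A}(x^{-1})^{-1}\bigr)_{u,v}.
\]
I would justify this by solving $M\binom{y}{w} = \binom{0}{f}$ directly. The first block row gives $y = D^{-1}A w$, and substituting into the second yields $A^{T}D^{-1}A\,w = f$. Hence $w = L_{A}(x^{-1})^{-1} f$, which is exactly the claimed lower-right block.

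Next I express the entries of $M^{-1}$ through cofactors. Cramer's rule gives $(M^{-1})_{ij} = (-1)^{i+j}\det(M^{j,i})/\det(M)$. For $i = n+u$ and $j = n+v$ the sign is $(-1)^{2n+u+v} = (-1)^{u+v}$, so
\[
(M^{-1})_{n+u,n+v} = (-1)^{u+v}\,\Psi^{n+v,n+u}/\Psi .
\]
It remains to swap the indices. By the Fact on symmetry, with both indices column indices, $\Psi^{n+v,n+u} = \Psi^{n+u,n+v}$. Alternatively, $M^{T}$ differs from $M$ by conjugation with $\diag(I,-I)$, and deleting row and column indices of the same type preserves this relation up to the sign $(-1)^{2(m-1)} = 1$. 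Either route gives the stated formula on the open simplex.

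The only real obstacle is bookkeeping: keeping the transpose in Cramer's rule straight and checking that the minus sign on the $-A$ block does not introduce a sign into the minors. The symmetry Fact settles the second point. Finally, the identity extends to wherever both sides are defined by continuity, since both sides are rational functions in $x$.
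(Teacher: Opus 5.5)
Your proof is correct. It uses the same Schur-complement mechanism as the paper, but applies it, and the cofactor formula, to a different matrix.

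The paper works with the minor directly. It factors $M^{n+u,n+v}=FGH$, observing that deleting row $n+u$ and column $n+v$ of $M$ turns the Schur complement of $D$ into $L_A(x^{-1})^{u,v}$. This gives the polynomial identity $\Psi^{n+u,n+v}=\det(D)\det(L_A(x^{-1})^{u,v})$, which does not yet require $L_A(x^{-1})$ to be invertible. The lemma then follows from the cofactor formula for $L_A(x^{-1})^{-1}$, with the symmetry of $L$ absorbing the transpose.

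You instead identify $L_A(x^{-1})^{-1}$ as the lower-right block of $M^{-1}$ and apply the cofactor formula to $M$. The cost is the extra index swap $\Psi^{n+v,n+u}=\Psi^{n+u,n+v}$, which you justify correctly via conjugation by $\diag(I,-I)$.

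Your route has two small advantages:
\begin{itemize}
\item The sign $(-1)^{u+v}$ appears automatically from $(-1)^{2n+u+v}$. The paper's final step quotes $L^{-1}_{ij}=\det(L^{ij})/\det(L)$ and so drops this sign.
\item The same block inversion also gives the upper-left block $D^{-1}-D^{-1}A\,L_A(x^{-1})^{-1}A^{T}D^{-1}$ of $M^{-1}$. Its off-diagonal entries are exactly \cref{lem:eLeToM}, so the Laplace-expansion argument of \cref{lem:dodgsonVtoE} becomes unnecessary.
\end{itemize}
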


\begin{proof}
	Denote the $k \times k$ identity matrix by $I_{k}$ and define the diagonal matrix $D := \diag(x_1,\ldots,x_{n})$. 
	The minor $M^{n+u,n+v}$ can be decomposed as $M = F G H$, where
	 \[
		 F = \begin{pmatrix}[c|c]
			 I_{n}& 0\\ \hline
			 (A^{T})^{u,\bullet} D^{-1} & I_{m-1}
		 \end{pmatrix} \qquad
		 G = \begin{pmatrix}[c|c]
			 D& 0\\ \hline
			 0 & (L_{A}(x^{-1}))^{u,v}
		 \end{pmatrix} \qquad
		 H = \begin{pmatrix}[c|c]
			 I_{n}& - D^{-1} A^{\bullet,v}\\ \hline
			 0 & I_{m-1}
		 \end{pmatrix} 
	.\] 
	Thus we find that
	\[
		\frac{\Psi^{n+u,n+v}}{\Psi} = \frac{\det(M^{n+u,n+v})}{\Psi} = \frac{\det(F G H)}{\Psi} 
		= \det((L_{A}(x^{-1}))^{u,v}) \frac{\prod_{e \in [n]} x_{e}}{\Psi} = (L_{A}^{-1}(x^{-1}))_{u,v}
	\]
	where in the last equality we used that $\det(L_{A}(x^{-1})) = \Psi / \prod_{e \in [n]} x_{e}$ as well as 
	$L_{ij}^{-1} = \frac{\det(L^{ij})}{\det(L)}$  for an invertible matrix $L$.
\end{proof}

Next, we may reformulate an identity due to Jacobi to
write $\Psi^{I,J}$ as sums over products of Dodgson polynomials $\Psi^{u,v}$.
\begin{lemma}[{\cite[Lemma 48]{schnetz24}}] \label{lem:dodgsonToAdjugate}
	Let $I, J \subseteq [n]$ be sets of row indices of equal cardinality. Then
	\[
	\Psi^{I,J} = \det(M^{I,J}) = \frac{\sigma(I) \sigma(J)}{\det(M)^{\abs{I} - 1}} \det(\adj(M)_{I,J})
	\]
	where $\sigma(I) = (-1)^{i_1+\ldots+i_{k}} \sgn(I)$.
\end{lemma}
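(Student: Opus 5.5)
This is Jacobi's complementary minor theorem, rewritten in the paper's notation. I will derive it from the standard form for an invertible $N\times N$ matrix $M$, with $N=n+m$, and index sets $I,J$ of size $k$:
\[
\det\bigl((M^{-1})_{J,I}\bigr) = (-1)^{\sum_{i\in I} i+\sum_{j\in J} j}\,\frac{\det(M^{I,J})}{\det(M)},
\]
where $I,J$ are taken in increasing order. One way to prove this is to permute the rows in $I$ and the columns in $J$ to the front. This gives a matrix $PMQ$ with $\det(P)\det(Q)=(-1)^{\sum I+\sum J}$. Then factor
\[
PMQ\begin{pmatrix} X & 0\\ Y & I_{N-k}\end{pmatrix},
\]
where the first block column is the corresponding block of $(PMQ)^{-1}$. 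Taking determinants gives $\det(PMQ)\det((PMQ)^{-1})_{11}=\det(M^{I,J})$. The block $((PMQ)^{-1})_{11}$ equals $Q^{-1}M^{-1}P^{-1}$ restricted to $(J,I)$, which is exactly $(M^{-1})_{J,I}$.

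Next I pass to the adjugate using $\adj(M)=\det(M)M^{-1}$. Then $\det((M^{-1})_{J,I})=\det(M)^{-k}\det(\adj(M)_{J,I})$, which yields
\[
\det(M^{I,J}) = (-1)^{\sum I+\sum J}\,\frac{\det(\adj(M)_{J,I})}{\det(M)^{k-1}}.
\]
To put this in the stated form with $\adj(M)_{I,J}$, I use that $I,J$ are row indices. On the block $\diag(x)$ part the relevant entries of $M^{-1}$ are symmetric: $M$ has the structure $\begin{pmatrix}D&-A\\A^T&0\end{pmatrix}$, and conjugating by $\diag(I,-I)$ turns it into its transpose. Hence the $(e,e')$ entries of $M^{-1}$ with $e,e'\le n$ satisfy $(M^{-1})_{e,e'}=(M^{-1})_{e',e}$. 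So $\adj(M)_{J,I}=(\adj(M)_{I,J})^T$, and the two determinants agree.

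Finally, I allow $I$ and $J$ to be ordered sets, as the paper uses them. Reordering the rows of $\adj(M)_{I,J}$ according to the order of $I$ changes its determinant by $\sgn(I)$, and similarly for columns by $\sgn(J)$. The left-hand side $\Psi^{I,J}=\det(M^{I,J})$ depends only on the underlying sets, because deletion does not care about order. This accounts for the factors $\sgn(I)\sgn(J)$ in $\sigma(I)\sigma(J)$. The main obstacle I expect is bookkeeping: getting the sign $(-1)^{\sum I+\sum J}$ right in the block factorisation, and checking that the symmetry step does not introduce an extra sign. Since it is restricted to row indices, the conjugation by $\diag(I,-I)$ acts trivially there, so no extra sign should appear.
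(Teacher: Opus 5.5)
Your proposal is correct and follows essentially the paper's route: the paper gives no argument of its own beyond calling the statement a reformulation of Jacobi's complementary-minor identity and citing \cite[Lemma~28]{brown09}. You prove that identity directly and make explicit the two steps the paper leaves implicit: the transpose $\adj(M)_{J,I}$ versus $\adj(M)_{I,J}$, handled by symmetry of the row-index block of $M^{-1}$ (since $M^{T}$ is $M$ conjugated by $\diag(1,\dots,1,-1,\dots,-1)$), and the $\sgn(I)\sgn(J)$ factors coming from ordered index sets.
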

A proof of this identity can be found in \cite[Lemma~28]{brown09}.
There is also an explicit expression for the Dodgson polynomials $\Psi^{I,J}$ in terms of the matrix $A$:
\begin{lemma}[{\cite[Proof of Proposition 23]{brown09}}]\label{lem:detProductAsDodgson}
	Let $I,J \subseteq [n]$ of equal cardinality and
	let $Q$ be the set of subsets of  $[n] \setminus \{I \cup  J\}$ of cardinality $n - m - \abs{I}$,
	Then
	\[
		\Psi^{I,J} = \sum_{R \in Q} (-1)^{\sum_{e \in R} \abs{I_{< e}} + \abs{J_{<e}}}
		\det(A^{R + I,\bullet}) \det(A^{R + J, \bullet}) \prod_{e \in R} x_{e}
	.\]
	In the case $R = \emptyset$, the product $\prod_{e \in R} x_{e}$ is understood to equal $1$.
\end{lemma}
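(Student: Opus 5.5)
The statement to prove is \cref{lem:detProductAsDodgson}: an expansion of a Dodgson minor $\Psi^{I,J} = \det(M^{I,J})$ of the graph matrix $M = \begin{pmatrix} D & -A \\ A^{T} & 0\end{pmatrix}$, with $D = \diag(x)$, as a sum over row subsets $R$ of products of two maximal minors of $A$.

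The plan is to expand $\det(M^{I,J})$ multilinearly along the diagonal block $D$. After deleting rows $I$ and columns $J$, every remaining row index $e \notin I \cup J$ still carries its diagonal entry $x_e$, while for $e \in I \triangle J$ the entry $x_e$ has been removed from the corresponding row or column.

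Step 1: write the $D$-part as $D = D_0 + \sum_e x_e E_{ee}$. Because the determinant is multilinear in rows, and the variable $x_e$ occurs in only one entry, $\det(M^{I,J})$ is affine in each $x_e$. Its expansion is a sum over subsets $R \subseteq [n]\setminus(I\cup J)$ of $\prod_{e\in R} x_e$ times a cofactor. That cofactor is, up to sign, the determinant of the matrix obtained from $M^{I,J}$ by deleting row and column $e$ for every $e \in R$, and setting $x_e = 0$ for the remaining indices. The sign per removed index is determined by its position in $M^{I,J}$: the diagonal entry at original index $e$ sits at row position $e - |I_{<e}|$ and column position $e-|J_{<e}|$. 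This produces the factor $(-1)^{\sum_{e\in R}|I_{<e}|+|J_{<e}|}$.

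Step 2: evaluate the remaining determinant. Call the resulting matrix $N = \begin{pmatrix} 0 & -A^{R\cup I,\bullet} \\ (A^{R\cup J,\bullet})^{T} & 0 \end{pmatrix}$. Its two blocks are $-A$ with rows $R\cup I$ deleted, of size $(n-|R|-|I|)\times m$, and $A^{T}$ with columns $R\cup J$ deleted. The determinant of $N$ vanishes unless the blocks are square, which forces $n-|R|-|I| = m$; this is exactly the cardinality condition defining $Q$. When the blocks are square, $\det N = \pm\det(A^{R+I,\bullet})\det(A^{R+J,\bullet})$, where the sign is $(-1)^{m}\cdot(-1)^{m^2}\cdot(-1)^{m}$ from the block anti-diagonal form and the $-A$.

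Step 3: this global sign must cancel to $+1$, consistent with $\Psi = \det M = \det(A^{T}DA)$-type normalisation. I would check it against the case $I = J = \emptyset$, where the formula should reproduce the Cauchy–Binet expansion $\Psi = \sum_R \det(A^{R,\bullet})^2 \prod_{e\in R} x_e$.

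The main obstacle is sign bookkeeping in Step 1: positions shift once both $I$ and $J$ rows and columns are removed, and in Step 2 the row and column orders of the deleted sets must match the ordered-minor conventions of the statement. I would handle Step 1 by removing the indices of $R$ one at a time in increasing order, so each removal contributes exactly $(-1)^{(e-|I_{<e}|)+(e-|J_{<e}|)}$, which equals the stated factor.
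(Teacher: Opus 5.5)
Your proposal follows the same route as the paper. You expand $\det(M^{I,J})$ along the diagonal block and pick up the Laplace sign $(-1)^{(e-\abs{I_{<e}})+(e-\abs{J_{<e}})}$ for each selected $x_e$. You then note that the leftover block anti-diagonal matrix is nonzero only when $\abs{R}=n-m-\abs{I}$, and factor it into two maximal minors of $A$. Your placement of the blocks, $-A^{R\cup I,\bullet}$ in the upper right and $(A^{R\cup J,\bullet})^{T}$ in the lower left, is the correct one. The paper's display has $I$ and $J$ interchanged, which is harmless because the final product is symmetric.

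The one concrete error is the sign count in Step~2. The product $(-1)^{m}\cdot(-1)^{m^2}\cdot(-1)^{m}$ you write equals $(-1)^{m}$, which is $-1$ for odd $m$. This contradicts the $+1$ you assert in Step~3.

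There is only one factor of $(-1)^m$. Swapping the two block columns of the $2m\times 2m$ matrix $\begin{pmatrix}0&-A^{R\cup I,\bullet}\\(A^{R\cup J,\bullet})^{T}&0\end{pmatrix}$ costs $(-1)^{m^2}$. Pulling the sign out of the upper-right block gives $\det(-A^{R\cup I,\bullet})=(-1)^m\det(A^{R\cup I,\bullet})$. The total is therefore $(-1)^{m^2+m}=+1$, which is exactly the cancellation the paper invokes.

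With this direct computation, the Cauchy--Binet consistency check in Step~3 is unnecessary. It would be a legitimate way to fix a sign that depends only on $m$, using $\det M=\det(D)\det(A^{T}D^{-1}A)$ and any $A$ of rank $m$. As written, though, it contradicts your own Step~2 rather than confirming it.
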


\begin{proof}
	Since the lower-right $m \times m $ submatrix of $M$ is zero, any nonzero term in  $\det(M^{I,J})$ must 
	choose $m$ rows/columns from each submatrix $A$. 
	After removing the rows indexed by $I$ and the columns indexed by $J$, this leaves $n-m-\abs{I}$
	diagonal entries from the submatrix $D$ to be chosen.

	Let $Q$ denote the set of all subsets $R \subseteq [n]\setminus (I\cup J)$ of cardinality $n-m-\abs{I}$. 
	Expanding along the diagonal submatrix $D$ gives
	\[
	\Psi^{I,J} = \det(M^{I,J}) = \sum_{R \in Q} (-1)^{\sum_{e \in R} \abs{I_{< e}} + \abs{J_{<e}}}\begin{vmatrix} 
		0 & -A^{J+R,\bullet}\\ (A^{I+R,\bullet})^{T} & 0
	\end{vmatrix}  \prod_{e \in R} x_{e} 
	.\]
	where the sign is the usual Laplace sign arising from selecting the diagonal entries,
	giving $(-1)^{e - \abs{I_{< e}} + e - \abs{J_{< e}}}$ for each $x_{e}$.
	Finally, to bring the remaining matrix into block diagonal form to write it as the product of determinants, one permutes rows and columns. 
	The sign of this permutation is cancelled by the minus sign in the upper-right block, and the formula follows.
\end{proof}

\begin{remark}
	In the case of $S$ having cardinality $n - m$, $R$ is just an empty set and the formula reduces to
	\[
		\Psi^{I,J} = \det(A^{I,\bullet}) \det(A^{J,\bullet})
	,\] 
	so that the Dodgson polynomial $\Psi^{I,J}$ is actually just $\pm 1$.

	Moreover, for the case of graphs,
	only the terms where $R + S$ and $R + T$ are spanning tree compliments contribute as else one of the determinants of $A$ will vanish.
\end{remark}

Finally, we have an identity relating column indexed to row indexed Dodgson polynomials. For this we first need the following additional lemma:

\begin{lemma}[{\cite[Lemma A.11]{balduf24}}]\label{lem:dodgsonVtoE}
	Let $e \in [n]$ be a row index and let $v \in [n+m]$, be a row or column index distinct from $e$. Then
	\[
		\sum_{j \in [m] } (-1)^{j} a_{ej} \psi^{j,v} = (-1)^{e + n-1} x_{e} \Psi^{e,v}
	.\] 
\end{lemma}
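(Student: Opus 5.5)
The plan is to prove this by a single cofactor expansion, reading $\psi^{j,v}$ as the Dodgson polynomial $\Psi^{n+j,v}$ of $M$ (row index $n+j$ removed, i.e.\ the $j$-th column index in the convention of \cref{def:dodgsons}). The left-hand side is then a signed linear combination of the minors $\det(M^{k,v})$, $k \in [n+m]$, all obtained by deleting the same column $v$. Such a combination $\sum_{k} (-1)^{k+v} c_k \det(M^{k,v})$ is, by Laplace expansion along column $v$, exactly the determinant of the matrix obtained from $M$ by replacing its $v$-th column with the vector $c$. I would choose $c$ so that this determinant is trivially zero and so that the coefficients of $c$ reproduce the lemma.

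The natural choice is $c = M_{\bullet,e}$, the $e$-th column of $M$. From the block form in \cref{def:graphMatrices}, $M = \begin{pmatrix} \diag(x) & -A \\ A^{T} & 0\end{pmatrix}$, this column has entry $x_e$ in position $e$, zeros in the other positions of the upper block, and entries $(A^{T})_{j e} = a_{ej}$ in positions $n+j$ for $j \in [m]$. Because $v \neq e$, replacing column $v$ by column $e$ yields a matrix with two identical columns, whose determinant vanishes. This gives
\[
0 = \sum_{k \in [n+m]} (-1)^{k+v} M_{k e}\, \Psi^{k,v} = (-1)^{e+v} x_e \Psi^{e,v} + \sum_{j \in [m]} (-1)^{n+j+v} a_{ej}\, \Psi^{n+j,v}.
\]
Multiplying through by $(-1)^{n+v}$ and moving the first term to the other side yields $\sum_{j} (-1)^{j} a_{ej} \Psi^{n+j,v} = -(-1)^{e+n} x_e \Psi^{e,v} = (-1)^{e+n-1} x_e \Psi^{e,v}$, which is the claim.

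The only step needing care is the sign bookkeeping: the Laplace expansion sign $(-1)^{k+v}$ must be consistent with the convention $\Psi^{k,v} = \det(M^{k,v})$, and the lower-left block must be $A^{T}$ (with $+$ sign), not $-A$. I would also check that the argument covers both cases for $v$. If $v$ is a row index, nothing changes. If $v = n+j_0$ is a column index, the term $\Psi^{n+j_0, n+j_0}$ is just an ordinary principal minor, and the expansion argument still applies verbatim since it never uses the nature of $v$ beyond $v \neq e$. No analytic input or earlier lemma is needed; the proof is purely the vanishing of a determinant with a repeated column.
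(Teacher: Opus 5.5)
Your proof is correct, and it takes a genuinely different and shorter route than the paper's.

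You read the identity as the Laplace expansion along column $v$ of the matrix obtained from $M$ by overwriting column $v$ with column $e$. That matrix has a repeated column, so its determinant vanishes. Equivalently, the identity is the vanishing $(v,e)$ entry of $\adj(M)\,M=\det(M)\,I$ for $v\neq e$, with $\adj(M)_{vk}=(-1)^{k+v}\Psi^{k,v}$. Your reading $\psi^{j,v}=\Psi^{n+j,v}$ is also how the paper's proof uses the symbol.

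The paper instead performs two cofactor expansions along column $e$:
\begin{itemize}
\item one of $M^{e,v}$, which expresses $\Psi^{e,v}$ through the second-order minors $\Psi^{\{e,n+j\},\{e,v\}}$;
\item one of each $M^{n+j,v}$, which yields $x_e\Psi^{\{e,n+j\},\{e,v\}}$ plus cross terms $a_{ej}a_{ek}\Psi^{\{n+j,n+k\},\{e,v\}}$.
\end{itemize}
The cross terms cancel in pairs under $j\leftrightarrow k$, and matching the surviving sums gives the lemma.

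Your argument needs only one expansion of a full matrix, so the sign bookkeeping reduces to $(-1)^{k+v}$. The paper must track positions of rows and columns inside minors. Its signs, such as $(-1)^{e+(n-1+j)}$, tacitly place column $e$ at position $e$ of $M^{e,v}$, which is off by one when $v<e$; the discrepancy happens to cancel between the two expansions. The paper's route stays entirely within manipulations of Dodgson polynomials and exhibits both sides in terms of $\Psi^{\{e,n+j\},\{e,v\}}$. That intermediate form is not used later, so your version is the more economical one.
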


\begin{proof}
	The proof of this theorem is basically an exercise in Laplace expansion after one observes that the structure of the $e$-th column is given
	by one entry $x_{e}$ in the $e$-th row and then the entries $a_{e j}$ in the rows $n+1$ to $n + m$.

	First consider the minor of $M^{e,v}$ corresponding to $\Psi^{e,v}$ and expand it along the $e$-th column to obtain:
	\[
		\Psi^{e,v} = \sum_{j=1}^{m} (-1)^{e + (n-1+j)}  a_{e j} \Psi^{\{e,j\},\{e,v\}} \qq{thus} 
		(-1)^{e+n-1} x_{e} \Psi^{e,v} = \sum_{j=1}^{m} (-1)^{j}  a_{e j} x_{e} \Psi^{\{e,j\},\{e,v\}}
	.\]
	Let $j \in [m]$, then we can consider the minor $M^{j,v}$ corresponding to $\Psi^{j,v}$ and expand it along the $e$-th column to obtain:
	 \[
		 \Psi^{j,v} = (-1)^{e+e} x_{e} \Psi^{\{e,j\},\{e,v\}} +
		 \sum_{k \in [m] \setminus \{j\}}  (-1)^{e + n+k-(j < k)} a_{e k} \Psi^{\{j,k\},\{e,v\}}
	,\] 
	where the $(j < k)$ in the sign arises from the fact that if  $j$ is smaller than $k$ then in the submatrix $M^{j,v}$ the $k$-th row of
	 $M$ appears in position $k-1$, whereas if  $j > k$ then the  $k$-th row  is still the $k$-th row.
	Let us then consider
	\[
		 \sum_{j \in [m]} (-1)^{j} a_{e j} \Psi^{j,v} = \sum_{j \in [m]} (-1)^{j} a_{e j} x_{e} \Psi^{\{e,j\}, \{e,v\} }
			 \sum_{\substack{j,k =1\\ j \neq k}}^{m} (-1)^{e + n + k +j - (j<k)} a_{e j}
		 a_{e k} \Psi^{\{j,k\}, \{e,v\}}
	.\]
	Here the second sum vanishes as the term $(j,k)$  and  $(k,j)$ appear with opposite sign.
	Comparing the remaining term in the expansion of the sum over $\Psi^{j,v}$ with the expansion of $\Psi^{e,v}$ proves the lemma.
\end{proof}
Using the previous lemma, the identity between row indexed and column indexed Dodgson polynomials is given by:
\begin{lemma}[{\cite[Lemma A.12]{balduf24}}]\label{lem:eLeToM}
	Let $e_1, e_2 \in [n]$ be distinct row indices. Then
	\[
		\sum_{j,k \in [m]} (-1)^{j+k} a_{e_1 j} a_{e_2 k} \Psi^{j,k} = (-1)^{e_1+e_2+1} x_{e_1} x_{e_2} \Psi^{e_1,e_2}
	.\]
	which can be summarised as
	\[
		(A L(x)^{-1} A^{T})_{e_1 e_2} = - x_{e_1} x_{e_2} M(x)^{-1}_{e_1 e_2}
	,\] 
\end{lemma}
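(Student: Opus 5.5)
The plan is to derive the first identity by applying \cref{lem:dodgsonVtoE} twice, once to remove the sum over $j$ and once to remove the sum over $k$. The matrix reformulation then follows from \cref{lem:laplToDodgson} and Cramer's rule. Throughout, the column indices $j,k\in[m]$ in $\Psi^{j,k}$ are understood as the indices $n+j,n+k$ of $M$, as in \cref{lem:dodgsonVtoE}.

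\emph{Step 1 (sum over $j$).} Fix $k$ and apply \cref{lem:dodgsonVtoE} with the row index $e=e_1$ and $v=n+k$. This is allowed since $v\neq e_1$. It gives
\[
\sum_{j\in[m]}(-1)^j a_{e_1 j}\Psi^{j,k}=(-1)^{e_1+n-1}x_{e_1}\Psi^{e_1,k}.
\]
The left-hand side of the lemma therefore becomes $(-1)^{e_1+n-1}x_{e_1}\sum_k(-1)^k a_{e_2k}\Psi^{e_1,k}$.

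\emph{Step 2 (sum over $k$).} The remaining sum runs over the \emph{column} (second) index of the Dodgson polynomial, whereas \cref{lem:dodgsonVtoE} is stated for the first index. To handle this, I pass to the transpose:
\[
M^T=\begin{pmatrix}\diag(x)& A\\ -A^T&0\end{pmatrix}
\]
is exactly the graph matrix associated to $-A$. Moreover $\Psi^{I,J}(M)=\Psi^{J,I}(M^T)$, because minors are unchanged under transposition. Applying \cref{lem:dodgsonVtoE} to $M^T$, with the matrix $-A$, the row index $e=e_2$ and $v=e_1$, gives
\[
\sum_k(-1)^k a_{e_2k}\Psi^{e_1,k}=-\sum_k(-1)^k(-a_{e_2k})\Psi^{k,e_1}(M^T)=-(-1)^{e_2+n-1}x_{e_2}\Psi^{e_1,e_2}.
\]
Combining the two steps, the total sign is $(-1)^{e_1+n-1}(-1)^{e_2+n-1}(-1)=(-1)^{e_1+e_2+1}$, which is the claimed identity. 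The only delicate point is checking that the sign conventions of \cref{lem:dodgsonVtoE} really survive the substitution $A\mapsto -A$. This holds because the proof of that lemma uses only Laplace expansion along the $e$-th column and is linear in each occurrence of the entries of $A$. I expect this sign bookkeeping to be the main (if modest) obstacle. If in doubt, I would re-run the Laplace expansion of \cref{lem:dodgsonVtoE} along the $e_2$-th \emph{row} of $M$ directly.

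\emph{Step 3 (matrix form).} By \cref{lem:laplToDodgson}, $(L^{-1})_{jk}=(-1)^{j+k}\Psi^{n+j,n+k}/\Psi$, where the Laplacian is evaluated at the inverted variables, matching the convention of that lemma. Hence the left-hand side of the identity equals $\Psi\,(AL^{-1}A^T)_{e_1e_2}$. On the other hand, Cramer's rule gives $M^{-1}_{e_1e_2}=(-1)^{e_1+e_2}\Psi^{e_2,e_1}/\Psi$, and $\Psi^{e_2,e_1}=\Psi^{e_1,e_2}$ since both are row indices. Dividing the identity by $\Psi$ therefore yields
\[
(AL^{-1}A^T)_{e_1e_2}=-x_{e_1}x_{e_2}M^{-1}_{e_1e_2},
\]
which is the stated reformulation.
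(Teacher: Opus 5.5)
Your proof is correct and follows essentially the same route as the paper. The paper also applies \cref{lem:dodgsonVtoE} twice, though it starts from the right-hand side and moves the index with the antisymmetry $\Psi^{e,v}=-\Psi^{v,e}$, where you use the transpose $M^{T}=M_{-A}$; the two devices are equivalent. It then derives the matrix form via \cref{lem:laplToDodgson} and Cramer's rule, as you do. Since \cref{lem:dodgsonVtoE} holds for arbitrary $A$, applying it to $-A$ needs no extra sign check. You are also right that the $L$ in the matrix form has to be evaluated at $x^{-1}$.
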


\begin{proof}
	Consider the right hand side and apply \cref{lem:dodgsonVtoE} once with  $e = e_1$ and $v = e_2$ to obtain:
	\[
		(-1)^{e_1+e_2+1} x_{e_1} x_{e_2} \Psi^{e_1 e_2} = \sum_{j \in [m]} a_{e_1 j} (-1)^{j+e_2+n} x_{e_2} \Psi^{j,e_2}
	.\] 
	Now take the $(-1)^{e_2 + n -1} x_{e_2} \Psi^{j,e_2}$ in every summand and apply \cref{lem:dodgsonVtoE} with $e = e_2$ and $v = j$.
	Notice that as  $\Psi^{e,v} = - \Psi^{v,e}$ we obtain a negative sign,
	\[
		\sum_{j \in [m]} a_{e_1 j} (-1)^{j+e_2+n} x_{e_2} \Psi^{j,e_2} 
		= \sum_{j \in [m]} \sum_{\substack{k \in [m]}} (-1)^{j+k} a_{e_1 j} a_{e_2 k } \Psi^{k,j}
	.\] 
	As both $j$ and $k$ are column indices  $\Psi^{j,k} = \Psi^{k,j}$, which proves the first identity.

	Let $v \in [m]$, then
	\[
		(L(x^{-1})^{-1} A^{T})_{v e_2} = \sum_{k = 1}^{m} a_{e_2 k} (L^{-1}(x^{-1}))_{v k}
	\]
	and for the full expansion:
	\[
		(A L(x^{-1})^{-1} A^{T})_{e_1 e_2} = \sum_{j,k \in [m]} a_{e_1 j} a_{e_2 k} 
	(L^{-1}(x^{-1}))_{j k} = \sum_{j,k \in [m]} (-1)^{j+k}  a_{e_1 j} a_{e_2 k} \frac{\Psi^{j,k}}{\Psi}
	,\]
	where in the second equality we used \cref{lem:laplToDodgson}.
	Applying the first identity we obtain:
	\[
		\sum_{j,k \in [m]} (-1)^{j+k}  a_{e_1 j} a_{e_2 k} \frac{\Psi^{j,k}}{\Psi}
		= (-1)^{e_1 + e_2 + 1} x_{e_1} x_{e_2} \frac{\Psi^{e_1,e_2}}{\Psi} = - x_{e_1} x_{e_2} (M(x)^{^{-1}})_{e_1 e_2} 
	.\] 
	which proves the second identity.
\end{proof}

\graphicspath{{Images/}}

\section{Fubinis theorem, Leibniz integral rule and dominated convergence}
We summarise three classic results from measure theory which we frequently use in the main text.
They allow us to exchange limits and derivatives with integrals as well as exchange integrals themselves.

\begin{theorem}[Fubini's Theorem]\label{thm:fubinisTheorem}
	Let $X,Y$ be $\sigma$-finite measure spaces and suppose that $X \times Y$ has the product measure.
	If $f$ is a measureable function and its integral over $X \times Y$ converges absolutely, that is
	\[
		\int_{X \times Y} \abs{f(x,y)} \dd{(x,y)} < \infty
	\]
	then
	\[
		\int_{X} \int_{Y} f(x,y) \dd{y} \dd{y} = 
		\int_{Y} \int_{X} f(x,y) \dd{x} \dd{y} =
		\int_{X \times Y} \abs{f(x,y)} \dd{(x,y)}
	.\] 
\end{theorem}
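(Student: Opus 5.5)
The statement is Fubini's theorem, a standard result of measure theory; we do not reprove it from scratch but indicate the route we would follow. (The last displayed equality is evidently meant with $f$ in place of $\abs{f}$, and $\dd{y}\dd{x}$ in the first iterated integral.)

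The plan has three steps, in this order.

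\textbf{Step 1 (Tonelli).} We first treat nonnegative measurable $f$. For indicator functions of measurable rectangles $E \times F$, both iterated integrals equal $\mu(E)\nu(F)$. A monotone class (or $\pi$–$\lambda$) argument then extends two facts from rectangles to all sets in the product $\sigma$-algebra:
\begin{itemize}
\item the sections are measurable;
\item both iterated integrals agree with the product measure.
\end{itemize}
Here $\sigma$-finiteness is essential: it lets us exhaust $X \times Y$ by rectangles of finite measure, so that decreasing limits are allowed in the monotone class step. Linearity gives the result for simple functions, and the monotone convergence theorem, applied in each variable, gives it for arbitrary nonnegative measurable $f$.

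\textbf{Step 2 (integrable $f$).} For general $f$ with $\int_{X\times Y}\abs{f} < \infty$, split $f = f^+ - f^-$ into its positive and negative parts. By Tonelli applied to $\abs{f}$, the section integrals $\int_Y \abs{f(x,y)}\dd{y}$ are finite for almost every $x$. Hence the inner integrals of $f^{\pm}$ are finite almost everywhere and integrable in the outer variable. Subtracting the Tonelli identities for $f^+$ and $f^-$ yields equality of both iterated integrals with $\int_{X\times Y} f$.

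\textbf{Main obstacle.} The only delicate point is the measurability of the sections and of the partial integrals. This is handled in the monotone class step, and for completed measures (such as Lebesgue measure on $\C^n \times \sigma_{2n}$) by the usual almost-everywhere modification.

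In the paper we simply cite this standard result. It is applied with $X = \C^n$ and $Y = \sigma^{\epsilon}_{2n}$, both $\sigma$-finite, where absolute convergence is supplied by \cref{thm:productSpaceConvergence}.
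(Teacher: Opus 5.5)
Your proposal is correct and matches the paper, which likewise gives no proof and simply records Fubini's theorem as a classical measure-theoretic fact. Your Tonelli-then-split sketch (a monotone class argument on rectangles using $\sigma$-finiteness, then $f = f^{+} - f^{-}$ with a.e.\ finiteness of the sections of $\abs{f}$) is the standard textbook route. You are also right that the statement has typos: the first iterated integral should carry $\dd{y}\,\dd{x}$, and the last term should be $\int_{X \times Y} f(x,y) \dd{(x,y)}$ rather than the integral of $\abs{f}$.
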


\begin{theorem}[Dominated Convergence Theorem (DCT)]\label{thm:dominatedConvergenceTheorem}
	Let $X$ be an open subset of $\R$, $\Omega$ a measure space and fix $x_{0} \in X$.  
	Let $f: X \times \Omega \to \C$ be a complex-valued measureable function such that:
	\begin{enumerate}
		\item For almost all $\omega \in \Omega$
			\[
			\lim_{x \to x_0} f(x,\omega) = f(x_0,\omega)
			.\] 
		\item There exists a integrable function $g: \Omega \to \R$ such that for all $x \in X$ and almost all  $\omega \in \Omega$ 
			$\abs{f(x,\omega)} < g(\omega)$.
	\end{enumerate}
	then 
	\[
		\lim_{x \to x_0} \int_{\Omega} f(x,\omega) \dd{\omega} = \int_{\Omega} f(x_0,\omega) \dd{\omega}
	.\] 
\end{theorem}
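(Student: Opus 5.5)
The statement just above is the Dominated Convergence Theorem for a family $f(x,\omega)$ indexed by a real parameter $x \to x_0$. I would reduce it to the sequential Lebesgue dominated convergence theorem. The plan is to show that for every sequence $x_k \to x_0$ with $x_k \in X$ one has $\int_\Omega f(x_k,\omega)\dd{\omega} \to \int_\Omega f(x_0,\omega)\dd{\omega}$. Then invoke the sequential characterisation of limits in $\R$: a function $F(x) := \int_\Omega f(x,\omega)\dd{\omega}$ satisfies $\lim_{x\to x_0}F(x)=F(x_0)$ if and only if $F(x_k)\to F(x_0)$ along every such sequence.

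First I check that $F$ is well defined. For each fixed $x$, the function $\omega\mapsto f(x,\omega)$ is measurable, since $f$ is jointly measurable and sections of measurable functions are measurable. It is also integrable because $\abs{f(x,\omega)}<g(\omega)$ almost everywhere and $g$ is integrable. The same holds at $x_0$, either from hypothesis (2) at $x=x_0$ or by passing to the almost-everywhere limit. Next, fix a sequence $x_k\to x_0$ and set $f_k(\omega):=f(x_k,\omega)$. Hypothesis (1) gives $f_k(\omega)\to f(x_0,\omega)$ for all $\omega$ outside a null set $N$. The key point is that $N$ is determined by hypothesis (1) alone and does not depend on the sequence. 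Hypothesis (2) gives $\abs{f_k}\le g$ outside the null sets $N_k$, and the countable union $N\cup\bigcup_k N_k$ is still null.

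Then I apply the standard proof of sequential dominated convergence. The functions $g-\Re f_k$, $g+\Re f_k$, $g-\Im f_k$ and $g+\Im f_k$ are non-negative almost everywhere, so Fatou's lemma applies to each. It yields $\int \Re f(x_0,\cdot)\le\liminf\int\Re f_k$ and $\int \Re f(x_0,\cdot)\ge\limsup\int \Re f_k$, and the same two inequalities for the imaginary parts. This gives convergence of the integrals of the real and imaginary parts separately, hence of the complex integrals. Since the sequence was arbitrary, the claim follows.

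The only delicate step I expect is the measure-theoretic bookkeeping around the exceptional sets. One has to ensure that the domination and pointwise-convergence null sets can be taken uniformly along a sequence, which is why I pass to sequences rather than working with the uncountable family in $x$ directly. The rest is routine.
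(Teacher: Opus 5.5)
Your proof is correct, and there is nothing in the paper to measure it against. The paper does not prove this statement: it appears in the appendix, alongside Fubini's theorem and the Leibniz rule, as a classical measure-theoretic fact that is simply quoted.

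What you give is the standard derivation of the parametric version from the sequential dominated convergence theorem. You correctly isolate the one point that needs care.

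\begin{itemize}
\item The null set in hypothesis (1) is a single set, independent of how $x$ approaches $x_0$.
\item The domination in hypothesis (2) only holds off a null set that may depend on $x$. So you must restrict to countably many parameters $x_k$ before taking the union of exceptional sets.
\end{itemize}

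The sequential characterisation of $\lim_{x\to x_0}F(x)$ then finishes the argument. The Fatou step applied to $g\pm\Re f_k$ and $g\pm\Im f_k$ is the usual one; it uses $\int_\Omega g\,\dd{\omega}<\infty$ to cancel $\int_\Omega g\,\dd{\omega}$ from both sides.

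A side benefit of your route is that it uses nothing about $X\subseteq\R$ beyond the fact that limits are detected by sequences. It therefore applies verbatim to the paper's actual uses, for instance the limit $p\to 0$ in $\C^{2n}$ in the section on parameter space.

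The only caveat concerns measurability of the sections $\omega\mapsto f(x,\omega)$. Your justification requires $f$ to be measurable for the uncompleted product $\sigma$-algebra. This is harmless here: the statement implicitly needs each $\int_\Omega f(x,\omega)\,\dd{\omega}$ to be defined, and in the paper's applications the integrands are continuous in $\omega$ anyway.
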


\begin{theorem}[Leibniz integral rule]\label{thm:leibnizRule}
	Let $X$ be an open subset of $\R$, $\Omega$ a measure space and  $f: X \times \Omega \to \C$ a function such that:
	\begin{enumerate}
		\item For each $x \in X$ $f(x,\omega)$ is an integrable function of $\omega$.
		\item For almost all $\omega \in \Omega$, the partial derivative $\frac{\partial f}{\partial x}$ exists for all $x \in X$.
		\item There exists an integrable function $g: \Omega \to \R$ such that for all $x \in X$ and almost all $\omega \in \Omega$
			$\abs{\frac{\partial f}{\partial x}(x,\omega)} < g(\omega)$.
	\end{enumerate}
	Then for $x \in X$
	\[
		\frac{\partial }{\partial x} \int_{\Omega} f(x,\omega) \dd{\omega} = \int_{\Omega} \frac{\partial f}{\partial x}(x,\omega) \dd{\omega}
	.\] 
\end{theorem}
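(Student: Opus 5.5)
The plan is to reduce the statement to the Dominated Convergence Theorem (\cref{thm:dominatedConvergenceTheorem}) applied to difference quotients. Fix $x \in X$. Since $X$ is open, there is $\delta > 0$ with $(x-\delta, x+\delta) \subseteq X$. For $0 < \abs{h} < \delta$, define
\[
q_h(\omega) := \frac{f(x+h,\omega) - f(x,\omega)}{h}.
\]
By hypothesis (1), $q_h$ is integrable. By linearity of the integral,
\[
\frac{1}{h}\left(\int_\Omega f(x+h,\omega)\dd{\omega} - \int_\Omega f(x,\omega)\dd{\omega}\right) = \int_\Omega q_h(\omega)\dd{\omega}.
\]
So it suffices to show that $\int_\Omega q_h \to \int_\Omega \frac{\partial f}{\partial x}(x,\cdot)$ as $h \to 0$.

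The first step is pointwise convergence and measurability. By hypothesis (2) there is a null set $N \subseteq \Omega$ such that for $\omega \notin N$ the map $y \mapsto f(y,\omega)$ is differentiable on all of $X$. Hence $q_h(\omega) \to \frac{\partial f}{\partial x}(x,\omega)$ for $\omega \notin N$. Measurability of the limit follows by taking a sequence $h_k \to 0$: each $q_{h_k}$ is measurable, so their almost-everywhere limit is measurable. To avoid any issue with the continuum limit in $h$, I would run the whole argument along arbitrary sequences $h_k \to 0$. Convergence of the integrals along every such sequence then yields convergence as $h \to 0$.

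The second step, and the only real point, is a uniform domination $\abs{q_h(\omega)} \le g(\omega)$ for $\omega \notin N$. Because $f$ is complex-valued, the real mean value theorem cannot be applied directly. I would apply it separately to $\Re f(\cdot,\omega)$ and $\Im f(\cdot,\omega)$ on the segment $[x, x+h] \subseteq X$, which is differentiable there. This gives
\[
\Re q_h(\omega) = \Re \frac{\partial f}{\partial x}(\xi_1,\omega), \qquad \Im q_h(\omega) = \Im \frac{\partial f}{\partial x}(\xi_2,\omega)
\]
for some $\xi_1, \xi_2$ in the segment. Hypothesis (3) then gives $\abs{q_h(\omega)} \le \sqrt{2}\, g(\omega)$, and $\sqrt{2}\,g$ is still integrable, which is all that is needed. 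Alternatively, the mean value inequality for vector-valued differentiable maps, $\abs{f(x+h,\omega)-f(x,\omega)} \le \abs{h}\sup_{\xi}\abs{\partial_x f(\xi,\omega)}$, gives the bound $g(\omega)$ directly without the factor $\sqrt{2}$.

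Finally, apply \cref{thm:dominatedConvergenceTheorem} to the family $q_{h_k}$ with dominating function $\sqrt{2}\,g$. This gives $\int_\Omega q_{h_k} \to \int_\Omega \frac{\partial f}{\partial x}(x,\cdot)$ for every sequence $h_k \to 0$, which proves the formula. I expect no genuine obstacle. The only care needed is to keep the segment inside $X$, to handle the complex values via real and imaginary parts, and to note that the exceptional null set in (2) and (3) does not depend on $x$ or $h$, so the bound holds simultaneously for all $h$ with $0 < \abs{h} < \delta$.
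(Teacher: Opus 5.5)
Your proof is correct. The paper does not prove this statement; it records it in the appendix as a classical measure-theoretic fact. Your argument is the standard textbook proof: dominated convergence applied to the difference quotients $q_h$, with the domination obtained from the mean value theorem applied to $\Re f$ and $\Im f$ (or from the vector-valued mean value inequality).

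The two points you flag are the right ones.

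\begin{enumerate}
\item The argument needs the exceptional null set in (3) to be independent of $x$. This is the intended reading of that hypothesis.
\item Working along sequences $h_k\to 0$ means you only need the sequential form of dominated convergence. That form is what you should cite, rather than \cref{thm:dominatedConvergenceTheorem} verbatim. The paper's version is stated for a continuous parameter and assumes a function measurable on $X\times\Omega$, and joint measurability of $(h,\omega)\mapsto q_h(\omega)$ is not among the hypotheses here.
\end{enumerate}
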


\section{Divided differences}\label{ap:divDiff}
We also briefly use a key theorem from the theory of divided differences.
As a reference for this we use \cite{jamesonXX}.

\begin{definition}\label{def:dividedDifference}
	Let $n \in \N$ and $I \subseteq \R$ and interval.
	Let $f: I \to \R$ be $n$-times differentiable and $x_0,\ldots,x_{n}$ be $n+1$ points in $I$ then the 
	\emph{divided difference} of $f$ is defined as
	\begin{align*}
		f[x_{k}] &:=f(x_{k}), & &k \in \{0,\ldots ,n\}\\
		f[x_{k},\ldots ,x_{j}]&:={\frac {f[x_{k+1},\ldots ,x_{j}]-f[x_{k},\ldots ,x_{j-1}]}{x_{j}-x_{k}}},
							  &&k\in \{0,\ldots ,n-1\},\ j\in \{k+1,\ldots ,n\}.
	\end{align*}
	By defining
	\[
		f[x_0,\ldots,x_0] = \frac{f^{(n)}(x_0)}{n!}
	\] 
	this definition can be continuously extended to the points where some $x_{j}$ coincide.
\end{definition}
An explicit non-recursive formula for the divided difference (whenever no points coincide) is given by
\begin{proposition}\label{prop:dividedDifference}
	Let $f$ as above and let $x_0,\ldots,x_{n}$ be distinct. Then
	\[
		f[x_0,\ldots,x_{n}] = \sum_{j=0}^{n} \frac{f(x_{j})}{\prod\limits_{\substack{k=0\\ k \neq j}}^{n} (x_{j} - x_{k})}
	\] 
\end{proposition}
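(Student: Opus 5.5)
Since the points are distinct, I will prove the explicit formula by induction on $n$, using the recursive definition in \cref{def:dividedDifference}. For $n=0$ both sides are $f(x_0)$. For the inductive step, apply the induction hypothesis to the two divided differences of order $n-1$ in the recursion. That is, I write
\[
f[x_1,\ldots,x_n] = \sum_{j=1}^{n} \frac{f(x_j)}{\prod_{k\in\{1,\ldots,n\}\setminus\{j\}}(x_j-x_k)}
\]
and
\[
f[x_0,\ldots,x_{n-1}] = \sum_{j=0}^{n-1} \frac{f(x_j)}{\prod_{k\in\{0,\ldots,n-1\}\setminus\{j\}}(x_j-x_k)}.
\]
Then I divide their difference by $x_n-x_0$ and compare coefficients of each $f(x_j)$ with the claimed formula.

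The comparison splits into three cases. For $j=n$, only the first sum contributes. Its coefficient divided by $(x_n-x_0)$ is exactly $1/\prod_{k\neq n}(x_n-x_k)$. For $j=0$, only the second sum contributes, with a minus sign. Dividing by $(x_n-x_0)$ gives $-1/\bigl((x_n-x_0)\prod_{k=1}^{n-1}(x_0-x_k)\bigr)=1/\prod_{k\neq 0}(x_0-x_k)$. For $1\le j\le n-1$, both sums contribute. Setting $P_j=\prod_{k\neq j,\,1\le k\le n-1}(x_j-x_k)$, the coefficient is
\[
\frac{1}{x_n-x_0}\left(\frac{1}{P_j(x_j-x_n)}-\frac{1}{P_j(x_j-x_0)}\right)
=\frac{1}{P_j}\cdot\frac{(x_j-x_0)-(x_j-x_n)}{(x_n-x_0)(x_j-x_n)(x_j-x_0)}=\frac{1}{P_j(x_j-x_0)(x_j-x_n)}.
\]
This is the required coefficient.

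The only step needing care is this middle-index partial-fraction simplification, which is routine. Alternatively, I could use that the right-hand side is the leading coefficient of the Lagrange interpolation polynomial and that divided differences satisfy the same recursion. The direct induction above suffices, however.
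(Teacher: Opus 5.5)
Your induction is correct: phrased properly it runs on $j-k$ for the consecutive blocks $x_k,\ldots,x_j$, which is what the recursive definition actually refers to and what your relabelled sums implicitly use. All three coefficient computations check out, including the partial-fraction step for $1\le j\le n-1$ and the degenerate case $n=1$. The paper gives no proof of this proposition and simply quotes it as a standard fact from the cited reference on divided differences; your argument is the standard textbook one, so there is nothing further to compare.
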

The key theorem we use in this work is the following:
\begin{theorem}[Mean value theorem for divided differences]\label{thm:meanDividedDiff}
	There exists a point $\xi$ in the interior of $I$ such that 
	\[
		f[x_0,\ldots,x_{n}] = \frac{f^{n}(\xi)}{n!}
	.\] 
\end{theorem}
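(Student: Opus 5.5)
The plan is the classical Rolle argument, assuming first that the nodes $x_0,\ldots,x_n$ are pairwise distinct. Let $p$ be the unique polynomial of degree at most $n$ with $p(x_j)=f(x_j)$ for $j=0,\ldots,n$. The first step is to identify the leading coefficient of $p$ with the divided difference. By Lagrange interpolation,
\[
p(t)=\sum_{j=0}^{n} f(x_j)\prod_{k\neq j}\frac{t-x_k}{x_j-x_k}.
\]
The coefficient of $t^n$ is therefore $\sum_{j} f(x_j)/\prod_{k\neq j}(x_j-x_k)$. By \cref{prop:dividedDifference} this equals $f[x_0,\ldots,x_n]$. Consequently
\[
p^{(n)}(t)\equiv n!\,f[x_0,\ldots,x_n].
\]

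Next set $g:=f-p$, which is $n$ times differentiable on $I$ and vanishes at the $n+1$ distinct points $x_0<\cdots<x_n$ (after relabelling, which does not change the divided difference, since the formula in \cref{prop:dividedDifference} is symmetric). Apply Rolle's theorem on each interval $[x_{j},x_{j+1}]$. This gives $n$ distinct zeros of $g'$ in $(x_0,x_n)$. Iterating, $g^{(k)}$ has at least $n+1-k$ distinct zeros in $(x_0,x_n)$, so $g^{(n)}$ has a zero $\xi\in(x_0,x_n)\subseteq \operatorname{int} I$. Evaluating $g^{(n)}(\xi)=f^{(n)}(\xi)-n!\,f[x_0,\ldots,x_n]=0$ gives the claim. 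Rolle's theorem requires only that $g^{(k)}$ be differentiable on the open subintervals and continuous on the closed ones. This holds for $k\le n-1$ because $f$ is $n$ times differentiable, and no continuity of $f^{(n)}$ is needed.

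The only real obstacle is the case of coinciding nodes, where the divided difference is defined by the continuous extension in \cref{def:dividedDifference}. If all nodes coincide the statement holds by definition (take $\xi=x_0$, or note that it is an interior point when $x_0\in\operatorname{int} I$). For partially coinciding nodes, the cleanest route is Hermite interpolation. Take $p$ matching $f$ and its derivatives up to the multiplicities of the nodes. Then $g=f-p$ has $n+1$ zeros counted with multiplicity, and the multiplicity-aware Rolle argument still yields a zero of $g^{(n)}$ in the convex hull of the nodes.

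Alternatively, approximate by distinct nodes and pass to the limit. This route requires $f^{(n)}$ to be continuous: one extracts a convergent subsequence of the points $\xi$ and uses continuity of $f^{(n)}$ at the limit point. In the application in \cref{prop:absoluteBoundAndConvergence} the function $h_\gamma$ is smooth in $t$, so either variant suffices there. I would present the distinct-node proof in full and treat the coincident case via the limiting argument under this smoothness assumption.
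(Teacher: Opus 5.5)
\textbf{Verdict.} Your distinct-node argument is correct, but your treatment of coincident nodes does not establish the stated conclusion that $\xi$ is interior.

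\textbf{Distinct nodes.} This is the standard Rolle proof of this classical fact. The paper gives no argument of its own here and simply cites its reference on divided differences. The distinct-node case is also all the paper actually uses. In \cref{prop:absoluteBoundAndConvergence} the sum is rewritten through \cref{prop:dividedDifference}, which already presupposes distinct nodes. Those nodes $\abs{\la_{i_j}}^2$ are strictly positive, hence interior to $\R_{\geq 0}$.

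\textbf{All nodes coinciding.} The statement is false as written when all nodes coincide at an endpoint of $I$. Take $f(t)=t^{n+1}$ on $I=[0,1]$ and $x_0=\cdots=x_n=0$. Then $f[0,\ldots,0]=f^{(n)}(0)/n!=0$, whereas $f^{(n)}(\xi)=(n+1)!\,\xi\neq 0$ for all $\xi\in(0,1)$. So this case is not settled ``by definition''. You need the extra hypothesis that the nodes are not all equal, or that $x_0\in\operatorname{int} I$.

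\textbf{Partially coinciding nodes.} Your preferred limiting argument only produces $\xi\in[\min_j x_j,\max_j x_j]$. The limit of the $\xi_k$ may be an extreme node, and that node may lie on $\partial I$. So the argument does not yield an interior point. There are two ways to close this.
\begin{enumerate}
\item Run the multiplicity-aware Rolle argument with care. Suppose no node has multiplicity $n+1$. A node of multiplicity $m_j\le n$ contributes a zero of multiplicity $m_j-(n-1)\le 1$ to $g^{(n-1)}=(f-p)^{(n-1)}$, and Rolle points are simple and are not retained at the next stage. Since the total count is at least two, $g^{(n-1)}$ has at least two distinct zeros in the hull. Rolle then places the zero of $g^{(n)}$ strictly between the extreme nodes. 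On this route you must also show that the leading coefficient of the Hermite interpolant equals the continuously extended divided difference of \cref{def:dividedDifference}; you left this implicit.
\item Assume $f^{(n)}$ is continuous and use the Hermite--Genocchi formula. It writes $n!\,f[x_0,\ldots,x_n]$ as an average of $f^{(n)}$ over the hull, against a probability density that is positive on the hull's interior. The intermediate value theorem then gives $\xi$ strictly inside the hull. The same formula also supplies the continuity of the extended divided difference that your limiting argument relies on.
\end{enumerate}
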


\section*{References}
\printbibliography[heading=none]

\end{document}